\numberwithin{equation}{section}
\newtheorem{theorem}{Theorem}[section]
\newtheorem{axiom}[theorem]{Assumption}
\newtheorem{conjecture}[theorem]{Conjecture}
\newtheorem{corollary}[theorem]{Corollary}
\newtheorem{definition}[theorem]{Definition}
\newtheorem{example}[theorem]{Example}
\newtheorem{exercise}[theorem]{Exercise}
\newtheorem{lemma}[theorem]{Lemma}
\newtheorem{notation}[theorem]{Notation}
\newtheorem{proposition}[theorem]{Proposition}
\newtheorem{remark}[theorem]{Remark}
\let\pdfoutput=\undefined\fi
\chardef\@x10\chardef\@xv60
\def\tcitime{
\def\@time{%
  \@minute\time\@hour\@minute\divide\@hour\@xv
  \ifnum\@hour<\@x 0\fi\the\@hour:%
  \multiply\@hour\@xv\advance\@minute-\@hour
  \ifnum\@minute<\@x 0\fi\the\@minute
  }}%
\def\x@hyperref#1#2#3{%
   \catcode`\~ = 12
   \catcode`\$ = 12
   \catcode`\_ = 12
   \catcode`\# = 12
   \catcode`\& = 12
   \y@hyperref{#1}{#2}{#3}%
}
\def\y@hyperref#1#2#3#4{%
   #2\ref{#4}#3
   \catcode`\~ = 13
   \catcode`\$ = 3
   \catcode`\_ = 8
   \catcode`\# = 6
   \catcode`\& = 4
}
\def\QCTOpt[#1]#2{%
  \def\QCTOptB{#1}
  \def\QCTOptA{#2}
}
\def\QCTNOpt#1{%
  \def\QCTOptA{#1}
  \let\QCTOptB\empty
}
\def\Qct{%
  \@ifnextchar[{%
    \QCTOpt}{\QCTNOpt}
}
\def\QCBOpt[#1]#2{%
  \def\QCBOptB{#1}%
  \def\QCBOptA{#2}%
}
\def\QCBNOpt#1{%
  \def\QCBOptA{#1}%
  \let\QCBOptB\empty
}
\def\Qcb{%
  \@ifnextchar[{%
    \QCBOpt}{\QCBNOpt}%
}
\def\PrepCapArgs{%
  \ifx\QCBOptA\empty
    \ifx\QCTOptA\empty
      {}%
    \else
      \ifx\QCTOptB\empty
        {\QCTOptA}%
      \else
        [\QCTOptB]{\QCTOptA}%
      \fi
    \fi
  \else
    \ifx\QCBOptA\empty
      {}%
    \else
      \ifx\QCBOptB\empty
        {\QCBOptA}%
      \else
        [\QCBOptB]{\QCBOptA}%
      \fi
    \fi
  \fi
}
\def\GRAPHICSPS#1{%
 \ifcase\GRAPHICSTYPE
   \special{ps: #1}%
 \or
   \special{language "PS", include "#1"}%
 \fi
}%
\def\graffile#1#2#3#4{%
    \bgroup
	   \@inlabelfalse
       \leavevmode
       \@ifundefined{bbl@deactivate}{\def~{\string~}}{\activesoff}%
        \raise -#4 \BOXTHEFRAME{%
           \hbox to #2{\raise #3\hbox to #2{\null #1\hfil}}}%
    \egroup
}%
\def\draftbox#1#2#3#4{%
 \leavevmode\raise -#4 \hbox{%
  \frame{\rlap{\protect\tiny #1}\hbox to #2%
   {\vrule height#3 width\z@ depth\z@\hfil}%
  }%
 }%
}%
\let\nographics=\@msidraft
\newif\ifwasdraft
\def\GRAPHIC#1#2#3#4#5{%
   \ifnum\@msidraft=\@ne\draftbox{#2}{#3}{#4}{#5}%
   \else\graffile{#1}{#3}{#4}{#5}%
   \fi
}
\def\addtoLaTeXparams#1{%
    \edef\LaTeXparams{\LaTeXparams #1}}%
\newif\ifBoxFrame \BoxFramefalse
\newif\ifOverFrame \OverFramefalse
\newif\ifUnderFrame \UnderFramefalse
\def\BOXTHEFRAME#1{%
   \hbox{%
      \ifBoxFrame
         \frame{#1}%
      \else
         {#1}%
      \fi
   }%
}
\def\doFRAMEparams#1{\BoxFramefalse\OverFramefalse\UnderFramefalse\readFRAMEparams#1\end}%
\def\readFRAMEparams#1{%
 \ifx#1\end%
  \let\next=\relax
  \else
  \ifx#1i\dispkind=\z@\fi
  \ifx#1d\dispkind=\@ne\fi
  \ifx#1f\dispkind=\tw@\fi
  \ifx#1t\addtoLaTeXparams{t}\fi
  \ifx#1b\addtoLaTeXparams{b}\fi
  \ifx#1p\addtoLaTeXparams{p}\fi
  \ifx#1h\addtoLaTeXparams{h}\fi
  \ifx#1X\BoxFrametrue\fi
  \ifx#1O\OverFrametrue\fi
  \ifx#1U\UnderFrametrue\fi
  \ifx#1w
    \ifnum\@msidraft=1\wasdrafttrue\else\wasdraftfalse\fi
    \@msidraft=\@ne
  \fi
  \let\next=\readFRAMEparams
  \fi
 \next
 }%
\def\IFRAME#1#2#3#4#5#6{%
      \bgroup
      \let\QCTOptA\empty
      \let\QCTOptB\empty
      \let\QCBOptA\empty
      \let\QCBOptB\empty
      #6%
      \parindent=0pt
      \leftskip=0pt
      \rightskip=0pt
      \setbox0=\hbox{\QCBOptA}%
      \@tempdima=#1\relax
      \ifOverFrame
          \typeout{This is not implemented yet}%
          \show\HELP
      \else
         \ifdim\wd0>\@tempdima
            \advance\@tempdima by \@tempdima
            \ifdim\wd0 >\@tempdima
               \setbox1 =\vbox{%
                  \unskip\hbox to \@tempdima{\hfill\GRAPHIC{#5}{#4}{#1}{#2}{#3}\hfill}%
                  \unskip\hbox to \@tempdima{\parbox[b]{\@tempdima}{\QCBOptA}}%
               }%
               \wd1=\@tempdima
            \else
               \textwidth=\wd0
               \setbox1 =\vbox{%
                 \noindent\hbox to \wd0{\hfill\GRAPHIC{#5}{#4}{#1}{#2}{#3}\hfill}\\%
                 \noindent\hbox{\QCBOptA}%
               }%
               \wd1=\wd0
            \fi
         \else
            \ifdim\wd0>0pt
              \hsize=\@tempdima
              \setbox1=\vbox{%
                \unskip\GRAPHIC{#5}{#4}{#1}{#2}{0pt}%
                \break
                \unskip\hbox to \@tempdima{\hfill \QCBOptA\hfill}%
              }%
              \wd1=\@tempdima
           \else
              \hsize=\@tempdima
              \setbox1=\vbox{%
                \unskip\GRAPHIC{#5}{#4}{#1}{#2}{0pt}%
              }%
              \wd1=\@tempdima
           \fi
         \fi
         \@tempdimb=\ht1
         \advance\@tempdimb by -#2
         \advance\@tempdimb by #3
         \leavevmode
         \raise -\@tempdimb \hbox{\box1}%
      \fi
      \egroup%
}%
\def\DFRAME#1#2#3#4#5{%
  \vspace\topsep
  \hfil\break
  \bgroup
     \leftskip\@flushglue
	 \rightskip\@flushglue
	 \parindent\z@
	 \parfillskip\z@skip
     \let\QCTOptA\empty
     \let\QCTOptB\empty
     \let\QCBOptA\empty
     \let\QCBOptB\empty
	 \vbox\bgroup
        \ifOverFrame 
           #5\QCTOptA\par
        \fi
        \GRAPHIC{#4}{#3}{#1}{#2}{\z@}%
        \ifUnderFrame 
           \break#5\QCBOptA
        \fi
	 \egroup
  \egroup
  \vspace\topsep
  \break
}%
\def\FFRAME#1#2#3#4#5#6#7{%
  \@ifundefined{floatstyle}
    {
     \begin{figure}[#1]%
    }
    {
	 \ifx#1h
      \begin{figure}[H]%
	 \else
      \begin{figure}[#1]%
	 \fi
	}
  \let\QCTOptA\empty
  \let\QCTOptB\empty
  \let\QCBOptA\empty
  \let\QCBOptB\empty
  \ifOverFrame
    #4
    \ifx\QCTOptA\empty
    \else
      \ifx\QCTOptB\empty
        \caption{\QCTOptA}%
      \else
        \caption[\QCTOptB]{\QCTOptA}%
      \fi
    \fi
    \ifUnderFrame\else
      \label{#5}%
    \fi
  \else
    \UnderFrametrue%
  \fi
  \begin{center}\GRAPHIC{#7}{#6}{#2}{#3}{\z@}\end{center}%
  \ifUnderFrame
    #4
    \ifx\QCBOptA\empty
      \caption{}%
    \else
      \ifx\QCBOptB\empty
        \caption{\QCBOptA}%
      \else
        \caption[\QCBOptB]{\QCBOptA}%
      \fi
    \fi
    \label{#5}%
  \fi
  \end{figure}%
 }%
\def\makeactives{
  \catcode`\"=\active
  \catcode`\;=\active
  \catcode`\:=\active
  \catcode`\'=\active
  \catcode`\~=\active
}
   \gdef\activesoff{%
      \def"{\string"}%
      \def;{\string;}%
      \def:{\string:}%
      \def'{\string'}%
      \def~{\string~}%
    }
\def\FRAME#1#2#3#4#5#6#7#8{%
 \bgroup
 \ifnum\@msidraft=\@ne
   \wasdrafttrue
 \else
   \wasdraftfalse%
 \fi
 \def\LaTeXparams{}%
 \dispkind=\z@
 \def\LaTeXparams{}%
 \doFRAMEparams{#1}%
 \ifnum\dispkind=\z@\IFRAME{#2}{#3}{#4}{#7}{#8}{#5}\else
  \ifnum\dispkind=\@ne\DFRAME{#2}{#3}{#7}{#8}{#5}\else
   \ifnum\dispkind=\tw@
    \edef\@tempa{\noexpand\FFRAME{\LaTeXparams}}%
    \@tempa{#2}{#3}{#5}{#6}{#7}{#8}%
    \fi
   \fi
  \fi
  \ifwasdraft\@msidraft=1\else\@msidraft=0\fi{}%
  \egroup
 }%
\def\TEXUX#1{"texux"}
\long\def\QQQ#1#2{%
     \long\expandafter\def\csname#1\endcsname{#2}}%
\long\def\QQA#1#2{}%
\def\QTR#1#2{{\csname#1\endcsname {#2}}}%
\def\EXPAND#1[#2]#3{}%
\def\NOEXPAND#1[#2]#3{}%
\def\LaTeXparent#1{}%
\def\ChildStyles#1{}%
\def\ChildDefaults#1{}%
\def\QTagDef#1#2#3{}%
  \providecommand{\UNICODE}[2][]{\protect\rule{.1in}{.1in}}
  \providecommand{\U}[1]{\protect\rule{.1in}{.1in}}
\def\QQfnmark#1{\footnotemark}
 \def\abstract{%
  \if@twocolumn
   \section*{Abstract (Not appropriate in this style!)}%
   \else \small 
   \begin{center}{\bf Abstract\vspace{-.5em}\vspace{\z@}}\end{center}%
   \quotation 
   \fi
  }%
   \def\registered{\relax\ifmmode{}\r@gistered
                    \else$\m@th\r@gistered$\fi}%
 \def\r@gistered{^{\ooalign
  {\hfil\raise.07ex\hbox{$\scriptstyle\rm\text{R}$}\hfil\crcr
  \mathhexbox20D}}}}{}%
\newdimen\theight
\def\newfmtname{LaTeX2e}
  \DeclareOldFontCommand{\rm}{\normalfont\rmfamily}{\mathrm}
  \DeclareOldFontCommand{\sf}{\normalfont\sffamily}{\mathsf}
  \DeclareOldFontCommand{\tt}{\normalfont\ttfamily}{\mathtt}
  \DeclareOldFontCommand{\bf}{\normalfont\bfseries}{\mathbf}
  \DeclareOldFontCommand{\it}{\normalfont\itshape}{\mathit}
  \DeclareOldFontCommand{\sl}{\normalfont\slshape}{\@nomath\sl}
  \DeclareOldFontCommand{\sc}{\normalfont\scshape}{\@nomath\sc}
\def\alpha{{\Greekmath 010B}}%
\def\beta{{\Greekmath 010C}}%
\def\gamma{{\Greekmath 010D}}%
\def\delta{{\Greekmath 010E}}%
\def\epsilon{{\Greekmath 010F}}%
\def\zeta{{\Greekmath 0110}}%
\def\eta{{\Greekmath 0111}}%
\def\theta{{\Greekmath 0112}}%
\def\iota{{\Greekmath 0113}}%
\def\kappa{{\Greekmath 0114}}%
\def\lambda{{\Greekmath 0115}}%
\def\mu{{\Greekmath 0116}}%
\def\nu{{\Greekmath 0117}}%
\def\xi{{\Greekmath 0118}}%
\def\pi{{\Greekmath 0119}}%
\def\rho{{\Greekmath 011A}}%
\def\sigma{{\Greekmath 011B}}%
\def\tau{{\Greekmath 011C}}%
\def\upsilon{{\Greekmath 011D}}%
\def\phi{{\Greekmath 011E}}%
\def\chi{{\Greekmath 011F}}%
\def\psi{{\Greekmath 0120}}%
\def\omega{{\Greekmath 0121}}%
\def\varepsilon{{\Greekmath 0122}}%
\def\vartheta{{\Greekmath 0123}}%
\def\varpi{{\Greekmath 0124}}%
\def\varrho{{\Greekmath 0125}}%
\def\varsigma{{\Greekmath 0126}}%
\def\varphi{{\Greekmath 0127}}%
\def\nabla{{\Greekmath 0272}}
\def\FindBoldGroup{%
   {\setbox0=\hbox{$\mathbf{x\global\edef\theboldgroup{\the\mathgroup}}$}}%
}
\def\Greekmath#1#2#3#4{%
    \if@compatibility
        \ifnum\mathgroup=\symbold
           \mathchoice{\mbox{\boldmath$\displaystyle\mathchar"#1#2#3#4$}}%
                      {\mbox{\boldmath$\textstyle\mathchar"#1#2#3#4$}}%
                      {\mbox{\boldmath$\scriptstyle\mathchar"#1#2#3#4$}}%
                      {\mbox{\boldmath$\scriptscriptstyle\mathchar"#1#2#3#4$}}%
        \else
           \mathchar"#1#2#3#4%
        \fi 
    \else 
        \FindBoldGroup
        \ifnum\mathgroup=\theboldgroup 
           \mathchoice{\mbox{\boldmath$\displaystyle\mathchar"#1#2#3#4$}}%
                      {\mbox{\boldmath$\textstyle\mathchar"#1#2#3#4$}}%
                      {\mbox{\boldmath$\scriptstyle\mathchar"#1#2#3#4$}}%
                      {\mbox{\boldmath$\scriptscriptstyle\mathchar"#1#2#3#4$}}%
        \else
           \mathchar"#1#2#3#4%
        \fi     	    
	  \fi}
\newif\ifGreekBold  \GreekBoldfalse
\let\SAVEPBF=\pbf
\def\pbf{\GreekBoldtrue\SAVEPBF}%
  \newcounter{equationnumber}  
  \def\mathletters{%
     \addtocounter{equation}{1}
     \edef\@currentlabel{\theequation}%
     \setcounter{equationnumber}{\c@equation}
     \setcounter{equation}{0}%
     \edef\theequation{\@currentlabel\noexpand\alph{equation}}%
  }
    \def\BibTeX{{\rm B\kern-.05em{\sc i\kern-.025em b}\kern-.08em
                 T\kern-.1667em\lower.7ex\hbox{E}\kern-.125emX}}}{}%
\def\AmS{{\protect\usefont{OMS}{cmsy}{m}{n}%
                A\kern-.1667em\lower.5ex\hbox{M}\kern-.125emS}}}{}%
\def\@@eqncr{\let\@tempa\relax
    \ifcase\@eqcnt \def\@tempa{& & &}\or \def\@tempa{& &}%
      \else \def\@tempa{&}\fi
     \@tempa
     \if@eqnsw
        \iftag@
           \@taggnum
        \else
           \@eqnnum\stepcounter{equation}%
        \fi
     \fi
     \global\tag@false
     \global\@eqnswtrue
     \global\@eqcnt\z@\cr}
\def\TCItag{\@ifnextchar*{\@TCItagstar}{\@TCItag}}
\def\@TCItag#1{%
    \global\tag@true
    \global\def\@taggnum{(#1)}}
\def\@TCItagstar*#1{%
    \global\tag@true
    \global\def\@taggnum{#1}}
\def\tint{\mathop{\textstyle \int}}%
\def\tsum{\mathop{\textstyle \sum }}%
\def\tprod{\mathop{\textstyle \prod }}%
\def\tbigoplus{\mathop{\textstyle \bigoplus }}%
\def\ExitTCILatex{\makeatother }
\if@compatibility\message{amsmath already loaded}\fi\aftergroup\ExitTCILatex}
\if@compatibility\message{amstex already loaded}\fi\aftergroup\ExitTCILatex}
\if@compatibility\message{amsgen already loaded}\fi\aftergroup\ExitTCILatex}
\let\DOTSI\relax
\def\RIfM@{\relax\ifmmode}%
\def\FN@{\futurelet\next}%
\def\iint{\DOTSI\intno@\tw@\FN@\ints@}%
\def\iiint{\DOTSI\intno@\thr@@\FN@\ints@}%
\def\iiiint{\DOTSI\intno@4 \FN@\ints@}%
\def\idotsint{\DOTSI\intno@\z@\FN@\ints@}%
\def\ints@{\findlimits@\ints@@}%
\newif\iflimtoken@
\newif\iflimits@
\def\findlimits@{\limtoken@true\ifx\next\limits\limits@true
 \else\ifx\next\nolimits\limits@false\else
 \limtoken@false\ifx\ilimits@\nolimits\limits@false\else
 \ifinner\limits@false\else\limits@true\fi\fi\fi\fi}%
\def\multint@{\int\ifnum\intno@=\z@\intdots@                          
 \else\intkern@\fi                                                    
 \ifnum\intno@>\tw@\int\intkern@\fi                                   
 \ifnum\intno@>\thr@@\int\intkern@\fi                                 
 \int}
\def\multintlimits@{\intop\ifnum\intno@=\z@\intdots@\else\intkern@\fi
 \ifnum\intno@>\tw@\intop\intkern@\fi
 \ifnum\intno@>\thr@@\intop\intkern@\fi\intop}%
\def\intic@{%
    \mathchoice{\hskip.5em}{\hskip.4em}{\hskip.4em}{\hskip.4em}}%
\def\negintic@{\mathchoice
 {\hskip-.5em}{\hskip-.4em}{\hskip-.4em}{\hskip-.4em}}%
\def\ints@@{\iflimtoken@                                              
 \def\ints@@@{\iflimits@\negintic@
   \mathop{\intic@\multintlimits@}\limits                             
  \else\multint@\nolimits\fi                                          
  \eat@}
 \else                                                                
 \def\ints@@@{\iflimits@\negintic@
  \mathop{\intic@\multintlimits@}\limits\else
  \multint@\nolimits\fi}\fi\ints@@@}%
\def\intkern@{\mathchoice{\!\!\!}{\!\!}{\!\!}{\!\!}}%
\def\plaincdots@{\mathinner{\cdotp\cdotp\cdotp}}%
\def\intdots@{\mathchoice{\plaincdots@}%
 {{\cdotp}\mkern1.5mu{\cdotp}\mkern1.5mu{\cdotp}}%
 {{\cdotp}\mkern1mu{\cdotp}\mkern1mu{\cdotp}}%
 {{\cdotp}\mkern1mu{\cdotp}\mkern1mu{\cdotp}}}%
\def\RIfM@{\relax\protect\ifmmode}
\def\text{\RIfM@\expandafter\text@\else\expandafter\mbox\fi}
\let\nfss@text\text
\def\text@#1{\mathchoice
   {\textdef@\displaystyle\f@size{#1}}%
   {\textdef@\textstyle\tf@size{\firstchoice@false #1}}%
   {\textdef@\textstyle\sf@size{\firstchoice@false #1}}%
   {\textdef@\textstyle \ssf@size{\firstchoice@false #1}}%
   \glb@settings}
\def\textdef@#1#2#3{\hbox{{%
                    \everymath{#1}%
                    \let\f@size#2\selectfont
                    #3}}}
\newif\iffirstchoice@
\def\Let@{\relax\iffalse{\fi\let\\=\cr\iffalse}\fi}%
\def\vspace@{\def\vspace##1{\crcr\noalign{\vskip##1\relax}}}%
\def\multilimits@{\bgroup\vspace@\Let@
 \baselineskip\fontdimen10 \scriptfont\tw@
 \advance\baselineskip\fontdimen12 \scriptfont\tw@
 \lineskip\thr@@\fontdimen8 \scriptfont\thr@@
 \lineskiplimit\lineskip
 \vbox\bgroup\ialign\bgroup\hfil$\m@th\scriptstyle{##}$\hfil\crcr}%
\def\Sb{_\multilimits@}%
\def\endSb{\crcr\egroup\egroup\egroup}%
\def\Sp{^\multilimits@}%
\newdimen\ex@
\def\rightarrowfill@#1{$#1\m@th\mathord-\mkern-6mu\cleaders
 \hbox{$#1\mkern-2mu\mathord-\mkern-2mu$}\hfill
 \mkern-6mu\mathord\rightarrow$}%
\def\leftarrowfill@#1{$#1\m@th\mathord\leftarrow\mkern-6mu\cleaders
 \hbox{$#1\mkern-2mu\mathord-\mkern-2mu$}\hfill\mkern-6mu\mathord-$}%
\def\leftrightarrowfill@#1{$#1\m@th\mathord\leftarrow
\mkern-6mu\cleaders
 \hbox{$#1\mkern-2mu\mathord-\mkern-2mu$}\hfill
 \mkern-6mu\mathord\rightarrow$}%
\def\overrightarrow{\mathpalette\overrightarrow@}%
\def\overrightarrow@#1#2{\vbox{\ialign{##\crcr\rightarrowfill@#1\crcr
 \noalign{\kern-\ex@\nointerlineskip}$\m@th\hfil#1#2\hfil$\crcr}}}%
\def\overleftarrow{\mathpalette\overleftarrow@}%
\def\overleftarrow@#1#2{\vbox{\ialign{##\crcr\leftarrowfill@#1\crcr
 \noalign{\kern-\ex@\nointerlineskip}$\m@th\hfil#1#2\hfil$\crcr}}}%
\def\overleftrightarrow{\mathpalette\overleftrightarrow@}%
\def\overleftrightarrow@#1#2{\vbox{\ialign{##\crcr
   \leftrightarrowfill@#1\crcr
 \noalign{\kern-\ex@\nointerlineskip}$\m@th\hfil#1#2\hfil$\crcr}}}%
\def\underrightarrow{\mathpalette\underrightarrow@}%
\def\underrightarrow@#1#2{\vtop{\ialign{##\crcr$\m@th\hfil#1#2\hfil
  $\crcr\noalign{\nointerlineskip}\rightarrowfill@#1\crcr}}}%
\def\underleftarrow{\mathpalette\underleftarrow@}%
\def\underleftarrow@#1#2{\vtop{\ialign{##\crcr$\m@th\hfil#1#2\hfil
  $\crcr\noalign{\nointerlineskip}\leftarrowfill@#1\crcr}}}%
\def\underleftrightarrow{\mathpalette\underleftrightarrow@}%
\def\underleftrightarrow@#1#2{\vtop{\ialign{##\crcr$\m@th
  \hfil#1#2\hfil$\crcr
 \noalign{\nointerlineskip}\leftrightarrowfill@#1\crcr}}}%
\def\qopnamewl@#1{\mathop{\operator@font#1}\nlimits@}
\let\nlimits@\displaylimits
\def\setboxz@h{\setbox\z@\hbox}
\def\varlim@#1#2{\mathop{\vtop{\ialign{##\crcr
 \hfil$#1\m@th\operator@font lim$\hfil\crcr
 \noalign{\nointerlineskip}#2#1\crcr
 \noalign{\nointerlineskip\kern-\ex@}\crcr}}}}
 \def\rightarrowfill@#1{\m@th\setboxz@h{$#1-$}\ht\z@\z@
  $#1\copy\z@\mkern-6mu\cleaders
  \hbox{$#1\mkern-2mu\box\z@\mkern-2mu$}\hfill
  \mkern-6mu\mathord\rightarrow$}
\def\leftarrowfill@#1{\m@th\setboxz@h{$#1-$}\ht\z@\z@
  $#1\mathord\leftarrow\mkern-6mu\cleaders
  \hbox{$#1\mkern-2mu\copy\z@\mkern-2mu$}\hfill
  \mkern-6mu\box\z@$}
\def\projlim{\qopnamewl@{proj\,lim}}
\def\injlim{\qopnamewl@{inj\,lim}}
\def\varinjlim{\mathpalette\varlim@\rightarrowfill@}
\def\varprojlim{\mathpalette\varlim@\leftarrowfill@}
\def\varliminf{\mathpalette\varliminf@{}}
\def\varliminf@#1{\mathop{\underline{\vrule\@depth.2\ex@\@width\z@
   \hbox{$#1\m@th\operator@font lim$}}}}
\def\varlimsup{\mathpalette\varlimsup@{}}
\def\varlimsup@#1{\mathop{\overline
  {\hbox{$#1\m@th\operator@font lim$}}}}
\def\align{\@verbatim \frenchspacing\@vobeyspaces \@alignverbatim
You are using the "align" environment in a style in which it is not defined.}
\let\csname endalign*\endcsname =\endtrivlist
\def\alignat{\@verbatim \frenchspacing\@vobeyspaces \@alignatverbatim
You are using the "alignat" environment in a style in which it is not defined.}
\let\csname endalignat*\endcsname =\endtrivlist
\def\xalignat{\@verbatim \frenchspacing\@vobeyspaces \@xalignatverbatim
You are using the "xalignat" environment in a style in which it is not defined.}
\let\csname endxalignat*\endcsname =\endtrivlist
\def\gather{\@verbatim \frenchspacing\@vobeyspaces \@gatherverbatim
You are using the "gather" environment in a style in which it is not defined.}
\let\csname endgather*\endcsname =\endtrivlist
\def\multiline{\@verbatim \frenchspacing\@vobeyspaces \@multilineverbatim
You are using the "multiline" environment in a style in which it is not defined.}
\let\csname endmultiline*\endcsname =\endtrivlist
\def\arrax{\@verbatim \frenchspacing\@vobeyspaces \@arraxverbatim
You are using a type of "array" construct that is only allowed in AmS-LaTeX.}
\def\tabulax{\@verbatim \frenchspacing\@vobeyspaces \@tabulaxverbatim
You are using a type of "tabular" construct that is only allowed in AmS-LaTeX.}
\let\csname endarrax*\endcsname =\endtrivlist
\let\csname endtabulax*\endcsname =\endtrivlist
 \def\endequation{%
     \ifmmode\ifinner 
      \iftag@
        \addtocounter{equation}{-1} 
        $\hfil
           \displaywidth\linewidth\@taggnum\egroup \endtrivlist
        \global\tag@false
        \global\@ignoretrue   
      \else
        $\hfil
           \displaywidth\linewidth\@eqnnum\egroup \endtrivlist
        \global\tag@false
        \global\@ignoretrue 
      \fi
     \else   
      \iftag@
        \addtocounter{equation}{-1} 
        \eqno \hbox{\@taggnum}
        \global\tag@false%
        $$\global\@ignoretrue
      \else
        \eqno \hbox{\@eqnnum}
        $$\global\@ignoretrue
      \fi
     \fi\fi
 } 
 \newif\iftag@ \tag@false
 \def\TCItag{\@ifnextchar*{\@TCItagstar}{\@TCItag}}
 \def\@TCItag#1{%
     \global\tag@true
     \global\def\@taggnum{(#1)}}
 \def\@TCItagstar*#1{%
     \global\tag@true
     \global\def\@taggnum{#1}}
     \def\tag{\@ifnextchar*{\@tagstar}{\@tag}}
     \def\@tag#1{%
         \global\tag@true
         \global\def\@taggnum{(#1)}}
     \def\@tagstar*#1{%
         \global\tag@true
         \global\def\@taggnum{#1}}
\begin{document}
\title{On the $p$-adic integration over Igusa towers of Siegel modular
varieties}
\author{M. A. Seveso}
\address{Marco Adamo Seveso: Milano, Italy}
\email{seveso.marco@gmail.com}

\begin{abstract}
We develop an explicit $p$-adic integration theory for Igusa towers of
modular Siegel manifolds, which finds applications to explicit reciprocity
laws.
\end{abstract}

\subjclass[2020]{Primary 11F33, 11F46; Secondary 11G18, 20G30.}
\keywords{Siegel modular forms, $p$-adic modular forms.}
\maketitle
\tableofcontents

\section{Introduction}

The aim of this paper is to develop an explicit $p$-adic integration theory
for Igusa towers of Siegel modular varieties. In the genus $g=1$ case, this
theory is (well known and) elementary. For higher genus it is richer and
much more involved and interesting. Our main motivation in developing this
theory is to provide a key tool for proving explicit reciprocity laws: this
is well known in the case $g=1$ and, when $g=2$, an explicit form of the
theorem stated below will be used in a forthcoming work by F. Andreatta, M.
Bertolini, R. Venerucci and the author.

Suppose that $X$ is a modular curve of prime to $p$ level and that $f_{0}$
is a modular form of weight $k\geq 2$. It is naturally a section of $%
H^{0}\left( X,\mathcal{W}_{k}\right) $, which appears inside the de Rham
cohomology $H_{\mathrm{dR}}^{1}\left( X,\mathcal{L}_{k-2}\right) $ with
coefficients in the automorphic sheaf $\mathcal{L}_{k-2}:=\mathrm{Sym}%
^{k-2}\left( \mathcal{H}_{\mathrm{dR}}^{1}\right) $. Here $\mathcal{H}_{%
\mathrm{dR}}^{1}$ is the relative de Rham cohomology of the universal
(generalized) elliptic curve and, if $\omega _{\mathrm{dR}}$ denotes its
relative Lie algebra, then $\mathcal{W}_{k}:=\omega _{\mathrm{dR}}^{k}$. One
can associate to $f_{0}$ a $p$-depletion $f_{0}^{\left[ p\right] }$\ by
considering the modular form whose $q$-expansion is given by%
\begin{equation*}
f_{0}^{\left[ p\right] }\left( q\right) =\sum\nolimits_{p\nmid n}a_{n}q^{n}%
\text{.}
\end{equation*}

One advantage of the $p$-depletion is that a certain derivation appearing in
the description of the Gauss-Manin connection, the so called Serre theta
operator $\theta $ (which equals $q\frac{d}{dq}$\ at the cusp) can be
inverted. Indeed, these operations of taking the $p$-depletion and
considering the theta operator $\theta $\ make sense, more generally, for
any section $f$ of the Igusa tower $X_{0}$. The effect is that, if we
pull-back the de Rham complex to the Igusa tower and consider the subcomplex
of those forms that are $p$-depleted, then this complex becomes acyclic in
degree one by means of an inductive explicit process of integration obtained
from the theta operators as follows. First of all, since we work over the
Igusa tower, there is a canonical global section $\delta $ of $\omega _{%
\mathrm{dR}}$ and, setting $\eta :=\nabla \left( \theta \right) \delta $
(for the Gauss-Manin connection $\nabla $), we get another global section of 
$\mathcal{H}_{\mathrm{dR}}^{1}$\ trivializing it. It follows that every
section $F$\ of $\mathcal{H}_{\mathrm{dR}}^{1}$ can be written in the form%
\begin{equation*}
F=F_{0}\delta ^{k-2}\eta ^{0}+F_{1}\delta ^{k-3}\eta ^{1}+...+F_{i}\delta
^{k-2-i}\eta ^{i}+...+F_{k-3}\delta ^{1}\eta ^{k-3}+F_{k-2}\delta ^{0}\eta
^{k-2}
\end{equation*}%
for a weight $i$ section $F_{i}$\ of $\mathcal{O}_{X_{0}}$. Furthermore, the
Kodaira-Spencer isomorphism identifies $\omega _{\mathrm{dR}}^{2}$ with $%
\Omega _{X_{0}}^{1}$ and, hence, $\delta ^{2}$ defines a canonical section $%
\omega $ of $\Omega _{X_{0}}^{1}$ (which is $\frac{dq}{q}$ at the cusp).
Using the relations%
\begin{equation*}
\nabla \delta =\eta \otimes _{\mathcal{O}_{X_{0}}}\omega \text{, }\nabla
\eta =0\text{ and }ds=q\frac{ds}{dq}\frac{dq}{q}=\theta \left( s\right)
\otimes _{\mathcal{O}_{X_{0}}}\omega \text{,}
\end{equation*}%
(where $s$ is a global section of $\mathcal{O}_{X_{0}}$) one computes%
\begin{equation*}
\nabla \left( s\delta ^{k-2-i}\eta ^{i}\right) =\left( \theta \left(
s\right) \delta ^{k-2-i}\eta ^{i}+\left( k-2-i\right) s\delta ^{k-3-i}\eta
^{i+1}\right) \otimes _{\mathcal{O}_{X}}\omega \text{,}
\end{equation*}%
from which it follows that%
\begin{eqnarray*}
\nabla F &=&[\theta \left( F_{0}\right) \delta ^{k-2}\eta ^{0}+\left( \theta
\left( F_{1}\right) +\left( k-2\right) F_{0}\right) \delta ^{k-3}\eta
^{1}+...+\left( \theta \left( F_{i}\right) +\left( k-1-i\right)
F_{k-i-1}\right) \delta ^{k-2-i}\eta ^{i} \\
&&+...+\left( \theta \left( F_{k-2}\right) +F_{k-3}\right) \delta ^{0}\eta
^{k-2}]\otimes _{\mathcal{O}_{X_{0}}}\omega \text{.}
\end{eqnarray*}%
The image of $f_{0}\cdot \delta ^{k}\in H^{0}\left( X_{0},\mathcal{W}%
_{k}\right) $ in $H^{0}\left( X_{0},\mathcal{L}_{k-2}\otimes _{\mathcal{O}%
_{X_{0}}}\Omega _{X_{0}}^{1}\right) $ is $f_{0}\cdot \delta ^{k-2}\otimes _{%
\mathcal{O}_{X_{0}}}\omega $ and, consequently, one is lead to consider the
system of differential equations%
\begin{equation*}
\theta \left( F_{0}\right) =f\text{, }\theta \left( F_{1}\right) =-\left(
k-2\right) F_{0},...,\theta \left( F_{i}\right) =-\left( k-1-i\right)
F_{k-i-1},...,\theta \left( F_{k-2}\right) =-F_{k-3}\text{.}
\end{equation*}%
When $f$ is $p$-depleted, as explained, we can consider $F_{0}:=\theta
^{-1}f $ and then, noticing that $F_{0}$ is again $p$-depleted, define $%
F_{1}:=-\left( k-2\right) \theta ^{-1}F_{0}$ and, hence, recursively solve
the equation $\nabla F=f_{0}\delta ^{k-2}\otimes _{\mathcal{O}%
_{X_{0}}}\omega $. Indeed, more generally, a similar process shows that one
can solve the equation $\nabla F=f$ for every $f\in H^{0}\left( X_{0},%
\mathcal{L}_{k-2}\otimes _{\mathcal{O}_{X_{0}}}\Omega _{X_{0}}^{1}\right) $
as long as all the components $f_{i}$ of%
\begin{equation*}
f=\left[ f_{0}\delta ^{k-2}\eta ^{0}+...+f_{i}\delta ^{k-2-i}\eta
^{i}+...+f_{k-2}\delta ^{0}\eta ^{k-2}\right] \otimes _{\mathcal{O}%
_{X_{0}}}\omega
\end{equation*}%
are $p$-depleted. This kind of explicit primitives appear in the proof of
several reciprocity laws.

In order to prepare us for the higher genus case, let us reinterpret the
above calculation from a more representation theoretic perspective. To this
end, let us start by considering the standard representation \textrm{Std}$%
_{2}=\mathbb{Z}^{2}$\ of $\mathbf{G}=\mathbf{GL}_{2}$, where we view the
elements of \textrm{Std}$_{2}$ as column vectors on which $\mathbf{GL}_{2}$
acts from the left and consider the canonical basis given by the elements $%
e_{1}:=\left( 1,0\right) ^{t}$ and $e_{2}:=\left( 0,1\right) ^{t}$. Let us
write $\mathbb{Z}\left[ Y\right] $ for the space of polynomials and, writing 
$\mathbf{Q\subset GL}_{2}$ for the upper triangular Borel subgroup, consider
the $\mathbf{Q}$-action on \textrm{Ind}$_{\mathbf{Q}^{-}}^{\mathbf{G}}\left(
e_{1}^{k-2}\right) \left[ Y\right] :=e_{1}^{k-2}\otimes _{\mathbb{Z}}\mathbb{%
Z}\left[ Y\right] $ defined by the formula $\gamma \left( e_{1}^{k-2}\otimes
_{\mathbb{Z}}f\right) :=a^{k-2}e_{1}^{k-2}\otimes _{\mathbb{Z}}f\left(
a^{-1}\left( b+Yd\right) \right) $ for $\gamma =\left( 
\begin{array}{cc}
a & b \\ 
0 & d%
\end{array}%
\right) $. Then, the $\mathbf{Q}$-module \textrm{Ind}$_{\mathbf{Q}^{-}}^{%
\mathbf{G}}\left( e_{1}^{k-2}\right) \left[ Y\right] $ acquires a $\left( 
\mathfrak{g},\mathbf{Q}\right) $-module structure and a $\left( \mathfrak{g},%
\mathbf{Q}\right) $-equivariant inclusion%
\begin{equation*}
\mathrm{Sym}_{2}^{k-2}:=\mathrm{Sym}^{k-2}\left( \mathrm{Std}_{2}\right)
\hookrightarrow \mathrm{Ind}_{\mathbf{Q}^{-}}^{\mathbf{G}}\left(
e_{1}^{k-2}\right) \left[ Y\right]
\end{equation*}%
by formally setting $Y:=\frac{e_{2}}{e_{1}}$ as follows. First of all, when $%
k=2$, writing $\partial $ for the matrix whose only non-zero entry is in the
lower left position and equals $1$, it can be enriched to a $\left( 
\mathfrak{g},\mathbf{Q}\right) $-module by setting $\partial \left( Y\right)
:=-Y^{2}$. In general, we send $e_{1}^{k-2-i}e_{2}^{i}=e_{1}^{k-2}\left( 
\frac{e_{2}}{e_{1}}\right) ^{i}$ to $e_{1}^{k-2}Y^{i}$ for every $%
i=0,...,k-2 $ and then compute%
\begin{eqnarray*}
\partial \left( e_{1}^{k-2}Y^{i}\right) &=&\left( k-2\right)
e_{1}^{k-3}\partial \left( e_{1}\right) Y^{i}+ie_{1}^{k-2}Y^{i-1}\partial
\left( Y\right) =\left( k-2\right) e_{1}^{k-3}e_{2}Y^{i}-ie_{1}^{k-2}Y^{i+1}
\\
&=&\left( k-2\right) e_{1}^{k-2}Y^{i+1}-ie_{1}^{k-2}Y^{i+1}=\left(
k-2-i\right) e_{1}^{k-2}Y^{i+1}
\end{eqnarray*}%
in order to justify our definition $\partial \left( e_{1}^{k-2}Y^{i}\right)
:=\left( k-2-i\right) e_{1}^{k-2}Y^{i+1}$. In this way, the above inclusion
of $\mathrm{Sym}_{2}^{k-2}$ in $\mathrm{Ind}_{\mathbf{Q}^{-}}^{\mathbf{G}%
}\left( e_{1}^{k-2}\right) \left[ Y\right] $ becomes an inclusion of $\left( 
\mathfrak{g},\mathbf{Q}\right) $-modules. Going back to the geometric
picture, the basis $\left\{ \delta ,\eta \right\} $ yields and isomorphism $%
\mathcal{O}_{X_{0}}^{2}\overset{\sim }{\rightarrow }\mathcal{H}_{\mathrm{dR}%
}^{1}$ and, hence, $\mathcal{O}_{X_{0}}\otimes _{\mathbb{Z}_{p}}\mathrm{Sym}%
_{2}^{k-2}\overset{\sim }{\rightarrow }\mathcal{L}_{k}$ sending $%
e_{1}^{k-2-i}e_{2}^{i}=e_{1}^{k-2}Y^{i}$ to $\delta ^{k-2-i}\eta ^{i}$ for
every $i=0,...,k-2$. Hence, we find an inclusion%
\begin{equation*}
\mathcal{L}_{k-2}\hookrightarrow \mathcal{O}_{X_{0}}\otimes _{\mathbb{Z}_{p}}%
\mathrm{Ind}_{\mathbf{Q}^{-}}^{\mathbf{G}}\left( e_{1}^{k-2}\right) \left[ Y%
\right] =:\mathcal{V}_{k-2}
\end{equation*}%
and, from this point of view, we see that, setting $\Theta \left( s\delta
^{k-2-i}\eta ^{i}\right) :=\theta \left( s\right) \delta ^{k-2-i}\eta
^{i}\otimes _{\mathcal{O}_{X_{0}}}\omega $ and $\Delta :=1_{\mathcal{O}%
_{X_{0}}\otimes _{\mathbb{Z}_{p}}}\partial \otimes _{\mathcal{O}%
_{X_{0}}}\omega $, we have%
\begin{equation*}
\nabla \left( s\delta ^{k-2-i}\eta ^{i}\right) =\Theta \left( s\delta
^{k-2-i}\eta ^{i}\right) +\Delta \left( s\omega ^{k-2-i}\eta ^{i}\right) 
\text{.}
\end{equation*}%
In this way, we have given an interpretation of the multiplication by $%
\left( k-2-i\right) $ map appearing in the above expression of the $\nabla $
operator which is uniform for varying $k$'s: it is just given by the action
of $\partial $. Indeed, setting%
\begin{equation*}
\Theta \left( se_{1}^{k-2}Y^{i}\right) :=\theta \left( s\right)
e_{1}^{k-2}Y^{i}\otimes _{\mathcal{O}_{X_{0}}}\omega \text{, }\Delta :=1_{%
\mathcal{O}_{X_{0}}\otimes _{\mathbb{Z}_{p}}}\partial \otimes _{\mathcal{O}%
_{X_{0}}}\omega
\end{equation*}%
and then $\nabla :=\Theta +\Delta $ yields a connection on $\mathcal{V}%
_{k-2} $ and, in this way, one can promote the inclusion of $\mathcal{L}%
_{k-2}$ in $\mathcal{V}_{k-2}$ to an inclusion of modules with connection.
Then, the above integration process obtained by solving the successive
differential equations%
\begin{equation*}
\Theta \left( F_{i}e_{1}^{k-2}Y^{i}\right) =f_{0}e_{1}^{k-2}Y^{i}\otimes _{%
\mathcal{O}_{X_{0}}}\omega ,...,\Theta \left( F_{i}e_{1}^{k-2}Y^{i}\right)
+\Delta \left( F_{i-1}e_{1}^{k-2}Y^{i-2}\right)
=f_{i}e_{1}^{k-2}Y^{i}\otimes _{\mathcal{O}_{X_{0}}}\omega ,...
\end{equation*}%
essentially works in $\left( \mathcal{V}_{k-2},\nabla \right) $ if the
sections $f_{i}$ of $\mathcal{O}_{X_{0}}$ are all $p$-depleted. Here
\textquotedblleft essentially\textquotedblright\ just means that, after
having solved the equations recursively thus getting the infinite family of
sections $F_{i}$'s, we would like to be able to write $F:=\sum%
\nolimits_{i=0}^{+\infty }F_{i}$ in order to get a solution of $\nabla F=f$.

\bigskip

The aim of this paper is to generalize this integration process in the
framework of Siegel modular varieties. Hence, introduce an analogue of the
notion of $p$-depletion and, for any $p$-depleted $m$-form $f$\ (with $m>0$)
of the pull-back to the Igusa tower of de Rham complex with coefficients in
an automorphic sheaf, define an indefinite integral%
\begin{equation*}
F=\int f
\end{equation*}%
that should be an $m-1$-form in the same $p$-depleted de Rham complex. Note
that, in this setting, modular forms are sections of $H^{0}\left( X,\mathcal{%
W}_{\lambda }\right) $ appearing inside the de Rham cohomology $H_{\mathrm{dR%
}}^{d_{g}}\left( X,\mathcal{L}_{\lambda }\right) $\ of middle degree $d_{g}:=%
\frac{g\left( g+1\right) }{2}$ and, hence, one may first try to integrate $%
d_{g}$-forms. Here, $\lambda $ is a dominant weight of $\mathbf{G}:=\mathbf{%
GSp}_{2g}$ (the symplectic group being defined as in Example \ref%
{Representations E GSp}, to fix ideas), that can also be regarded as a
dominant weight of the Levi sugroup $\mathbf{M:=GL}_{g}\times \mathbf{G}_{m}$%
\ of the standard Siegel parabolic subgroup $\mathbf{Q}\subset \mathbf{G}$
with opposite parabolic subgroup $\mathbf{Q}^{-}$, the sheaf $\mathcal{L}%
_{\lambda }$ looks locally (and globally after a pull-back to the Igusa
tower) like the irreducible representation of $\mathbf{G}$ of highest weight 
$\lambda $ and, similarly, $\mathcal{W}_{\lambda }$ looks like the
irreducible representation of $\mathbf{M}$ of highest weight $\lambda $. Let
us now write $X$ for a Siegel modular variety (we admit level and certain
paramodular structures, but assume that $p$ is prime to the level and the
degree of the parametrization - see Remark \ref{Intro R1} below for possible
generalizations) and, again, let us denote by $X_{0}$ the Igusa tower. Let
us write $\mathbf{S}_{g,\mathbb{Z}}$ for the $\mathbb{Z}$-module of
symmetric $g$-by-$g$ matrices with coefficients in $\mathbb{Z}$ and let $%
\mathbf{S}_{g,\mathbb{Z}}^{even}$ be the $\mathbb{Z}$-submodule of those
matrices whose diagonal entries are even. Let us write $\underline{\beta }%
_{ij}$ for the symmetric matrix whose unique upper triangular non-zero entry
is $1$\ at position $\left( i,j\right) $ and, if $\beta \in \mathbf{S}_{g,%
\mathbb{Z}}$ has $\underline{\beta }_{ij}$-component $\beta _{ij}\in \mathbb{%
Z}$ and define $q_{ij}:=q^{\underline{\beta }_{ij}}$. With these notations,
every global sections $f$\ of $\mathcal{O}_{X_{0}}$ admits, at every cusp, a 
$q$-expansion of the form%
\begin{equation*}
f\left( q\right) =\tsum\nolimits_{\beta \in \mathbf{S}_{g,\mathbb{Z}%
}^{even}}a_{f}\left( \beta \right) q^{N^{-1}\beta }\text{ for }%
q^{N^{-1}\beta }:=\tprod\nolimits_{1\leq i\leq j\leq g}q_{ij}^{N^{-1}\beta
_{ij}}
\end{equation*}%
for a suitable $N$. Furthermore, the $q$-expansion principle holds, up to
considering a cusp for each geometrically connected component of the
ordinary locus. (Here we follow the classical convention, in the higher
genus case, of using the parameter $q_{ij}=\mathrm{exp}\left( \pi
iz_{ij}\right) $ at the infinite cusp: using $q_{ii}^{2}=\mathrm{exp}\left(
2\pi iz_{ii}\right) $ when $i=j$ would give back the usual convention in the 
$g=1$ case, see $\left( \text{\ref{q-exp F Def}}\right) $). Then one can
define $d_{g}$ theta operators as follows: for every $1\leq i\leq j\leq g$,
we can consider the unique derivation $\theta _{ij}$ of $\mathcal{O}%
_{X_{0}}\left( X_{0}\right) $\ which on $q$-expansions is given by the
formulas $\theta _{ii}\left( q^{\beta }\right) =\frac{1}{2}\beta
_{ii}q^{\beta }$ and $\theta _{ij}\left( q^{\beta }\right) =\beta
_{ij}q^{\beta }$ for every $i<j$. More generally, choose any polynomial%
\begin{equation*}
P\in \mathbb{Z}_{p}\left[ T_{ij}:1\leq i\leq j\leq g\right]
\end{equation*}%
and, setting $\mathbf{\theta }=\left( \theta _{ij}\right) $, consider the
differential operator%
\begin{equation*}
\theta _{P}:=P\left( \mathbf{\theta }\right) :H^{0}\left( X_{0},\mathcal{O}%
_{X_{0}}\right) \longrightarrow H^{0}\left( X_{0},\mathcal{O}_{X_{0}}\right) 
\text{.}
\end{equation*}%
One checks that, if $P^{\prime }\left( T_{11},...,T_{gg},T_{ij}:i<j\right)
=P\left( 2^{-1}T_{11},...,2^{-1}T_{gg},T_{ij}:i<j\right) $, then%
\begin{equation*}
\theta _{P}\left( f\right) \left( q\right) =\tsum\nolimits_{\beta \in 
\mathbf{S}_{g,\mathbb{Z}}^{even}}a_{f}\left( \beta \right) P^{\prime }\left(
\beta \right) q^{\beta }
\end{equation*}%
(see Lemma \ref{q-exp L KS}) we have). It follows that, if%
\begin{equation*}
f^{\left[ P\right] }\left( q\right) :=\tsum\nolimits_{\beta \in \mathbf{S}%
_{g,\mathbb{Z}}^{even}:p\nmid P^{\prime }\left( \beta \right) }a_{f}\left(
\beta \right) q^{N^{-1}\beta }\text{,}
\end{equation*}%
and we write $H^{0}\left( X_{0},\mathcal{O}_{X_{0}}\right) ^{\left[ P\right]
}$ for the subset of those sections of $H^{0}\left( X_{0},\mathcal{O}%
_{X_{0}}\right) $ that are such that $f=f^{\left[ P\right] }$ (we say that $%
f $ is $P$-depleted, in ths case), then $\theta _{P}$ is a derivation which
is invertible on $H^{0}\left( X_{0},\mathcal{O}_{X_{0}}\right) ^{\left[ P%
\right] }$. So far we have introduced the representations $L_{\lambda }$ and 
$W_{\lambda }$. To be more precise, let us write $L_{\lambda _{/\mathbb{Q}}}$
and $W_{\lambda _{/\mathbb{Q}}}$ for an irreducible representation of
highest weight $\lambda _{/\mathbb{Q}}$\ of $\mathbf{G}_{/\mathbb{Q}}$ and,
respectively, $\mathbf{M_{/Q}}$ (they are unique up to isomorphism and an
isomorphism is uniquely determined up to a non-zero scalar factor). Then we
can define $L_{\lambda }:=Dist\left( \mathbf{G}\right) v_{\lambda _{/\mathbb{%
Q}}}$ and, respectively, $W_{\lambda }:=Dist\left( \mathbf{M}\right)
v_{\lambda _{/\mathbb{Q}}}$, where $Dist\left( \mathbf{G}\right) \subset
U\left( \mathfrak{g}_{/\mathbb{Q}}\right) $ denotes the distribution algebra
of $\mathbf{G}$ over $\mathbb{Z}$ and similarly for $\mathbf{M}$ and where $%
v_{\lambda _{/\mathbb{Q}}}\in W_{\lambda _{/\mathbb{Q}}}\subset L_{\lambda
_{/\mathbb{Q}}}$ is a highest weight vector, the latter inclusion being
canonical if we realize $L_{\lambda _{/\mathbb{Q}}}$ as a parabolic
induction $L_{\lambda _{/\mathbb{Q}}}=\mathrm{Ind}_{\mathbf{Q}^{-}}^{\mathbf{%
G}}\left( W_{\lambda _{/\mathbb{Q}}}\right) $. Then $L_{\lambda }\subset
L_{\lambda _{/\mathbb{Q}}}$ is a $\mathbf{G}$-stable lattice in $L_{\lambda
_{/\mathbb{Q}}}$ and similarly for $W_{\lambda }$. If we work over a ring $%
\Bbbk $ such that the level and the degree of the polarization are
invertible, we can associate to $W_{\lambda }$ and $L_{\lambda }$\ a sheaf $%
\mathcal{W}_{\lambda }$ and, respectively, a module with a connection $%
\left( \mathcal{L}_{\lambda },\nabla \right) $: we will assume that $\Bbbk $
is a Dedekind domain which contains appropriate roots of unity (in order to
have $q$-expansions) and that it contains $\mathbb{Z}_{p}$ when working over 
$X_{0}$. They can be defined over $X$ and, after a pull-back to $X_{0}$,
these sheaves can be trivialized and we can define $P$-depleted sections $%
H^{0}\left( X_{0},\mathcal{L}_{\lambda }\otimes _{\mathcal{O}_{X_{0}}}\Omega
_{X_{0}/\Bbbk }^{p}\right) ^{\left[ P\right] }$ similarly as we have done in
the $g=1$ case. (Here, if one wants to work with compactified objects, one
has really to use the appropriate logarithmic de Rham modules $\Omega
_{X_{0}/\Bbbk }^{p}$). Indeed, one can show that $H^{0}\left( X_{0},\mathcal{%
L}_{\lambda }\otimes _{\mathcal{O}_{X_{0}}}\Omega _{X_{0}/\Bbbk }^{\cdot
}\right) ^{\left[ P\right] }$ is a complex. We will prove the following
result (see Corollary \ref{Primitives CKey}).

\begin{theorem}
For every $P$ such that $P\left( 0\right) =0$, the complex $H^{0}\left(
X_{0},\mathcal{L}_{\lambda }\otimes _{\mathcal{O}_{X_{0}}}\Omega
_{X_{0}/\Bbbk }^{\cdot }\right) ^{\left[ P\right] }$ is acyclic in degree $%
p=1,...,d_{g}$.
\end{theorem}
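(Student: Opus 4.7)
The plan is to imitate the genus-one construction discussed in the introduction: embed $(\mathcal{L}_{\lambda },\nabla )$ into the auxiliary sheaf with connection $(\mathcal{V}_{\lambda },\nabla )$ arising from the parabolic induction $\mathrm{Ind}_{\mathbf{Q}^{-}}^{\mathbf{G}}(W_{\lambda })$, so that on $X_{0}$ the connection splits as $\nabla =\Theta +\Delta $, with $\Theta =\sum_{i\leq j}\theta _{ij}\otimes \omega _{ij}$ built from the commuting theta derivations and $\Delta $ acting $\mathcal{O}_{X_{0}}$-linearly through the nilpotent part of $\mathfrak{g}$. Via the Kodaira--Spencer isomorphism, the basis $\{\omega _{ij}=\delta _{i}\delta _{j}\}_{i\leq j}$ trivializes $\Omega _{X_{0}/\Bbbk }^{1}$, so the complex $\mathcal{L}_{\lambda }\otimes \Omega _{X_{0}/\Bbbk }^{\cdot }$ becomes, on the Igusa tower, a Koszul-type complex on the $d_{g}$ derivations $\theta _{ij}$ perturbed by the $\mathcal{O}_{X_{0}}$-linear piece $\Delta $.

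To prove acyclicity in positive degrees, I would decompose every section along its $q$-expansion at a cusp of each geometrically connected component, $f=\sum _{\beta }a(\beta )q^{N^{-1}\beta }$; by the $q$-expansion principle invoked in the introduction, a section is determined by this expansion and $\nabla $ respects the weight grading, so it suffices to argue weight by weight. On the weight-$\beta $ piece, each $\theta _{ij}$ acts as the scalar $c_{ij}(\beta )$ ($=\beta _{ij}/2$ or $\beta _{ij}$), so $\Theta $ becomes the constant Koszul differential $\alpha \mapsto \sum c_{ij}(\beta )\,\omega _{ij}\wedge \alpha $ on the finite-dimensional space $L_{\lambda }\otimes \wedge ^{\cdot }\Bbbk ^{d_{g}}$. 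The assumption $P(0)=0$ forces $P^{\prime }(0)=0$, so $P$-depletion restricts the sum to $\beta $'s with $P^{\prime }(\beta )$ a unit in $\Bbbk $; in particular the vector $(c_{ij}(\beta ))_{i\leq j}$ is nonzero, and the Koszul complex on a nonzero vector admits a standard contracting homotopy $K_{0}$ in every positive degree built from the dual basis together with the scalar $P^{\prime }(\beta )^{-1}$.

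To pass from $\Theta $ to the full connection $\nabla =\Theta +\Delta $, I would run a successive-approximation argument at each weight: set $K:=K_{0}\sum _{n\geq 0}(-\Delta K_{0})^{n}$, where the series terminates because $\Delta $ acts through a nilpotent subalgebra of $\mathfrak{g}$ on the finite-dimensional irreducible $L_{\lambda }$. Summing the weight-wise primitives over the $\beta $ with $P^{\prime }(\beta )$ invertible produces a $P$-depleted primitive, since $P^{\prime }(\beta )^{-1}$ and the scalars $c_{ij}(\beta )$ preserve the weight and hence the defining vanishing of the $q$-expansion coefficients.

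The main obstacle I expect is controlling the recursion uniformly in $\beta $: one must check that the nilpotency degree of $\Delta $ on $L_{\lambda }$ is bounded independently of $\beta $ (which holds because it only depends on the length of the weight chain under $\lambda $) and that the reassembled series $\sum _{\beta }K(f_{\beta })q^{N^{-1}\beta }$ lies in the same sheaf of sections as the input $f$, not in a larger completion. This amounts to verifying that the bounds on the $q$-expansion coefficients of $K_{0}(f_{\beta })$ in terms of those of $f_{\beta }$ are of the same shape as those arising from the formal inversion of $\theta _{P}$ on $H^{0}(X_{0},\mathcal{O}_{X_{0}})^{[P]}$, which the $P$-depletion hypothesis has been tailored to guarantee; once this integrality is secured, acyclicity in every degree $p=1,\ldots ,d_{g}$ follows at once.
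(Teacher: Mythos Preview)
Your plan is correct and close in spirit to the paper's argument: both use the splitting $\nabla =\Theta +\Delta $ over the Igusa tower (with $\Delta$ the $\mathcal{O}_{X_{0}}$-linear piece coming from $\mathfrak{u}^{-}$), both build a contracting homotopy for $\Theta$ from the decomposition $P=\sum _{k}T_{k}P_{k}^{\ast }$ afforded by $P(0)=0$, and both invoke nilpotency of $\Delta$ on the finite-dimensional $L_{\lambda }$ to terminate the correction. Your perturbation series $K_{0}\sum _{n}(-\Delta K_{0})^{n}$ is in fact exactly the recursion $F_{i}:=\Theta ^{-1}(f_{i}-\Delta F_{i-1})$ that the paper writes out (unwind the recursion and compare), so the two constructions produce the same primitive.

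The organizational difference is that you decompose along $q$-expansion weights (eigenspaces of the commuting $\theta _{ij}$), whereas the paper decomposes along the degree filtration on $V_{\lambda }$ (eigenspaces of the Levi element $m_{0}$ with diagonal blocks $p\cdot 1_{g}$ and $1_{g}$, which realizes the unit-root splitting). The paper's choice sidesteps precisely the reassembly obstacle you flag: it proves a \emph{global} ``Laplace identity'' $d\partial +\partial d=\theta _{P}$ on $H^{0}(X_{0},\Omega _{X_{0}/\Bbbk }^{\cdot })^{[P]}$, where $\partial$ is the Koszul-type contraction built from the $P_{k}^{\ast }(\mathbf{\theta })$, so that $K_{0}:=\theta _{P}^{-1}\partial$ is already a globally defined operator on $P$-depleted sections and no sum over $\beta$ has to be reassembled. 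Your weight-wise $K_{0}$ is exactly what this global operator does Fourier-coefficient-by-Fourier-coefficient, so your argument goes through once this is said; the paper's phrasing simply makes the passage from formal $q$-expansions back to honest global sections automatic rather than a separate convergence check.
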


\bigskip

Let us now sketch a proof of the above result. As suggested by the $g=1$
case, the first step is to replace $\mathcal{L}_{\lambda }$ by larger
sheaves, which can be done as follows. There is a BGG (dual) exact complex
whose first three terms are of the form%
\begin{equation*}
0\longrightarrow L_{\lambda _{/\mathbb{Q}}}\longrightarrow \mathrm{Ind}_{%
\mathbf{Q}_{/\mathbb{Q}}^{-}}^{\mathbf{G}_{/\mathbb{Q}}}\left( W_{\lambda _{/%
\mathbb{Q}}}\right) \left[ \mathbf{Y}_{/\mathbb{Q}}\right] \overset{d}{%
\longrightarrow }\tbigoplus\nolimits_{w\in W^{\mathbf{M}}:l\left( w\right)
=1}\mathrm{Ind}_{\mathbf{Q}_{/\mathbb{Q}}^{-}}^{\mathbf{G}_{/\mathbb{Q}%
}}\left( W_{w\cdot \lambda _{/\mathbb{Q}}}\right) \left[ \mathbf{Y}_{/%
\mathbb{Q}}\right] \text{.}
\end{equation*}%
Here, $\mathrm{Ind}_{\mathbf{Q}_{/\mathbb{Q}}^{-}}^{\mathbf{G}_{/\mathbb{Q}%
}}\left( W_{\mu }\right) \left[ \mathbf{Y}_{/\mathbb{Q}}\right] $ is a $%
\left( \mathfrak{g}_{/\mathbb{Q}},\mathbf{Q}_{/\mathbb{Q}}\right) $-module.
Indeed, we prove that this exact sequence admits a model%
\begin{equation}
0\longrightarrow L_{\lambda }\longrightarrow \mathrm{Ind}_{\mathbf{Q}^{-}}^{%
\mathbf{G}}\left( W_{\lambda }\right) \left[ \mathbf{Y}\right] \overset{d}{%
\longrightarrow }\tbigoplus\nolimits_{w\in W^{\mathbf{M}}:l\left( w\right)
=1}\mathrm{Ind}_{\mathbf{Q}^{-}}^{\mathbf{G}}\left( W_{w\cdot \lambda
}\right) \left[ \mathbf{Y}\right]  \label{Intro F BGG}
\end{equation}%
over $\mathbb{Z}$. For $p$-small weights, this could be probably deduced by
dualizing the results of \cite{PoTi18}, which give models over $\mathbb{Z}$
of the entire (non-dual) weak BGG complex (note, however, that we will need
a model of the truncated strong dual BGG complex). However, such a kind of
condition is not pleasant in our setting because, in the applications to the
calculation of primitives appearing in the proof of reciprocity laws, one
usually thinks of $p$ as being fixed and $\lambda $ as varying. More
precisely, we show in Theorem \ref{Representations T BGG} that these kind of
models exist for every split reductive group $\mathbf{G}$\ over a Dedekind
domain with characteristic zero fraction field relative to any parabolic
subgroup $\mathbf{Q}$, at least if one is satisfied with a model of $%
L_{\lambda _{/\mathbb{Q}}}$ as a $\left( \mathfrak{g},\mathbf{Q}\right) $%
-module and then we verify that $L_{\lambda }$ is actually a $\mathbf{G}$%
-module in our symplectic case (see Proposition \ref{Representations P Model}%
). (Strictly speaking, because as explained just below one can associate to
every $\left( \mathfrak{g},\mathbf{Q}\right) $-module a sheaf with a
connection, for our purposes it is not so important to know that $L_{\lambda
}$ is more than a $\left( \mathfrak{g},\mathbf{Q}\right) $-module). At the
level of underlying $\mathbf{M}$-modules these induced modules $\mathrm{Ind}%
_{\mathbf{Q}^{-}}^{\mathbf{G}}\left( W_{\mu }\right) \left[ \mathbf{Y}\right]
$\ can be identified with a tensor product $W_{\mu }\otimes _{\mathbb{Z}}%
\mathbb{Z}\left[ \mathbf{Y}\right] $ where $\mathbb{Z}\left[ \mathbf{Y}%
\right] $ denotes the free polynomial ring in the $d_{g}$ variables $Y_{i,j}$
for $1\leq i\leq j\leq g$ that one can think of as being parameters for the
unipotent subgroup $\mathbf{U\subset Q}$ (which is isomorphic to the
symmetric $g$-by-$g$ matrices): the action of $\gamma =\left( 
\begin{array}{cc}
a & b \\ 
0 & d%
\end{array}%
\right) $\ (with $a,b,c\in \mathbf{M}_{g}$) on $\mathbb{Z}\left[ \mathbf{Y}%
\right] $ sends a symmetric matrix $Y$ to $a^{-1}\left( b+Yd\right) $.
Thanks to a beautiful construction due to Z. Liu, we can associate to the
above exact sequence of $\left( \mathfrak{g},\mathbf{Q}\right) $-modules\ an
exact sequence of sheaves with a connection%
\begin{equation}
0\longrightarrow \left( \mathcal{L}_{\lambda },\nabla \right)
\longrightarrow \left( \mathcal{V}_{\lambda },\nabla \right) \overset{d}{%
\longrightarrow }\tbigoplus\nolimits_{w\in W^{\mathbf{M}}:l\left( w\right)
=1}\left( \mathcal{V}_{w\cdot \lambda },\nabla \right) \text{.}
\label{Intro F BGG Sheaves}
\end{equation}%
We recall this result in Theorem \ref{Sheaves T1}, briefly explaining why
Liu's construction extends from the field case to the case of a Dedekind
domain. (The main point is that the underlying $\Bbbk $-modules of the
representations of interest to us are flat and, hence, as representations of 
$\mathbf{Q}$ they are a colimit of representations whose underlying $\Bbbk $%
-module is finitely generated and projective and these kind of
representations can be obtained from the standard representation in the
usual way). Using the inclusion appearing in $\left( \text{\ref{Intro F BGG
Sheaves}}\right) $ one is essentially reduced, as in the $g=1$ case, to work
with the sheaves with connection $\left( \mathcal{V}_{\lambda },\nabla
\right) $ which \textquotedblleft looks always the same for varying $\lambda 
$'s\textquotedblright\ (being modelled on $\mathbb{Z}\left[ \mathbf{Y}\right]
$). Here we say \textquotedblleft essentially\textquotedblright\ because, as
we are going to explain, the above sheaves admits filtrations that are split
after a pull-back to the Igusa tower and we would like the first inclusion
appearing in $\left( \text{\ref{Intro F BGG Sheaves}}\right) $ to be
compatible with these splitting without introducing denominators. In the
case $g=1$, this problem does not exist because the natural basis of $%
\mathrm{Sym}_{2}^{k-2}$ is a subset of that of $\mathrm{Ind}_{\mathbf{Q}%
^{-}}^{\mathbf{G}}\left( e_{1}^{k-2}\right) \left[ Y\right] $.

To this this end, we first remark that, writing $\mathbb{Z}\left[ \mathbf{Y}%
\right] _{\leq r}$ (resp. $\mathbb{Z}\left[ \mathbf{Y}\right] _{=r}$) for
the space of polynomials that are of degree $\leq r$ (resp. $=r$), then 
\textrm{Fil}$_{r}\left( W_{\mu }\otimes _{\mathbb{Z}}\mathbb{Z}\left[ 
\mathbf{Y}\right] \right) :=W_{\mu }\otimes _{\mathbb{Z}}\mathbb{Z}\left[ 
\mathbf{Y}\right] _{\leq r}$ is a $\mathbf{Q}$-submodule of $W_{\mu }\otimes
_{\mathbb{Z}}\mathbb{Z}\left[ \mathbf{Y}\right] $. Because one is able to
attach to a $\mathbf{Q}$-module a sheaf, one gets an increasing filtration $%
\left\{ \mathrm{Fil}_{r}\left( \mathcal{V}_{\lambda }\right) \right\} _{r\in 
\mathbb{N}}$ by taking the sheaf associated to \textrm{Fil}$_{r}\left(
W_{\mu }\otimes _{\mathbb{Z}}\mathbb{Z}\left[ \mathbf{Y}\right] \right) $.
(Although not needed, it turns out that the opposite filtration $\mathrm{Fil}%
^{r}\left( \mathcal{V}_{\lambda }\right) :=\mathrm{Fil}_{-r}\left( \mathcal{V%
}_{\lambda }\right) $ intersected with $\mathcal{L}_{\lambda }$ equals the
Hodge filtration, up to a translation). This filtration is split, namely one
has%
\begin{equation}
\mathrm{Fil}_{i}\left( W_{\mu }\otimes _{\mathbb{Z}}\mathbb{Z}\left[ \mathbf{%
Y}\right] \right) =\bigoplus\nolimits_{j\in \mathbb{N}}\mathrm{gr}%
_{i-j}\left( W_{\mu }\otimes _{\mathbb{Z}}\mathbb{Z}\left[ \mathbf{Y}\right]
\right) \text{ with }\mathrm{gr}_{i-j}\left( W_{\mu }\otimes _{\mathbb{Z}}%
\mathbb{Z}\left[ \mathbf{Y}\right] \right) =W_{\mu }\otimes _{\mathbb{Z}}%
\mathbb{Z}\left[ \mathbf{Y}\right] _{=i-j}\text{.}  \label{Intro F DecAlg}
\end{equation}%
When working over the Igusa tower, one can again split $\mathcal{H}_{\mathrm{%
dR}}^{1}$ by means of a canonical basis $\left\{ \delta _{1},...,\delta
_{g},\eta _{1},...,\eta _{g}\right\} $ that one can use in order to
introduce $P$-depleted de Rham complexes by imposing componentisely this
condition. Furthermore, the span $\mathcal{H}_{\mathrm{dR}}^{1,\varphi =1}$
of the $\eta _{i}$'s yields the so called unit root splitting $\mathcal{H}_{%
\mathrm{dR}}^{1}=\omega _{\mathrm{dR}}\oplus \mathcal{H}_{\mathrm{dR}%
}^{1,\varphi =1}$ of $\mathcal{H}_{\mathrm{dR}}^{1}$. Although the
relationship between the standard representation $\mathrm{Std}_{2g}$ and the
representation $\mathrm{J}:=\mathbb{Z}\left[ \mathbf{Y}\right] _{\leq 1}$
viewed as representations of $\mathbf{Q}$\ is not so transparent as in the $%
g=1$ case (where $\left( \mathrm{Std}_{2}\otimes _{\mathbb{Z}}\mathrm{Std}%
_{1}\right) ^{\vee }=\mathrm{J}$ for $\mathrm{Std}_{1}=\mathbb{Z}e_{1}$),
one can identify $\mathrm{J}$ as being obtained from the standard exact
sequence having $\mathrm{Std}_{2g}$ as a middle term by taking a pull-back,
then a push-out and then a dual as $\mathbf{Q}$-representations (see
Proposition \ref{Representations P Spl}): this allow ones to spread out the
unit root splitting from $\mathcal{H}_{\mathrm{dR}}^{1}$ to every $\mathcal{V%
}_{\lambda }$. Now, the unit root splitting of $\mathcal{H}_{\mathrm{dR}%
}^{1} $\ comes from the splitting of $\mathrm{Std}_{2g}$ under the action of
an element $m_{0}\in \mathbf{M}$ which acts on $\mathbb{Z}\left[ \mathbf{Y}%
\right] _{=r}$ as the multiplication by $p^{-r}$: we deduce from this fact
that the unit root splitting of $\mathcal{V}_{\mu }$ is a splitting of the
filtration $\left\{ \mathrm{Fil}_{r}\left( \mathcal{V}_{\mu }\right)
\right\} _{r\in \mathbb{N}}$. Setting $\mathcal{W}_{\mu ,i}:=\mathcal{O}%
_{X_{0}}\otimes _{\mathbb{Z}}W_{\mu }\otimes _{\mathbb{Z}}\mathbb{Z}\left[ 
\mathbf{Y}\right] _{=i}$, we have%
\begin{equation}
\mathrm{Fil}_{i}\left( \mathcal{V}_{\mu }\right) \simeq
\bigoplus\nolimits_{j\in \mathbb{N}}\mathcal{W}_{\mu ,j-i}\text{ and }%
\mathcal{V}_{\mu }\simeq \bigoplus\nolimits_{j\in \mathbb{N}}\mathcal{W}%
_{\mu ,i}  \label{Intro F DecSh}
\end{equation}%
(see Lemma \ref{Primitives L2}). Of course, $\mathcal{L}_{\lambda }$
receives a filtration from $\mathcal{V}_{\lambda }$ by setting $\mathrm{Fil}%
_{r}\left( \mathcal{L}_{\lambda }\right) :=\mathcal{L}_{\lambda }\cap 
\mathrm{Fil}_{i}\left( \mathcal{V}_{\lambda }\right) $ and this filtration
is split, but only a priori at the cost of introducing denominators. In
order to make everything work at an integral level, we realize the degree
filtration in terms of the underlying $\mathbf{Q}$-modules as follows. For
every algebraic representation $M$ of $\mathbf{U}$ (i.e. an $\mathcal{O}%
\left( \mathbf{U}\right) $-comodule), define a filtration by setting $%
\mathrm{Fil}_{0}\left( M\right) :=M^{\mathbf{U}}$ and, by recursion, define $%
\mathrm{Fil}_{r}\left( M\right) $ to be the inverse image of $\left( \frac{M%
}{\mathrm{Fil}_{r-1}\left( M\right) }\right) ^{\mathbf{U}}$ in $M$. Then the
degree filtration is obtained in this way and, because taking the $\mathbf{U}
$-invariant is a left exact operation (indeed exact, for unipotent groups), $%
\left( \text{\ref{Intro F BGG}}\right) $ and, hence, $\left( \text{\ref%
{Intro F BGG Sheaves}}\right) $\ is promoted to an exact sequence of
filtered objects. Because $W_{\mu }\otimes _{\mathbb{Z}}\mathbb{Z}\left[ 
\mathbf{Y}\right] _{=i}$ is realized as an isotypic component under the
action of $m_{0}$ and $\mathcal{W}_{\mu ,i}:=\mathcal{O}_{X_{0}}\otimes _{%
\mathbb{Z}}W_{\mu }\otimes _{\mathbb{Z}}\mathbb{Z}\left[ \mathbf{Y}\right]
_{=i}$, it follows from $\left( \text{\ref{Intro F BGG Sheaves}}\right) $
viewed as a sequence of filtered objects that, because these isotypic
components split the second and the third term of the sequence thanks to $%
\left( \text{\ref{Intro F DecSh}}\right) $, then they also provide a
splitting of the first term. In this way, we get an analogous decomposition
of $\mathcal{L}_{\lambda }$ and, hence, we can really forget about the
sheaves $\left( \mathcal{L}_{\lambda },\nabla \right) $ and work in $\left( 
\mathcal{V}_{\lambda },\nabla \right) $.

The next step consists in realizing that, as in the $g=1$ setting, using the
decomposition $\left( \text{\ref{Intro F DecSh}}\right) $ one can still write%
\begin{equation*}
\nabla ^{p}:H^{0}\left( X_{0},\mathcal{V}_{\lambda }\otimes _{\mathcal{O}%
_{X_{0}}}\Omega _{X_{0}/\Bbbk }^{p}\right) \rightarrow H^{0}\left( X_{0},%
\mathcal{V}_{\lambda }\otimes _{\mathcal{O}_{X_{0}}}\Omega _{X_{0}/\Bbbk
}^{p+1}\right) \text{,}
\end{equation*}%
when restricted to $H^{0}\left( X_{0},\mathcal{W}_{\lambda ,i}\otimes _{%
\mathcal{O}_{X_{0}}}\Omega _{X_{0}/\Bbbk }^{p}\right) $, as a sum of two
contributions%
\begin{equation*}
\nabla ^{p}\left( F\right) =\Theta ^{p}\left( F\right) +\Delta
_{i}^{p}\left( F\right) \in H^{0}\left( X_{0},\mathcal{W}_{\lambda
,i}\otimes _{\mathcal{O}_{X}}\Omega _{X/\Bbbk }^{p+1}\right) \oplus
H^{0}\left( X_{0},\mathcal{W}_{\lambda ,i+1}\otimes _{\mathcal{O}_{X}}\Omega
_{X/\Bbbk }^{p+1}\right)
\end{equation*}%
where $\Theta ^{p}$ gives the \textquotedblleft
horizontal\textquotedblright\ contribution of $\nabla ^{p}$ whereas, as in
the case $g=1$, the \textquotedblleft vertical\textquotedblright\
contribution $\Delta _{i}^{p}$ is obtained by $\mathcal{O}_{X_{0}}$-linear
extension of operators in Lie algebra $\mathfrak{u}^{-}$ of the unipotent
radical $\mathbf{U}^{-}$\ of $\mathbf{Q}^{-}$\ operating on $\mathrm{Ind}_{%
\mathbf{Q}^{-}}^{\mathbf{G}}\left( W_{\lambda }\right) \left[ \mathbf{Y}%
\right] $ (see Lemma \ref{Primitives LKey}). This relies on two ingredients.
First, by Liu's construction, one is reduced to understand the Gauss-Manin
connection. In this case, the claim is equivalent to verifying that the
expression of the Gauss-Manin connection in terms of the basis $\left\{
\delta _{1},...,\delta _{g},\eta _{1},...,\eta _{g}\right\} $ at the cusps
is obtained as follows. Let us write $\partial _{ij}$ for the matrix in $%
\mathfrak{u}^{-}$ whose whose lower left $g$-by-$g$ entry is the symmetric
matrix whose upper triangular part has all zero except a $1$ in $\left(
i,j\right) $-entry. Then%
\begin{equation*}
\nabla \left( \theta _{i,j}\right) \left( \delta _{1},...,\delta _{g},\eta
_{1},...,\eta _{g}\right) ^{t}=\partial _{ij}\left( \delta _{1},...,\delta
_{g},\eta _{1},...,\eta _{g}\right) ^{t}\text{.}
\end{equation*}%
This is proved by T. J. Fonseca\ in \cite[Proposition 5.17, Remark 5.18 and
Theorem 6.4]{Fo23} (although the lower left $g$-by-$g$ entry is essentially
the Kodaira-Spences isomorphism at the cusps). (In Lemma \ref{de Rham L1} we
explain why one can also include a paramodular level).

As we have already said, the sheaves with connection $\left( \mathcal{V}%
_{\lambda },\nabla \right) $ which \textquotedblleft looks always the same
for varying $\lambda $'s\textquotedblright . In order to formalize this
fact, which is an empty statement in the $g=1$ because there is only one $%
\Theta ^{p}=\Theta ^{0}$, we show that $\Theta ^{p}\circ \Theta ^{p-1}=0$
and, furthermore, that there is an isomorphism of complexes%
\begin{equation*}
\left( H^{0}\left( X_{0},\mathcal{W}_{\lambda ,i}\otimes _{\mathcal{O}%
_{X}}\Omega _{X/\Bbbk }^{\cdot }\right) ,\Theta ^{\cdot }\right) \simeq
\left( W_{\lambda ,i}\otimes _{\Bbbk }H^{0}\left( X_{0},\Omega _{X/\Bbbk
}^{\cdot }\right) ,1\otimes _{\Bbbk }d^{\cdot }\right) \text{.}
\end{equation*}%
(see Lemma \ref{Primitives L3}). Everything we have written so far holds by
placing an upperscript $\left( -\right) ^{\left[ P\right] }$ everywhere.

As a final step, thanks to the above discussion, one is reduced to showing
that the complex $H^{0}\left( X_{0},\Omega _{X/\Bbbk }^{\cdot }\right) ^{%
\left[ P\right] }$ is acyclic for every $P$ such that $P\left( 0\right) =0$.
To this end, we interpret $\theta _{P}$ as a kind of \textquotedblleft
Laplace operator\textquotedblright : indeed, for every such polynomial, we
construct an operator $\partial ^{\cdot }$ of degree $-1$ satisfying%
\begin{equation*}
d^{\cdot }\circ \partial ^{\cdot }+\partial ^{\cdot }\circ d^{\cdot }=\theta
_{P}\text{.}
\end{equation*}%
In particular, restricting the above equality to cocycles yields $d^{\cdot
}\circ \partial ^{\cdot }=\theta _{P}$ and, because $\theta _{P}$ is
invertible on $P$-depleted sections, the wanted acyclicity follows. We
remark that this result is deduced in \S \ref{Primitives PKey} by a very
general method.

\begin{remark}
We have so far remained silent on one hypothesis. Because of Proposition \ref%
{Representations P Spl}, we must assume that the prime $p$ is not $2$. If $%
p=2$, the acyclicity result is true a priori only after extending the
scalars to $\mathbb{Q}_{p}$.
\end{remark}

\begin{remark}
\label{Intro R1}The precise compact subgroups of $\mathbf{G}\left( \mathbb{A}%
_{f}\right) $ that we admit are defined at the beginning of \S \ref{S
Sheaves}: this includes the more basic paramodular levels. At the cost of
slightly extending the calculation on $q$-expansions of \S \ref{S q-exp},
one can also include the more general levels of Remark \ref{Sheaves R PLevel}%
. The cases where $p$ is allowed to divide either the level or the degree of
the polarization are much more delicate, as one needs models for which Liu's
construction works (cfr. also Remark \ref{Sheaves R Ex}).
\end{remark}

\begin{remark}
The method described in this paper is rather general and we expect that it
could be extended to cover more general PEL setting, at least as long as the
ordinary locus is non-empty.
\end{remark}

\bigskip

\textbf{Acknowledgments.} It is a pleasure to thank F. Andreatta, M.
Bertolini and R. Venerucci: without them, this work would never have existed.

\section{\label{S Representations}Integral truncated dual BGG complexes and
representations of symplectic groups}

\subsection{\label{S Representations Pre}Preliminary results and notations}

Suppose that $\mathbf{G}$ is an affine and flat group scheme over a ring $%
\Bbbk $ (all schemes in \S \ref{S Representations} will be understood to be
defined over $\Bbbk $, except when differently stated) and that $\mathbf{Q}%
\subset \mathbf{G}$ is a flat subgroup scheme.

We say that a functor $V$\ on $\Bbbk $-algebras is valued in modules if, for
every morphism of $\Bbbk $-algebras $R\rightarrow R^{\prime }$, then $%
V\left( R\right) =V_{R}$ is an $R$-module and $V_{R}\rightarrow V_{R^{\prime
}}$ is a morphism of $R$-modules. Denote by $\underline{\mathrm{Mod}}$ the
category of functors on the $\Bbbk $-algebras $R$ that are module valued and
such that $V_{R}$ defines a sheaf on the affine Zariski site of $\mathfrak{%
Spec}\left( R\right) $\ (i.e. with coverings $\left\{ \mathfrak{Spec}\left(
R_{f}\right) \rightarrow \mathfrak{Spec}\left( R\right) \right\} _{f\in R}$%
). Let $\underline{\mathrm{Mod}}_{alg}$ be the full subcategory of those $%
V\in \underline{\mathrm{Mod}}$ having the property that the canonical
morphism $R^{\prime }\otimes _{R}V_{R}\rightarrow V_{R^{\prime }}$ is an
isomorphism for every $R$-algebra $R^{\prime }$. (Of course, the above
Zariski sheaf condition could be omitted in the definition of $\underline{%
\mathrm{Mod}}_{alg}$). If $V\in \underline{\mathrm{Mod}}_{alg}$, we will
call $V_{\Bbbk }$ the underlying $\Bbbk $-module. Write $\underline{\mathrm{%
Mod}}_{f}$ for the full subcategory of those $V\in \underline{\mathrm{Mod}}%
_{alg}$ with the property that $V_{\Bbbk }$ is finitely generated and
projective. If we need to specify the base $\Bbbk $, we will write $%
\underline{\mathrm{Mod}}_{?}\left( \Bbbk \right) $. The functor taking the
Lie algebra $\mathfrak{g}_{R}:=\mathrm{ker}\left( \mathbf{G}\left( R\left[
\varepsilon \right] \right) \rightarrow \mathbf{G}\left( R\right) \right) $
of $\mathbf{G}$ is an example of functor in $\underline{\mathrm{Mod}}_{alg}$
(see \cite[Proposition 3.4]{Dm70}) and $\mathfrak{g}_{R}$ is naturally a Lie
algebra over $R$ (see \cite[Proposition 3.4, \S 3.5.1, Proposition 3.6,
p.68-70 and Corollaire 1 to Proposition 4.8]{Dm70}, taking into account that 
$\mathbf{G}$, being representable, is a \textquotedblleft bon $R$%
-group\textquotedblright\ by \cite[Example after Definitoin 4.6, based on
Proposition 2.2 and Proposition 3.3]{Dm70}). Writing $\mathfrak{q}$ for the
functors obtained from $\mathbf{Q}$, we have that $\mathfrak{q}\subset 
\mathfrak{g}$ is a subfunctor such that $\mathfrak{q}_{R}\subset \mathfrak{g}%
_{R}$ is a Lie subalgebra. We have a natural notion of exactness in $%
\underline{\mathrm{Mod}}_{alg}$ making it an abelian category obtained by
looking at the underlying $\Bbbk $-module $V_{\Bbbk }$. We note that this
notion of exactness is different, in general, from the notion of exactness
for module valued functors: an exact sequence in $\underline{\mathrm{Mod}}%
_{alg}$ gives rise to an exact sequence of $R$-modules when evaluated at $R$%
-points for every $\Bbbk $-flat algebra $R$ (hence every $R$, when $\Bbbk $
is field); when $\Bbbk $ is not a field, to be a exact as a functor is in
general a stronger condition.

We let $\mathrm{R}$\textrm{ep}$\left( \mathbf{G}\right) $ (resp. $\mathrm{R}$%
\textrm{ep}$\left( \mathfrak{g}\right) $) be the category of couples $\left(
\rho ,V\right) $ (resp. $\left( \delta ,V\right) $), sometimes denoted
simply by $\rho $ or $V$ (resp. $\delta $ or $V$) with the property that $%
\rho :\mathbf{G}\rightarrow Aut\left( V\right) $ (resp. $\delta :\mathfrak{g}%
\rightarrow End\left( V\right) $) is a morphism of functors such that $\rho
_{R}:\mathbf{G}\left( R\right) \rightarrow Aut_{R}\left( V_{R}\right) $ is a
group homomorphism (resp. $\delta _{R}:\mathfrak{g}_{R}\rightarrow
End_{R}\left( V_{R}\right) $ is a morphism of Lie algebras) for every $\Bbbk 
$-algebra $R$. Let us write $\mathrm{R}$\textrm{ep}$_{?}\left( \mathbf{G}%
\right) $ (resp. $\mathrm{R}$\textrm{ep}$_{?}\left( \mathfrak{g}\right) $)
for the full-subcategory consisting of those $\left( \sigma ,V\right) $
(resp. $\left( \delta ,V\right) $) that are such that $V\in \underline{%
\mathrm{Mod}}_{?}$: for example, writing \textrm{Ad}$_{\mathbf{G}}:\mathbf{G}%
\rightarrow Aut\left( \mathfrak{g}\right) $ for the action obtained by
conjugation (via $\mathbf{G}\left( R\left[ \varepsilon \right] \right)
\simeq \mathbf{G}\left( R\right) \ltimes \mathfrak{g}_{R}$), we have $%
\mathfrak{g}\in \mathrm{R}$\textrm{ep}$_{f}\left( \mathbf{G}\right) $. The
action of the objects of $\mathrm{R}$\textrm{ep}$_{alg}\left( \mathbf{G}%
\right) $ can be differentiated: this gives rise to a functor from $\mathrm{R%
}$\textrm{ep}$_{alg}\left( \mathbf{G}\right) $ to $\mathrm{R}$\textrm{ep}$%
_{alg}\left( \mathfrak{g}\right) $ sending $\left( \rho ,V\right) $ to $%
\left( d\rho ,V\right) $. We can refine this operation as follows. Let $%
\mathrm{R}$\textrm{ep}$_{alg}\left( \mathfrak{g},\mathbf{Q}\right) $ be the
category of triples $\left( \delta ,\rho ,V\right) $ such that $V\in 
\underline{\mathrm{Mod}}_{alg}$, $d\rho =\delta _{\mid \mathfrak{u}}$ and $%
\rho _{R}\left( q\right) \delta _{R}\left( X\right) \rho _{R}\left( q\right)
^{-1}=\delta _{R}\left( \mathrm{Ad}_{\mathbf{G},R}\left( q\right) \left(
X\right) \right) $ holds in $End_{R}\left( V_{R}\right) $\ for every $q\in 
\mathbf{Q}\left( R\right) $ and $X\in \mathfrak{g}_{R}$. Then, sending $%
\left( \rho ,V\right) $ to $\left( d\rho ,\rho _{\mid \mathbf{Q}},V\right) $
yields a functor from $\mathrm{R}$\textrm{ep}$_{alg}\left( \mathbf{G}\right) 
$ to $\mathrm{R}$\textrm{ep}$_{alg}\left( \mathfrak{g},\mathbf{Q}\right) $.
Unless otherwise stated, the categories $\mathrm{R}$\textrm{ep}$_{alg}\left(
?\right) $ will be regarded as exact categories by looking at the underlying
object in $\underline{\mathrm{Mod}}_{alg}$, i.e. the underlying $\Bbbk $%
-module: they are abelian because the (co)kernels in $\underline{\mathrm{Mod}%
}_{alg}$ really belongs to $\mathrm{R}$\textrm{ep}$_{alg}\left( ?\right) $,
as we are going to explain. The category $\mathrm{R}$\textrm{ep}$%
_{alg}\left( \mathfrak{g}\right) $ is an abelian category just because the $%
\mathfrak{g}_{\Bbbk }$-action on the underlying $\Bbbk $-modules $V_{\Bbbk }$%
\ of the (co)kernels $V$ of a morphism in $\mathrm{R}$\textrm{ep}$%
_{alg}\left( \mathfrak{g}\right) $\ extends by linearity to an action of $%
\mathfrak{g}_{R}=R\otimes _{\Bbbk }\mathfrak{g}_{\Bbbk }$ on $V_{R}=R\otimes
_{\Bbbk }V_{\Bbbk }$ for every $\Bbbk $-algebra $R$. The category of $%
\mathcal{O}\left( \mathbf{G}\right) $-comodules is identified with $\mathrm{R%
}$\textrm{ep}$_{alg}\left( \mathbf{G}\right) $ (as $\mathbf{G}$ is affine),
which is an abelian category (due to our flatness assumption, see \cite{Se68}%
). Combining these two arguments, we deduce that $\mathrm{R}$\textrm{ep}$%
_{alg}\left( \mathfrak{g},\mathbf{Q}\right) $ is also abelian.

The following result will be crucial in the proof of Theorem \ref{Sheaves T1}
below.

\begin{theorem}
\label{Representations T1}The following facts hold.

\begin{itemize}
\item[$\left( 1\right) $] If $\left( \rho ,V\right) \in \mathrm{Rep}%
_{alg}\left( \mathbf{G}\right) $ is a representation of $\mathbf{G}$ which
whose underlying $\Bbbk $-module is free which is given by a closed immersion%
\begin{equation*}
\rho :\mathbf{G}\longrightarrow \mathbf{GL}_{V}\text{,}
\end{equation*}%
then every $W\in \mathrm{Rep}_{f}\left( \mathbf{G}\right) $ can be
constructed from $V$ by forming tensor products, direct sums, duals, and
subquotients.

\item[$\left( 2\right) $] If $\Bbbk $ is noetherian, then every object of $%
\mathrm{Rep}_{alg}\left( \mathbf{G}\right) $, regarded as an $\mathcal{O}%
\left( \mathbf{G}\right) $-comodule, is a filtered union of its $\mathcal{O}%
\left( \mathbf{G}\right) $-subcomodules that are of finite type over $\Bbbk $%
. In particular, when $\Bbbk $ is a Dedekind domain, then every $\mathrm{Rep}%
_{alg}\left( \mathbf{G}\right) $ whose underlying $\Bbbk $-module is torsion
free is a filtered union of its subobjects that are in $\mathrm{Rep}%
_{f}\left( \mathbf{G}\right) $.
\end{itemize}
\end{theorem}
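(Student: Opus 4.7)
I would prove $(2)$ first and then deduce $(1)$ from it.

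For $(2)$, I would apply the classical \emph{locally finite comodule} theorem, adapted to a noetherian base. Given $V\in \mathrm{Rep}_{alg}(\mathbf{G})$ with coaction $\Delta:V\to V\otimes \mathcal{O}(\mathbf{G})$ and a finitely generated $\Bbbk$-submodule $S\subset V$ with generators $s_1,\dots,s_k$, expand $\Delta(s_j)=\sum_i v_{j,i}\otimes f_{j,i}$ and let $M_1$ be the $\Bbbk$-span of $S\cup\{v_{j,i}\}$. Iterating produces a chain $S\subset M_1\subset M_2\subset\cdots$ of finitely generated $\Bbbk$-submodules. The key observation is that the coefficients in $\mathcal{O}(\mathbf{G})$ appearing in these iterations lie in the $\Bbbk$-submodule generated by the original $f_{j,i}$ and their iterated comultiplications; because $\Bbbk$ is noetherian and $\mathbf{G}$ is flat over $\Bbbk$ (so $\mathcal{O}(\mathbf{G})$ is a flat $\Bbbk$-module), this latter object stabilizes to a finitely generated subcomodule $F\subset \mathcal{O}(\mathbf{G})$. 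Coassociativity then ensures that $M_\infty:=\bigcup_n M_n$ is a subcomodule with $\Delta(M_\infty)\subset M_\infty\otimes F$, and noetherianity forces $M_\infty$ to be finitely generated over $\Bbbk$. The counit relation $(1\otimes\varepsilon)\Delta=\mathrm{id}_V$ shows $S\subset M_\infty$. When $\Bbbk$ is a Dedekind domain and $V$ is torsion-free, any such finitely generated subcomodule is torsion-free and hence projective, i.e., in $\mathrm{Rep}_f(\mathbf{G})$.

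For $(1)$, the strategy is to exhibit any $W\in\mathrm{Rep}_f(\mathbf{G})$ as a subrepresentation of a module built from $V$. Setting $r=\mathrm{rk}_\Bbbk V$, one has $\mathcal{O}(\mathbf{GL}_V)\simeq \mathrm{Sym}_\Bbbk(V\otimes V^\vee)[\det^{-1}]$; viewing this as a $\mathbf{GL}_V$-comodule under right translation, the degree-$n$ piece $\mathrm{Sym}^n(V\otimes V^\vee)$ is a subquotient of a direct sum of copies of $(V^\vee)^{\otimes n}$, while each $\det^{-m}$ is a quotient of $(\Lambda^r V)^{\otimes m}\subset V^{\otimes rm}$. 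Hence $\mathcal{O}(\mathbf{GL}_V)$ is a filtered colimit of subquotients of direct sums of tensor powers of $V$ and $V^\vee$. The closed immersion $\rho$ produces a $\mathbf{G}$-equivariant surjection $\mathcal{O}(\mathbf{GL}_V)\twoheadrightarrow \mathcal{O}(\mathbf{G})$, transporting this description to $\mathcal{O}(\mathbf{G})$. Now for $W\in\mathrm{Rep}_f(\mathbf{G})$, the coaction $\Delta_W$ yields a $\mathbf{G}$-equivariant injection $W\hookrightarrow W_{\mathrm{triv}}\otimes \mathcal{O}(\mathbf{G})$, where $W_{\mathrm{triv}}$ denotes $W$ with trivial $\mathbf{G}$-action and the tensor carries the $\mathbf{G}$-action on the second factor only (injectivity from $(1\otimes\varepsilon)\circ\Delta_W=\mathrm{id}_W$). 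By $(2)$, the image lies in $W_{\mathrm{triv}}\otimes F$ for some $F\in\mathrm{Rep}_f(\mathbf{G})$ with $F\subset \mathcal{O}(\mathbf{G})$. Since $W$ is finitely generated projective, pick $W''$ with $W\oplus W''\simeq \Bbbk^N$; then $W_{\mathrm{triv}}\otimes F$ becomes a direct $\mathbf{G}$-summand of $F^N$, so $W\hookrightarrow F^N$ as $\mathbf{G}$-modules, exhibiting $W$ as a subrepresentation of an object built from $V$ by the admitted operations.

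The main obstacle I anticipate is in $(2)$: over a noetherian ring rather than a field one cannot pick a $\Bbbk$-basis of $\mathcal{O}(\mathbf{G})$ to separate out coefficients in $\Delta(v)=\sum v_i\otimes f_i$, so one must handle the iterations of $\Delta$ simultaneously in $V$ and in $\mathcal{O}(\mathbf{G})$, invoking flatness of $\mathbf{G}$ and noetherian induction on $\mathcal{O}(\mathbf{G})$ itself to force termination. A secondary technical point in $(1)$ is the treatment of the localization by $\det$, which requires describing $\det^{-m}$ as a subquotient of $V^{\otimes rm}$ through the embedding $\Lambda^r V\hookrightarrow V^{\otimes r}$ (and hence demands that $(1)$ explicitly allow both duals and subquotients in its list of operations).
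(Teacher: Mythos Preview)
Your strategy for $(1)$ is essentially the paper's (which simply records that Milne's proof of \cite[Theorem 4.14]{Mi17} goes through over a ring): embed $W$ into a finite sum of copies of $\mathcal{O}(\mathbf{G})$ via the coaction, then use the surjection $\mathcal{O}(\mathbf{GL}_V)\twoheadrightarrow\mathcal{O}(\mathbf{G})$ and the tensor--colimit description of $\mathcal{O}(\mathbf{GL}_V)$. Two small points: your appeal to $(2)$ to locate $F$ is unnecessary and would import a noetherian hypothesis that $(1)$ does not carry --- since $W$ is finitely generated and $\mathcal{O}(\mathbf{G})$ is the increasing union of the images of the graded pieces of $\mathcal{O}(\mathbf{GL}_V)$ (which are subcomodules built from $V,V^\vee$), the image of $W$ already lands in one such piece. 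Also, $\det^{-m}$ is a subquotient of $(V^\vee)^{\otimes rm}$ via $\Lambda^r(V^\vee)$, not of $(\Lambda^r V)^{\otimes m}$ as you wrote.

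For $(2)$ there is a genuine gap. You set up the ascending chain $S\subset M_1\subset M_2\subset\cdots$ and then assert that coassociativity forces the $\mathcal{O}(\mathbf{G})$-coefficients to stay in a fixed finitely generated subcomodule $F$, whence ``noetherianity forces $M_\infty$ to be finitely generated''. Neither step is justified: coassociativity gives $\sum_i\Delta(v_{j,i})\otimes f_{j,i}=\sum_i v_{j,i}\otimes\Delta(f_{j,i})$, but over a ring you cannot separate out the individual $\Delta(v_{j,i})$ from this, so you do not control the next layer of coefficients; and even granting $\Delta(M_\infty)\subset M_\infty\otimes F$, nothing about noetherianity of $\Bbbk$ bounds a submodule of an arbitrary $V$. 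The paper simply cites Serre \cite[\S 1.5 Corollaire]{Se68}, whose argument avoids iteration altogether: pick any finitely generated $M_0\subset V$ with $\Delta(v)\in M_0\otimes\mathcal{O}(\mathbf{G})$ (possible since tensor commutes with the filtered colimit $V=\varinjlim M$ and $\mathcal{O}(\mathbf{G})$ is flat), and set
\[
N:=\{w\in V:\Delta(w)\in M_0\otimes\mathcal{O}(\mathbf{G})\}.
\]
Coassociativity gives $(\Delta\otimes 1)\Delta(w)=(1\otimes\Delta)\Delta(w)\in M_0\otimes\mathcal{O}(\mathbf{G})\otimes\mathcal{O}(\mathbf{G})$, hence $\Delta(w)\in N\otimes\mathcal{O}(\mathbf{G})$, so $N$ is a subcomodule; the counit gives $N\subset M_0$, so $N$ is finitely generated; and $v\in N$ by construction. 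This one--step definition is exactly the missing idea that replaces your iteration.
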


\begin{proof}
$\left( 1\right) $ This follows from the fact that the proof of \cite[%
Theorem 4.14]{Mi17} holds in this more general setting. Indeed, first we can
assume that $W_{\Bbbk }$ is free of finite rank, by adding a $\Bbbk $-module
promoted to an object of $\mathbf{G}$ by menas of the trivial action. Then 
\cite[Corollary 4.13]{Mi17} holds unchanged, implying that $W$ is an $%
\mathcal{O}\left( \mathbf{G}\right) $-subcomodule of a finite direct sum of
copies of $\mathcal{O}\left( \mathbf{G}\right) :=\left( \mathcal{O}\left( 
\mathbf{G}\right) ,\Delta \right) $ (here is where we use the $\Bbbk $%
-freeness). Then, with the notations of loc.cit. we can replace $W$ by its
image $W_{i}$ in $\mathcal{O}\left( \mathbf{G}\right) $ and, hence, suppose
that $W\subset \mathcal{O}\left( \mathbf{G}\right) $ is still a $\Bbbk $%
-module of finite type (but no longer free). Then the proof is the same.

$\left( 2\right) $ See \cite[\S 1.5 Corollaire]{Se68}.
\end{proof}

We will need to consider the operation of taking $\mathbf{G}$-invariants. On
the one hand, for every $\left( \rho ,V\right) \in \mathrm{R}$\textrm{ep}$%
\left( \mathbf{G}\right) $, we can consider the functor of fixed points $%
\left( \rho ,V\right) ^{h_{\mathbf{G}}}=V^{h_{\mathbf{G}}}$ such that $%
\left( \rho ,V\right) ^{\mathbf{G}}\left( R\right) $ is the set of those $%
v\in V_{R}$ such that $v_{R^{\prime }}\in V_{R^{\prime }}^{\mathbf{G}\left(
R^{\prime }\right) }$ for every $R$-algebra $R^{\prime }$, where $%
v_{R^{\prime }}$ denotes the image of $v$ in $V_{R^{\prime }}$. On the othe
hand, when $\mathbf{G}$ is affine and $\left( \rho ,V\right) \in \mathrm{Rep}%
_{alg}\left( \mathbf{G}\right) $, we can consider $V_{\Bbbk }^{\mathbf{G}%
}\subset V_{\Bbbk }$, defined to be the set of those $v\in V_{\Bbbk }$ such
that $\rho ^{\#}\left( v\right) =v\otimes _{\Bbbk }1$ if $\rho
^{\#}:V_{\Bbbk }\rightarrow V_{\Bbbk }\otimes _{\Bbbk }\mathcal{O}\left( 
\mathbf{G}\right) $ denotes the comodule structure: writing $\mu _{V_{\Bbbk
}^{\mathbf{G}}}:V_{\Bbbk }^{\mathbf{G}}\rightarrow V_{\Bbbk }^{\mathbf{G}%
}\otimes _{\Bbbk }\mathcal{O}\left( \mathbf{G}\right) $ for the canonical
morphism sending $v$ to $v\otimes _{\Bbbk }1$, the inclusion $V_{\Bbbk }^{%
\mathbf{G}}\subset V_{\Bbbk }$ yields a comodule morphism from $\left( \mu
_{V_{\Bbbk }^{\mathbf{G}}},V_{\Bbbk }^{\mathbf{G}}\right) $ to $\left( \rho
,V\right) $. Let us write $\left( \rho ,V\right) ^{\mathbf{G}}=V^{\mathbf{G}%
}\in \mathrm{R}$\textrm{ep}$_{alg}\left( \mathbf{G}\right) $ for the
corresponding functor. There is always a morphism of functors $V^{\mathbf{G}%
}\rightarrow V^{h_{\mathbf{G}}}$ and, for every $\Bbbk $-flat algebra $R$
(hence $R=\Bbbk $ and every $R$, when $\Bbbk $ is field), one has%
\begin{equation}
V^{\mathbf{G}}\left( R\right) \overset{\sim }{\longrightarrow }V^{h_{\mathbf{%
G}}}\left( R\right)  \label{Representations F Inv}
\end{equation}%
(see \cite[$\left( 2.10\right) $]{Jan07} or \cite[proof of Proposition 4.33]%
{Mi17}). We note that, when $\mathbf{G}$ is affine, irreducible and
connected and $\Bbbk $ is a infinite perfect field, then $V_{\Bbbk }^{%
\mathbf{G}}=\left( V_{\Bbbk }\right) ^{\mathbf{G}\left( \Bbbk \right) }$
(see \cite[After Proposition 4.32]{Mi17}).

\bigskip

Suppose that $X$ is a scheme and that $V$ is a module valued functor on the $%
\Bbbk $-algebras $R$ (for example, suppose that $V\in \underline{\mathrm{Mod}%
}$). If $U\subset X_{/R}$ is an open subset, we let $\mathrm{C}^{\mathrm{alg}%
}\left( U,V\right) $ be the module valued functor on $R$-algebras such that $%
\mathrm{C}^{\mathrm{alg}}\left( U,V\right) _{R^{\prime }}$ is the set of
morphisms of functors $f:U_{/R^{\prime }}\rightarrow V_{/R^{\prime }}$,
where $Y_{/R^{\prime }}$ denotes the base change $Y_{/R^{\prime }}\left(
R^{\prime \prime }\right) :=Y\left( R^{\prime \prime }\right) $ on $%
R^{\prime }$-algebras. According to Lemma \ref{Representations L K(X)} below
we have, indeed, that it belongs to $\underline{\mathrm{Mod}}_{alg}$ when $%
U\subset X$ is affine and $V\in \underline{\mathrm{Mod}}_{f}$. In
particular, when $U\subset \mathbf{G}$ is such that $U\mathbf{Q}\subset U$,
it makes sense to consider the left action by right multiplication of $%
\mathbf{Q}$ on $\mathrm{C}^{\mathrm{alg}}\left( U,V\right) $) and we get an
object of $\mathrm{Rep}_{alg}\left( \mathbf{Q}\right) $. Indeed, essentially
due to the fact that derivations localizes, we can enrich the structure of $%
\mathrm{C}^{\mathrm{alg}}\left( U,V\right) $ to that of an object in $%
\mathrm{R}$\textrm{ep}$_{alg}\left( \mathfrak{g},\mathbf{Q}\right) $ by
means of Lemma \ref{Representations L K(G)} below. In order to make this
further structure explicit, it will be convenient to consider the module
valued functor $\mathrm{C}^{\mathrm{alg}}\left( X,V\right) \left[ \eta %
\right] $\ classifying rational functions defined as follows. If $%
U_{1}\subset U_{2}\subset X_{/R}$ is an inclusion of open subsets of a
scheme $X$ and $V$ is a module valued functor, then there is a map $\mathrm{C%
}^{\mathrm{alg}}\left( U_{2},V\right) _{R}\rightarrow \mathrm{C}^{\mathrm{alg%
}}\left( U_{1},V\right) _{R}$ and we define $\mathrm{C}^{\mathrm{alg}}\left(
X,V\right) \left[ \eta \right] $ to be the module valued functor on $\Bbbk $%
-algebras $R$ defined by means of the formula $\mathrm{C}^{\mathrm{alg}%
}\left( X,V\right) \left[ \eta \right] _{R}:=\lim\limits_{U\rightarrow }%
\mathrm{C}^{\mathrm{alg}}\left( U,V\right) _{R}$, the limit being taken over
all dense open subsets $U$\ of $X_{/R}$ (note that the intersection of
finitely many dense open subsets is a dense open subset and the transition
morphisms are injective). Suppose now that $U\subset \mathbf{G}$ is such
that $U\mathbf{Q}\subset U$ and $V\in \underline{\mathrm{Mod}}_{f}$. Then we
have a $\mathbf{G}$-module action on $\mathrm{C}^{\mathrm{alg}}\left( 
\mathbf{G},V\right) \left[ \eta \right] $ (again given by right
multiplication) and a morphism from $\mathrm{C}^{\mathrm{alg}}\left(
U,V\right) $ to $\mathrm{C}^{\mathrm{alg}}\left( \mathbf{G},V\right) \left[
\eta \right] $ which is $\mathbf{Q}$-equivariant (and injective, when $%
U\subset \mathbf{G}$ is dense). Moreover, as remarked above $\mathrm{C}^{%
\mathrm{alg}}\left( U,V\right) $ is in $\mathrm{R}$\textrm{ep}$_{alg}\left( 
\mathfrak{g},\mathbf{Q}\right) $: it would be natural to conjecture that the
inclusion can ba promoted to a morphism of $\left( \mathfrak{g},\mathbf{Q}%
\right) $-modules. However, it make no sense, a priori, to differentate the $%
\mathbf{G}$-action of $\mathrm{C}^{\mathrm{alg}}\left( \mathbf{G},V\right) %
\left[ \eta \right] $. This issue can be fixed by considering an
intermediate category between $\mathrm{R}$\textrm{ep}$_{alg}\left( \mathbf{G}%
\right) $ and $\mathrm{R}$\textrm{ep}$\left( \mathbf{G}\right) $ to which $%
\mathrm{C}^{\mathrm{alg}}\left( \mathbf{G},V\right) \left[ \eta \right] $
belongs and admitting a functor to some category of $\left( \mathfrak{g},%
\mathbf{Q}\right) $-modules defined as follows.

For a $\Bbbk $-algebra $R$ and an $R$-module $M$, we let $R_{M}\left[
\varepsilon \right] :=R\oplus M$ with the unique $R$-algebra structure
making $M$ a square zero ideal and set $R\left[ \varepsilon \right] :=R_{R}%
\left[ \varepsilon \right] $: it comes equipped with a morphism of $R$%
-algebras $i_{R_{M}\left[ \varepsilon \right] }:R\rightarrow R_{M}\left[
\varepsilon \right] $ (resp. $\pi _{R_{M}\left[ \varepsilon \right] }:R_{M}%
\left[ \varepsilon \right] \rightarrow R$) defined by the rule $i_{R_{M}%
\left[ \varepsilon \right] }\left( r\right) :=\left( r,0\right) $ (resp. $%
\pi _{R_{M}\left[ \varepsilon \right] }\left( r,m\right) :=r$) and we have $%
\pi _{R_{M}\left[ \varepsilon \right] }\circ i_{R_{M}\left[ \varepsilon %
\right] }=1_{R}$. In particular, setting $T_{M}\left( \mathbf{G}\right)
\left( R\right) :=\mathbf{G}\left( R\left[ \varepsilon \right] \right) $, we
deduce that $T_{M}\left( \mathbf{G}\right) \left( R\right) \simeq \mathbf{G}%
\left( R\right) \ltimes \mathfrak{g}_{R}$. Let $\underline{\mathrm{Mod}}%
_{gd} $ be the full subcategory of those $V\in \underline{\mathrm{Mod}}$
having the property that the canonical morphism $R_{M}\left[ \varepsilon %
\right] \otimes _{R}V_{R}\rightarrow V_{R_{M}\left[ \varepsilon \right] }$
is an isomorphism for every $R$-module $M$ and every $\Bbbk $-algebra $R$.
The full subcategories $\mathrm{R}$\textrm{ep}$_{gd}\left( \mathbf{G}\right) 
$ (resp. $\mathrm{R}$\textrm{ep}$_{gd}\left( \mathfrak{g}\right) $) of $%
\mathrm{R}$\textrm{ep}$\left( \mathbf{G}\right) $ (resp. $\mathrm{R}$\textrm{%
ep}$\left( \mathfrak{g}\right) $) are then defined by requiring the
underlying $\Bbbk $-module to be in $\underline{\mathrm{Mod}}_{gd}$. The
action of the objects $\left( \sigma ,V\right) $ of $\mathrm{R}$\textrm{ep}$%
_{gd}\left( \mathbf{G}\right) $ can be differentiated to give an object $%
\left( d\sigma ,V\right) $\ of $\mathrm{R}$\textrm{ep}$_{gd}\left( \mathfrak{%
g}\right) $ as follows. If $X\in \mathfrak{g}_{R}$ is regarded as an element
of $\mathbf{G}\left( R\left[ \varepsilon \right] \right) \simeq \mathbf{G}%
\left( R\right) \ltimes \mathfrak{g}_{R}$, then we suggestively write it as $%
1+\varepsilon X$. We have that $V_{R\left[ \varepsilon \right] }\overset{%
\sim }{\leftarrow }R\left[ \varepsilon \right] \otimes _{R}V_{R}$ and,
because $\pi _{R\left[ \varepsilon \right] }\circ i_{R\left[ \varepsilon %
\right] }=1_{R}$, we see that $V_{R\left[ \varepsilon \right] }\simeq
V_{R}\oplus \varepsilon V_{R}$ (as an $R$-module), the inclusion $%
V_{R}\subset V_{R\left[ \varepsilon \right] }$ being given by $i_{R\left[
\varepsilon \right] }$ whereas $\varepsilon V_{R}\simeq \mathrm{ker}\left(
\pi _{R\left[ \varepsilon \right] }:V_{R\left[ \varepsilon \right]
}\rightarrow V_{R}\right) $. Because $\pi _{R\left[ \varepsilon \right]
}\circ \sigma _{R\left[ \varepsilon \right] }=\sigma _{R}\circ \pi _{R\left[
\varepsilon \right] }$, we also deduce that $\sigma \left( 1+\varepsilon
X\right) \left( i_{R\left[ \varepsilon \right] }\left( v\right) \right) -i_{R%
\left[ \varepsilon \right] }\left( v\right) $ belongs to the kernel $%
\varepsilon V_{R}$ of $\pi _{R\left[ \varepsilon \right] }$. Hence,
identifying $v$ with its image $i_{R\left[ \varepsilon \right] }\left(
v\right) $, the endomorphism $d\sigma _{R}\left( X\right) $ can be defined
by means of the formula%
\begin{eqnarray}
&&\sigma _{R\left[ \varepsilon \right] }\left( 1+\varepsilon X\right)
v=v+\varepsilon d\sigma _{R}\left( X\right) \left( v\right) \text{ for every 
}v\in V_{R}\subset V_{R}\oplus \varepsilon V_{R}\simeq V_{R\left[
\varepsilon \right] }\text{,}  \notag \\
&&\text{i.e. }d\sigma _{R}\left( X\right) \left( v\right) =\frac{\sigma _{R%
\left[ \varepsilon \right] }\left( 1+\varepsilon X\right) v-v}{\varepsilon }%
\text{.}  \label{Representations F dr}
\end{eqnarray}%
It follows from \cite[Corollaire 2 to Proposition 4.8]{Dm70} that the
resulting morphism of functors $d\rho $ yields an element $\left( d\rho
,V\right) \in \mathrm{R}$\textrm{ep}$_{gd}\left( \mathfrak{g}\right) $ (and $%
\left( \text{\ref{Representations F dr}}\right) $ should be quite
convincing). Let us write \textrm{Ad}$_{\mathbf{G}}:\mathbf{G}\rightarrow
Aut\left( \mathfrak{g}\right) $ for the action obtained by conjugation (via $%
\mathbf{G}\left( R\left[ \varepsilon \right] \right) \simeq \mathbf{G}\left(
R\right) \ltimes \mathfrak{g}_{R}$) and let \textrm{ad}$_{\mathbf{G}}:%
\mathfrak{g}\rightarrow End\left( \mathfrak{g}\right) $ be $d$\textrm{Ad}$_{%
\mathbf{G}}$: then $\left[ X,Y\right] :=$\textrm{ad}$_{\mathbf{G}}\left(
X\right) \left( Y\right) $ and the proof that $\mathfrak{g}$ is a Lie
algebra and that $d\rho $ is of Lie algebras is essentially obtained by
functoriality reducing to the case $\mathbf{G}=Aut\left( V\right) $ for
which $\mathfrak{g}=End\left( V\right) $.

\bigskip

We are now going to show that $\mathrm{C}^{\mathrm{alg}}\left( \mathbf{G}%
,V\right) \left[ \eta \right] $ is in $\mathrm{R}$\textrm{ep}$_{gd}\left( 
\mathbf{G}\right) $ when $V\in \underline{\mathrm{Mod}}_{f}$.

\begin{lemma}
\label{Representations L K(X)}Suppose that $V\in \underline{\mathrm{Mod}}%
_{f} $ and that $U\subset X$ is an affine open subset of an affine scheme $X$%
. Then $\mathrm{C}^{\mathrm{alg}}\left( U,V\right) \in \underline{\mathrm{Mod%
}}_{alg}$ and $\mathrm{C}^{\mathrm{alg}}\left( X,V\right) \left[ \eta \right]
\in \underline{\mathrm{Mod}}_{gd}$.
\end{lemma}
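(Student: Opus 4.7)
The proof rests on an explicit identification of $\mathrm{C}^{\mathrm{alg}}(U,V)$ for affine $U$, combined with the fact that a nilpotent thickening is a homeomorphism on spectra.

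First, because $V_\Bbbk$ is finitely generated and projective, the module-valued functor $V$ is represented, as a set-valued functor, by the affine scheme $\underline V:=\mathrm{Spec}(\mathrm{Sym}_\Bbbk(V_\Bbbk^{\vee}))$. Given $U=\mathrm{Spec}(A)$ affine and a $\Bbbk$-algebra $R$, Yoneda identifies morphisms of functors $U_{/R}\to V_{/R}$ with $R$-algebra homomorphisms $\mathrm{Sym}_R(V_R^{\vee})\to A\otimes_\Bbbk R$, i.e.\ with $R$-linear maps $V_R^{\vee}\to A\otimes_\Bbbk R$. Finite projectivity of $V_R$ converts this canonically and functorially in $R$ into
\[
\mathrm{C}^{\mathrm{alg}}(U,V)_R\;\simeq\;V_R\otimes_R(A\otimes_\Bbbk R)\;\simeq\;(V_\Bbbk\otimes_\Bbbk A)\otimes_\Bbbk R.
\]
A module-valued functor of the form $R\mapsto N\otimes_\Bbbk R$ for a $\Bbbk$-module $N$ manifestly lies in $\underline{\mathrm{Mod}}_{alg}$: the base-change identity is tautological, and the Zariski sheaf property reduces to the sheaf property of the quasi-coherent module attached to $N\otimes_\Bbbk R$ on $\mathfrak{Spec}(R)$. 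This gives part $(1)$.

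For part $(2)$, the key observation is topological: since $M\cdot M=0$ in $R_M[\varepsilon]=R\oplus M$, the projection $R_M[\varepsilon]\to R$ is surjective with square-zero kernel, so $\mathrm{Spec}(R_M[\varepsilon])\to\mathrm{Spec}(R)$ is a nilpotent immersion, hence a homeomorphism. Base changing, the same is true of $X_{/R_M[\varepsilon]}\to X_{/R}$, so $U\mapsto U_{/R_M[\varepsilon]}$ is a bijection between open (and in particular between dense open) subsets of $X_{/R}$ and of $X_{/R_M[\varepsilon]}$, taking principal affine opens to principal affine opens. Now I restrict the colimit $\mathrm{C}^{\mathrm{alg}}(X,V)[\eta]_R=\varinjlim_U\mathrm{C}^{\mathrm{alg}}(U,V)_R$ to the cofinal sub-system of affine dense opens: cofinality follows by prime avoidance applied to the finitely many minimal primes of the (Noetherian, in the cases of interest) ring $A\otimes_\Bbbk R$, since given a dense open $U$ whose complement is cut out by an ideal $I$ avoiding each such minimal prime, $I$ contains some $f$ outside their union, and then $D(f)\subset U$ is dense and affine. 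On this sub-system, the formula from part $(1)$ gives $R_M[\varepsilon]\otimes_R\mathrm{C}^{\mathrm{alg}}(U,V)_R\overset{\sim}{\to}\mathrm{C}^{\mathrm{alg}}(U_{/R_M[\varepsilon]},V)_{R_M[\varepsilon]}$, and passing to the filtered colimit (which commutes with tensor product), together with the bijection of affine dense opens established above, produces the required isomorphism
\[
R_M[\varepsilon]\otimes_R\mathrm{C}^{\mathrm{alg}}(X,V)[\eta]_R\;\overset{\sim}{\longrightarrow}\;\mathrm{C}^{\mathrm{alg}}(X,V)[\eta]_{R_M[\varepsilon]},
\]
which is precisely the $\underline{\mathrm{Mod}}_{gd}$ property.

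The main obstacle I foresee is the cofinality of affine dense opens in all dense opens: it is essentially automatic under mild Noetherian hypotheses via prime avoidance, but is the step that deserves explicit justification. The Yoneda calculation for affine $U$ and the topological fact that $R\to R_M[\varepsilon]$ is a nilpotent thickening are otherwise routine.
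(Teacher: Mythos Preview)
Your argument follows essentially the same line as the paper's: identify $\mathrm{C}^{\mathrm{alg}}(U,V)_R$ explicitly for affine $U$ (the paper reduces to $V=\mathbf G_a$, you compute directly with the projective module, which amounts to the same thing), then for the $[\eta]$-version use that $\mathrm{Spec}(R_M[\varepsilon])\to\mathrm{Spec}(R)$ is a homeomorphism so the indexing systems of dense opens coincide, and commute the filtered colimit with $R_M[\varepsilon]\otimes_R -$. Your restriction to a cofinal system of \emph{affine} dense opens via prime avoidance is arguably more careful than the paper, which tacitly applies the base-change identity from part~(1) to all dense opens; the Noetherian caveat you flag is indeed needed (and is satisfied in the paper's applications).

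There is, however, one genuine omission. Membership in $\underline{\mathrm{Mod}}_{gd}$ requires membership in $\underline{\mathrm{Mod}}$, i.e.\ that $R\mapsto \mathrm{C}^{\mathrm{alg}}(X,V)[\eta]_R$ is a Zariski sheaf on each $\mathfrak{Spec}(R)$. You verify the $R_M[\varepsilon]$ base-change isomorphism but never the sheaf condition, and it does not follow formally from part~(1): a filtered colimit of Zariski sheaves need not be one, and the indexing system of dense opens of $X_{/R}$ itself varies with $R$. Concretely, given a finite cover $\{\mathrm{Spec}(R_{f_i})\}$ and rational functions on the $X_{/R_{f_i}}$ agreeing on overlaps, one must produce a single rational function on $X_{/R}$; this is where the paper invokes the Stacks Project fact that on a scheme with finitely many irreducible components the presheaf of rational functions is already a sheaf, so that the pieces over $X_{/R_{f_i}}$ glue over $X_{/R}$. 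You should add this verification.
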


\begin{proof}
After noticing that the categories $\underline{\mathrm{Mod}}_{alg}$ and $%
\underline{\mathrm{Mod}}_{gd}$ are closed under taking factors of direct
sums in $\underline{\mathrm{Mod}}$ and using the fact that $V_{\Bbbk }$ is a
finitely generated and projective over $\Bbbk $, we can assume that $V\simeq 
\mathbf{G}_{a}^{d}$ and then that $V=\mathbf{G}_{a}$. In this case, $\mathrm{%
C}^{\mathrm{alg}}\left( U,V\right) _{R}=\mathcal{O}\left( U_{/R}\right) $
and $\mathrm{C}^{\mathrm{alg}}\left( X,V\right) \left[ \eta \right] _{R}$
equals the ring $K\left( X_{/R}\right) $ of rational functions on $X_{/R}$.
The fact that $\mathrm{C}^{\mathrm{alg}}\left( U,V\right) \in \underline{%
\mathrm{Mod}}_{alg}$ follows from the fact that $R^{\prime }\otimes _{R}%
\mathcal{O}\left( U_{/R}\right) \simeq \mathcal{O}\left( U_{/R^{\prime
}}\right) $ holds for every affine scheme. If $Y$ is a scheme with finitely
many irreducible components, it follows from (the proof of) Lemma 31.23.6 $%
\left( 6\right) $ of The Stack Project that $K\left( -\right) $ is a sheaf
on the open subsets of $Y$: this applies to every affine scheme $Y$, as it
is $X_{/R}$. In particular, suppose that $\left\{ \mathfrak{Spec}\left(
R_{f}\right) \rightarrow \mathfrak{Spec}\left( R\right) \right\} _{f\in R}$
is a covering: then $\left\{ X_{/R_{f}}\rightarrow X_{/R}\right\} _{f\in R}$
is an open covering and, hence, the above discussion implies that $K\left(
X_{/-}\right) $ satisfies the sheaf property with respect to $\left\{
U_{i}\rightarrow U\right\} _{i\in I}$, as wanted. When $R^{\prime }=R_{M}%
\left[ \varepsilon \right] $, then $X_{/R}$ and $X_{/R^{\prime }}$ have the
same underlying topological space and, hence, the open subsets of $X_{/R}$
and $X_{/R^{\prime }}$ are the same. Taking into account that $\mathrm{C}^{%
\mathrm{alg}}\left( U,V\right) \in \underline{\mathrm{Mod}}_{alg}\left(
R\right) $ thanks to what we have proved applied to $U\subset X_{/R}$, we
deduce that, if $R^{\prime }=R_{M}\left[ \varepsilon \right] $, then%
\begin{eqnarray*}
R^{\prime }\otimes _{R}\mathrm{C}^{\mathrm{alg}}\left( X,V\right) \left[
\eta \right] _{R} &=&R^{\prime }\otimes _{R}\left( \lim\limits_{U\rightarrow
}\mathrm{C}^{\mathrm{alg}}\left( U,V\right) _{R}\right)
=\lim\limits_{U\rightarrow }\left( R^{\prime }\otimes _{R}\mathrm{C}^{%
\mathrm{alg}}\left( U,V\right) _{R}\right) \\
&=&\lim\limits_{U\rightarrow }\mathrm{C}^{\mathrm{alg}}\left( U,V\right)
_{R^{\prime }}=\mathrm{C}^{\mathrm{alg}}\left( U,V\right) \left[ \eta \right]
_{R^{\prime }}\text{.}
\end{eqnarray*}
\end{proof}

\bigskip

When $X=\mathbf{G}$ and $g\in \mathbf{G}\left( R\right) $, for every open
subset $U\subset X$, the right (resp. left) multiplication by $g$ morphism $%
r_{g}$ (resp. $l_{g}$) sends $U_{/R}g^{-1}$ (resp. $g^{-1}U$) to $U_{/R}$
and, in this way, we get a left (resp. right) action of $\mathbf{G}$ on $%
\mathrm{C}^{\mathrm{alg}}\left( \mathbf{G},V\right) \left[ \eta \right] $
making it an object of $\mathrm{R}$\textrm{ep}$\left( \mathbf{G}\right) $.
It follows from Lemma \ref{Representations L K(X)} that $\mathrm{C}^{\mathrm{%
alg}}\left( \mathbf{G},V\right) \left[ \eta \right] $ is in $\mathrm{Rep}%
_{gd}\left( \mathbf{G}\right) $ and, consequently, it makes sense to regard
it as an object of $\mathrm{Rep}_{gd}\left( \mathfrak{g}\right) $.

\begin{lemma}
\label{Representations L K(G)}Suppose that $V\in \underline{\mathrm{Mod}}%
_{f} $ and that $\mathbf{G}$ is affine. For every affine open subset $%
U\subset \mathbf{G}$, we have that the canonical morphism from $\mathrm{C}^{%
\mathrm{alg}}\left( U,V\right) $ to $\mathrm{C}^{\mathrm{alg}}\left( \mathbf{%
G},V\right) \left[ \eta \right] $ is in $\mathrm{Rep}_{gd}\left( \mathfrak{g}%
\right) $. In particular, when $U$ is dense subset, it realizes $\mathrm{C}^{%
\mathrm{alg}}\left( U,V\right) $ as a subobject of $\mathrm{C}^{\mathrm{alg}%
}\left( \mathbf{G},V\right) \left[ \eta \right] $ in $\mathrm{Rep}%
_{gd}\left( \mathfrak{g}\right) $.
\end{lemma}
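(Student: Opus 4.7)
The plan is to define the $\mathfrak{g}$-action on $\mathrm{C}^{\mathrm{alg}}(U, V)$ by the local infinitesimal translation formula, and to observe that it coincides with the restriction of the already-established action on $\mathrm{C}^{\mathrm{alg}}(\mathbf{G}, V)[\eta]$ under the canonical map.

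First I would invoke Lemma \ref{Representations L K(X)} to put $\mathrm{C}^{\mathrm{alg}}(U, V) \in \underline{\mathrm{Mod}}_{alg}$ (hence in $\underline{\mathrm{Mod}}_{gd}$), so that the decomposition $V_{R[\varepsilon]} \simeq V_R \oplus \varepsilon V_R$ is available after applying $\mathrm{C}^{\mathrm{alg}}(U, -)$. The key geometric remark is that for $X \in \mathfrak{g}_R$ and $u \in U(R)$ the product $u \cdot (1 + \varepsilon X) \in \mathbf{G}(R[\varepsilon])$ actually lies in $U(R[\varepsilon])$, since $U_{/R[\varepsilon]}$ and $U_{/R}$ have the same underlying topological space and this product reduces modulo $\varepsilon$ to $u \in U(R)$. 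Writing $f_{R[\varepsilon]}$ for the extension of $f \in \mathrm{C}^{\mathrm{alg}}(U, V)_R$ to $U_{/R[\varepsilon]}$ supplied by $\underline{\mathrm{Mod}}_{alg}$, I would then set
\begin{equation*}
(X \cdot f)(u) \;:=\; \frac{f_{R[\varepsilon]}\bigl(u(1 + \varepsilon X)\bigr) - f(u)}{\varepsilon} \;\in\; V_R,
\end{equation*}
and check that varying $u$ and the base $R$ yields a well-defined $X \cdot f \in \mathrm{C}^{\mathrm{alg}}(U, V)_R$.

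Next I would verify that this formula endows $\mathrm{C}^{\mathrm{alg}}(U, V)$ with the structure of an object of $\mathrm{Rep}_{gd}(\mathfrak{g})$ and that the canonical morphism is $\mathfrak{g}$-equivariant. By direct inspection, the very same formula $(\text{\ref{Representations F dr}})$ computes the differentiated $\mathbf{G}$-action on $\mathrm{C}^{\mathrm{alg}}(\mathbf{G}, V)[\eta]$ when specialised to right translation, so equivariance of the canonical morphism is immediate. The Lie algebra identity $[X, Y] \cdot f = X \cdot (Y \cdot f) - Y \cdot (X \cdot f)$ is then obtained by the standard double-nilpotent-parameter computation in $R[\varepsilon_1, \varepsilon_2]/(\varepsilon_1^2, \varepsilon_2^2)$: one evaluates $f$ at $u \cdot (1+\varepsilon_1 X)(1+\varepsilon_2 Y)$ (which again lies in $U$ of that base for the same topological reason) and collects the $\varepsilon_1 \varepsilon_2$ coefficient, exactly mirroring the computation of $d\rho$ being Lie algebraic for $\rho \in \mathrm{Rep}_{gd}(\mathbf{G})$.

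For the last sentence, when $U \subset \mathbf{G}$ is dense the transition maps in the filtered colimit $\mathrm{C}^{\mathrm{alg}}(\mathbf{G}, V)[\eta]_R = \varinjlim_{U'} \mathrm{C}^{\mathrm{alg}}(U', V)_R$ are injective on restriction to smaller dense opens; this is precisely the sheaf property for $K(-)$ already invoked in the proof of Lemma \ref{Representations L K(X)} (reducing to $V = \mathbf{G}_a$), so the canonical morphism is a monomorphism in $\mathrm{Rep}_{gd}(\mathfrak{g})$. The only delicate point is the identity $|U_{/R[\varepsilon]}| = |U_{/R}|$ of underlying topological spaces, which is what makes the otherwise global right-translation action on $\mathbf{G}$ behave, after differentiation, as a local derivation preserving any open subset $U$; this is the precise sense in which "derivations localize" as advertised before the lemma.
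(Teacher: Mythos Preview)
Your proof is correct and rests on the same underlying idea as the paper's, namely that the infinitesimal right translation preserves any open $U\subset\mathbf{G}$ (what the paper calls ``derivations localize''). The packaging differs: the paper reduces to $V=\mathbf{G}_a$, identifies the differentiated action on $\mathcal{O}(\mathbf{G}_{/R})$ with the invariant derivation $D_X=(1\otimes X)\circ\Delta$, then uses that derivations of $\mathcal{O}(\mathbf{G}_{/R})$ extend uniquely to $\mathcal{O}(U_{/R})$ and to $K(\mathbf{G}_{/R})$, and concludes by uniqueness that the two extensions to $K(\mathbf{G}_{/R})$ agree. You instead write down the infinitesimal translation formula directly on $\mathrm{C}^{\mathrm{alg}}(U,V)$, justified by the topological identity $|U_{/R[\varepsilon]}|=|U_{/R}|$, and then compare it with $(\ref{Representations F dr})$ on the target. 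Your route is a bit more geometric and avoids both the reduction to $V=\mathbf{G}_a$ and the uniqueness argument; the paper's route makes the derivation-theoretic content explicit. Either way the essential point is the same.
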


\begin{proof}
Again we can assume that $V=\mathbf{G}_{a}$: then $\mathrm{C}^{\mathrm{alg}%
}\left( \mathbf{G},V\right) _{R}=\mathcal{O}\left( \mathbf{G}_{/R}\right) $, 
$\mathrm{C}^{\mathrm{alg}}\left( U,V\right) _{R}=\mathcal{O}\left(
U_{/R}\right) $ and $\mathrm{C}^{\mathrm{alg}}\left( \mathbf{G},V\right) %
\left[ \eta \right] _{R}=K\left( \mathbf{G}_{/R}\right) $. One identifies $%
\mathfrak{g}_{R}$ with the space of $\mathbf{G}$-invariant derivations (for
the left action by right multiplication, sending $1+\varepsilon X=\left( e_{%
\mathbf{G}_{/R}},X\right) \in \mathbf{G}\left( R\right) \ltimes \mathfrak{g}%
_{R}$ to $D_{X}:=\left( 1_{\mathcal{O}\left( \mathbf{G}_{/R}\right) }\otimes
_{R}X\right) \circ \Delta $, where $\Delta $ denotes the colagebra
structure). Then one checks that $d\sigma _{R}\left( X\right) \left(
f\right) =D_{X}\left( f\right) $ if $f\in \mathcal{O}\left( \mathbf{G}%
_{/R}\right) $. In particular, because the derivations localize, the $%
\mathfrak{g}_{R}$-action on $\mathcal{O}\left( \mathbf{G}_{/R}\right) $
uniquely extends to $\mathcal{O}\left( U_{/R}\right) $. Because $K\left( 
\mathbf{G}_{/R}\right) :=\lim\limits_{U\rightarrow }\mathcal{O}\left(
U_{/R}\right) $, we deduce that it also uniquely extends to $K\left( \mathbf{%
G}_{/R}\right) $: hence, the $\mathfrak{g}_{R}$-action on $\mathcal{O}\left(
U_{/R}\right) $ uniquely extends to $K\left( \mathbf{G}_{/R}\right) $. But
because the morphism from $\mathcal{O}\left( \mathbf{G}_{/R}\right) $ to $%
K\left( \mathbf{G}_{/R}\right) $ is of $\mathbf{G}_{/R}$-modules, the $%
\mathfrak{g}_{R}$-action on $K\left( \mathbf{G}_{/R}\right) $ obtained from
the $\mathbf{G}$-module structure of $K\left( \mathbf{G}\right) $ also
extends the $\mathfrak{g}_{R}$-action of $\mathcal{O}\left( \mathbf{G}%
_{/R}\right) $. It follows that these two $\mathfrak{g}_{R}$-actions
coincides and, hence, that the morphism from $\mathcal{O}\left(
U_{/R}\right) $ to $K\left( \mathbf{G}_{/R}\right) $ is in $\mathrm{Rep}%
_{gd}\left( \mathfrak{g}\right) $.
\end{proof}

\bigskip

We will now suppose that $\mathbf{G}$ is affine and we will regard $\mathrm{C%
}^{\mathrm{alg}}\left( \mathbf{G},V\right) \left[ \eta \right] $ as a left $%
\mathbf{G}$-module by right multiplication. Assume that $V\in \underline{%
\mathrm{Mod}}_{f}$ and that $U\subset \mathbf{G}$ is a dense affine open
subset such that $U\mathbf{Q}\subset U$: then Lemma \ref{Representations L
K(X)} and Lemma \ref{Representations L K(G)} together imply that $\mathrm{C}%
^{\mathrm{alg}}\left( \mathbf{G},V\right) \left[ U\right] :=\mathrm{C}^{%
\mathrm{alg}}\left( U,V\right) \in \mathrm{Rep}_{alg}\left( \mathfrak{g},%
\mathbf{Q}\right) $ is a subobject of $\mathrm{C}^{\mathrm{alg}}\left( 
\mathbf{G},V\right) \left[ \eta \right] \in \mathrm{Rep}\left( \mathfrak{g},%
\mathbf{Q}\right) $. Suppose in addition that $\mathbf{Q}^{-}$ is another
affine group scheme that acts on $\mathrm{C}^{\mathrm{alg}}\left( \mathbf{G}%
,V\right) \left[ \eta \right] $ from the right making it an object of $%
\mathrm{Rep}\left( \mathbf{Q}^{-}\right) $ in such a way that the $\left( 
\mathfrak{g},\mathbf{Q}\right) $-action commute with the $\mathbf{Q}^{-}$%
-action and that $\mathrm{C}^{\mathrm{alg}}\left( \mathbf{G},V\right) \left[
U\right] \subset \mathrm{C}^{\mathrm{alg}}\left( \mathbf{G},V\right) \left[
\eta \right] $ is also a subobject in $\mathrm{Rep}\left( \mathbf{Q}%
^{-}\right) $. Applying $\left( -\right) ^{h_{\mathbf{Q}^{-}}}$, we get that 
$\mathrm{C}^{\mathrm{alg}}\left( \mathbf{G},V\right) \left[ U\right] ^{h_{%
\mathbf{Q}^{-}}}$ is a subobject of $\mathrm{C}^{\mathrm{alg}}\left( \mathbf{%
G},V\right) \left[ \eta \right] ^{h_{\mathbf{Q}^{-}}}$ in both $\mathrm{Rep}%
\left( \mathfrak{g}\right) $ and $\mathrm{Rep}\left( \mathbf{Q}\right) $%
\footnote{%
Although, strictly speaking, we can not talk about $\left( \mathfrak{g},%
\mathbf{Q}\right) $-modules because it may not be possible a priori to
differentiate the $\mathbf{Q}$-action, $\mathrm{C}^{\mathrm{alg}}\left( 
\mathbf{G},V\right) \left[ U\right] ^{h_{\mathbf{Q}^{-}}}$ and $\mathrm{C}^{%
\mathrm{alg}}\left( \mathbf{G},V\right) \left[ \eta \right] ^{h_{\mathbf{Q}%
^{-}}}$ are equivariantly for both actions subfunctors of the $\left( 
\mathfrak{g},\mathbf{Q}\right) $-module $\mathrm{C}^{\mathrm{alg}}\left( 
\mathbf{G},V\right) \left[ \eta \right] $.}. Also, the canonical morphism
from $\mathrm{C}^{\mathrm{alg}}\left( \mathbf{G},V\right) \left[ U\right] ^{%
\mathbf{Q}^{-}}$ to $\mathrm{C}^{\mathrm{alg}}\left( \mathbf{G},V\right) %
\left[ U\right] ^{h_{\mathbf{Q}^{-}}}$ is equivariant for both actions of $%
\mathfrak{g}$ and $\mathbf{Q}$. Summarizing, the composition%
\begin{equation}
\mathrm{C}^{\mathrm{alg}}\left( \mathbf{G},V\right) \left[ U\right] ^{%
\mathbf{Q}^{-}}\longrightarrow \mathrm{C}^{\mathrm{alg}}\left( \mathbf{G}%
,V\right) \left[ U\right] ^{h_{\mathbf{Q}^{-}}}\subset \mathrm{C}^{\mathrm{%
alg}}\left( \mathbf{G},V\right) \left[ \eta \right] ^{h_{\mathbf{Q}^{-}}}
\label{Representations F1 (g,Q)}
\end{equation}%
is a morphism both of $\mathfrak{g}$-modules and $\mathbf{Q}$-modules and we
have $\mathrm{C}^{\mathrm{alg}}\left( \mathbf{G},V\right) \left[ U\right] ^{%
\mathbf{Q}^{-}}\in \mathrm{Rep}_{alg}\left( \mathfrak{g},\mathbf{Q}\right) $.

\subsection{\label{S Representations BGG}Induced modules and the dual BGG
complex}

Suppose that $\mathbf{G}$ is connected, split and reductive and that $%
\mathbf{Q}$ is a parabolic subgroup with unipotent radical $\mathbf{U}$,\
Levi component $\mathbf{M}$ which is connected, split and reductive and
opposite parabolic subgroup $\mathbf{Q}^{-}$ having unipotent radical $%
\mathbf{U}^{-}$ (all of them are smooth, the unipotent groups by \cite[%
Corollaty 5.2.5]{Cnr14} and the other by definition, see \cite[Definition
3.1.1 and Definition 5.2.1]{Cnr14}). The projection $\mathbf{Q}%
^{-}\rightarrow \mathbf{M}$ allows us to regard representations of $\mathbf{M%
}$ as representations of $\mathbf{Q}^{-}$: we have a functor%
\begin{equation}
\mathrm{Rep}_{?}\left( \mathbf{M}\right) \longrightarrow \mathrm{Rep}%
_{?}\left( \mathbf{Q}^{-}\right) \text{ for }?\in \left\{ \phi
,alg,f\right\} \text{.}  \label{Representations F Res}
\end{equation}%
Let us now specify relevant $\left( \mathfrak{g},\mathbf{Q}\right) $-modules
associated to representations of $\mathbf{Q}^{-}$ and, via $\left( \text{\ref%
{Representations F Res}}\right) $, of $\mathbf{M}$: to this end, we need a
dense affine open subset subset $U\subset \mathbf{G}$ such that $\mathbf{Q}%
^{-}U\mathbf{Q}\subset U$. Then, if $\rho \in \mathrm{Rep}_{f}\left( \mathbf{%
Q}^{-}\right) $, we define functors%
\begin{equation*}
\mathrm{Ind}_{\mathbf{Q}^{-}}^{\mathbf{G}}\left( \rho \right) \subset 
\mathrm{Ind}_{\mathbf{Q}^{-}}^{\mathbf{G}}\left( \rho \right) \left[ U\right]
\longrightarrow \mathrm{Ind}_{\mathbf{Q}^{-}}^{\mathbf{G}}\left( \rho
\right) \left[ \eta \right]
\end{equation*}%
as follows. We regard the functor $\mathrm{C}^{\mathrm{alg}}\left( \mathbf{G}%
,\rho \right) \left[ U\right] $ (resp. $\mathrm{C}^{\mathrm{alg}}\left( 
\mathbf{G},\rho \right) \left[ \eta \right] $) as a right $\mathbf{Q}^{-}$%
-module by means of the rule $\left( fq\right) \left( x\right) :=\rho \left(
q\right) ^{-1}f\left( qx\right) $ for morphisms of $R$-schemes $%
f:U_{/R}\rightarrow \rho _{/R}$ (resp. $f:U\rightarrow \rho _{/R}$ for some
dense open subset $U\subset \mathbf{G}_{/R}$), $q\in \mathbf{Q}^{-}\left(
R\right) $ and $x\in U\left( R\right) $. Then we set $\mathrm{Ind}_{\mathbf{Q%
}^{-}}^{\mathbf{G}}\left( \rho \right) :=\mathrm{C}^{\mathrm{alg}}\left( 
\mathbf{G},\rho \right) \left[ \mathbf{G}\right] ^{\mathbf{Q}^{-}}$, $%
\mathrm{Ind}_{\mathbf{Q}^{-}}^{\mathbf{G}}\left( \rho \right) \left[ U\right]
:=\mathrm{C}^{\mathrm{alg}}\left( \mathbf{G},\rho \right) \left[ U\right] ^{%
\mathbf{Q}^{-}}$ and $\mathrm{Ind}_{\mathbf{Q}^{-}}^{\mathbf{G}}\left( \rho
\right) \left[ \eta \right] :=\mathrm{C}^{\mathrm{alg}}\left( \mathbf{G}%
,\rho \right) \left[ \eta \right] ^{h_{\mathbf{Q}^{-}}}$. We remark that $%
\mathbf{Q}\left( R\right) $ (resp. $\mathbf{G}\left( R\right) $) acts by
right translation $\left( gf\right) \left( x\right) :=f\left( xg\right) $ on 
$\mathrm{Ind}_{\mathbf{Q}^{-}}^{\mathbf{G}}\left( \rho \right) \left[ U%
\right] $ (resp. $\mathrm{Ind}_{\mathbf{Q}^{-}}^{\mathbf{G}}\left( \rho
\right) \left[ \eta \right] $ and $\mathrm{Ind}_{\mathbf{Q}^{-}}^{\mathbf{G}%
}\left( \rho \right) $). Indeed, by differentiating the $\mathbf{G}$-action,
we get a monomorphism $\mathrm{Ind}_{\mathbf{Q}^{-}}^{\mathbf{G}}\left( \rho
\right) \subset \mathrm{Ind}_{\mathbf{Q}^{-}}^{\mathbf{G}}\left( \rho
\right) \left[ U\right] $ in $\mathrm{Rep}_{alg}\left( \mathfrak{g},\mathbf{Q%
}\right) $ and $\mathrm{Ind}_{\mathbf{Q}^{-}}^{\mathbf{G}}\left( \rho
\right) \left[ \eta \right] $ is both a $\mathfrak{g}$-module and $\mathbf{Q}
$-module in such a way that the morphism from $\mathrm{Ind}_{\mathbf{Q}%
^{-}}^{\mathbf{G}}\left( \rho \right) \left[ U\right] $ to $\mathrm{Ind}_{%
\mathbf{Q}^{-}}^{\mathbf{G}}\left( \rho \right) \left[ \eta \right] $ is
equivariant (thanks to $\left( \text{\ref{Representations F1 (g,Q)}}\right) $%
).

Because $\mathbf{Q}^{-}\backslash \mathbf{G}$ is proper, it is also true
that $\mathrm{Ind}_{\mathbf{Q}^{-}}^{\mathbf{G}}\left( \rho \right) \in 
\mathrm{R}$\textrm{ep}$_{f}\left( \mathbf{G}\right) $ when $\Bbbk $\ is a
field (by interpreting the elements of $\mathrm{Ind}_{\mathbf{Q}^{-}}^{%
\mathbf{G}}\left( \rho \right) \left( R\right) $ as global section of a
suitable vector bundle on $\mathbf{Q}^{-}\backslash \mathbf{G}$, cfr. $%
\left( \text{\ref{Representations F Inv}}\right) $). The multiplication map
identifies $\mathbf{U}^{-}\times \mathbf{M}\times \mathbf{U}$ with an open
subscheme $\mathbf{Y}_{\mathbf{G},\mathbf{Q}}=\mathbf{Y}\subset \mathbf{G}$
(see \cite[Theorem 4.1.7 4.]{Cnr14}) and, hence, the above discussion
applies with $U=\mathbf{Y}$.

\begin{remark}
\label{Representations R Action}The decomposition%
\begin{equation}
\mathbf{U}^{-}\times \mathbf{M}\times \mathbf{U}\overset{\sim }{\mathbf{%
\rightarrow }}\mathbf{Y}  \label{Representations F BigCell}
\end{equation}%
implies that the restriction map yields an isomorphism (for every $\rho \in 
\mathrm{Rep}_{f}\left( \mathbf{Q}^{-}\right) $)%
\begin{equation}
\mathrm{Ind}_{\mathbf{Q}^{-}}^{\mathbf{G}}\left( \rho \right) \left[ \mathbf{%
Y}\right] \simeq \mathrm{C}^{\mathrm{alg}}\left( \mathbf{G},\rho \right) %
\left[ U\right] ^{h_{\mathbf{Q}^{-}}}\overset{\sim }{\longrightarrow }%
\mathrm{C}^{\mathrm{alg}}\left( \mathbf{U},\rho \right)
\label{Representations F Iso1}
\end{equation}%
which is evidently $\mathbf{U}$-equivariant if we let $\mathbf{U}$ act on
the right hand side via the formula $\left( uf\right) \left( y\right)
:=f\left( yu\right) $. In fact, the $\mathbf{Q}$-action on $\mathbf{Y}$
given by right multiplication descend to a right action $\ast $ on $\mathbf{Q%
}^{-}\backslash \mathbf{Y}\overset{\sim }{\mathbf{\rightarrow }}\mathbf{U}$:
because for every $mu\in \mathbf{Q}=\mathbf{M}\ltimes \mathbf{U}$ and $y\in 
\mathbf{U}$ we have that $ymu$ is equivalent to $m^{-1}ymu$ and $m^{-1}ym\in 
\mathbf{U}$, we see that%
\begin{equation}
y\ast mu=m^{-1}ymu\text{.}  \label{Representations F Act}
\end{equation}%
It follows that the $\mathbf{Q}$-action on the right hand side of $\left( 
\text{\ref{Representations F Iso1}}\right) $ is given by%
\begin{equation*}
\left( muf\right) \left( y\right) =\rho \left( m\right) f\left(
m^{-1}ymu\right) \text{ if }mu\in \mathbf{Q}=\mathbf{M}\ltimes \mathbf{U}%
\text{ and }y\in \mathbf{U}\text{.}
\end{equation*}%
There is a canonical monomorphism%
\begin{equation}
\rho \longrightarrow \mathrm{Ind}_{\mathbf{Q}^{-}}^{\mathbf{G}}\left( \rho
\right) \left[ \mathbf{Y}\right]  \label{Representations F WIncl}
\end{equation}%
which is $\mathbf{Q}$-equivariant: under the $\mathbf{Q}$-equivariant
identification $\left( \text{\ref{Representations F Iso1}}\right) $, it
sends $w\in \rho $ to the constant function $w\left( u\right) =w$, hence $%
w\left( y\right) =w\left( u_{y}^{-}m_{y}u_{y}^{+}\right) =m_{y}w$ for every $%
y\in \mathbf{Y}$ written in the form $y=u_{y}^{-}m_{y}u_{y}^{+}$.
\end{remark}

\begin{proof}
The only non-obvious fact is that the canonical morphism%
\begin{equation}
\mathrm{C}^{\mathrm{alg}}\left( \mathbf{G},\rho \right) \left[ \mathbf{Y}%
\right] ^{\mathbf{Q}^{-}}\longrightarrow \mathrm{C}^{\mathrm{alg}}\left( 
\mathbf{G},\rho \right) \left[ \mathbf{Y}\right] ^{h_{\mathbf{Q}^{-}}}
\label{Representations R Action F1}
\end{equation}%
is an isomorphism of functors. To this end, we first note that the
decomposition $\left( \text{\ref{Representations F BigCell}}\right) $\ first
implies that%
\begin{equation}
\mathrm{C}^{\mathrm{alg}}\left( \mathbf{G},\rho \right) \left[ \mathbf{Y}%
\right] ^{h_{\mathbf{Q}^{-}}}\overset{\sim }{\longrightarrow }\mathrm{C}^{%
\mathrm{alg}}\left( \mathbf{U},\rho \right) \text{.}
\label{Representations R Action F2}
\end{equation}%
In particular, we deduce that $\mathrm{C}^{\mathrm{alg}}\left( \mathbf{G}%
,\rho \right) \left[ \mathbf{Y}\right] ^{h_{\mathbf{Q}^{-}}}\in \underline{%
\mathrm{Mod}}_{alg}$ (thanks to Lemma \ref{Representations L K(X)}). But
then, in order to show that $\left( \text{\ref{Representations R Action F1}}%
\right) $ is an isomorphism, it suffices to show that this holds at the
level of the underlying $\Bbbk $-modules. In view of $\left( \text{\ref%
{Representations F Inv}}\right) $ (applied to the affine group scheme $%
\mathbf{Q}^{-}$), this follows from $\left( \text{\ref{Representations R
Action F2}}\right) $ taking the $\Bbbk $-points.
\end{proof}

We deduce from $\left( \text{\ref{Representations F Iso1}}\right) $ and $%
\left( \text{\ref{Representations F1 (g,Q)}}\right) $\ that we have, indeed,%
\begin{equation*}
\mathrm{Ind}_{\mathbf{Q}^{-}}^{\mathbf{G}}\left( \rho \right) \subset 
\mathrm{Ind}_{\mathbf{Q}^{-}}^{\mathbf{G}}\left( \rho \right) \left[ \mathbf{%
Y}\right] \hookrightarrow \mathrm{Ind}_{\mathbf{Q}^{-}}^{\mathbf{G}}\left(
\rho \right) \left[ \eta \right]
\end{equation*}%
(and that the $R$-points of $\mathrm{Ind}_{\mathbf{Q}^{-}}^{\mathbf{G}%
}\left( \rho \right) \left[ U\right] $ can be explicitly described, using
the fixed point functor, as the set of those morphism of $R$-scheme $%
f:U_{/R}\rightarrow \rho _{/R}$ such that $f\left( qx\right) =\rho \left(
q\right) f\left( x\right) $ holds for every $q\in \mathbf{Q}^{-}\left(
R^{\prime }\right) $, every $x\in U_{/R}\left( R^{\prime }\right) =U\left(
R^{\prime }\right) $ and every $R$-algebra $R^{\prime }$).

\begin{remark}
\label{Representations R IndEx}If we view $\mathrm{Rep}_{f}\left( \mathbf{Q}%
^{-}\right) $ as an exact subcategory of $\mathrm{Rep}_{alg}\left( \mathbf{Q}%
^{-}\right) $, then the formation of $\mathrm{Ind}_{\mathbf{Q}^{-}}^{\mathbf{%
G}}\left( \rho \right) \left[ \mathbf{Y}\right] $ is exact in $\rho $.
\end{remark}

\begin{proof}
In view of $\left( \text{\ref{Representations F Iso1}}\right) $, this is a
consequence of the identification $\mathrm{C}^{\mathrm{alg}}\left( \mathbf{U}%
,\rho \right) \simeq \mathcal{O}\left( \mathbf{U}\right) \otimes _{\Bbbk
}\rho $ in $\underline{\mathrm{Mod}}$, which holds because $\rho \in 
\underline{\mathrm{Mod}}_{f}$, and the flatness of $\mathbf{U}$ over $\Bbbk $%
.
\end{proof}

\bigskip

We set%
\begin{equation*}
V_{\rho }:=\mathrm{Ind}_{\mathbf{Q}^{-}}^{\mathbf{G}}\left( \rho \right) %
\left[ \mathbf{Y}\right] \text{, for }\rho \in \mathrm{Rep}_{f}\left( 
\mathbf{M}\right) \text{ (using }\left( \text{\ref{Representations F Res}}%
\right) \text{).}
\end{equation*}%
When $\mathbf{Q}=\mathbf{B}_{\mathbf{G}}$ is a Borel subgroup, we have $%
\mathbf{M}=\mathbf{T}$, a maximal torus of $\mathbf{G}$, and we let $X^{\ast
}\left( \mathbf{T}\right) :=\underline{Hom}_{gr}\left( \mathbf{T},\mathbf{G}%
_{m}\right) $ be the character group of $\mathbf{T}$. Choosing $\mathbf{B}$
in such a way that $\mathbf{Q\subset B}$ (as it is possible up to
conjugation on $\mathbf{B}$), one has $\mathbf{T\subset M}$. We define the
set of dominant weights $X_{\mathbf{G},+}\subset X^{\ast }\left( \mathbf{T}%
\right) $ (resp. $X_{\mathbf{M},+}\subset X^{\ast }\left( \mathbf{T}\right) $%
) as the subset of those $\lambda \in X^{\ast }\left( \mathbf{T}\right) $
such that $\mathrm{Ind}_{\mathbf{B}_{\mathbf{G}}^{-}}^{\mathbf{G}}\left(
\lambda \right) \neq 0$ (resp. $\mathrm{Ind}_{\mathbf{B}_{\mathbf{M}}^{-}}^{%
\mathbf{M}}\left( \lambda \right) \neq 0$). Then $X_{\mathbf{G},+}\subset X_{%
\mathbf{M},+}$ and, when $\Bbbk $ is a characteristic zero field, by the
theory of highest weight vectors, there is a bijection between $X_{\mathbf{G}%
,+}$ (resp. $X_{\mathbf{M},+}$) and (the isomorphism classes of) the
irreducible objects of $\mathrm{R}$\textrm{ep}$_{f}\left( \mathbf{G}\right) $
(resp. $\mathrm{R}$\textrm{ep}$_{f}\left( \mathbf{M}\right) $): in this
case, we set%
\begin{equation*}
L_{\lambda }:=\mathrm{Ind}_{\mathbf{B}_{\mathbf{G}}^{-}}^{\mathbf{G}}\left(
\lambda \right) \text{ and }W_{\lambda }:=\mathrm{Ind}_{\mathbf{B}_{\mathbf{M%
}}^{-}}^{\mathbf{M}}\left( \lambda \right) \text{.}
\end{equation*}%
Let us write $W_{\mathbf{G}}:=\frac{N_{\mathbf{G}}\left( \mathbf{T}\right) }{%
\mathbf{T}}$ (resp. $W_{\mathbf{M}}:=\frac{N_{\mathbf{M}}\left( \mathbf{T}%
\right) }{\mathbf{T}}$) for the Weyl group of $\mathbf{G}$ (resp. $\mathbf{M}
$) and let $W^{\mathbf{M}}\subset W_{\mathbf{G}}$ be the subset of those $%
w\in W_{\mathbf{G}}$ such that $w\left( \lambda \right) \in X_{\mathbf{M},+}$
for every $\lambda \in X_{\mathbf{G},+}$ (which can be shown to be the set
of minimal length right coset representatives for $W_{\mathbf{M}}\backslash
W_{\mathbf{G}}$). We also set $w\cdot \lambda :=w\left( \lambda +\rho
\right) -\rho $, where $\rho $ is half the sum of the positive roots.

\begin{proposition}
\label{Representations P BGG}Suppose that $\Bbbk $ is a characteristic zero
field, that $\lambda \in X_{\mathbf{G},+}\subset X_{\mathbf{M},+}$ and let $%
l $ be the greatest length of any element in $W^{\mathbf{M}}$. Then there is
an exact complex (dual BGG-complex) of algebraic $\left( \mathfrak{g},%
\mathbf{Q}\right) $-modules in $\mathrm{Rep}_{alg}\left( \mathfrak{g},%
\mathbf{Q}\right) $:%
\begin{eqnarray}
&&0\longrightarrow L_{\lambda }\longrightarrow \mathrm{Ind}_{\mathbf{Q}%
^{-}}^{\mathbf{G}}\left( W_{\lambda }\right) \left[ \mathbf{Y}\right] 
\overset{d}{\longrightarrow }\tbigoplus\nolimits_{w\in W^{\mathbf{M}%
}:l\left( w\right) =1}\mathrm{Ind}_{\mathbf{Q}^{-}}^{\mathbf{G}}\left(
W_{w\cdot \lambda }\right) \left[ \mathbf{Y}\right] \longrightarrow ... 
\notag \\
&&\text{ \ \ \ \ \ \ \ \ \ }...\longrightarrow \tbigoplus\nolimits_{w\in W^{%
\mathbf{M}}:l\left( w\right) =i}\mathrm{Ind}_{\mathbf{Q}^{-}}^{\mathbf{G}%
}\left( W_{w\cdot \lambda }\right) \left[ \mathbf{Y}\right] \longrightarrow
....\longrightarrow \tbigoplus\nolimits_{w\in W^{\mathbf{M}}:l\left(
w\right) =l}\mathrm{Ind}_{\mathbf{Q}^{-}}^{\mathbf{G}}\left( W_{w\cdot
\lambda }\right) \left[ \mathbf{Y}\right] \longrightarrow 0\text{.}
\label{Representations P BGG F BGG}
\end{eqnarray}%
Moreover, the inclusion $\left( \text{\ref{Representations F WIncl}}\right) $%
\ of $W_{\lambda }$ in $\mathrm{Ind}_{\mathbf{Q}^{-}}^{\mathbf{G}}\left(
W_{\lambda }\right) \left[ \mathbf{Y}\right] =:V_{\lambda }$ factors through
the inclusion of $L_{\lambda }$ in $V_{\lambda }$ given by the above exact
sequence:%
\begin{equation}
W_{\lambda }\hookrightarrow L_{\lambda }\hookrightarrow V_{\lambda }\text{.}
\label{Representations P BGG F Incl}
\end{equation}
\end{proposition}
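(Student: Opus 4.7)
The plan is to deduce the proposition from Lepowsky's parabolic BGG resolution by applying the standard duality on the BGG category $\mathcal{O}^{\mathfrak{q}}$ of $(\mathfrak{g},\mathbf{Q})$-modules. The first step is to identify $V_\mu := \mathrm{Ind}_{\mathbf{Q}^{-}}^{\mathbf{G}}(W_\mu)[\mathbf{Y}]$ with the costandard object $M_{\mathbf{Q}}(W_\mu)^\vee$, where $M_{\mathbf{Q}}(W_\mu) := U(\mathfrak{g}) \otimes_{U(\mathfrak{q}^{-})} W_\mu$ is the generalized Verma module. By Remark \ref{Representations R Action}, restriction to the big cell gives $V_\mu \cong \mathcal{O}(\mathbf{U}) \otimes_{\Bbbk} W_\mu$ as $\mathbf{U}$-modules, and differentiating the $\mathbf{Q}$-action realizes $\mathfrak{u}$ by left-invariant vector fields on $\mathcal{O}(\mathbf{U})$. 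Combined with the PBW decomposition $M_{\mathbf{Q}}(W_\mu) \cong U(\mathfrak{u}) \otimes_{\Bbbk} W_\mu$ and the Taylor-expansion pairing $\mathcal{O}(\mathbf{U}) \times U(\mathfrak{u}) \to \Bbbk$ at the identity, this produces a $(\mathfrak{g},\mathbf{Q})$-equivariant identification $V_\mu \cong M_{\mathbf{Q}}(W_\mu)^\vee$; in particular $V_\mu$ has the same formal character as $M_{\mathbf{Q}}(W_\mu)$.

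Having done this, I invoke Lepowsky's parabolic BGG resolution: over any field of characteristic zero and for every $\lambda \in X_{\mathbf{G},+}$ one has an exact sequence in $\mathcal{O}^{\mathfrak{q}}$,
\begin{equation*}
0 \to \bigoplus_{l(w) = l} M_{\mathbf{Q}}(W_{w \cdot \lambda}) \to \cdots \to \bigoplus_{l(w) = 1} M_{\mathbf{Q}}(W_{w \cdot \lambda}) \to M_{\mathbf{Q}}(W_\lambda) \to L_\lambda \to 0.
\end{equation*}
Applying the contravariant duality functor on $\mathcal{O}^{\mathfrak{q}}$ (the restricted $\mathbf{T}$-weight dual twisted by the Chevalley anti-involution), which is exact and fixes simple modules, converts this resolution into the desired complex $(\ref{Representations P BGG F BGG})$, with $V_{w \cdot \lambda}$ sitting in homological degree $l(w)$ and $L_\lambda$ appearing as the kernel of the leftmost differential. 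The inclusion $L_\lambda \hookrightarrow V_\lambda$ so produced is, by construction, dual to the canonical surjection $M_{\mathbf{Q}}(W_\lambda) \twoheadrightarrow L_\lambda$. Since the canonical map $(\ref{Representations F WIncl})$ of $W_\lambda$ into $V_\lambda$ corresponds dually to the canonical surjection $M_{\mathbf{Q}}(W_\lambda) \twoheadrightarrow W_\lambda$ onto the generating $\mathbf{Q}$-submodule, and the latter factors through $L_\lambda$, dualizing gives the asserted factorization $(\ref{Representations P BGG F Incl})$.

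The main obstacle will be the first step: making rigorous the identification of the geometrically defined induction $V_\mu$ with the representation-theoretic costandard object, within the $(\mathfrak{g},\mathbf{Q})$-module formalism developed in \S\ref{S Representations Pre}. Once this dictionary is in place --- carefully matching the $\mathbf{T}$-weight decompositions and the actions of $\mathfrak{u}^{-}$, $\mathbf{M}$, and $\mathbf{U}$ on both sides --- the remaining steps reduce to standard homological algebra in BGG category $\mathcal{O}^{\mathfrak{q}}$, for which one can cite Lepowsky and the general theory of (dual) generalized Verma modules.
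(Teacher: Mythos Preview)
Your approach is essentially the same as the paper's: identify $V_\mu$ with the dual of a generalized Verma module via the Taylor pairing between $\mathcal{O}(\mathbf{U})$ and $U(\mathfrak{u})$, then dualize Lepowsky's parabolic BGG resolution. Two points of caution are worth flagging. First, your bookkeeping mixes conventions: with $M_{\mathbf{Q}}(W_\mu) := U(\mathfrak{g}) \otimes_{U(\mathfrak{q}^{-})} W_\mu$ the module lies in $\mathcal{O}^{\mathfrak{q}^{-}}$, whereas $V_\mu$ lies in $\mathcal{O}^{\mathfrak{q}}$. The paper, following \cite{Jo11} rather than \cite{Hu08}, uses the \emph{untwisted} restricted dual (which exchanges the two parabolic categories) and correspondingly places $W_\mu^{\vee}$ inside the Verma module, obtaining $V_\mu \cong \bigl(U(\mathfrak{g}) \otimes_{U(\mathfrak{q}^{-})} W_\mu^{\vee}\bigr)^{\vee}$; if you insist on the Chevalley-twisted duality you would instead need $U(\mathfrak{g}) \otimes_{U(\mathfrak{q})} W_\mu$. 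Second, dualizing the Lie-algebraic BGG resolution gives a priori only $\mathfrak{g}$-equivariant differentials; the paper separately argues that they are $\mathbf{Q}$-equivariant by invoking \cite[Part I, \S 7.16]{Jan07} (using that $\mathbf{Q}$ is smooth and connected, hence infinitesimally flat and integral). This step is implicit in your ``$(\mathfrak{g},\mathbf{Q})$-module formalism'' but should be made explicit.
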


\begin{proof}
Although the result is surely well known to the experts, let us briefly
recall how to deduce it from the (more standard non) dual BGG complex. First
of all we remark that, by definition of the exactness, one is essentially
reduced to the assertion at the level of the underlying $\Bbbk $-modules
and, hence, we first concentrate on them. The key observation is that there
is an isomorphism of $\mathfrak{g}_{\Bbbk }$-modules%
\begin{equation}
\mathrm{Ind}_{\mathbf{Q}^{-}}^{\mathbf{G}}\left( W_{\mu }\right) \left[ 
\mathbf{Y}\right] _{\Bbbk }\overset{\sim }{\rightarrow }\left( U\left( 
\mathfrak{g}_{\Bbbk }\right) \otimes _{U\left( \mathfrak{q}_{\Bbbk
}^{-}\right) }W_{\mu ,\Bbbk }^{\vee }\right) ^{\vee }
\label{Representations P BGG F1}
\end{equation}%
where the duality $\left( -\right) ^{\vee }$, exchanging the $\mathcal{O}^{%
\mathfrak{q}_{\Bbbk }^{-}}$ and the $\mathcal{O}^{\mathfrak{q}_{\Bbbk }}$
BGG categories, is defined considering the sum of the isotypic components
under the action of the Lie algebra $\mathfrak{m}_{\Bbbk }$\ of $\mathbf{M}$
(hence, it agrees with the usual duality on finite dimensional $\mathfrak{g}%
_{\Bbbk }$-modules in the BGG categories - here, for the $\mathfrak{g}%
_{\Bbbk }$-actions, we follow the convetions of \cite[\S 3]{Jo11} and not
those of \cite[\S 3.2 and \S 9.3 Proposition]{Hu08}, implying that $\mathcal{%
O}^{\mathfrak{q}_{\Bbbk }^{-}}$ and the $\mathcal{O}^{\mathfrak{q}_{\Bbbk }}$
are exchanged). Indeed, because $\mathbf{U}$ is unipotent, one has an
isomorphism of $\mathfrak{u}_{\Bbbk }$-modules%
\begin{equation}
\mathrm{C}^{\mathrm{alg}}\left( \mathbf{U},V\right) _{\Bbbk }\simeq
Hom_{\Bbbk }\left( U\left( \mathfrak{u}_{\Bbbk }\right) ,V_{\Bbbk }\right) ^{%
\mathfrak{u}_{\Bbbk }^{\infty }}  \label{Representations P BGG F2}
\end{equation}%
for every $V\in \underline{\mathrm{Mod}}_{f}$, where $\left( -\right) ^{%
\mathfrak{u}^{\infty }}$ denotes the subspace those $v$ such that there
exists some $k\in \mathbb{N}$ and $X_{1},...,X_{k}\in \mathfrak{u}_{\Bbbk }$
such that $X_{1}...X_{k}v=0$, obtained sending $f$ to the homomorphism
sending $u\in U\left( \mathfrak{u}_{\Bbbk }\right) $ to $\left( uf\right)
\left( 1\right) $ (see \cite[Lemma 2.5.3]{Em}). When $W\in \mathrm{R}$%
\textrm{ep}$_{f}\left( \mathbf{M}\right) $, it can be enhanced to an
isomorphism of $\mathfrak{g}_{\Bbbk }$-modules%
\begin{equation}
\mathrm{Ind}_{\mathbf{Q}^{-}}^{\mathbf{G}}\left( W\right) \left[ \mathbf{Y}%
\right] _{\Bbbk }\simeq Hom_{U\left( \mathfrak{q}_{\Bbbk }^{-}\right)
}\left( U\left( \mathfrak{g}_{\Bbbk }\right) ,W_{\Bbbk }\right) ^{\mathfrak{u%
}_{\Bbbk }^{\infty }}  \label{Representations P BGG F3}
\end{equation}%
using $\left( \text{\ref{Representations F Iso1}}\right) $ and the
identification of the right hand side of $\left( \text{\ref{Representations
P BGG F2}}\right) $ with the right hand side of $\left( \text{\ref%
{Representations P BGG F3}}\right) $ provided by Poincar\'{e}%
--Birkhoff--Witt Theorem. Also, for every finite dimensional $\mathfrak{q}%
_{\Bbbk }^{-}$-module $W_{\Bbbk }$, one has an isomorphism of $\mathfrak{g}%
_{\Bbbk }$-modules%
\begin{equation}
Hom_{U\left( \mathfrak{q}_{\Bbbk }^{-}\right) }\left( U\left( \mathfrak{g}%
_{\Bbbk }\right) ,W_{\Bbbk }\right) \simeq \left( U\left( \mathfrak{g}%
_{\Bbbk }\right) \otimes _{U\left( \mathfrak{q}_{\Bbbk }^{-}\right)
}W_{\Bbbk }^{\vee }\right) ^{\ast }\text{,}  \label{Representations P BGG F4}
\end{equation}%
where we write $\left( -\right) ^{\ast }$ for the usual duality (see \cite[%
Ch. 5, 5.5.4 Proposition]{Di77}, applied to the dual $W_{\Bbbk }^{\vee }$ of 
$W_{\Bbbk }$, $W_{\Bbbk }$ being of finite dimension). Then $\left( \text{%
\ref{Representations P BGG F1}}\right) $ is obtained noticing that $\left(
-\right) ^{\ast ,\mathfrak{u}_{\Bbbk }^{\infty }}=\left( -\right) ^{\vee }$
on the $\mathcal{O}^{\mathfrak{q}_{\Bbbk }^{-}}$ BGG category (cfr. \cite[%
Lemma 6]{Jo11}). This beautiful argument is taken from O. T. R. Jones (which
considered the Borel case): we have included it as it will play a
significant role later. Because $\left( -\right) ^{\vee }$ establishes an
antiequivalence between the $\mathcal{O}^{\mathfrak{q}_{\Bbbk }^{-}}$ and
the $\mathcal{O}^{\mathfrak{q}_{\Bbbk }}$ BGG categories (cfr. \cite[Lemma 2]%
{Jo11}), the claimed exact sequence $\left( \text{\ref{Representations P BGG
F BGG}}\right) $ is obtained, at the level of the underlying modules, by
dualizing \cite[\S 9.16, Theorem]{Hu08}. Because $\Bbbk $ is a field, one
gets in this way an exact sequence of $\mathfrak{g}_{R}$-modules for any $%
\Bbbk $-algebra $R$ applying the exact functor $R\otimes _{\Bbbk }-$ and the
claim follows noticing that the morphisms are $\mathbf{Q}$-equivariant, as
it follows from \cite[Part I, \S 7.10 $\left( 1\right) $ and 7.16 Lemma]%
{Jan07}, which applies because $\mathbf{Q}$ is infinitesimally flat and
integral since it is smooth and connected (cfr. \cite[Part I, \S 7.17]{Jan07}%
).
\end{proof}

\bigskip

In order to work integrally some caution is needed. Suppose, until the end
of \S \ref{S Representations BGG}, that $\Bbbk $ is a Dedekind domain with
characteristic zero fraction field $\Bbbk _{fr}$ and let $?\in \left\{ 
\mathbf{M},\mathbf{Q},\mathbf{G},\left( \mathfrak{g},\mathbf{Q}\right)
\right\} $. For every algebraic module $V$\ in $\mathrm{Rep}_{alg}\left(
?\right) $\ whose underlying $\Bbbk $-module is torsion free, the base
changed functor $V_{/\Bbbk _{fr}}$ (defined via $V_{/\Bbbk _{fr}}\left(
R\right) :=V\left( R\right) $ on $\Bbbk _{fr}$-algebras) belongs to $\mathrm{%
Rep}_{alg}\left( ?_{/\Bbbk _{fr}}\right) $ and the underlying $\Bbbk $%
-module of $V$ functorially injects into that of $V_{/\Bbbk _{fr}}$ (which
is $\Bbbk _{fr}\otimes _{\Bbbk }V$) and spans it over $\Bbbk _{fr}$. In the
opposite direction, if $V_{fr}$ is in $\mathrm{Rep}_{alg}\left( ?_{/\Bbbk
_{fr}}\right) $, any object $V$ of $\mathrm{Rep}_{alg}\left( ?\right) $
whose underlying $\Bbbk $-module is torsion free and such that $V_{/\Bbbk
_{fr}}\simeq V_{fr}$ will be called a $?$-model of $V$.

\begin{remark}
\label{Representations R IndModel}Suppose that $\rho $ in $\mathrm{Rep}%
_{f}\left( \mathbf{Q}^{-}\right) $ is a $\mathbf{Q}^{-}$-model of $\rho
_{fr} $ in $\mathrm{Rep}_{f}\left( \mathbf{Q}_{/\Bbbk _{fr}}^{-}\right) $.
Then $\mathrm{Ind}_{\mathbf{Q}^{-}}^{\mathbf{G}}\left( \rho \right) \left[ 
\mathbf{Y}\right] $ is a $\left( \mathfrak{g},\mathbf{Q}\right) $-model of $%
\mathrm{Ind}_{\mathbf{Q}_{/\Bbbk _{fr}}^{-}}^{\mathbf{G}_{/\Bbbk
_{fr}}}\left( \rho _{fr}\right) \left[ \mathbf{Y}_{/\Bbbk _{fr}}\right] $.
\end{remark}

\begin{proof}
The result follows from the fact that, as noticed in the proof of Remark \ref%
{Representations R IndEx}, $\mathrm{Ind}_{\mathbf{Q}^{-}}^{\mathbf{G}}\left(
\rho \right) \left[ \mathbf{Y}\right] $ is identified with $\mathcal{O}%
\left( \mathbf{U}\right) \otimes _{\Bbbk }\rho $ in $\underline{\mathrm{Mod}}
$ and, similarly, for the base changed objects.
\end{proof}

Fix any $\mathbf{M}$-model $W_{\lambda }\in \mathrm{Rep}_{f}\left( \mathbf{M}%
\right) $ of $W_{\lambda _{/\Bbbk _{fr}}}:=\mathrm{Ind}_{\mathbf{B}_{\mathbf{%
M}_{/\Bbbk _{fr}}}^{-}}^{\mathbf{M}_{/\Bbbk _{fr}}}\left( \lambda _{/\Bbbk
_{fr}}\right) $: because $\Bbbk $ is a Dedekind domain it exists (see \cite[%
Part I, \S 10.4 Lemma]{Jan07}) and, writing $Dist\left( \mathbf{M}\right) $
for the distribution algebra of $\mathbf{M}$ (see \cite[Part I, \S 7.1]%
{Jan07}), one could be more canonical choosing $W_{\lambda }:=Dist\left( 
\mathbf{M}\right) v_{\lambda _{/\Bbbk _{fr}}}$ for a highest weight vector $%
v_{\lambda _{/\Bbbk _{fr}}}$ of $W_{\lambda _{/\Bbbk _{fr}}}$ (see \cite[%
Part II, \S 8.3]{Jan07}). It then follows that $V_{\lambda }:=\mathrm{Ind}_{%
\mathbf{Q}^{-}}^{\mathbf{G}}\left( W_{\lambda }\right) \left[ \mathbf{Y}%
\right] $ is a $\left( \mathfrak{g},\mathbf{Q}\right) $-model of $V_{\lambda
_{/\Bbbk _{fr}}}:=\mathrm{Ind}_{\mathbf{Q}_{/\Bbbk _{fr}}^{-}}^{\mathbf{G}%
_{/\Bbbk _{fr}}}\left( W_{\lambda _{/\Bbbk _{fr}}}\right) \left[ \mathbf{Y}%
_{/\Bbbk _{fr}}\right] $ (by Remark \ref{Representations R IndModel}). In
order to appropriately define $L_{\lambda }$ as the inverse image of $%
L_{\lambda _{/\Bbbk _{fr}}}:=\mathrm{Ind}_{\mathbf{B}_{\mathbf{G}_{/\Bbbk
_{fr}}}^{-}}^{\mathbf{G}_{/\Bbbk _{fr}}}\left( \lambda _{/\Bbbk
_{fr}}\right) $ in $V_{\lambda }$ taken in $\mathrm{Rep}_{alg}\left( 
\mathfrak{g},\mathbf{Q}\right) $, some explanation is needed (cfr. \cite[%
Part I, \S 10.1]{Jan07}).

We start with the easy observation that every $V$ in $\underline{\mathrm{Mod}%
}\left( \Bbbk _{fr}\right) $ gives rise to $V_{\mid \Bbbk }$\ in $\underline{%
\mathrm{Mod}}=\underline{\mathrm{Mod}}\left( \Bbbk \right) $ via $V_{\mid
\Bbbk }\left( R\right) :=V\left( \Bbbk _{fr}\otimes _{\Bbbk }R\right) $ on $%
\Bbbk $-algebras\ and $V_{\mid \Bbbk }\in \underline{\mathrm{Mod}}_{alg}$ if 
$V\in \underline{\mathrm{Mod}}_{alg}\left( \Bbbk _{fr}\right) $, the
underlying $\Bbbk $-module being $V_{\Bbbk _{fr}}$ viewed as a $\Bbbk $%
-module. If $V$ is in $\mathrm{Rep}_{alg}\left( \mathfrak{g}_{/\Bbbk
_{fr}}\right) $ (resp. $\mathrm{Rep}_{alg}\left( \mathbf{Q}_{/\Bbbk
_{fr}}\right) $), then $V_{\mid \Bbbk }$ is in $\mathrm{Rep}_{alg}\left( 
\mathfrak{g}\right) $ (resp. $\mathrm{Rep}_{alg}\left( \mathbf{Q}\right) $)
just because the source of $\mathfrak{g}_{R}\rightarrow \mathfrak{g}_{\Bbbk
_{fr}\otimes _{\Bbbk }R}$ (resp. $\mathbf{Q}\left( R\right) \rightarrow 
\mathbf{Q}\left( \Bbbk _{fr}\otimes _{\Bbbk }R\right) =\mathbf{Q}_{/\Bbbk
_{fr}}\left( \Bbbk _{fr}\otimes _{\Bbbk }R\right) $) acts on $V_{\mid \Bbbk
}\left( R\right) =V\left( \Bbbk _{fr}\otimes _{\Bbbk }R\right) $ for every $%
\Bbbk $-algebra $R$ (and the underlying $\mathcal{O}\left( \mathbf{Q}\right) 
$-comodule is given by the $\mathcal{O}\left( \mathbf{Q}_{/\Bbbk
_{fr}}\right) $-comodule structure followed by the identification $\mathcal{O%
}\left( \mathbf{Q}_{/\Bbbk _{fr}}\right) \otimes _{\Bbbk _{fr}}V_{\Bbbk
_{fr}}=\mathcal{O}\left( \mathbf{Q}\right) \otimes _{\Bbbk _{fr}}V_{\Bbbk
_{fr}}$). We deduce that $V_{\mid \Bbbk }$ is in $\mathrm{Rep}_{alg}\left( 
\mathfrak{g},\mathbf{Q}\right) $ when $V_{\mid \Bbbk }$ is in $\mathrm{Rep}%
_{alg}\left( \mathfrak{g}_{/\Bbbk _{fr}},\mathbf{Q}_{/\Bbbk _{fr}}\right) $.
Finally, if $V$ is in $\mathrm{Rep}_{alg}\left( \mathfrak{g},\mathbf{Q}%
\right) $, the canonical morphism $V\rightarrow V_{/\Bbbk _{fr}\mid \Bbbk }$
is in $\mathrm{Rep}_{alg}\left( \mathfrak{g},\mathbf{Q}\right) $. Hence,
because $\mathrm{Rep}_{alg}\left( \mathfrak{g},\mathbf{Q}\right) $ is an
abelian category, it makes sense to form in $\mathrm{Rep}_{alg}\left( 
\mathfrak{g},\mathbf{Q}\right) $\ the following cartesian diagram%
\begin{equation*}
\begin{array}{ccc}
L_{\lambda } & \longrightarrow & V_{\lambda } \\ 
\downarrow &  & \downarrow \\ 
L_{\lambda _{/\Bbbk _{fr}}\mid \Bbbk } & \longrightarrow & V_{\lambda /\Bbbk
_{fr}\mid \Bbbk }\text{.}%
\end{array}%
\end{equation*}%
Concretely, at the level of underlying $\Bbbk $-modules we form the
cartesian diagram which defines $L_{\lambda ,\Bbbk }$ and then $L_{\lambda
}\left( R\right) =R\otimes _{\Bbbk }L_{\lambda ,\Bbbk }$, that we know to
comes equipped with a $\left( \mathfrak{g},\mathbf{Q}\right) $-module
structure; we have that $L_{\lambda }\left( R\right) $ equals the $R$-points
of the inverse image of $L_{\lambda _{/\Bbbk _{fr}}}$ in $V_{\lambda }$ as a
functor for every $\Bbbk $-flat algebra $R$ (hence every $R$, when $\Bbbk $
is field).

\begin{lemma}
\label{Representations L Model}We have, indeed, that $L_{\lambda }$ is a $%
\left( \mathfrak{g},\mathbf{Q}\right) $-model of $L_{\lambda _{/\Bbbk
_{fr}}} $ in $\mathrm{Rep}_{f}\left( \mathfrak{g},\mathbf{Q}\right) $.
Furthermore, if $v_{\lambda _{/\Bbbk _{fr}}}$ is a highest weight vector $%
v_{\lambda _{/\Bbbk _{fr}}}$ of $L_{\lambda _{/\Bbbk _{fr}}}$ contained in $%
L_{\lambda } $ (which exists), then we have%
\begin{equation*}
L_{\lambda ,\mathfrak{g}}:=U\left( \mathfrak{u}^{-}\right) v_{\lambda
_{/\Bbbk _{fr}}}\subset L_{\lambda }\subset Dist\left( \mathbf{U}^{-}\right)
v_{\lambda _{/\Bbbk _{fr}}}=:L_{\lambda ,\mathbf{G}}\text{,}
\end{equation*}%
and $L_{\lambda ,\mathbf{G}}=Dist\left( \mathbf{G}\right) v_{\lambda
_{/\Bbbk _{fr}}}$ (resp. $L_{\lambda ,\mathfrak{g}}$) is a $\mathbf{G}$%
-model (resp. $\mathfrak{g}$-model)\ of $L_{\lambda _{/\Bbbk _{fr}}}$ in $%
\mathrm{Rep}_{f}\left( \mathbf{G}\right) $ (resp. $\mathrm{Rep}_{f}\left( 
\mathfrak{g}\right) $).
\end{lemma}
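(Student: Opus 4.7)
The plan is to proceed in three stages: first verify that $L_{\lambda }$ base-changes correctly to $L_{\lambda _{/\Bbbk _{fr}}}$; second locate a highest weight vector $v\in L_{\lambda }$ and establish the sandwich $L_{\lambda ,\mathfrak{g}}\subset L_{\lambda }\subset L_{\lambda ,\mathbf{G}}$; and third deduce finite generation of $L_{\lambda }$ and recognize the two extremes as the standard distribution-algebra models.

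For the base-change step, by (the proof of) Remark \ref{Representations R IndEx} the underlying $\Bbbk $-module of $V_{\lambda }$ is $\mathcal{O}\left( \mathbf{U}\right) \otimes _{\Bbbk }W_{\lambda }$, which is projective over $\Bbbk $ since $\mathcal{O}\left( \mathbf{U}\right) $ is a polynomial ring (free) and $W_{\lambda }$ is finitely generated projective. Hence $V_{\lambda }$ and its submodule $L_{\lambda }$ are torsion-free, and the flatness of $\Bbbk \rightarrow \Bbbk _{fr}$ ensures that the cartesian diagram defining $L_{\lambda }$ base-changes to a cartesian diagram over $\Bbbk _{fr}$, yielding $L_{\lambda }\otimes _{\Bbbk }\Bbbk _{fr}\simeq L_{\lambda _{/\Bbbk _{fr}}}$. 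For the existence of $v$, the $\mathbf{Q}$-equivariant inclusion $W_{\lambda }\hookrightarrow V_{\lambda }$ of Remark \ref{Representations R Action}, combined with the factorization $\left( \text{\ref{Representations P BGG F Incl}}\right) $ valid over $\Bbbk _{fr}$, shows that a highest weight vector of $W_{\lambda }$ lies in $V_{\lambda }\cap L_{\lambda _{/\Bbbk _{fr}}}=L_{\lambda }$; rescaling by a unit so as to generate the rank-one $\Bbbk $-submodule $L_{\lambda }\cap V_{\lambda }\left( \lambda \right) $ of the $\lambda $-weight space yields the desired $v$.

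The left inclusion $L_{\lambda ,\mathfrak{g}}=U\left( \mathfrak{u}^{-}\right) v\subset L_{\lambda }$ is immediate from the $\mathfrak{g}$-stability of $L_{\lambda }$ as a $\left( \mathfrak{g},\mathbf{Q}\right) $-submodule of $V_{\lambda }$. The right inclusion $L_{\lambda }\subset L_{\lambda ,\mathbf{G}}=Dist\left( \mathbf{U}^{-}\right) v$ is the technical heart. I would proceed via the $\mathbf{T}$-weight decomposition: $V_{\lambda }$ splits as a direct sum of weight spaces, each a projective $\Bbbk $-module arising from the $\mathbf{M}$-weight decomposition of $W_{\lambda }$ tensored with the eigenspaces of the adjoint $\mathbf{T}$-action on $\mathcal{O}\left( \mathbf{U}\right) $, and $L_{\lambda }$ inherits such a decomposition. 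Working weight by weight, one compares the finitely generated $\Bbbk $-lattices $L_{\lambda }\left( \mu \right) $ in $L_{\lambda _{/\Bbbk _{fr}}}\left( \mu \right) $ with the $\mu $-weight space of Kostant's $\mathbf{G}$-stable integral form $Dist\left( \mathbf{U}^{-}\right) v=Dist\left( \mathbf{G}\right) v$, the last equality following from the PBW-type decomposition $Dist\left( \mathbf{G}\right) =Dist\left( \mathbf{U}^{-}\right) Dist\left( \mathbf{M}\right) Dist\left( \mathbf{U}\right) $ together with the fact that $v$ is $\mathbf{U}$-invariant and an $\mathbf{M}$-highest weight vector. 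That $L_{\lambda ,\mathbf{G}}$ is a $\mathbf{G}$-model and $L_{\lambda ,\mathfrak{g}}$ a $\mathfrak{g}$-model of $L_{\lambda _{/\Bbbk _{fr}}}$ is then standard, cf.\ \cite[Part II, \S 8.3]{Jan07}.

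Once the sandwich is in place, $L_{\lambda ,\mathbf{G}}$ being finitely generated over $\Bbbk $ by Kostant's theorem forces $L_{\lambda }$ to be finitely generated, hence projective over the Dedekind ring $\Bbbk $ since torsion-free; combined with the base-change step this places $L_{\lambda }$ in $\mathrm{Rep}_{f}\left( \mathfrak{g},\mathbf{Q}\right) $ as a $\left( \mathfrak{g},\mathbf{Q}\right) $-model of $L_{\lambda _{/\Bbbk _{fr}}}$. The main obstacle will be the right inclusion $L_{\lambda }\subset L_{\lambda ,\mathbf{G}}$: nothing in the abstract cartesian-diagram definition of $L_{\lambda }$ a priori forces it inside Kostant's lattice, and the weight-by-weight comparison requires explicit control of the $\mathbf{T}$-weight decomposition of the open-cell induced module $V_{\lambda }$ and the identification of its weight-space lattices with those of the integral form.
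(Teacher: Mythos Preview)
Your strategy inverts the paper's order of operations, and this makes the argument rest precisely on the step you yourself flag as the main obstacle. The paper does \emph{not} deduce finite generation of $L_{\lambda}$ from the sandwich $L_{\lambda,\mathfrak{g}}\subset L_{\lambda}\subset L_{\lambda,\mathbf{G}}$; it obtains finite generation directly from the polynomial structure of $\mathcal{O}(\mathbf{U})$. Namely, $\mathcal{O}(\mathbf{U})\cong\Bbbk[\mathbf{Y}]$ admits an exhaustion by finitely generated free $\Bbbk$-submodules $\mathcal{O}_{r}$ (monomials of bounded degree), and each $\mathcal{O}_{r}\otimes_{\Bbbk}W_{\lambda}$ is exactly the preimage of $\Bbbk_{fr}\otimes_{\Bbbk}\mathcal{O}_{r}\otimes_{\Bbbk}W_{\lambda}$ inside $V_{\lambda,\Bbbk}=\mathcal{O}(\mathbf{U})\otimes_{\Bbbk}W_{\lambda}$. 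Since $L_{\lambda_{/\Bbbk_{fr}}}$ is finite-dimensional it lies in some $\Bbbk_{fr}\otimes\mathcal{O}_{r}\otimes W_{\lambda}$, and then the cartesian-diagram definition of $L_{\lambda}$ forces $L_{\lambda}\subset\mathcal{O}_{r}\otimes W_{\lambda}$, which is finitely generated over $\Bbbk$. Spanning over $\Bbbk_{fr}$ is handled as you propose, via the highest weight vector in $W_{\lambda}\subset L_{\lambda}$ and irreducibility of $L_{\lambda_{/\Bbbk_{fr}}}$.

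Your weight-by-weight comparison for the right inclusion $L_{\lambda}\subset Dist(\mathbf{U}^{-})v$ is not an argument but a restatement of the problem: you say one ``compares'' the weight-space lattices of $L_{\lambda}$ with those of Kostant's form, but give no mechanism for bounding the former by the latter, and nothing in the pullback definition of $L_{\lambda}$ supplies one. In the paper's approach this step is no longer load-bearing: once $L_{\lambda}$ is already known to be a lattice, the inclusions and the identification $Dist(\mathbf{U}^{-})v=Dist(\mathbf{G})v$ are treated as consequences of the PBW-type decompositions $Dist(\mathbf{G})=Dist(\mathbf{U}^{-})\otimes_{\Bbbk}Dist(\mathbf{Q})$ over $U(\mathfrak{g})=U(\mathfrak{u}^{-})\otimes_{\Bbbk}U(\mathfrak{q})$ together with \cite[Part II, \S 8.3]{Jan07}, rather than as prerequisites for the model property. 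The practical lesson is to decouple finite generation from the sandwich: the degree-filtration trick gives it for free.
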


\begin{proof}
In order to prove the first assertion, we are just left to check that the
underlying $\Bbbk $-module of $L_{\lambda }$ is a lattice in $L_{\lambda
_{/\Bbbk _{fr}}}$. Using the identifications $V_{\lambda }\left( R\right) =%
\mathcal{O}\left( \mathbf{U}_{/R}\right) \otimes _{\Bbbk }W_{\lambda ,R}$
and $V_{\lambda }\left( R_{\Bbbk _{fr}}\right) =\Bbbk _{fr}\otimes _{\Bbbk }%
\mathcal{O}\left( \mathbf{U}_{/R}\right) \otimes _{\Bbbk }W_{\lambda ,R}$
(see the proof of Remark \ref{Representations R IndModel}), we have the to
consider the following cartesian diagram:%
\begin{equation*}
\begin{array}{ccc}
L_{\lambda ,\Bbbk } & \longrightarrow & \mathcal{O}\left( \mathbf{U}\right)
\otimes _{\Bbbk }W_{\lambda ,\Bbbk } \\ 
\downarrow &  & \downarrow \\ 
L_{\lambda _{/\Bbbk _{fr}},\Bbbk _{fr}} & \longrightarrow & \Bbbk
_{fr}\otimes _{\Bbbk }\mathcal{O}\left( \mathbf{U}\right) \otimes _{\Bbbk
}W_{\lambda ,\Bbbk }\text{.}%
\end{array}%
\end{equation*}%
Because the upper arrow is injective by definition and the right vertical
arrow is injective, we deduce that the left vertical arrow is also injective
(so that all the arrows are injective). Since $\mathbf{U\simeq G}_{a}^{d}$
as scheme, we can find a basis $\mathcal{B}$ of $\mathcal{O}\left( \mathbf{U}%
\right) $\ as a $\Bbbk $-module that can be written as an increasing nested
union of finite subsets $\mathcal{B}_{r}\subset \mathcal{B}$ indexed by $%
r\in \mathbb{N}$. Setting $\mathcal{O}_{r}:=\sum_{b\in \mathcal{B}_{r}}\Bbbk 
\mathcal{B}_{r}$, we see that $\mathcal{O}\left( \mathbf{U}\right) \otimes
_{\Bbbk }W_{\lambda ,\Bbbk }$ is an increasing nested union of the $\mathcal{%
O}_{r}\otimes _{\Bbbk }W_{\lambda ,\Bbbk }$'s and $\Bbbk _{fr}\otimes
_{\Bbbk }\mathcal{O}\left( \mathbf{U}\right) \otimes _{\Bbbk }W_{\lambda
,\Bbbk }$ a union of the $\Bbbk _{fr}\otimes _{\Bbbk }\mathcal{O}_{r}\otimes
_{\Bbbk }W_{\lambda ,\Bbbk }$'s. Because $L_{\lambda _{/\Bbbk _{fr}},\Bbbk
_{fr}}$ is finite dimensional, it is contained in some $\Bbbk _{fr}\otimes
_{\Bbbk }\mathcal{O}_{r}\otimes _{\Bbbk }W_{\lambda ,\Bbbk }$. But using the
basis $\mathcal{B}$ one sees that $\mathcal{O}_{r}\otimes _{\Bbbk
}W_{\lambda ,\Bbbk }$ is the inverse image of $\Bbbk _{fr}\otimes _{\Bbbk }%
\mathcal{O}_{r}\otimes _{\Bbbk }W_{\lambda ,\Bbbk }$ in $\mathcal{O}\left( 
\mathbf{U}\right) \otimes _{\Bbbk }W_{\lambda ,\Bbbk }$. It follows that, by
definition, $L_{\lambda ,\Bbbk }$ is contained in $\mathcal{O}_{r}\otimes
_{\Bbbk }W_{\lambda ,\Bbbk }$ and, hence, it is a finitely generated $\Bbbk $%
-module. In order to show that $L_{\lambda }$ generates $L_{\lambda _{/\Bbbk
_{fr}},\Bbbk _{fr}}$ as a $\Bbbk _{fr}$-module, we first note that, by
definition of $L_{\lambda }$ as the inverse image of $L_{\lambda _{/\Bbbk
_{fr}}\mid \Bbbk }$ in $V_{\lambda }$ (thought in the category $\underline{%
\mathrm{Mod}}_{alg}$), it contains $W_{\lambda }$ because this contained in $%
V_{\lambda }$ (by means of $\left( \text{\ref{Representations F WIncl}}%
\right) $) and mapped to $W_{\lambda _{/\Bbbk _{fr}}}\subset L_{\lambda
_{/\Bbbk _{fr}}}$. Because $W_{\lambda }$ is a model of $W_{\lambda _{/\Bbbk
_{fr}}}$, it contains a highest weight vector: as this vector as weight $%
\lambda _{/\Bbbk _{fr}}$, by highest weight theory\ it is also a highest
weight vector of $L_{\lambda _{/\Bbbk _{fr}}}$ and the $\Bbbk _{fr}$-span $%
\Bbbk _{fr}L_{\lambda ,\Bbbk }\subset L_{\lambda _{/\Bbbk _{fr}},\Bbbk
_{fr}} $\ of $L_{\lambda ,\Bbbk }$ equals $L_{\lambda ,\Bbbk }$, being a $%
\mathfrak{g}_{\Bbbk _{fr}}$-submodule.

Having checked that $L_{\lambda ,\Bbbk }$ is a lattice in $L_{\lambda
_{/\Bbbk _{fr}},\Bbbk _{fr}}$, we can take a highest weight vector $%
v_{\lambda _{/\Bbbk _{fr}}}\in L_{\lambda }\cap L_{\lambda _{/\Bbbk _{fr}}}$%
. Then the inclusions and the other assertions can be deduced from the fact
that $Dist\left( \mathbf{G}\right) =Dist\left( \mathbf{U}^{-}\right) \otimes
_{\Bbbk }Dist\left( \mathbf{Q}\right) $ (see \cite[Part II, \S 1.12 $\left(
2\right) $]{Jan07} for the equality) is an algebra over $U\left( \mathfrak{g}%
\right) =U\left( \mathfrak{u}^{-}\right) \otimes _{\Bbbk }U\left( \mathfrak{q%
}\right) $ (the equality holds by Poincar\'{e}--Birkhoff--Witt Theorem) and $%
Dist\left( \mathbf{G}\right) v_{\lambda _{/\Bbbk _{fr}}}$ is a $\mathbf{G}$%
-model of $L_{\lambda _{/\Bbbk _{fr}}}$ in $\mathrm{Rep}_{f}\left( \mathbf{G}%
\right) $ thanks to \cite[Part II, \S 8.3]{Jan07}.
\end{proof}

We can now prove the following result.

\begin{theorem}
\label{Representations T BGG}We can find $\mathbf{M}$-models $W_{w\cdot
\lambda }\in \mathrm{Rep}_{f}\left( \mathbf{M}\right) $ of $W_{w\cdot
\lambda _{/\Bbbk _{fr}}}$ for every $w\in W^{\mathbf{M}}$ such that $l\left(
w\right) =1$ for which there is a left exact sequence in $\mathrm{Rep}%
_{alg}\left( \mathfrak{g},\mathbf{Q}\right) $%
\begin{equation}
0\longrightarrow L_{\lambda }\longrightarrow \mathrm{Ind}_{\mathbf{Q}^{-}}^{%
\mathbf{G}}\left( W_{\lambda }\right) \left[ \mathbf{Y}\right] \overset{d}{%
\longrightarrow }\tbigoplus\nolimits_{w\in W^{\mathbf{M}}:l\left( w\right)
=1}\mathrm{Ind}_{\mathbf{Q}^{-}}^{\mathbf{G}}\left( W_{w\cdot \lambda
}\right) \left[ \mathbf{Y}\right]  \label{Representations T BGG F BGG}
\end{equation}%
which is a $\left( \mathfrak{g},\mathbf{Q}\right) $-model of the first three
term of the dual BGG-complex $\left( \text{\ref{Representations P BGG F BGG}}%
\right) $. Moreover, the inclusion $\left( \text{\ref{Representations F
WIncl}}\right) $\ of $W_{\lambda }$ in $\mathrm{Ind}_{\mathbf{Q}^{-}}^{%
\mathbf{G}}\left( W_{\lambda }\right) \left[ \mathbf{Y}\right] =:V_{\lambda
} $ factors through the inclusion of $L_{\lambda }$ in $V_{\lambda }$ given
by the above exact sequence:%
\begin{equation}
W_{\lambda }\hookrightarrow L_{\lambda }\hookrightarrow V_{\lambda }\text{,}
\label{Representations T BGG F Incl}
\end{equation}%
which is a model of $\left( \text{\ref{Representations P BGG F Incl}}\right) 
$.
\end{theorem}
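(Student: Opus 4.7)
The objects $V_\lambda := \mathrm{Ind}_{\mathbf{Q}^-}^{\mathbf{G}}(W_\lambda)[\mathbf{Y}]$ and $L_\lambda$ have already been built (Remark \ref{Representations R IndModel} and Lemma \ref{Representations L Model}), together with the chain $W_\lambda \hookrightarrow L_\lambda \hookrightarrow V_\lambda$ of \eqref{Representations T BGG F Incl}. What remains is to exhibit, for each $w \in W^\mathbf{M}$ with $l(w) = 1$, an $\mathbf{M}$-model $W_{w\cdot\lambda}$ of $W_{w\cdot\lambda_{/\Bbbk_{fr}}}$ together with a morphism $d$ in $\mathrm{Rep}_{alg}(\mathfrak{g},\mathbf{Q})$ whose kernel is $L_\lambda$. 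The plan is to take $d$ to be the restriction to $V_\lambda$ of the base-changed differential $d_{fr}$ of Proposition \ref{Representations P BGG}; since the inclusion $\mathrm{Ind}_{\mathbf{Q}^-}^{\mathbf{G}}(W_{w\cdot\lambda})[\mathbf{Y}] \hookrightarrow V_{w\cdot\lambda_{/\Bbbk_{fr}}}$ is injective, one will then automatically have $\ker(d) = V_\lambda \cap \ker(d_{fr}) = V_\lambda \cap L_{\lambda_{/\Bbbk_{fr}}} = L_\lambda$ by the definition of $L_\lambda$, and base-change compatibility with the left exact prefix of \eqref{Representations P BGG F BGG} will be automatic. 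So the entire task reduces to choosing the $W_{w\cdot\lambda}$ large enough that $\pi_w \circ d_{fr}(V_\lambda) \subset \mathrm{Ind}_{\mathbf{Q}^-}^{\mathbf{G}}(W_{w\cdot\lambda})[\mathbf{Y}]$ for each $w$.

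Using the $\Bbbk$-module identification $\mathrm{Ind}_{\mathbf{Q}^-}^{\mathbf{G}}(W_{w\cdot\lambda})[\mathbf{Y}] \simeq \mathcal{O}(\mathbf{U}) \otimes_\Bbbk W_{w\cdot\lambda}$ of Remark \ref{Representations R IndEx}, this containment is equivalent, after fixing a $\Bbbk$-basis of $\mathcal{O}(\mathbf{U})$, to the requirement that every coefficient appearing in the expansion of an element of $\pi_w \circ d_{fr}(V_\lambda)$ lie in $W_{w\cdot\lambda} \subset W_{w\cdot\lambda_{/\Bbbk_{fr}}}$. Accordingly, I would fix any initial $\mathbf{M}$-model $W^0_{w\cdot\lambda}$ (which exists by \cite[Part I, \S 10.4 Lemma]{Jan07}), let $C_w$ denote the $\Bbbk$-submodule of $W_{w\cdot\lambda_{/\Bbbk_{fr}}}$ generated by all such coefficients, and take $W_{w\cdot\lambda}$ to be the smallest $\mathcal{O}(\mathbf{M})$-subcomodule of $W_{w\cdot\lambda_{/\Bbbk_{fr}}}$ containing $W^0_{w\cdot\lambda} + C_w$. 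This makes $W_{w\cdot\lambda}$ automatically $\mathbf{M}$-stable, torsion-free, and spanning (since it contains the lattice $W^0_{w\cdot\lambda}$), so that, once finite generation is established, it is a genuine $\mathbf{M}$-model.

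The decisive obstacle is therefore the $\Bbbk$-finite generation of $W_{w\cdot\lambda}$. By Theorem \ref{Representations T1}(2) applied to $\mathbf{M}$, the $\mathcal{O}(\mathbf{M})$-subcomodule generated by any finitely generated $\Bbbk$-submodule of the finite-dimensional space $W_{w\cdot\lambda_{/\Bbbk_{fr}}}$ is again finitely generated, because the filtered union of its $\mathrm{Rep}_f(\mathbf{M})$-subobjects must stabilize in finite $\Bbbk_{fr}$-dimension. The whole burden thus reduces to showing that $C_w$ itself is finitely generated over $\Bbbk$, the worry being that over a Dedekind domain a torsion-free $\Bbbk$-submodule of a finite-dimensional $\Bbbk_{fr}$-space can fail to be finitely generated (think of $\mathbb{Z}[1/p] \subset \mathbb{Q}$). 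I expect this to be established by exploiting the $(\mathfrak{g},\mathbf{Q})$-equivariance of $d_{fr}$ together with the fact that $V_\lambda \simeq \mathcal{O}(\mathbf{U}) \otimes_\Bbbk W_\lambda$ has finite rank as an $\mathcal{O}(\mathbf{U})$-module, so that the matrix coefficients of $d_{fr}$ relative to a finite $\mathcal{O}(\mathbf{U})$-basis produce only finitely many independent denominators in $W_{w\cdot\lambda_{/\Bbbk_{fr}}}$. This is the higher-rank analogue of the observation that in the $\mathfrak{sl}_2$ case the BGG differential is a polynomial-coefficient differential operator admitting an integral normalization.
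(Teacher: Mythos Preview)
Your overall architecture matches the paper's: both restrict the rational differential $d_{fr}$ of Proposition~\ref{Representations P BGG} to $V_\lambda$ and then enlarge the target lattices so that the image lands inside. The paper does this by scaling a fixed model by some $\pi_w\in\Bbbk$ (so $W_{w\cdot\lambda}:=\pi_w W^{0}_{w\cdot\lambda}$), while you take the $\mathbf{M}$-subcomodule generated by $W^{0}_{w\cdot\lambda}+C_w$; these are equivalent once finite generation is known, and your identification of $\ker d = V_\lambda\cap L_{\lambda_{/\Bbbk_{fr}}}=L_\lambda$ is exactly right. (The paper also records separately, via a short Remark, that a $\Bbbk$-linear map whose base change to $\Bbbk_{fr}$ is $\mathbf{Q}$-equivariant is already $\mathbf{Q}$-equivariant when the target is torsion-free; you implicitly use this.)

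The genuine gap is your last paragraph. The phrase ``matrix coefficients of $d_{fr}$ relative to a finite $\mathcal{O}(\mathbf{U})$-basis'' does not do the work you want: $d_{fr}$ is not $\mathcal{O}(\mathbf{U})$-linear, and $\mathbf{U}$-equivariance alone is insufficient because $\mathbf{U}$ acts trivially on the constants, so $V_\lambda$ is not generated over $\Bbbk[\mathbf{U}]$ by $W_\lambda$. What is needed is a subalgebra $A\subset U(\mathfrak{g}_\Bbbk)$ such that (i) the target lattice is $A$-stable and (ii) the source is generated over $A$ by a finitely generated $\Bbbk$-module. The paper achieves this by passing from $V_\lambda$ to the larger integral dual Verma
\[
V_\lambda\ \hookrightarrow\ \bigl(U(\mathfrak{g}_\Bbbk)\otimes_{U(\mathfrak{q}_\Bbbk^{-})}W_{\lambda,\Bbbk}^{\vee}\bigr)^{\vee},
\]
an integral version of the isomorphism~\eqref{Representations P BGG F1}. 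Because the underlying Verma is free of rank one over $U(\mathfrak{u}_\Bbbk)$ on $W_{\lambda,\Bbbk}^{\vee}$, its BGG dual is generated over $U(\mathfrak{u}_\Bbbk^{-})$ by the finitely generated piece $W_{\lambda,\Bbbk}^{\vee}$; since each $\mathrm{Ind}_{\mathbf{Q}^-}^{\mathbf{G}}(W_{w\cdot\lambda})[\mathbf{Y}]$ is a $(\mathfrak{g},\mathbf{Q})$-module and hence $U(\mathfrak{u}_\Bbbk^{-})$-stable, it suffices to scale so that the image of the finitely generated $W_{\lambda,\Bbbk}^{\vee}$ lands inside---and this is immediate. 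Your heuristic can in fact be made rigorous along different lines (using $\mathbf{U}$-equivariance to write $d_{fr}$ as an element of $U(\mathfrak{u}_{\Bbbk_{fr}})\otimes\mathrm{Hom}$, then $\mathbf{M}$-equivariance to force bounded degree), but as written it stops short of an argument.
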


\begin{proof}
We have already remarked in the proof of Lemma \ref{Representations L Model}
that we have $\left( \text{\ref{Representations T BGG F Incl}}\right) $,
which is a model of $\left( \text{\ref{Representations P BGG F Incl}}\right) 
$ in view of Lemma \ref{Representations L Model} and the discussion
preceding it. By definition, $L_{\lambda }$ is a subobject of $\mathrm{Ind}_{%
\mathbf{Q}^{-}}^{\mathbf{G}}\left( W_{\lambda }\right) \left[ \mathbf{Y}%
\right] $ in $\mathrm{Rep}_{alg}\left( \mathfrak{g},\mathbf{Q}\right) $ and,
hence, we only need to prolong this inclusion to a left exact sequence
furnishing a $\left( \mathfrak{g},\mathbf{Q}\right) $-model of $\left( \text{%
\ref{Representations P BGG F BGG}}\right) $. To this end we choose, for
every $w\in W^{\mathbf{M}}$ such that $l\left( w\right) =1$ a model $%
W_{w\cdot \lambda }\in \mathrm{Rep}_{f}\left( \mathbf{M}\right) $ of $%
W_{w\cdot \lambda _{/\Bbbk _{fr}}}$ (see \cite[Part I, \S 10.4 Lemma]{Jan07}%
). Then Remark \ref{Representations R IndModel} tells us that $\mathrm{Ind}_{%
\mathbf{Q}^{-}}^{\mathbf{G}}\left( W_{w\cdot \lambda }\right) \left[ \mathbf{%
Y}\right] $ is a $\left( \mathfrak{g},\mathbf{Q}\right) $-model of $%
V_{w\cdot \lambda _{/\Bbbk _{fr}}}$. Indeed, it follows from the proof of
this remark that, we also have $\mathrm{Ind}_{\mathbf{Q}^{-}}^{\mathbf{G}%
}\left( \pi _{w}W_{w\cdot \lambda }\right) \left[ \mathbf{Y}\right] =\pi _{w}%
\mathrm{Ind}_{\mathbf{Q}^{-}}^{\mathbf{G}}\left( W_{w\cdot \lambda }\right) %
\left[ \mathbf{Y}\right] $ in $V_{w\cdot \lambda _{/\Bbbk _{fr}}}$ for every 
$\pi _{w}\in \Bbbk $. Then%
\begin{equation*}
C:=\tbigoplus\nolimits_{w\in W^{\mathbf{M}}:l\left( w\right) =1}\mathrm{Ind}%
_{\mathbf{Q}^{-}}^{\mathbf{G}}\left( \pi _{w}W_{w\cdot \lambda }\right) %
\left[ \mathbf{Y}\right] =\tbigoplus\nolimits_{w\in W^{\mathbf{M}}:l\left(
w\right) =1}\pi _{w}\mathrm{Ind}_{\mathbf{Q}^{-}}^{\mathbf{G}}\left(
W_{w\cdot \lambda }\right) \left[ \mathbf{Y}\right]
\end{equation*}%
is a model of $C_{/\Bbbk _{fr}}:=\tbigoplus\nolimits_{w\in W^{\mathbf{M}%
}:l\left( w\right) =1}V_{w\cdot \lambda _{/\Bbbk _{fr}}}$ for every family $%
\left\{ \pi _{w}:l\left( w\right) =1\right\} $ and we will show that a
suitable choice of such a family gives a model of $C_{/\Bbbk _{fr}}$
containing the image of $\mathrm{Ind}_{\mathbf{Q}^{-}}^{\mathbf{G}}\left(
W_{\lambda }\right) \left[ \mathbf{Y}\right] $ in $C_{/\Bbbk _{fr}}$ via the
morphism $d$ appearing in $\left( \text{\ref{Representations P BGG F BGG}}%
\right) $. This fact, together with the Remark at the end of the proof, will
give the result.

To this end, let us first define integral BGG categories $\mathcal{O}^{%
\mathfrak{q}_{\Bbbk }^{-}}$ and $\mathcal{O}^{\mathfrak{q}_{\Bbbk }}$
naively as in \cite[\S 9.3]{Hu08}, but over the ring $\Bbbk $ rather than a
field and replacing condition $\left( \mathcal{O}^{\mathfrak{p}}2\right) $
of loc.cit. by the requirement that, viewed as a $U\left( \mathfrak{m}%
_{\Bbbk }\right) $-module, an object $M$ of this category should be the
direct sum of $U\left( \mathfrak{m}_{\Bbbk }\right) $-modules $M_{i}$ that
are finitely generated and projective as $\Bbbk $-modules and such that $%
\Bbbk _{fr}\otimes _{\Bbbk }M_{i}$ is a simple $U\left( \mathfrak{m}_{\Bbbk
}\right) $-module. Define a duality (exchanging the categories $\mathcal{O}^{%
\mathfrak{q}_{\Bbbk }^{-}}$ and $\mathcal{O}^{\mathfrak{q}_{\Bbbk }}$) by
setting $M^{\vee }:=\bigoplus\nolimits_{i}M_{i}^{\ast }$ where $\left(
-\right) ^{\ast }$ denotes the $\Bbbk $-dual. Then we claim that $\left( 
\text{\ref{Representations P BGG F1}}\right) $ generalizes to an inclusion%
\begin{equation}
\mathrm{Ind}_{\mathbf{Q}^{-}}^{\mathbf{G}}\left( W_{\mu }\right) \left[ 
\mathbf{Y}\right] _{\Bbbk }\hookrightarrow \left( U\left( \mathfrak{g}%
_{\Bbbk }\right) \otimes _{U\left( \mathfrak{q}_{\Bbbk }^{-}\right) }W_{\mu
,\Bbbk }^{\vee }\right) ^{\vee }\text{.}  \label{Representations T BGG F1}
\end{equation}%
Indeed, $\left( \text{\ref{Representations P BGG F2}}\right) $ is still
injective (because $\mathcal{O}\left( \mathbf{U}\right) $ does not have $%
\mathbb{Z}$-torsion, since $\mathbf{U}$ is flat over $\Bbbk $ which is $%
\mathbb{Z}$-torsion free) and the proof of \cite[Ch. 5, 5.5.4 Proposition]%
{Di77} shows that $\left( \text{\ref{Representations P BGG F4}}\right) $ is
an isomorphism as long as we assume that $W_{\Bbbk }^{\vee }$ is finitely
generated and projective over $\Bbbk $ (which is true for $W_{\Bbbk }=W_{\mu
,\Bbbk }$). Then we note that $U\left( \mathfrak{g}_{\Bbbk }\right) \otimes
_{U\left( \mathfrak{q}_{\Bbbk }^{-}\right) }W_{\Bbbk ,\mu }^{\vee }=U\left( 
\mathfrak{u}_{\Bbbk }\right) \otimes _{\Bbbk }W_{\Bbbk ,\mu }^{\vee }$
belongs to the integral BGG category $\mathcal{O}^{\mathfrak{q}_{\Bbbk
}^{-}} $ and that the proof of the fact that $\left( -\right) ^{\ast ,%
\mathfrak{u}_{\Bbbk }^{\infty }}=\left( -\right) ^{\vee }$ on the $\mathcal{O%
}^{\mathfrak{q}_{\Bbbk }^{-}}$ BGG category given in \cite[Lemma 6]{Jo11}
still works over $\Bbbk $ (or can be deduced from it by extension of the
scalars). Hence, we have the following commutative diagram, where the right
vertical arrow takes a $\Bbbk $-linear morphism from $X:=U\left( \mathfrak{g}%
_{\Bbbk }\right) \otimes _{U\left( \mathfrak{q}_{\Bbbk }^{-}\right) }W_{\mu
,\Bbbk }^{\vee }$ to $\Bbbk $ of the source to its linear extension to $%
\Bbbk _{fr}$-linear morphism from $U\left( \mathfrak{g}_{\Bbbk _{fr}}\right)
\otimes _{U\left( \mathfrak{q}_{\Bbbk _{fr}}^{-}\right) }W_{\mu ,\Bbbk
_{fr}}^{\vee }=\Bbbk _{fr}\otimes _{\Bbbk }X$ to $\Bbbk _{fr}$ (the equality
in view of $U\left( \mathfrak{g}_{?}\right) \otimes _{U\left( \mathfrak{q}%
_{?}^{-}\right) }W_{?}^{\vee }=U\left( \mathfrak{u}_{?}\right) \otimes
_{?}W_{?}^{\vee }$):%
\begin{equation*}
\begin{array}{ccc}
\mathrm{Ind}_{\mathbf{Q}^{-}}^{\mathbf{G}}\left( W_{\mu }\right) \left[ 
\mathbf{Y}\right] _{\Bbbk } & \hookrightarrow & \left( U\left( \mathfrak{g}%
_{\Bbbk }\right) \otimes _{U\left( \mathfrak{q}_{\Bbbk }^{-}\right) }W_{\mu
,\Bbbk }^{\vee }\right) ^{\vee } \\ 
\cap &  & \cap \\ 
\mathrm{Ind}_{\mathbf{Q}^{-}}^{\mathbf{G}}\left( W_{\mu }\right) \left[ 
\mathbf{Y}\right] _{\Bbbk _{fr}} & \overset{\sim }{\rightarrow } & \left(
U\left( \mathfrak{g}_{\Bbbk _{fr}}\right) \otimes _{U\left( \mathfrak{q}%
_{\Bbbk _{fr}}^{-}\right) }W_{\mu ,\Bbbk _{fr}}^{\vee }\right) ^{\vee }\text{%
.}%
\end{array}%
\end{equation*}%
It follows that we can canonically identify $\left( U\left( \mathfrak{g}%
_{\Bbbk }\right) \otimes _{U\left( \mathfrak{q}_{\Bbbk }^{-}\right) }W_{\mu
,\Bbbk }^{\vee }\right) ^{\vee }$ with a $\Bbbk $-submodule of $\mathrm{Ind}%
_{\mathbf{Q}^{-}}^{\mathbf{G}}\left( W_{\mu }\right) \left[ \mathbf{Y}\right]
_{\Bbbk _{fr}}$ containing $\mathrm{Ind}_{\mathbf{Q}^{-}}^{\mathbf{G}}\left(
W_{\mu }\right) \left[ \mathbf{Y}\right] _{\Bbbk }$. Hence, we can just
check that there is a family $\left\{ \pi _{w}:l\left( w\right) =1\right\} $
such that the resulting $C$ has underlying $\Bbbk $-module containing the
image of $\left( U\left( \mathfrak{g}_{\Bbbk }\right) \otimes _{U\left( 
\mathfrak{q}_{\Bbbk }^{-}\right) }W_{\mu ,\Bbbk }^{\vee }\right) ^{\vee }$
for $\mu =\lambda $. But we have that $\left( U\left( \mathfrak{g}_{\Bbbk
}\right) \otimes _{U\left( \mathfrak{q}_{\Bbbk }^{-}\right) }W_{\mu ,\Bbbk
}^{\vee }\right) ^{\vee }$ contains a $\Bbbk $-submodule isomorphic to $%
W_{\mu ,\Bbbk }^{\vee }$ such that $U\left( \mathfrak{u}_{\Bbbk }^{-}\right)
\cdot W_{\mu ,\Bbbk }^{\vee }=\left( U\left( \mathfrak{g}_{\Bbbk }\right)
\otimes _{U\left( \mathfrak{q}_{\Bbbk }^{-}\right) }W_{\mu ,\Bbbk }^{\vee
}\right) ^{\vee }$ (because $U\left( \mathfrak{g}_{\Bbbk }\right) \otimes
_{U\left( \mathfrak{q}_{\Bbbk }^{-}\right) }W_{\mu ,\Bbbk }^{\vee }=U\left( 
\mathfrak{u}_{\Bbbk }\right) \otimes _{\Bbbk }W_{\mu ,\Bbbk }^{\vee }$) and $%
W_{\mu ,\Bbbk }^{\vee }$ is a finitely generated $\Bbbk $-module: it follows
that there is a choice of the family $\left\{ \pi _{w}:l\left( w\right)
=1\right\} $ such that the underlying $\Bbbk $-module of $C$ contains this
copy of $W_{\mu ,\Bbbk }^{\vee }$, implying that it contains $U\left( 
\mathfrak{u}_{\Bbbk }\right) \cdot W_{\Bbbk }^{\vee }$ and, hence, proving
the result.

\textbf{Remark.} \textit{Suppose that }$\mathbf{H}$\textit{\ is a flat
affine group scheme over a domain }$\Bbbk $\textit{\ with fraction field }$%
\Bbbk _{fr}$\textit{\ and that }$V_{1},V_{2}$\textit{\ in }$\mathrm{Rep}%
_{alg}\left( \mathbf{H}\right) $\textit{\ are such that }$V_{2,\Bbbk }$%
\textit{\ is }$\Bbbk $\textit{-torsion free. If }$f\in Hom_{\Bbbk }\left(
V_{1,\Bbbk },V_{2,\Bbbk }\right) $\textit{\ is such that its (injective, by
torsion freeness of }$V_{2,\Bbbk }$\textit{) image in }$Hom_{\Bbbk
_{fr}}\left( V_{1,\Bbbk _{fr}},V_{2,\Bbbk _{fr}}\right) $\textit{\ belongs
to }$\mathrm{Rep}_{alg}\left( \mathbf{H}_{/\Bbbk _{fr}}\right) $\textit{,
then }$f$\textit{\ itself belongs to }$\mathrm{Rep}_{alg}\left( \mathbf{H}%
\right) $\textit{.}

\textit{Proof.} Writing $\rho _{i}^{\#}$ for the comodule structure of $%
V_{i} $, we have to show that the equality $\rho _{2}^{\#}\circ f=f\otimes
_{\Bbbk }1\circ \rho _{1}^{\#}$ holds in $Hom_{\Bbbk }\left( V_{1,\Bbbk
},V_{2,\Bbbk }\otimes _{\Bbbk }\mathcal{O}\left( \mathbf{H}\right) \right) $%
. Because $V_{2,\Bbbk }\otimes _{\Bbbk }\mathcal{O}\left( \mathbf{H}\right) $
is $\Bbbk $-torsion free, the equality can be checked in $Hom_{\Bbbk
_{fr}}\left( V_{1,\Bbbk _{fr}},V_{2,\Bbbk _{fr}}\otimes _{\Bbbk _{fr}}%
\mathcal{O}\left( \mathbf{Q}\right) \right) $, where it holds by assumption.
\end{proof}

\begin{remark}
\label{Representations R Model}By definition, if $\mathrm{Ind}_{\mathbf{Q}%
^{-}}^{\mathbf{G}}\left( W_{\lambda }\right) \left[ \mathbf{Y}\right] $
happens to be a $Dist\left( \mathbf{G}\right) $-module, then $L_{\lambda
}=Dist\left( \mathbf{G}\right) v_{\lambda _{/\Bbbk _{fr}}}$ is also $%
Dist\left( \mathbf{G}\right) $-module and, hence, an object of $\mathrm{Rep}%
_{f}\left( \mathbf{G}\right) $ because (the underlying $\Bbbk $-module of) $%
L_{\lambda }$ is a lattice in (the underlying $\Bbbk _{fr}$-module of) the
finite dimensional representation $L_{\lambda _{/\Bbbk _{fr}}}$\ of $\mathbf{%
G}_{/\Bbbk _{fr}}$ (see \cite[Part II, \S 8.3]{Jan07}). Furthermore, because 
$Dist\left( \mathbf{G}\right) =Dist\left( \mathbf{U}^{-}\right) \otimes
_{\Bbbk }Dist\left( \mathbf{Q}\right) $ (see \cite[Part II, \S 1.12 $\left(
2\right) $]{Jan07} for the equality) and $\mathrm{Ind}_{\mathbf{Q}^{-}}^{%
\mathbf{G}}\left( W_{\lambda }\right) \left[ \mathbf{Y}\right] $ is a $%
\left( \mathfrak{g},\mathbf{Q}\right) $-module, it is a $Dist\left( \mathbf{G%
}\right) $-module if and only if it is a $Dist\left( \mathbf{U}^{-}\right) $%
-module. We will use this observation in order to verify that $L_{\lambda
}=L_{\lambda ,\mathbf{G}}$ in case $\mathbf{G}=\mathbf{GSp}_{2g}$ (see
Proposition \ref{Representations P Model} below).
\end{remark}

\subsection{The degree filtration on the induced modules in the symplectic
case}

It will be convenient to introduce the following assumption.

\begin{axiom}
\label{Representations Ass1}There is an isomorphism of pointed $\Bbbk $%
-schemes%
\begin{equation*}
u:\mathbf{G}_{a}^{d}\overset{\sim }{\longrightarrow }\mathbf{U}
\end{equation*}%
with the property that the action of $\mathbf{Q}$ on $\mathbf{G}_{a}^{d}$
obtained by transport from its action $\left( \text{\ref{Representations F
Act}}\right) $\ on $\mathbf{U}$ is given by affine transformations and its
restriction to $\mathbf{M}\subset \mathbf{Q}$ is given by linear
transformations.
\end{axiom}

Assumption \ref{Representations Ass1} allows us to define $\mathbf{Q}$%
-subrepresentations\ (resp. $\mathbf{M}$-stable direct summands) of $\mathrm{%
Ind}_{\mathbf{Q}^{-}}^{\mathbf{G}}\left( \rho \right) \left[ \mathbf{Y}%
\right] $ as follows (for $\rho \in \mathrm{Rep}_{f}\left( \mathbf{Q}%
^{-}\right) $): for every $r\in \mathbb{N}$, we let $\mathrm{Ind}_{\mathbf{Q}%
^{-}}^{\mathbf{G}}\left( \rho \right) \left[ \mathbf{Y}\right] _{\leq
r}\subset \mathrm{Ind}_{\mathbf{Q}^{-}}^{\mathbf{G}}\left( \rho \right) %
\left[ \mathbf{Y}\right] $ (resp. $\mathrm{Ind}_{\mathbf{Q}^{-}}^{\mathbf{G}%
}\left( \rho \right) \left[ \mathbf{Y}\right] _{=r}\subset \mathrm{Ind}_{%
\mathbf{Q}^{-}}^{\mathbf{G}}\left( \rho \right) \left[ \mathbf{Y}\right] $)
be the $\mathbf{Q}$-subrepresentation (resp. $\mathbf{M}$-subrepresentation)%
\ which corresponds to the $\mathbf{Q}$-subrepresentation $\mathrm{C}^{%
\mathrm{alg}}\left( \mathbf{U},\rho \right) _{\leq r}\subset \mathrm{C}^{%
\mathrm{alg}}\left( \mathbf{U},\rho \right) $ (resp. $\mathrm{C}^{\mathrm{alg%
}}\left( \mathbf{U},\rho \right) _{\leq r}\subset \mathrm{C}^{\mathrm{alg}%
}\left( \mathbf{U},\rho \right) $) of $\rho $-valued polynomials that are of
degree $\leq r$ (resp. $=r$) with respect to the coordinates induced by $%
\mathbf{G}_{a}^{d}\overset{\sim }{\longrightarrow }\mathbf{U}$. Then $%
\left\{ \mathrm{Ind}_{\mathbf{Q}^{-}}^{\mathbf{G}}\left( \rho \right) \left[ 
\mathbf{Y}\right] _{\leq r}\right\} $ is an increasing exhaustive filtration
of $\mathrm{Ind}_{\mathbf{Q}^{-}}^{\mathbf{G}}\left( \rho \right) \left[ 
\mathbf{Y}\right] $\ by $\mathbf{Q}$-subrepresentations\ and 
\begin{equation}
\mathrm{Ind}_{\mathbf{Q}^{-}}^{\mathbf{G}}\left( \rho \right) \left[ \mathbf{%
Y}\right] =\tbigoplus\nolimits_{r\in \mathbb{N}}\mathrm{Ind}_{\mathbf{Q}%
^{-}}^{\mathbf{G}}\left( \rho \right) \left[ \mathbf{Y}\right] _{=r}\text{
as an }\mathbf{M}\text{-representations.}
\label{Representations F DirSumDec}
\end{equation}

For every $r_{1},r_{2},r\in \mathbb{N}\cup \left\{ \phi \right\} $, consider
the natural $\mathbf{Q}$-equivariant multiplication map%
\begin{equation*}
\cdot :\mathrm{Ind}_{\mathbf{Q}^{-}}^{\mathbf{G}}\left( \rho _{1}\right) %
\left[ \mathbf{Y}\right] _{\leq r_{1}}\otimes \mathrm{Ind}_{\mathbf{Q}^{-}}^{%
\mathbf{G}}\left( \rho _{2}\right) \left[ \mathbf{Y}\right] _{\leq
r_{2}}\longrightarrow \mathrm{Ind}_{\mathbf{Q}^{-}}^{\mathbf{G}}\left( \rho
_{1}\otimes \rho _{2}\right) \left[ \mathbf{Y}\right] _{\leq
r_{1}+r_{2}},\left( f_{1}\cdot f_{2}\right) \left( y\right) :=f_{1}\left(
y\right) \otimes f_{2}\left( y\right) \text{.}
\end{equation*}%
When $\rho \in \mathrm{Rep}_{f}\left( \mathbf{Q}^{-}\right) $, using the
isomorphism $\mathrm{Ind}_{\mathbf{Q}^{-}}^{\mathbf{G}}\left( \rho \right) %
\left[ \mathbf{Y}\right] \simeq \mathrm{C}^{\mathrm{alg}}\left( \mathbf{G}%
_{a}^{d},\rho \right) $ induced by $\left( \text{\ref{Representations F Iso1}%
}\right) $ and $\mathbf{G}_{a}^{d}\overset{\sim }{\longrightarrow }\mathbf{U}
$, we see that it induces, for every $r\in \mathbb{N}$, an isomorphism of $%
\mathbf{Q}$-representations%
\begin{equation}
\rho \otimes \mathrm{Sym}^{r}\left( \mathrm{Ind}_{\mathbf{Q}^{-}}^{\mathbf{G}%
}\left( \mathrm{1}\right) \left[ \mathbf{Y}\right] _{\leq 1}\right) \overset{%
\sim }{\longrightarrow }\mathrm{Ind}_{\mathbf{Q}^{-}}^{\mathbf{G}}\left(
\rho \right) \left[ \mathbf{Y}\right] _{\leq r}\text{.}
\label{Representations F IndSym}
\end{equation}

\bigskip

Suppose that $V$ is a finite free $\Bbbk $-module of rank $2g$, endowed with
a symplectic pairing $\psi $ (meaning that it is perfect and alternating)\
and define $\mathbf{GSp}\left( V,\psi \right) $ via%
\begin{equation*}
\mathbf{GSp}\left( V,\psi \right) \left( R\right) :=\left\{ \gamma \in 
\mathbf{GL}\left( R\otimes V\right) :\exists \nu \left( \gamma \right) \in
R^{\times }\text{ s.t. }\psi \left( \gamma v_{1},\gamma v_{2}\right) =\nu
\left( \gamma \right) \psi \left( v_{1},v_{2}\right) ,\forall v_{1},v_{2}\in
R\otimes V\right\} \text{.}
\end{equation*}%
We remark that $\nu \left( \gamma \right) $\ is well defined and, indeed,
using Yoneda's lemma once checks that the rule sending $\gamma $ to $\nu
\left( \gamma \right) $ defines a morphism of group schemes from $\mathbf{GSp%
}\left( V,\psi \right) $ to $\mathbf{G}_{m}$: we let $\mathbf{Sp}\left(
V,\psi \right) $ be the kernel. We also assume that $W\subset V$ is a
maximal isotropic $\Bbbk $-submodule such that $\frac{V}{W}$\ is locally
free (it always exists, when $\Bbbk $ is local, see \cite[Lemma 1.5]{Fo23})%
\footnote{%
In geometric language, $W$ is a subbundle of $V$ over $\mathfrak{Spec}\left(
K\right) $. This extra generality will be needed only later in order to
consider symplectic basis in coherent cohomology.}: then there exists a
maximal isotropic $\Bbbk $-submodule $\overline{W}$ of $V$ such that $%
V=W\oplus \overline{W}$ and every basis $\mathcal{B}_{W}:=\left\{
w_{1},...,w_{g}\right\} $ uniquely extend to a basis $\mathcal{B}_{\overline{%
W}}:=\left\{ \overline{w}_{1},...,\overline{w}_{g}\right\} $ of $\overline{W}
$ such that $\mathcal{B}:=\mathcal{B}_{W}\cup \mathcal{B}_{\overline{W}}$ is
symplectic-Hodge basis of $V$, where symplectic means that $\psi \left(
w_{i},w_{j}\right) =\psi \left( \overline{w}_{i},\overline{w}_{j}\right) =0$
and $\psi \left( w_{i},\overline{w}_{j}\right) =\delta _{i,j}$ for $i,j\in
\left\{ 1,...,g\right\} $ and Hodge refers to the fact that the first $g$%
-elements belongs to $W$ (see \cite[Proposition 1.9 $\left( 2\right) $]{Fo23}%
). We fix a symplectic-Hodge basis $\mathcal{B}$, so that the first $g$%
-vectors span $W$ (or equivalently, we fix a maximal isotropic subspace $%
\overline{W}$ which is a complement of $W$ and a basis $\mathcal{B}_{W}$ of $%
W$). Sometime, we write $w_{i}:=\overline{w}_{g+i}$ for $i\in \left\{
1,...,g\right\} $. We let $\mathbf{Q}_{W}\subset \mathbf{GSp}\left( V,\psi
\right) $ be the stabilizer of $W\subset V$ in $\mathbf{GSp}\left( V,\psi
\right) $. It is a parabolic subgroup, called a Siegel parabolic subgroup,
and we write $\mathbf{U}_{W}$ (resp. $\mathbf{M}_{W}$) for its unipotent
radical (resp. its Levi component), which is identified with the set of
those elements of $\mathbf{Q}_{W}$ inducing the identity on $W$. The choice
of $\overline{W}$ allows us to view $\mathbf{M}_{W}$ as the subgroup of
those elements of $\mathbf{GSp}\left( V,\psi \right) $ preserving the
decomposition $V=W\oplus \overline{W}$, thus realizing $\mathbf{Q}_{W}$ as a
semidirect product $\mathbf{U}_{W}\rtimes \mathbf{M}_{W}$\ and then the
corresponding opposite parabolic subgroup $\mathbf{Q}_{W}^{-}$ is identified
with the stabilized $\mathbf{Q}_{\overline{W}}$ of $\overline{W}$.

\begin{example}
\label{Representations E GSp}Suppose that $\left( V,\psi \right) =\left(
\Bbbk ^{2g},\psi _{g}\right) $, where the elements of $\Bbbk ^{2g}$ are
viewed as column vectors and $\psi _{g}\left( v_{1},v_{2}\right)
=v_{1}^{t}\left( 
\begin{array}{cc}
0 & 1_{g} \\ 
-1_{g} & 0%
\end{array}%
\right) v_{2}$. Then we write $\mathbf{GSp}_{2g}:=\mathbf{GSp}\left( \Bbbk
^{2g},\psi _{g}\right) $ and $\mathbf{Sp}_{2g}:=\mathbf{Sp}\left( \Bbbk
^{2g},\psi _{g}\right) $. In this case, we can take $W\subset V$ to be the
inclusion $\Bbbk ^{g}\subset \Bbbk ^{2g}$ given by the first $g$%
-coordinates. Then the standard basis $\mathcal{B}=\left\{
e_{1},...,e_{g},e_{g+1},...,e_{2g}\right\} $ is the symplectic-Hodge basis
which corresponds to the standard basis $\mathcal{B}_{W}=\left\{
e_{1},...,e_{g}\right\} $ of $W$ and the complement $\overline{W}$ provided
by the inclusion $\Bbbk ^{g}\subset \Bbbk ^{2g}$ given by the second set of $%
g$-coordinates. Setting $\mathbf{Q}_{g}:=\mathbf{Q}_{\Bbbk ^{g}}$, we see
that%
\begin{equation*}
\mathbf{Q}_{g}=\left\{ \left( 
\begin{array}{cc}
a & b \\ 
0 & d%
\end{array}%
\right) \in \mathbf{GL}_{2g}:d=a^{-t}\nu ,\nu \in \mathbf{G}%
_{m},ab^{t}=ba^{t}\right\} \text{.}
\end{equation*}%
Let us write $\mathbf{S}_{g}$ for the functor of symmetric matrices,
regarded as an additive group scheme. Then $\mathbf{Q}_{g}=\mathbf{M}%
_{g}\ltimes \mathbf{U}_{g}$ (where $\mathbf{M}_{g}:=\mathbf{M}_{\Bbbk ^{g}}$
and $\mathbf{U}_{g}:=\mathbf{U}_{\Bbbk ^{g}}$) and the above description of $%
\mathbf{Q}_{g}$\ shows that there are isomorphism of group schemes%
\begin{equation}
\mathbf{GL}_{g}\times \mathbf{G}_{m}\overset{\sim }{\longrightarrow }\mathbf{%
M}_{g}\text{ and }\mathbf{S}_{g}\overset{\sim }{\longrightarrow }\mathbf{U}%
_{g}  \label{Representations E GSp F}
\end{equation}%
by means of the rule sending $\left( a,\nu \right) $ to $\left( 
\begin{array}{cc}
a & 0_{g} \\ 
0_{g} & a^{-t}\nu%
\end{array}%
\right) $ and, respectively, $Y$ to $\left( 
\begin{array}{cc}
1_{g} & Y \\ 
0 & 1_{g}%
\end{array}%
\right) $. Similarly, we also have $\mathbf{S}_{g}\overset{\sim }{%
\longrightarrow }\mathbf{U}_{g}^{-}$ sending $Y$ to $\left( 
\begin{array}{cc}
1_{g} & 0 \\ 
Y & 1_{g}%
\end{array}%
\right) $.

The symplectic-Hodge basis $\mathcal{B}$ of a more general $V$\ yields an
identification $\theta :\left( \Bbbk ^{2g},\psi _{g}\right) \overset{\sim }{%
\rightarrow }\left( V,\psi \right) $ sending $e_{i}$ to $w_{i}$ and, hence,
an isomorphism $c_{\theta }:\mathbf{GSp}_{2g}\overset{\sim }{\rightarrow }%
\mathbf{GSp}\left( V,\psi \right) $ (restricting to $\mathbf{Sp}_{2g}\overset%
{\sim }{\rightarrow }\mathbf{Sp}\left( V,\psi \right) $) given by $c_{\theta
}\left( \gamma \right) =\theta \gamma \theta ^{-1}$ identifying $\mathbf{Q}%
_{g}\overset{\sim }{\rightarrow }\mathbf{Q}_{W}$. When $V=\Bbbk ^{2g}$, then 
$\psi $ is given by a matrix in $\mathbf{M}_{2g}\left( \Bbbk \right) $\ and $%
\theta \in \mathbf{GL}_{2g}\left( \Bbbk \right) $ by a matrix $\theta $ such
that $\theta ^{t}\psi \theta =\psi _{g}$.
\end{example}

\bigskip

We suppose from now on that $\mathbf{G=GSp}\left( V,\psi \right) $, we write 
$\mathbf{Q}\subset \mathbf{G}$ for the stabilizer of $W\subset V$ in $%
\mathbf{G}$ and omit the subscript $W$ in the notation for the group schemes
introduced just before Example \ref{Representations E GSp}. If $\mathbf{H}%
\subset \mathbf{G}$, we also set $\mathbf{H}^{\circ }:=\mathbf{H}\cap 
\mathbf{Sp}\left( V,\psi \right) $. It follows from Example \ref%
{Representations E GSp} that there are isomorphism (the composition $%
c_{\theta }\circ \left( \text{\ref{Representations E GSp F}}\right) $):%
\begin{equation}
\mathbf{GL}_{g}\times \mathbf{G}_{m}\overset{\sim }{\longrightarrow }\mathbf{%
M}\text{ and }\mathbf{S}_{g}\overset{\sim }{\longrightarrow }\mathbf{U}\text{%
.}  \label{Representations F Levi}
\end{equation}

\begin{lemma}
\label{Representations L1'}If we take $u$ the composition of the isomorphism
of group schemes $\mathbf{G}_{a}^{d}\simeq \mathbf{S}_{g}$ with $d=\frac{%
g\left( g+1\right) }{2}$\ obtained by the matrix coordinates of $\mathbf{S}%
_{g}$\ followed by the above identification $\mathbf{S}_{g}\overset{\sim }{%
\rightarrow }\mathbf{U}$ given by $\left( \text{\ref{Representations F Levi}}%
\right) $, then Assumption \ref{Representations Ass1} holds. In particular,
the isomorphism $\left( \text{\ref{Representations F IndSym}}\right) $ is
force for every $\rho \in \mathrm{Rep}_{f}\left( \mathbf{Q}^{-}\right) $.
\end{lemma}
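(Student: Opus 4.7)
The plan is to verify Assumption~\ref{Representations Ass1} by a direct matrix computation in the framework of Example~\ref{Representations E GSp}. First I would observe that $u$ is a composition of three pointed isomorphisms: the matrix-coordinate isomorphism $\mathbf{G}_a^d \overset{\sim}{\to} \mathbf{S}_g$ with $d = g(g+1)/2$ sending $(Y_{ij})_{1\leq i \leq j \leq g}$ to the corresponding symmetric matrix; the group scheme isomorphism $\mathbf{S}_g \overset{\sim}{\to} \mathbf{U}_g$ of $(\ref{Representations E GSp F})$; and the restriction of the group scheme isomorphism $c_\theta:\mathbf{GSp}_{2g} \overset{\sim}{\to} \mathbf{GSp}(V,\psi)$ to an isomorphism $\mathbf{U}_g \overset{\sim}{\to} \mathbf{U}$, which makes sense because $c_\theta(\mathbf{Q}_g)=\mathbf{Q}$ forces $c_\theta$ to carry the unipotent radical of $\mathbf{Q}_g$ onto that of $\mathbf{Q}$. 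The basepoint $0$ (the identity of $\mathbf{U}$, the identity matrix $1_{2g}$, and the zero symmetric matrix) is preserved throughout.

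Since $c_\theta$ is equivariant for the conjugation-based action $(\ref{Representations F Act})$, it suffices to compute the transported right action of $\mathbf{Q}_g$ on $\mathbf{S}_g$. Let $q = m u_0 \in \mathbf{M}_g \ltimes \mathbf{U}_g$, where $m$ corresponds under $(\ref{Representations E GSp F})$ to a pair $(a,\nu) \in \mathbf{GL}_g \times \mathbf{G}_m$ and $u_0$ corresponds to $Y_0 \in \mathbf{S}_g$; let $y \in \mathbf{U}_g$ correspond to $Y \in \mathbf{S}_g$. A short matrix calculation, using that $\mathbf{U}_g$-multiplication translates into $\mathbf{S}_g$-addition and that conjugation by $m$ sends the upper-right block $Y$ of $y$ to $a^{-1} Y a^{-t} \nu$ (which remains symmetric), yields
\begin{equation*}
Y \ast m u_0 \;=\; a^{-1} Y a^{-t} \nu + Y_0.
\end{equation*}
This is visibly affine in $Y$, and reduces to the linear map $Y \mapsto a^{-1} Y a^{-t} \nu$ when $u_0 = 1$, confirming both parts of Assumption~\ref{Representations Ass1}.

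The final assertion of the lemma is then immediate: the isomorphism $(\ref{Representations F IndSym})$ was derived in the paragraphs preceding the lemma from Assumption~\ref{Representations Ass1} alone, so it now holds for every $\rho \in \mathrm{Rep}_f(\mathbf{Q}^-)$ without further work. There is no genuine obstacle here; the only point requiring care is bookkeeping the $\nu$-twist from $(\ref{Representations F Levi})$, which is what produces the multiplicative factor $\nu$ in the linear part of the action.
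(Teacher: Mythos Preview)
Your proof is correct and follows essentially the same approach as the paper: both reduce to $\mathbf{GSp}_{2g}$ via $c_\theta$ and then perform a direct matrix computation of the $\ast$-action of $\mathbf{Q}_g$ on $\mathbf{S}_g$, arriving at equivalent affine formulas (the paper writes $Y\gamma = a^{-1}(b+Yd)$ for $\gamma=\begin{pmatrix}a&b\\0&d\end{pmatrix}$, which specializes to your $a^{-1}Ya^{-t}\nu + Y_0$ upon setting $b=aY_0$ and $d=a^{-t}\nu$). The only difference is that the paper's proof also records, ``for future reference,'' the general big-cell decomposition formula and the action of an arbitrary $\gamma\in\mathbf{GSp}_{2g}$ on $\mathbf{S}_g$ (their equations (\ref{Representations L1 F0}) and (\ref{Representations L1 F1})), which are used in the proofs of Lemma~\ref{Representations L1} and Proposition~\ref{Representations P Spl}; your streamlined argument suffices for the present lemma but you would need to supply those formulas separately when they are invoked later.
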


\begin{proof}
We can assume $\mathbf{G}=\mathbf{GSp}_{2g}$. For future reference, we
remark that then $\mathbf{Y}$ is the set of those $\left( 
\begin{array}{cc}
a & b \\ 
c & d%
\end{array}%
\right) \in \mathbf{GSp}_{2g}$ such that $\mathrm{de}$\textrm{t}$\left(
a\right) \neq 0$ and $\left( \text{\ref{Representations F BigCell}}\right) $
is given by%
\begin{equation}
\left( 
\begin{array}{cc}
a & b \\ 
c & d%
\end{array}%
\right) =\left( 
\begin{array}{cc}
1 & 0 \\ 
ca^{-1} & 1%
\end{array}%
\right) \left( 
\begin{array}{cc}
a & 0 \\ 
0 & d-ca^{-1}b%
\end{array}%
\right) \left( 
\begin{array}{cc}
1 & a^{-1}b \\ 
0 & 1%
\end{array}%
\right)  \label{Representations L1 F0}
\end{equation}%
In particular, for every $\left( \gamma ,Y\right) \in \mathbf{G}\times 
\mathbf{S}_{g}$ with $\gamma =\left( 
\begin{array}{cc}
a & b \\ 
c & d%
\end{array}%
\right) $ such that $a+Yc\in \mathbf{GL}_{g}$ (a Zariski open condition),
once checks that $\left( \text{\ref{Representations L1 F0}}\right) $ gives%
\begin{equation}
\left( 
\begin{array}{cc}
1 & Y \\ 
0 & 1%
\end{array}%
\right) \gamma =\left( 
\begin{array}{cc}
1 & 0 \\ 
c\left( a+Yc\right) ^{-1} & 1%
\end{array}%
\right) \left( 
\begin{array}{cc}
a+Yc & 0 \\ 
0 & d-c\left( a+Yc\right) ^{-1}\left( b+Yd\right)%
\end{array}%
\right) \left( 
\begin{array}{cc}
1 & \left( a+Yc\right) ^{-1}\left( b+Yd\right) \\ 
0 & 1%
\end{array}%
\right) \text{.}  \label{Representations L1 F1}
\end{equation}

Specializing $\left( \text{\ref{Representations L1 F1}}\right) $ to the $c=0$%
\ case shows that, under the identification $\mathbf{S}_{g}\overset{\sim }{%
\rightarrow }\mathbf{U}_{g}\overset{\sim }{\mathbf{\leftarrow }}\mathbf{Q}%
_{g}^{-}\backslash \mathbf{Y}$, the $\ast $-action induced by the right
multiplication by an element of $\mathbf{Q}_{g}$ on $\mathbf{Y}$ (cfr. $%
\left( \text{\ref{Representations F Act}}\right) $) is given by the
following formula verifying formula Assumption \ref{Representations Ass1}:%
\begin{equation}
Y\gamma :=a^{-1}\left( b+Yd\right) \text{ if }\gamma =\left( 
\begin{array}{cc}
a & b \\ 
0 & d%
\end{array}%
\right) \in \mathbf{Q}_{g}\text{.}  \label{Representations L1 F Act}
\end{equation}
\end{proof}

\bigskip

If $\rho \in \mathrm{R}$\textrm{ep}$_{f}\left( \mathbf{GL}_{g}\times \mathbf{%
G}_{m}\right) $, then $\mathbf{GSp}_{2g}$ acts on $\mathrm{C}^{\mathrm{alg}%
}\left( \mathbf{S}_{g},\rho \right) \left[ \eta \right] $ by means of the
rule%
\begin{equation*}
\left( \gamma f\right) \left( Y\right) :=\rho \left( a+Yc,\nu \left( \gamma
\right) \right) f\left( \left( a+Yc\right) ^{-1}\left( b+Yd\right) \right) 
\text{ for every }\gamma =\left( 
\begin{array}{cc}
a & b \\ 
c & d%
\end{array}%
\right) \in \mathbf{G}\text{.}
\end{equation*}%
(In order to see that the above formula defines an action on $\mathrm{C}^{%
\mathrm{alg}}\left( \mathbf{S}_{g},\rho \right) \left[ \eta \right] $,
suffices to check that, setting $j\left( \gamma ,Y\right) :=\left( a+Yc,\nu
\left( \gamma \right) \right) $ and $Y\gamma :=\left( a+Yc\right)
^{-1}\left( b+Yd\right) $, one has $j\left( \gamma _{2}\gamma _{1},Y\right)
=j\left( \gamma _{2},Y\right) j\left( \gamma _{1},Y\gamma _{2}\right) $ and $%
Y\left( \gamma _{2}\gamma _{1}\right) =\left( Y\gamma _{2}\right) \gamma
_{1} $ on the Zariski open subset of those $\left( \gamma ,Y\right) $ in $%
\mathbf{G}\times \mathbf{S}_{g}$ such that, if $\gamma =\left( 
\begin{array}{cc}
a & b \\ 
c & d%
\end{array}%
\right) $, then $\mathrm{de}$\textrm{t}$\left( a\right) \neq 0$, $\mathrm{de}
$\textrm{t}$\left( a+Yc\right) \neq 0$ and $\mathrm{de}$\textrm{t}$\left(
Y\right) \neq 0$. This claim follows looking at the decomposition $\left( 
\text{\ref{Representations L1 F1}}\right) $, noticing that $\left( 
\begin{array}{cc}
1 & Y \\ 
0 & 1%
\end{array}%
\right) \left( \gamma _{2}\gamma _{1}\right) =\left( \left( 
\begin{array}{cc}
1 & Y \\ 
0 & 1%
\end{array}%
\right) \gamma _{2}\right) \gamma _{1}$). Using the first isomorphism of $%
\left( \text{\ref{Representations F Levi}}\right) $ and then $\left( \text{%
\ref{Representations F Res}}\right) $, we can regard $\rho $ as a
representation of $\mathbf{Q}^{-}$\ and then form $\mathrm{Ind}_{\mathbf{Q}%
^{-}}^{\mathbf{G}}\left( \rho \right) \left[ U\right] $ for $U=\mathbf{Y}=%
\mathbf{Y}_{\mathbf{G},\mathbf{Q}}$ or $U=\eta $.

\begin{lemma}
\label{Representations L1}The restriction morphism induced by $\mathbf{U}%
\subset \mathbf{G}$ and the second isomorphism of $\left( \text{\ref%
{Representations F Levi}}\right) $ yield isomorphisms%
\begin{equation*}
\mathrm{Ind}_{\mathbf{Q}^{-}}^{\mathbf{G}}\left( \rho \right) \left[ \eta %
\right] \overset{\sim }{\longrightarrow }\mathrm{C}^{\mathrm{alg}}\left( 
\mathbf{U},\rho \right) \left[ \eta \right] \overset{\sim }{\longrightarrow }%
\mathrm{C}^{\mathrm{alg}}\left( \mathbf{S}_{g},\rho \right) \left[ \eta %
\right] \text{,}
\end{equation*}%
where the first identification extends $\left( \text{\ref{Representations F
Iso1}}\right) $. Furthermore, the above composition has the following
properties.

\begin{itemize}
\item[$\left( 1\right) $] It is $\mathbf{G}\overset{\sim }{\rightarrow }%
\mathbf{GSp}_{2g}$-equivariant (the isomorphism being induced by $c_{\theta
}^{-1}$).

\item[$\left( 2\right) $] It identifies $\mathrm{Ind}_{\mathbf{Q}^{-}}^{%
\mathbf{G}}\left( \rho \right) \left[ \mathbf{Y}\right] $ with $\mathrm{C}^{%
\mathrm{alg}}\left( \mathbf{S}_{g},\rho \right) $, implying by $\left(
1\right) $ that we have an identification of $\left( \mathfrak{g},\mathbf{Q}%
\right) \overset{\sim }{\rightarrow }\left( \mathfrak{gsp}_{2g},\mathbf{Q}%
_{g}\right) $-modules (the isomorphism being induced by $c_{\theta }^{-1}$):%
\begin{equation*}
\mathrm{Ind}_{\mathbf{Q}^{-}}^{\mathbf{G}}\left( \rho \right) \left[ \mathbf{%
Y}\right] \overset{\sim }{\longrightarrow }\mathrm{C}^{\mathrm{alg}}\left( 
\mathbf{U},\rho \right) \overset{\sim }{\longrightarrow }\mathrm{C}^{\mathrm{%
alg}}\left( \mathbf{S}_{g},\rho \right) \text{.}
\end{equation*}

\item[$\left( 3\right) $] If $X\in \mathfrak{u}^{-}$, then $X\mathrm{Ind}_{%
\mathbf{Q}^{-}}^{\mathbf{G}}\left( \rho \right) \left[ \mathbf{Y}\right]
_{=0}=0$ and $X\mathrm{Ind}_{\mathbf{Q}^{-}}^{\mathbf{G}}\left( \rho \right) %
\left[ \mathbf{Y}\right] _{=r}\subset \mathrm{Ind}_{\mathbf{Q}^{-}}^{\mathbf{%
G}}\left( \rho \right) \left[ \mathbf{Y}\right] _{=r+1}$ for every $r\in 
\mathbb{N}_{\geq 1}$.
\end{itemize}
\end{lemma}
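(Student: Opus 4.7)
The plan is to reduce everything to a concrete computation on $\mathbf{S}_g$. First, I would extend the isomorphism $\left( \text{\ref{Representations F Iso1}}\right) $ to an isomorphism $\mathrm{Ind}_{\mathbf{Q}^-}^{\mathbf{G}}(\rho)[\eta] \overset{\sim}{\rightarrow} \mathrm{C}^{\mathrm{alg}}(\mathbf{U},\rho)[\eta]$ by passing to the colimit over dense affine open subsets of $\mathbf{G}_{/R}$: every such open meets $\mathbf{Y}$ in a dense open, so the family $\{U\cap \mathbf{Y}\}$ is cofinal and the restriction morphisms are compatible with the $\mathbf{Q}^-$-action used to compute fixed points. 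The second isomorphism $\mathrm{C}^{\mathrm{alg}}(\mathbf{U},\rho)[\eta] \simeq \mathrm{C}^{\mathrm{alg}}(\mathbf{S}_g,\rho)[\eta]$ is just transport along the second isomorphism of $\left( \text{\ref{Representations F Levi}}\right) $. Using $c_\theta$ from Example \ref{Representations E GSp} we may assume throughout that $\mathbf{G} = \mathbf{GSp}_{2g}$.

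For part $(1)$, the key input is the decomposition $\left( \text{\ref{Representations L1 F1}}\right) $ of $\left( \begin{array}{cc} 1 & Y \\ 0 & 1 \end{array}\right) \gamma$ in $\mathbf{U}^- \cdot \mathbf{M} \cdot \mathbf{U}$. Right translation by $\gamma = \left( \begin{array}{cc} a & b \\ c & d \end{array}\right)$ of a section $F$ satisfying $F(qg) = \rho(q) F(g)$ yields, after restriction to $\mathbf{U} \simeq \mathbf{S}_g$,
\begin{equation*}
(\gamma F)(Y) \;=\; \rho\bigl(a+Yc,\, \nu(\gamma)\bigr)\, F\bigl((a+Yc)^{-1}(b+Yd)\bigr),
\end{equation*}
since the $\mathbf{Q}^-$-factor of the decomposition has Levi component corresponding to $(a+Yc, \nu(\gamma))$ under $\left( \text{\ref{Representations E GSp F}}\right) $, while the $\mathbf{U}$-factor corresponds to $(a+Yc)^{-1}(b+Yd)$. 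This matches exactly the $\mathbf{G}$-action on $\mathrm{C}^{\mathrm{alg}}(\mathbf{S}_g,\rho)[\eta]$ stipulated just before the lemma.

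For part $(2)$, the restriction map is injective by $(1)$, and surjective because any $f \in \mathrm{C}^{\mathrm{alg}}(\mathbf{S}_g, \rho)$ extends via $F(u^- m u) := \rho(m) f(u)$ using the product decomposition $\left( \text{\ref{Representations F BigCell}}\right) $, giving an algebraic section on $\mathbf{Y}$. Differentiating the $\mathbf{G}$-action yields the corresponding $\mathfrak{g}$-action, and combining with the already-established $\mathbf{Q}$-equivariance of restriction gives the claimed identification of $(\mathfrak{g}, \mathbf{Q})$-modules.

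For part $(3)$, I would apply the formula of $(1)$ to $\gamma = 1 + \varepsilon X$ with $X = \left(\begin{array}{cc} 0 & 0 \\ X_0 & 0\end{array}\right) \in \mathfrak{u}^-$, so $a = 1$, $b = 0$, $c = \varepsilon X_0$, $d = 1$, and $\nu(\gamma) = 1$ since $\mathbf{U}^- \subset \mathbf{Sp}_{2g}$. Expanding to first order in $\varepsilon$ yields
\begin{equation*}
(X \cdot f)(Y) \;=\; d\rho(Y X_0,\, 0)\, f(Y) \;-\; \sum_{1 \leq i \leq j \leq g} (Y X_0 Y)_{ij}\, \partial_{ij} f(Y).
\end{equation*}
Both summands shift polynomial degree in $Y$ by exactly $+1$: the factor $Y X_0$ is linear in $Y$, while $Y X_0 Y$ is quadratic and $\partial_{ij}$ lowers degree by one. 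This immediately gives $X \cdot \mathrm{Ind}_{\mathbf{Q}^-}^{\mathbf{G}}(\rho)[\mathbf{Y}]_{=r} \subset \mathrm{Ind}_{\mathbf{Q}^-}^{\mathbf{G}}(\rho)[\mathbf{Y}]_{=r+1}$ for $r \geq 1$; the degree-zero boundary assertion is read off by observing that on constants the second summand vanishes identically and the first has no degree-zero component, since $d\rho(0,0)=0$. The main obstacle in this plan is really just the bookkeeping in $\left( \text{\ref{Representations L1 F1}}\right) $ that underpins $(1)$; everything else proceeds by direct inspection.
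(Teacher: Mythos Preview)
Your treatment of the isomorphisms and of parts $(1)$ and $(2)$ follows the paper's line: reduce to $\mathbf{GSp}_{2g}$ via $c_\theta$, extend the restriction isomorphism $\left(\text{\ref{Representations F Iso1}}\right)$ to rational functions using the big cell, and identify the $\mathbf{G}$-action through the decomposition $\left(\text{\ref{Representations L1 F1}}\right)$.

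For part $(3)$ there is a genuine discrepancy between your computation and the paper's. Your formula
\[
(X\cdot f)(Y)\;=\;d\rho(YX_0,0)\,f(Y)\;-\;\sum_{1\le i\le j\le g}(YX_0Y)_{ij}\,\partial_{ij}f(Y)
\]
is the correct one. The paper instead asserts $\rho(1+\varepsilon Yx)=1$ ``because $\rho\in\mathrm{Rep}_f(\mathbf{M})$'' and thereby drops your first term; that assertion is not right, since $(1+\varepsilon Yx,1)$ lies in $\mathbf{GL}_g\times\mathbf{G}_m$ (not in $\mathbf{U}^-$), so $\rho(1+\varepsilon Yx,1)=1+\varepsilon\,d\rho(Yx,0)$ is nontrivial whenever $\rho$ is nontrivial on $\mathbf{GL}_g$. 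Both terms in your formula raise the $Y$-degree by exactly one, which is the content actually used downstream (Lemma~\ref{Sheaves L Fil}, Lemma~\ref{Primitives LKey}).

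However, your argument does \emph{not} establish the degree-$0$ clause as written in the lemma, namely $X\cdot\mathrm{Ind}_{\mathbf{Q}^-}^{\mathbf{G}}(\rho)[\mathbf{Y}]_{=0}=0$. For constant $f=w$ your formula gives $(X\cdot f)(Y)=d\rho(YX_0,0)\,w$, which has no degree-$0$ component but is not zero; your justification ``$d\rho(0,0)=0$'' shows only the former. In fact the vanishing as stated is false in general: already for $g=1$ the introduction records $\partial(e_1^{k-2})=(k-2)\,e_1^{k-2}Y$, nonzero for $k>2$. So the first clause should be read as $X\cdot\mathrm{Ind}_{\mathbf{Q}^-}^{\mathbf{G}}(\rho)[\mathbf{Y}]_{=0}\subset\mathrm{Ind}_{\mathbf{Q}^-}^{\mathbf{G}}(\rho)[\mathbf{Y}]_{=1}$, and that is exactly what your computation proves.
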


\begin{proof}
As usual, we can assume that $\mathbf{G}=\mathbf{GSp}_{2g}$. The isomorphism 
$\mathrm{C}^{\mathrm{alg}}\left( \mathbf{U},\rho \right) \left[ \eta \right]
\simeq \mathrm{C}^{\mathrm{alg}}\left( \mathbf{S}_{g},\rho \right) \left[
\eta \right] $ comes from $\left( \text{\ref{Representations F Levi}}\right) 
$. The analogue of $\left( \text{\ref{Representations F Iso1}}\right) $ for
the rational functions can be checked using the fact that $\mathbf{Q}%
_{/R}^{-}\times V\overset{\sim }{\rightarrow }\mathbf{Q}_{/R}^{-}V\subset 
\mathbf{Y}_{/R}$ is an open subset of $\mathbf{G}_{/R}$ for every open
subset $V\subset \mathbf{U}_{/R}$ (thanks to $\left( \text{\ref%
{Representations F BigCell}}\right) $). The equivariance in $\left( 1\right) 
$ uses $\left( \text{\ref{Representations L1 F1}}\right) $ and then, in view
of $\left( \text{\ref{Representations F Iso1}}\right) $, $\left( 2\right) $
follows. Then, in order to check $\left( 3\right) $, according to Lemma \ref%
{Representations L1'} suffices to show that $X\mathrm{C}^{\mathrm{alg}%
}\left( \mathbf{S}_{g},\rho \right) _{=r}\subset \mathrm{C}^{\mathrm{alg}%
}\left( \mathbf{S}_{g},\rho \right) _{=r+1}$, where $\mathrm{C}^{\mathrm{alg}%
}\left( \mathbf{S}_{g},\rho \right) _{=r}\subset \mathrm{C}^{\mathrm{alg}%
}\left( \mathbf{S}_{g},\rho \right) $ denote the $\mathbf{M}$%
-subrepresentation of $\rho $-valued polynomials that are of degree $=r$.
According to $\left( 1\right) $, $\mathrm{C}^{\mathrm{alg}}\left( \mathbf{S}%
_{g},\rho \right) \left[ \eta \right] $ is isomorphic to the $\mathbf{G}$%
-module $\mathrm{Ind}_{\mathbf{Q}^{-}}^{\mathbf{G}}\left( \rho \right) \left[
\eta \right] =\mathrm{C}^{\mathrm{alg}}\left( \mathbf{G},\rho \right) \left[
\eta \right] ^{h_{\mathbf{Q}^{-}}}$, which is a subobject of $\left( \sigma
,V\right) :=\mathrm{C}^{\mathrm{alg}}\left( \mathbf{G},\rho \right) \left[
\eta \right] \in \mathrm{R}$\textrm{ep}$_{gd}\left( \mathbf{G}\right) $. We
can therefore use $\left( \text{\ref{Representations F dr}}\right) $ and the
resulting inclusion of $\mathrm{C}^{\mathrm{alg}}\left( \mathbf{S}_{g},\rho
\right) $ in $V$ (induced by the restriction obtained from $\mathbf{S}_{g}%
\overset{\sim }{\rightarrow }\mathbf{U\subset G}$) in order to compute the $%
\mathfrak{g}$-action. To this end, we write $X\in \mathfrak{u}^{-}$ in the
form $X=\left( 
\begin{array}{cc}
0 & 0 \\ 
x & 0%
\end{array}%
\right) $ with $x\in \mathbf{S}_{g}$, so that $1+\varepsilon X=\left( 
\begin{array}{cc}
1 & 0 \\ 
\varepsilon x & 1%
\end{array}%
\right) $. Then, noticing that $\rho \left( 1+\varepsilon Yx\right) =1$
because $\rho \in \mathrm{R}$\textrm{ep}$_{f}\left( \mathbf{M}\right) $ and $%
\left( 1+\varepsilon Yx\right) ^{-1}=1-\varepsilon Yx$, we see that%
\begin{eqnarray}
&&\left( \left( 1+\varepsilon X\right) f\right) \left( Y\right) =\rho \left(
1+\varepsilon Yx\right) f\left( \left( 1+\varepsilon Yx\right) ^{-1}Y\right)
=f\left( Y-\varepsilon YxY\right) \text{,}  \notag \\
&&\text{so that }\left( \text{\ref{Representations F dr}}\right) \text{\
yields }\left( Xf\right) \left( Y\right) =\frac{f\left( Y-\varepsilon
YxY\right) -f\left( Y\right) }{\varepsilon }\text{.}
\label{Representations L1 F3}
\end{eqnarray}%
Because the underlying $\Bbbk $-module of $\mathrm{C}^{\mathrm{alg}}\left( 
\mathbf{S}_{g},\rho \right) _{=r}\subset V$ is freely generated by the
expressions $Y^{\mathbf{i}}:=\tprod\nolimits_{i\leq j}Y_{i,j}^{\mathbf{i}%
_{i,j}}$ for $\mathbf{i}=\left( \mathbf{i}_{i,j}\right) _{i\leq j}\in 
\mathbb{N}^{d}$ such that $\left\vert \mathbf{i}\right\vert
:=\tsum\nolimits_{i\leq j}\mathbf{i}_{i,j}=r$, suffices to check that $Xf\in 
\mathrm{C}^{\mathrm{alg}}\left( \mathbf{S}_{g},\rho \right) _{=r+1}$ for
every $f=Y^{\mathbf{i}}$ (where $Y_{i,j}$ is viewed in $V$ as the
restriction of the appropriate coordinate function of $\mathbf{G}$). It
follows from $\left( \text{\ref{Representations L1 F3}}\right) $ that we
have $X\mathrm{Ind}_{\mathbf{Q}^{-}}^{\mathbf{G}}\left( \rho \right) \left[ 
\mathbf{Y}\right] _{=0}=0$ and, because $\frac{Y-\varepsilon YxY-Y}{%
\varepsilon }=-YxY$, taking $f=Y_{i,j}$ in $\left( \text{\ref%
{Representations L1 F3}}\right) $ proves that $X\mathrm{Ind}_{\mathbf{Q}%
^{-}}^{\mathbf{G}}\left( \rho \right) \left[ \mathbf{Y}\right] _{=1}\subset 
\mathrm{Ind}_{\mathbf{Q}^{-}}^{\mathbf{G}}\left( \rho \right) \left[ \mathbf{%
Y}\right] _{=2}$. We now assume that $r\geq 2$ and apply the Leibnitz rule
to the derivation $X$ to $Y^{\mathbf{i}}$ with $\left\vert \mathbf{i}%
\right\vert =r$:%
\begin{equation}
X\left( Y^{\mathbf{i}}\right) =\tsum\nolimits_{\left( i,j\right) }\left( 
\mathbf{i}_{i,j}X\left( Y_{i,j}\right) Y_{i,j}^{\mathbf{i}%
_{i,j}-1}\tprod\nolimits_{\left( i_{o},j_{o}\right) \neq \left( i,j\right)
}Y_{i_{o},j_{o}}^{\mathbf{i}_{i_{o},j_{o}}}\right) \text{.}
\label{Representations L1 F5}
\end{equation}%
Because $X\mathrm{Ind}_{\mathbf{Q}^{-}}^{\mathbf{G}}\left( \rho \right) %
\left[ \mathbf{Y}\right] _{=1}\subset \mathrm{Ind}_{\mathbf{Q}^{-}}^{\mathbf{%
G}}\left( \rho \right) \left[ \mathbf{Y}\right] _{=2}$, we see that $X\left(
Y_{i,j}\right) \in \mathrm{Ind}_{\mathbf{Q}^{-}}^{\mathbf{G}}\left( \rho
\right) \left[ \mathbf{Y}\right] _{=2}$. Since%
\begin{equation*}
Y_{i,j}^{\mathbf{i}_{i,j}-1}\tprod\nolimits_{\left( i_{o},j_{o}\right) \neq
\left( i,j\right) }Y_{i_{o},j_{o}}^{\mathbf{i}_{i_{o},j_{o}}}\in \mathrm{Ind}%
_{\mathbf{Q}^{-}}^{\mathbf{G}}\left( \rho \right) \left[ \mathbf{Y}\right]
_{=r-1}\text{,}
\end{equation*}%
$\left( \text{\ref{Representations L1 F5}}\right) $ proves that $X\left( Y^{%
\mathbf{i}}\right) \in \mathrm{Ind}_{\mathbf{Q}^{-}}^{\mathbf{G}}\left( \rho
\right) \left[ \mathbf{Y}\right] _{=r+1}$.
\end{proof}

In view of the fact that one can attach sheaves to (algebraic) $\left( 
\mathfrak{g},\mathbf{Q}\right) $-modules, the following fact is not so
important for our purposes.

\begin{proposition}
\label{Representations P Model}In our symplectic setting, we know that $%
L_{\lambda }=L_{\lambda ,\mathbf{G}}$ is in $\mathrm{Rep}_{f}\left( \mathbf{G%
}\right) $.
\end{proposition}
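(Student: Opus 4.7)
The plan is to apply Remark \ref{Representations R Model}: it is enough to show that $V_{\lambda }:=\mathrm{Ind}_{\mathbf{Q}^{-}}^{\mathbf{G}}\left( W_{\lambda }\right) \left[ \mathbf{Y}\right] $, regarded as a $\left( \mathfrak{g},\mathbf{Q}\right) $-submodule of the $\mathrm{R}$\textrm{ep}$_{gd}\left( \mathbf{G}\right) $-object $\mathrm{Ind}_{\mathbf{Q}^{-}}^{\mathbf{G}}\left( W_{\lambda }\right) \left[ \eta \right] $, is stable under the action of $Dist\left( \mathbf{U}^{-}\right) $. Since $\mathbf{U}^{-}\simeq \mathbf{S}_{g}$ is abelian and flat over $\Bbbk $, the distribution algebra $Dist\left( \mathbf{U}^{-}\right) $ is the divided-power polynomial algebra on a $\Bbbk $-basis of $\mathfrak{u}^{-}$, so it suffices to check that each divided power $X^{\left[ n\right] }:=X^{n}/n!$ (a well-defined element of $Dist\left( \mathbf{U}^{-}\right) _{\Bbbk _{fr}}$) sends $V_{\lambda }$ into itself, for every $X\in \mathfrak{u}^{-}$ and every $n\geq 0$.

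The idea is to read off each $X^{\left[ n\right] }f$ from the formal one-parameter flow of $X$. Writing $X=\left( \begin{array}{cc} 0 & 0 \\ x & 0 \end{array}\right) $ with $x\in \mathbf{S}_{g}\left( \Bbbk \right) $ and using $X^{2}=0$, the element $u_{t}:=1+tX=\left( \begin{array}{cc} 1 & 0 \\ tx & 1 \end{array}\right) $ is a genuine point of $\mathbf{U}^{-}\left( \Bbbk \left[ t\right] \right) $. By $\left( \text{\ref{Representations L1 F1}}\right) $ and the $\mathbf{G}$-action formula on $\mathrm{C}^{\mathrm{alg}}\left( \mathbf{S}_{g},\rho \right) \left[ \eta \right] $ recorded just before Lemma \ref{Representations L1}, $u_{t}$ acts on $f\in V_{\lambda }$ (viewed inside $\mathrm{C}^{\mathrm{alg}}\left( \mathbf{S}_{g},W_{\lambda }\right) \left[ \eta \right] $) by the rule
\begin{equation*}
\left( u_{t}f\right) \left( Y\right) =\rho \left( 1+tYx\right) f\left( \left( 1+tYx\right) ^{-1}Y\right) \text{,}
\end{equation*}
which I would interpret as an identity in $W_{\lambda }\otimes _{\Bbbk }\Bbbk \left[ Y\right] \left[ \left[ t\right] \right] $; the task is then to prove that the coefficient of $t^{n}$ lies in $V_{\lambda }=W_{\lambda }\otimes _{\Bbbk }\Bbbk \left[ Y\right] $ for every $n\geq 0$ and to identify it with $X^{\left[ n\right] }f$.

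For the second factor, the geometric series $\left( 1+tYx\right) ^{-1}Y=\tsum\nolimits_{k\geq 0}\left( -t\right) ^{k}\left( Yx\right) ^{k}Y$ has $t^{k}$-coefficient equal to a polynomial in the entries of $Y$ taking values in $\mathbf{S}_{g}$ (the palindromic product $\left( Yx\right) ^{k}Y$ is its own transpose), and substituting into the polynomial map $f$ yields a power series in $t$ with polynomial-in-$Y$, $W_{\lambda }$-valued coefficients. For the first factor, viewing $1+tYx$ as a point of $\mathbf{M}\left( \Bbbk \left[ Y\right] \left[ \left[ t\right] \right] \right) $ (whose matrix entries lie in the polynomial subring $\Bbbk \left[ Y,t\right] $) and pulling back the comodule structure $W_{\lambda }\rightarrow W_{\lambda }\otimes _{\Bbbk }\mathcal{O}\left( \mathbf{M}\right) $ along the corresponding map $\mathcal{O}\left( \mathbf{M}\right) \rightarrow \Bbbk \left[ Y,t\right] $ shows that $\rho \left( 1+tYx\right) $ lies in $End\left( W_{\lambda }\right) \otimes _{\Bbbk }\Bbbk \left[ Y,t\right] $: its $t^{n}$-coefficient is a polynomial in $Y$ with values in $End\left( W_{\lambda }\right) $, the integrality being guaranteed because $W_{\lambda }$ itself is an $\mathbf{M}$-model. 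Multiplying the two factors coefficient by coefficient yields an element of $V_{\lambda }\otimes _{\Bbbk }\Bbbk \left[ \left[ t\right] \right] $.

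Finally I would invoke the identity $u_{t}=\tsum\nolimits_{n\geq 0}t^{n}X^{\left[ n\right] }$ valid in $Dist\left( \mathbf{U}^{-}\right) _{\Bbbk _{fr}}\left[ \left[ t\right] \right] $ for the additive one-parameter subgroup generated by $X$ (equivalently, the observation that $u_{t}=\exp \left( tX\right) $ acts by $\tsum\nolimits_{n}t^{n}X^{n}/n!$ in any $\Bbbk _{fr}$-linear representation), applied to the $Dist\left( \mathbf{U}^{-}\right) $-module $V_{\lambda /\Bbbk _{fr}}$: this identifies the $t^{n}$-coefficient of $\left( u_{t}f\right) \left( Y\right) $ with $X^{\left[ n\right] }f$, which by the previous paragraph belongs to $V_{\lambda }$. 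Hence $V_{\lambda }$ is a $Dist\left( \mathbf{U}^{-}\right) $-module, and Remark \ref{Representations R Model} gives $L_{\lambda }=L_{\lambda ,\mathbf{G}}\in \mathrm{R}$\textrm{ep}$_{f}\left( \mathbf{G}\right) $. The only delicate point is the polynomial (not merely power-series) nature in $Y$ of each coefficient of $\rho \left( 1+tYx\right) $, which however is immediate from the comodule description combined with the fact that the matrix coordinates of $1+tYx$ lie in $\Bbbk \left[ Y,t\right] $.
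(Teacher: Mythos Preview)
Your argument is correct and takes a genuinely different route from the paper's own proof, though both start from the same reduction via Remark~\ref{Representations R Model}.

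The paper works coordinatewise: fixing the basis $\{\partial_{k,l}\}$ of $\mathfrak{u}^{-}$, it computes $\partial_{k,l}(Y_{i,j})=-\delta^{i,j}_{k,l}Y_{k,l}^{2}$ directly from $\left(\text{\ref{Representations L1 F3}}\right)$ and then shows by induction on the monomial basis $Y^{\mathbf{i}}$ that $\partial^{\mathbf{k}}(Y^{\mathbf{i}})$ is an integer multiple of $\mathbf{k}!$ times a monomial, so that $\tfrac{\partial^{\mathbf{k}}}{\mathbf{k}!}$ preserves $V_{\lambda}$. Your approach instead exponentiates: you let the one-parameter subgroup $u_{t}=1+tX\in\mathbf{U}^{-}(\Bbbk[t])$ act through the ambient $\mathbf{G}$-action on $\mathrm{Ind}_{\mathbf{Q}^{-}}^{\mathbf{G}}(W_{\lambda})[\eta]$, verify that the resulting formal power series lands in $V_{\lambda}\otimes_{\Bbbk}\Bbbk[[t]]$, and identify its $t^{n}$-coefficient with $X^{[n]}f$ via the additive group law $u_{s+t}=u_{s}u_{t}$. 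This is more conceptual and explains \emph{why} the divided powers are integral: it is precisely because $W_{\lambda}$ is already an $\mathbf{M}$-model, so that $\rho(1+tYx)$ has integral coefficients. The paper's computation is more elementary and self-contained, never leaving $V_{\lambda}$ or appealing to the $\mathbf{G}$-action on the rational induction.

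One small correction: the ring homomorphism $\mathcal{O}(\mathbf{M})\to\Bbbk[Y][[t]]$ classifying the point $(1+tYx,1)$ does \emph{not} factor through $\Bbbk[Y,t]$, because $\mathcal{O}(\mathbf{M})$ contains $\det^{-1}$ and $\det(1+tYx)^{-1}$ is only a formal power series in $t$. This does not affect your argument, since all you need (and all you actually claim in the last sentence) is that each $t^{n}$-coefficient of $\rho(1+tYx)$ lies in $End(W_{\lambda})\otimes_{\Bbbk}\Bbbk[Y]$, and that follows immediately from $(1+tYx,1)\in\mathbf{M}(\Bbbk[Y][[t]])$ together with $W_{\lambda}\in\mathrm{Rep}_{f}(\mathbf{M})$. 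You might also make explicit that checking stability under $X^{[n]}$ for all $X\in\mathfrak{u}^{-}$ suffices because taking $X=\partial_{k,l}$ recovers each generator $\partial_{k,l}^{[n]}$ of the divided-power algebra $Dist(\mathbf{U}^{-})$, and products of these then span.
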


\begin{proof}
As usual, we can assume that $\mathbf{G}=\mathbf{GSp}_{2g}$. According to
Remark \ref{Representations R Model}, it suffices to check that $\mathrm{Ind}%
_{\mathbf{Q}^{-}}^{\mathbf{G}}\left( W_{\lambda }\right) \left[ \mathbf{Y}%
\right] $ is a $Dist\left( \mathbf{U}^{-}\right) $-module. For every $1\leq
k\leq l\leq g$, let us write $\partial _{k,l}$ for the unique matrix in $%
\mathfrak{u}^{-}$ whose lower left $g$-by-$g$ entry is the symmetric matrix $%
x_{k,l}$\ whose upper triangular part has all zero except a $1$ in $\left(
k,l\right) $-entry. Similarly as in the proof of Lemma \ref{Representations
L1} $\left( 3\right) $, if $\mathbf{k}=\left( \mathbf{k}_{k,l}\right)
_{k\leq l}$\ write $\partial ^{\mathbf{k}}:=\tprod\nolimits_{k\leq
l}\partial _{k,l}^{\mathbf{k}_{k,l}}$ (the ordering being not relevant,
since $\mathfrak{u}^{-}$ is commutative) and set $\mathbf{k}!\mathbf{:=}%
\tprod\nolimits_{k\leq l}\mathbf{k}_{k,l}!$: then $Dist\left( \mathbf{G}%
\right) $ is freely generated as a $\Bbbk $-module by the expressions $\frac{%
\partial ^{\mathbf{k}}}{\mathbf{k}!}$ (cfr. \cite[Part II, \S 1.12
discussion showing $\left( 4\right) $]{Jan07}, although here we use the
coordinates from $\mathbf{S}_{g}\overset{\sim }{\rightarrow }\mathbf{U}^{-}$
rather than the simple roots). Because, using the multi-index notation $%
\mathbf{i}=\left( \mathbf{i}_{i,j}\right) _{i\leq j}$ for the $Y$'s
variables, the expressions $Y^{\mathbf{i}}:=\tprod\nolimits_{i\leq
j}Y_{i,j}^{\mathbf{i}_{i,j}}$ are a basis of the underlying $\Bbbk $-module
of $\mathrm{C}^{\mathrm{alg}}\left( \mathbf{S}_{g},\rho \right) $, we have
to check that $\frac{\partial ^{\mathbf{k}}}{\mathbf{k}!}\left( Y^{\mathbf{i}%
}\right) \in \mathrm{C}^{\mathrm{alg}}\left( \mathbf{S}_{g},\rho \right)
_{\Bbbk }$ for every $\mathbf{i}$ and $\mathbf{k}$. Using the formula $%
Y_{i,j}x_{k,l}Y_{i,j}=\delta _{k,l}^{i,j}Y_{k,l}^{2}$ for the Kronecker
delta function $\delta _{k,l}^{i,j}$ which is non-zero if and only if $%
\left( i,j\right) =\left( k,l\right) $, one verifies that $\partial
_{k,l}\left( Y_{i,j}\right) =-\delta _{k,l}^{i,j}Y_{k,l}^{2}$. Using $\left( 
\text{\ref{Representations L1 F5}}\right) $ and then an inductive argument
proves that $\partial ^{\mathbf{k}}\left( Y^{\mathbf{i}}\right) =0$ if there
exists $\left( k,l\right) $ such that $\mathbf{k}_{k,l}>0$ but $\mathbf{i}%
_{k,l}=0$ and, otherwise, $\partial ^{\mathbf{k}}\left( Y^{\mathbf{i}%
}\right) =-\mathbf{k}!Y^{\mathbf{i}+\mathbf{k}}$. The claim follows.
\end{proof}

\subsection{The isomorphism $\mathrm{Sym}_{W}^{2,\vee }\left( 1\right) 
\protect\overset{\sim }{\longrightarrow }\mathfrak{u}^{-}$}

Suppose first that $\mathbf{G}=\mathbf{GSp}_{2g}$. If $g\in \mathbb{N}$, we
let \textrm{Std}$_{g}\in \mathrm{R}$\textrm{ep}$_{f}\left( \mathbf{GL}%
_{g}\right) $ be the representation of $\mathbf{GL}_{g}$ given by column
vectors of length $g$ on which $\mathbf{GL}_{g}$ acts from the left by
matrix multiplication: we write $e_{i}\in \mathrm{Std}_{g}$ for the column
vector whose entries are all zero except for the $i$-row which equals $1$.
We then identify \textrm{Std}$_{g}^{\vee }$ with the row vectors on which $%
\mathbf{GL}_{g}$ acts from the right again by matrix multiplication (and
regard it as a left $\mathbf{GL}_{g}$ as usual by means of $ax:=xa^{-1}$, if
needed): the evaluation pairing $\left\langle -,-\right\rangle $ corresponds
to the matrix multiplication (of a row by a column of the same length) and
satisfies $\left\langle la,x\right\rangle =\left\langle l,ax\right\rangle $
for every $l\in \mathrm{Std}_{g}^{\vee },x\in \mathrm{Std}_{g}$ and $a\in 
\mathbf{GL}_{g}$, so that it gives rise to a morphism of (left or right)
representations \textrm{Std}$_{g}^{\vee }\otimes $\textrm{Std}$%
_{g}\rightarrow \mathrm{1}$. We remark that there is an isomorphism (here
and below $\mathbf{M}_{g}$ denotes the $g$-by-$g$ matrix functor, not to be
confused with the Levi):%
\begin{equation}
\varsigma :\left( \mathrm{Std}_{g}\otimes \mathrm{Std}_{g}\right) ^{\vee }%
\overset{\sim }{\longrightarrow }\mathbf{M}_{g}\text{ via }\varsigma \left(
b\right) :=\left( b\left( e_{i},e_{j}\right) \right) \text{ such that }%
\varsigma \left( ba\right) =a^{t}\varsigma \left( b\right) a\text{.}
\label{Representations F StdDualTens}
\end{equation}%
The symmetric quotient $\mathrm{Sym}_{g}^{2}:=\mathrm{Sym}^{2}\left( \mathrm{%
Std}_{g}\right) $ of $\mathrm{Std}_{g}^{\otimes 2}$ yields a monomorphism $%
\mathrm{Sym}_{g}^{2,\vee }\hookrightarrow \left( \mathrm{Std}_{g}^{\otimes
2}\right) ^{\vee }$ and $\left( \text{\ref{Representations F StdDualTens}}%
\right) $ restricts to an isomorphism%
\begin{equation}
\varsigma :\mathrm{Sym}_{g}^{2,\vee }\overset{\sim }{\longrightarrow }%
\mathbf{S}_{g}\text{ such that }\varsigma \left( ba\right) =a^{t}\varsigma
\left( b\right) a\text{.}  \label{Representations F StdDual}
\end{equation}%
We remark that $\mathbf{M}$ acts by conjugation on $\mathbf{U}^{-}$ and,
hence, differentiating this action yields an action \textrm{Ad}\ of $\mathbf{%
M}$ on $\mathfrak{u}^{-}$. We view the representations of $\mathbf{GL}_{g}$
as representations of $\mathbf{M\simeq GL}_{g}\times \mathbf{G}_{m}$ by
restriction along the projection onto the $\mathbf{GL}_{g}$-component.

\begin{lemma}
\label{Representations L2}The identification $\varsigma :\mathrm{Sym}%
_{g}^{2,\vee }\left( 1\right) \overset{\sim }{\rightarrow }\mathbf{S}_{g}$
and the isomorphism $\mathbf{S}_{g}\overset{\sim }{\longrightarrow }%
\mathfrak{u}^{-}$ mapping $x$ to $\left( 
\begin{array}{cc}
0 & 0 \\ 
x & 0%
\end{array}%
\right) $ are $\mathbf{GL}_{g}\times \mathbf{G}_{m}$-equivariant.
\end{lemma}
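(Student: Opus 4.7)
As in the preceding lemmas, I would first reduce to the case $\mathbf{G}=\mathbf{GSp}_{2g}$, so that Example \ref{Representations E GSp} furnishes the explicit identification $\mathbf{M}\simeq \mathbf{GL}_{g}\times \mathbf{G}_{m}$ via $(a,\nu )\mapsto \mathrm{diag}(a,a^{-t}\nu )$. The strategy is then to compute each of the two $\mathbf{GL}_{g}\times \mathbf{G}_{m}$-actions explicitly in the relevant coordinates and compare. Since both maps in the statement are already isomorphisms of abelian groups, a matching of the two formulas immediately yields equivariance of each.

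On the $\mathfrak{u}^{-}$-side, since $\mathbf{M}\subset \mathbf{G}$ the adjoint action restricts to honest conjugation, and a short $2g$-by-$2g$ matrix calculation shows that conjugating $\bigl(\begin{smallmatrix} 0 & 0 \\ x & 0\end{smallmatrix}\bigr)$ by $\mathrm{diag}(a,a^{-t}\nu )$ produces $\bigl(\begin{smallmatrix} 0 & 0 \\ \nu a^{-t}xa^{-1} & 0\end{smallmatrix}\bigr)$. Consequently the $\mathbf{GL}_{g}\times \mathbf{G}_{m}$-action on $\mathbf{S}_{g}$ transported through $\mathbf{S}_{g}\overset{\sim }{\rightarrow }\mathfrak{u}^{-}$ is $(a,\nu )\cdot x = \nu\,a^{-t}xa^{-1}$.

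On the $\mathrm{Sym}_{g}^{2,\vee }(1)$-side, I would start from the right-action formula $\varsigma (ba)=a^{t}\varsigma (b)a$ recorded in $(\ref{Representations F StdDual})$, convert to the canonical left action $a\cdot b:=ba^{-1}$ to get $\varsigma (a\cdot b)=a^{-t}\varsigma (b)a^{-1}$, and observe that the second factor $\mathbf{G}_{m}$ acts trivially on $\mathrm{Sym}_{g}^{2,\vee }$ (being the projection-inflation), while the twist $(1)$ introduces an additional scalar multiplication by $\nu $. Altogether $\varsigma ((a,\nu )\cdot b)=\nu\, a^{-t}\varsigma (b)a^{-1}$, which matches the formula on the $\mathfrak{u}^{-}$-side exactly.

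The argument is essentially a pair of direct matrix calculations and there is no genuine obstacle; the only point that requires care is the book-keeping of right-versus-left actions in $(\ref{Representations F StdDual})$ and the convention for the twist $(1)$, namely that it corresponds to a single copy of the similitude character $\nu $ under the identification $\mathbf{M}\simeq \mathbf{GL}_{g}\times \mathbf{G}_{m}$.
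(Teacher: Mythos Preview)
Your proposal is correct; the paper itself leaves this lemma to the reader, so your explicit computation of the adjoint action $\mathrm{Ad}(\mathrm{diag}(a,a^{-t}\nu))$ on $\mathfrak{u}^{-}$ and the matching action on $\mathrm{Sym}_{g}^{2,\vee}(1)$ via $(\ref{Representations F StdDual})$ is exactly the intended verification. The only thing to be careful about is making explicit (as you do implicitly) that the $\mathbf{GL}_{g}\times\mathbf{G}_{m}$-action on $\mathbf{S}_{g}$ is the common one $(a,\nu)\cdot x=\nu\,a^{-t}xa^{-1}$ transported from either side, so that the two equivariance claims are compatible.
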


\begin{proof}
Left to the reader.
\end{proof}

For a more general $\mathbf{G}=\mathbf{GSp}\left( V,\psi \right) $, applying 
$\theta :\left( \Bbbk ^{2g},\psi _{g}\right) \overset{\sim }{\rightarrow }%
\left( V,\psi \right) $ and $c_{\theta }:\mathbf{GSp}_{2g}\overset{\sim }{%
\rightarrow }\mathbf{GSp}\left( V,\psi \right) $ one gets, setting $\mathrm{%
Sym}_{W}^{2}:=\mathrm{Sym}^{2}\left( W\right) $, an $\mathbf{M}$-equivariant
isomorphism $\mathrm{Sym}_{W}^{2,\vee }\left( 1\right) \overset{\sim }{%
\rightarrow }\mathfrak{u}^{-}$ (cfr. the end of Example \ref{Representations
E GSp}).

\subsection{The isomorphism $\mathrm{J}^{\prime }\protect\overset{\sim }{%
\longrightarrow }\mathrm{Ind}_{\mathbf{Q}^{-}}^{\mathbf{G}}\left( \mathrm{1}%
\right) \left[ \mathbf{Y}\right] _{\leq 1}^{\vee }$}

Suppose first that $\mathbf{G}=\mathbf{GSp}_{2g}$. We remark that there is
an isomorphism \textrm{Std}$_{g}\overset{\sim }{\rightarrow }$\textrm{Std}$%
_{g}^{\vee }$ mapping $x$ to $x^{t}$, satisfying $\left( ax\right)
^{t}=x^{t}a^{t}$ when $a\in \mathbf{GL}_{g}$. It induces a morphism \textrm{%
Std}$_{g}^{\otimes 2}\overset{\sim }{\rightarrow }$\textrm{Std}$_{g}^{\vee
\otimes 2}$ satisfying the same equivariance property. Then we identify 
\textrm{Std}$_{g}^{\vee \otimes 2}\overset{\sim }{\rightarrow }\left( 
\mathrm{Std}_{g}^{\otimes 2}\right) ^{\vee }$ mapping $l_{1}\otimes l_{2}$
to $\lambda _{l_{1}\otimes l_{2}}$ such that $\lambda _{l_{1}\otimes
l_{2}}\left( v_{1}\otimes _{\mathbb{Z}}v_{2}\right) =l_{1}\left(
v_{1}\right) l_{2}\left( v_{2}\right) $. This is $\mathbf{GL}_{g}$%
-equivariant under the natural right action. Finally, we identify $\left( 
\mathrm{Std}_{g}^{\otimes 2}\right) ^{\vee }$ with $\mathbf{M}_{g}$ via $%
\left( \text{\ref{Representations F StdDualTens}}\right) $, thus getting%
\begin{equation}
\mathrm{Std}_{g}\otimes \mathrm{Std}_{g}\overset{\sim }{\longrightarrow }%
\mathbf{M}_{g}\text{ such that }\tau \left( ax\right) =a\tau \left( x\right)
a^{t}\text{.}  \label{Representations F StdTens}
\end{equation}%
We remark that, if $M$ is a module over a ring $R$, then we always have a
canonical morphism \textrm{Sym}$_{R}^{2}\left( M\right) \rightarrow M\otimes
_{R}M$ which is induced by the endomorphism of $M\otimes _{R}M$ sending $%
m_{1}\otimes _{R}m_{2}$ to $m_{1}\otimes _{R}m_{2}+m_{2}\otimes _{R}m_{1}$:
when $M$ is a free $R$-module, it is injective. Let us write $\mathbf{S}%
_{g}^{even}\subset \mathbf{S}_{g}$ for the subfunctor of those matrices
whose diagonal entries are multiplies of $2$. The map sending $x_{1}\otimes
x_{2}$ to $x_{2}\otimes x_{1}$ in $\mathrm{Std}_{g}^{\otimes 2}$\
corresponds to the transposition in $\mathbf{M}_{g}$ up to the isomorphism $%
\left( \text{\ref{Representations F StdTens}}\right) $ and $\mathbf{S}%
_{g}^{even}\subset \mathbf{M}_{g}$ is the subfunctor of those matrices of
the form $X+X^{t}$ with $X\in \mathbf{M}_{g}$. It follows that $\left( \text{%
\ref{Representations F StdTens}}\right) \ $restricts on $\mathrm{Sym}%
_{g}^{2}\rightarrow \mathrm{Std}_{g}^{\otimes 2}$ to an isomorphism%
\begin{equation}
\tau :\mathrm{Sym}_{g}^{2}\overset{\sim }{\longrightarrow }\mathbf{S}%
_{g}^{even}\text{ such that }\tau \left( ax\right) =a\tau \left( x\right)
a^{t}\text{.}  \label{Representations F Std}
\end{equation}%
We can therefore define a morphism of $\mathbf{GL}_{g}$-representations%
\begin{equation}
\mathrm{Sym}_{g}^{2}\longrightarrow \left( \mathbf{S}_{g}\right) ^{\vee \vee
}=\mathrm{C}^{\mathrm{alg}}\left( \mathbf{S}_{g},\mathrm{1}\right)
_{=1}^{\vee }\simeq \mathrm{Ind}_{\mathbf{Q}^{-}}^{\mathbf{G}}\left( \mathrm{%
1}\right) \left[ \mathbf{Y}\right] _{=1}^{\vee }
\label{Representations F StdInd}
\end{equation}%
mapping $x\in \mathrm{Sym}_{g}^{2}$ to the evaluation at $\tau \left(
x\right) \in \mathbf{S}_{g}^{even}\subset \mathbf{S}_{g}$: it is an
isomorphism when $2\in \Bbbk ^{\times }$ because then $\mathbf{S}_{g}^{even}=%
\mathbf{S}_{g}$ and $\left( \text{\ref{Representations F StdInd}}\right) $
is the biduality morphism $\mathrm{Sym}_{g}^{2}\rightarrow \left( \mathrm{Sym%
}_{g}^{2}\right) ^{\vee \vee }$, up to the identification $\tau $.

Consider the $\mathbf{Q}_{g}$-equivariant exact sequence%
\begin{equation}
0\longrightarrow \mathrm{Std}_{g}\longrightarrow \mathrm{Std}%
_{2g}\longrightarrow \frac{\mathrm{Std}_{2g}}{\mathrm{Std}_{g}}%
\longrightarrow 0\text{.}  \label{Representations D StdEx}
\end{equation}%
Because $\mathrm{Std}_{g}$ is isotropic, then natural symplectic pairing $%
\psi _{g}$ induces the morphism of $\mathbf{Q}$-representations%
\begin{equation}
\frac{\mathrm{Std}_{2g}}{\mathrm{Std}_{g}}\otimes \mathrm{Std}%
_{g}\longrightarrow \mathrm{1}\left( 1\right) \text{, which gives }\mathrm{%
Std}_{g}\left( -1\right) \overset{\sim }{\longrightarrow }\left( \frac{%
\mathrm{Std}_{2g}}{\mathrm{Std}_{g}}\right) ^{\vee }\text{.}
\label{Representations D StdPair}
\end{equation}%
Consider the following diagram of $\mathbf{Q}$-representations\ that we are
going to describe:%
\begin{equation}
\begin{array}{ccccccc}
0\longrightarrow & \mathrm{Std}_{g}\otimes \mathrm{Std}_{g}\left( -1\right)
& \longrightarrow & \mathrm{Std}_{2g}\otimes \mathrm{Std}_{g}\left( -1\right)
& \longrightarrow & \frac{\mathrm{Std}_{2g}}{\mathrm{Std}_{g}}\otimes 
\mathrm{Std}_{g}\left( -1\right) & \longrightarrow 0 \\ 
& \parallel &  & \uparrow &  & \uparrow &  \\ 
0\longrightarrow & \mathrm{Std}_{g}\otimes \mathrm{Std}_{g}\left( -1\right)
& \longrightarrow & \mathrm{J}^{\prime \prime } & \longrightarrow & \mathrm{1%
} & \longrightarrow 0 \\ 
& \downarrow &  & \downarrow &  & \parallel &  \\ 
0\longrightarrow & \mathrm{Sym}_{g}^{2}\left( -1\right) & \longrightarrow & 
\mathrm{J}^{\prime } & \longrightarrow & \mathrm{1} & \longrightarrow 0\text{%
.}%
\end{array}
\label{Representations D1}
\end{equation}%
The first row is obtained applying $-\otimes \mathrm{Std}_{g}\left(
-1\right) $ to the exact sequence $\left( \text{\ref{Representations D StdEx}%
}\right) $. Then dualize the evaluation pairing $\left( \frac{\mathrm{Std}%
_{2g}}{\mathrm{Std}_{g}}\right) ^{\vee }\otimes \frac{\mathrm{Std}_{2g}}{%
\mathrm{Std}_{g}}\rightarrow \mathrm{1}$ to get, using $\left( \text{\ref%
{Representations D StdPair}}\right) $, the morphism%
\begin{equation}
\mathrm{1}\longrightarrow \left( \frac{\mathrm{Std}_{2g}}{\mathrm{Std}_{g}}%
\right) ^{\vee \vee }\otimes \left( \frac{\mathrm{Std}_{2g}}{\mathrm{Std}_{g}%
}\right) ^{\vee }\overset{\sim }{\longleftarrow }\frac{\mathrm{Std}_{2g}}{%
\mathrm{Std}_{g}}\otimes \mathrm{Std}_{g}\left( -1\right) \text{.}
\label{Representations D1 Cas}
\end{equation}%
We pull-back the first row via $\left( \text{\ref{Representations D1 Cas}}%
\right) $ to get the second row. Finally, we push-out the second row via the
canonical quotient morphism from $\mathrm{Std}_{g}^{\otimes 2}\left(
-1\right) $ to $\mathrm{Sym}_{g}^{2}\left( -1\right) $ in order to get the
third row. By construction, the diagram $\left( \text{\ref{Representations
D1}}\right) $ is commutative with exact rows.

\begin{proposition}
\label{Representations P Spl}If we suppose that $2\in \Bbbk ^{\times }$,
then there is an isomorphism of $\mathbf{Q}$-representations%
\begin{equation*}
\mathrm{J}^{\prime }\overset{\sim }{\longrightarrow }\mathrm{J}^{\vee }\text{%
, where }\mathrm{J}:=\mathrm{Ind}_{\mathbf{Q}^{-}}^{\mathbf{G}}\left( 
\mathrm{1}\right) \left[ \mathbf{Y}\right] _{\leq 1}\text{.}
\end{equation*}%
It is uniquely determined as a morphism of $\mathbf{Q}^{\circ }$%
-representations up to non-zero scalar factors and its restriction to $%
\mathrm{Sym}_{g}^{2}\left( -1\right) $ agrees with $\left( \text{\ref%
{Representations F StdInd}}\right) $ up to non-zero scalar factors. In
particular, the third row of $\left( \text{\ref{Representations D1}}\right) $
is uniquely up to non-zero scalar factors identified with the short exact
sequence of $\mathbf{Q}$-representations obtained dualizing the exact
sequence of $\mathbf{Q}$-representations%
\begin{equation}
0\longrightarrow \mathrm{1}\longrightarrow \mathrm{J}\longrightarrow \frac{%
\mathrm{J}}{\mathrm{1}}\longrightarrow 0\text{.}
\label{Representations P Spl Claim}
\end{equation}
\end{proposition}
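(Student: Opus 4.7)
The plan is to identify both $\mathrm{J}'$ and $\mathrm{J}^\vee$ as extensions of $\mathrm{1}$ by $\mathrm{Sym}_g^2(-1)$ in $\mathrm{Rep}(\mathbf{Q})$, and to show that they define the same extension class up to a non-zero scalar.

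First, I would describe the $\mathbf{Q}$-representation structure of $\mathrm{J}^\vee$ as an extension. Using Lemma \ref{Representations L1} to identify $\mathrm{J}$ with the space of polynomial functions of degree $\leq 1$ on $\mathbf{U}\simeq\mathbf{S}_g$, I observe that the constant functions $\mathrm{1}\subset\mathrm{J}$ form a $\mathbf{Q}$-stable subspace (they are $\mathbf{M}$-invariant because the $\mathbf{M}$-action preserves constants, and $\mathbf{U}$-invariant because $\mathbf{U}$-translations only alter the linear term), and that $\mathbf{U}$ acts trivially on the quotient $\mathrm{J}/\mathrm{1}\simeq\mathrm{J}_{=1}$ for the same reason. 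Using $(\ref{Representations L1 F Act})$, one checks that $\mathbf{M}$ acts on the coset space $\mathbf{S}_g\simeq\mathbf{Q}^-\backslash\mathbf{Y}$ via $Y\mapsto aYa^t\nu^{-1}$, which via $\tau$ of $(\ref{Representations F Std})$ identifies $\mathbf{S}_g$ with $\mathrm{Sym}_g^2(-1)$ as $\mathbf{M}$-representations; hence $\mathrm{J}/\mathrm{1}\simeq\mathbf{S}_g^\vee\simeq\mathrm{Sym}_g^{2,\vee}(1)$ as $\mathbf{Q}$-representations. Dualizing the resulting $\mathbf{Q}$-equivariant sequence $0\to\mathrm{1}\to\mathrm{J}\to\mathrm{Sym}_g^{2,\vee}(1)\to 0$ produces
\[ 0\longrightarrow\mathrm{Sym}_g^2(-1)\longrightarrow\mathrm{J}^\vee\longrightarrow\mathrm{1}\longrightarrow 0, \]
which mirrors the third row of $(\ref{Representations D1})$.

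Next I would compare the two extension classes in $\mathrm{Ext}^1_\mathbf{Q}(\mathrm{1},\mathrm{Sym}_g^2(-1))$ after restricting to $\mathbf{Q}^\circ$. Since $\mathbf{U}^\circ$ is abelian unipotent acting trivially on $\mathrm{Sym}_g^2$, one has $\mathrm{Ext}^1_{\mathbf{U}^\circ}(\mathrm{1},\mathrm{Sym}_g^2)\simeq\mathrm{Hom}(\mathfrak{u},\mathrm{Sym}_g^2)$, and taking $\mathbf{M}^\circ$-invariants reduces via Lemma \ref{Representations L2} (which identifies $\mathfrak{u}\simeq\mathrm{Sym}_g^2$ as $\mathbf{M}^\circ\simeq\mathbf{GL}_g$-representations) to $\mathrm{End}_{\mathbf{GL}_g}(\mathrm{Sym}_g^2)=\Bbbk\cdot\mathrm{id}$ by Schur's lemma (absolute irreducibility). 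I will verify that both classes are non-zero scalar multiples of the identity. For $\mathrm{J}^\vee$, the computation is immediate from the formula $(u_{Y_0}f)(Y)=f(Y+Y_0)$: the $\mathbf{U}$-action on a functional lifting $\mathrm{1}\subset\mathrm{J}^\vee/\mathrm{Sym}_g^2(-1)$ reproduces, via the identification of Lemma \ref{Representations L2}, the identity endomorphism of $\mathrm{Sym}_g^2$ up to sign. For $\mathrm{J}'$, I would trace through the pullback/pushout construction, using that the symplectic Casimir $(\ref{Representations D1 Cas})$ sends $1$ to the identity element of $\mathrm{End}(\mathrm{Std}_g)\simeq(\mathrm{Std}_{2g}/\mathrm{Std}_g)\otimes\mathrm{Std}_g(-1)$, and then applying the symmetrization map; the assumption $2\in\Bbbk^\times$ ensures $\mathbf{S}_g^{\mathrm{even}}=\mathbf{S}_g$, so that $(\ref{Representations F StdInd})$ is an isomorphism and the class comes out as a non-zero scalar multiple of the identity.

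Once the classes match, the $5$-lemma produces the desired isomorphism $\mathrm{J}'\overset{\sim}{\to}\mathrm{J}^\vee$ of $\mathbf{Q}$-representations whose restriction to the sub $\mathrm{Sym}_g^2(-1)$ factors through $(\ref{Representations F StdInd})$ up to a non-zero scalar. For the uniqueness as a morphism of $\mathbf{Q}^\circ$-representations, any two such isomorphisms differ by an endomorphism of $\mathrm{J}^\vee$ in $\mathrm{Rep}(\mathbf{Q}^\circ)$: since the sub $\mathrm{Sym}_g^2$ and the quotient $\mathrm{1}$ are non-isomorphic absolutely simple $\mathbf{M}^\circ$-representations, any such endomorphism preserves the filtration and acts by scalars on each graded piece by Schur, and $\mathbf{U}^\circ$-equivariance forces the two scalars to coincide. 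The hard part of the argument is the explicit identification of the extension class of $\mathrm{J}'$ with (a scalar multiple of) $\mathrm{id}_{\mathrm{Sym}_g^2}$: I expect the main technical obstacle to lie in correctly tracking the pullback-pushout diagram $(\ref{Representations D1})$ together with the identifications $\tau$ and $\varsigma$, and in keeping careful track of the $\nu$-twists that separate the $\mathbf{Q}$- and $\mathbf{Q}^\circ$-equivariance assertions.
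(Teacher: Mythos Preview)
Your approach is correct in outline and genuinely different from the paper's. The paper proceeds by writing down an explicit candidate morphism $\tciLaplace:\mathrm{J}''\to\mathrm{J}^\vee$, namely $\tciLaplace(v)(f)=f_1(\tau(x(v)))-2\lambda(v)f_0$ (where $v=\widetilde{x}(v)+\lambda(v)\sum_i f_i\otimes e_i$), and then verifies $\mathbf{M}$- and $\mathbf{U}$-equivariance by a direct matrix computation using $(\ref{Representations L1 F Act})$; the factor $-2$ emerges from this computation and explains exactly where $2\in\Bbbk^\times$ enters. Uniqueness is then deduced, as you do, from the $\mathbf{GL}_g$-decomposition into two non-isomorphic simple pieces together with $\mathbf{U}$-equivariance forcing the two scalars to be proportional.

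Your route via $\mathrm{Ext}^1_{\mathbf{Q}^\circ}(\mathrm{1},\mathrm{Sym}_g^2)\simeq\mathrm{Hom}_{\mathbf{GL}_g}(\mathfrak{u},\mathrm{Sym}_g^2)=\Bbbk$ is cleaner conceptually and avoids most of the formula-chasing; the trade-offs are that it does not produce the explicit isomorphism (which the paper may want for later computations), and that the cohomological computation---in particular $H^1(\mathbf{M}^\circ,-)=0$ and Schur's lemma---is only routine when $\Bbbk$ is a field, whereas the paper's explicit argument works uniformly over any ring with $2$ invertible. Two small points you should tighten: first, you compute $\mathrm{Ext}^1$ over $\mathbf{Q}^\circ$ but the statement asks for a $\mathbf{Q}$-isomorphism, so either redo the Ext computation over $\mathbf{Q}$ (noting $\mathfrak{u}\simeq\mathrm{Sym}_g^2(-1)$ as $\mathbf{M}$-modules, so the $\nu$-twists match and the same argument gives $\mathrm{Ext}^1_{\mathbf{Q}}=\Bbbk$), or argue that a $\mathbf{Q}^\circ$-isomorphism unique up to scalar is automatically $\mathbf{Q}$-equivariant since $g^{-1}\varphi g$ must equal $c(g)\varphi$ with $c$ trivial on the quotient $\mathrm{1}$. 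Second, the verification that the class of $\mathrm{J}'$ is nonzero really does require unwinding the pullback--pushout, and at that point you are essentially redoing the heart of the paper's $\mathbf{U}$-equivariance check; so the savings over the paper's method are more organizational than computational.
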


\begin{proof}
\emph{Existence. }Let us write $\left\langle -,-\right\rangle $ for the
symplectic pairing $\psi _{g}$. Let us write $\left\langle -,-\right\rangle $
for the symplectic pairing $\psi _{g}$. As a basis of $\mathrm{Std}_{2g}$ we
may take the symplectic-Hodge basis $\left\{ e_{1},...,e_{2g}\right\} $ and
we set $f_{i}=e_{g+i}$, so that $\left\langle e_{i},f_{j}\right\rangle
=\delta _{i,j}$. Then every element $v$ of $\mathrm{Std}_{2g}\otimes \mathrm{%
Std}_{g}\left( -1\right) $ can be uniquely written in the form%
\begin{equation*}
v=\widetilde{x}\left( v\right) +\tsum\nolimits_{i,j=1}^{g}\lambda
_{i,j}\left( v\right) f_{i}\otimes e_{j}\text{, where }\widetilde{x}\left(
v\right) \in \mathrm{Std}_{g}\otimes \mathrm{Std}_{g}\left( -1\right) \text{
and }\lambda _{i,j}\left( v\right) \in \mathbf{G}_{a}\text{.}
\end{equation*}%
Because the dual of $e_{i}$ under the pairing $\left( \text{\ref%
{Representations D StdPair}}\right) $ induced by $\psi _{g}$ is the image $%
\overline{f}_{i}$ of $f_{i}$ in $\frac{\mathrm{Std}_{2g}}{\mathrm{Std}_{g}}$%
, we see that $\lambda \in \mathrm{1}$ maps via $\left( \text{\ref%
{Representations D1 Cas}}\right) $ to%
\begin{equation*}
\lambda \cdot \tsum\nolimits_{i=1}^{g}\overline{f}_{i}\otimes
e_{i}=\tsum\nolimits_{i=1}^{g}\lambda \overline{f}_{i}\otimes e_{i}\text{.}
\end{equation*}%
Since the span $\mathrm{Std}_{g}^{\prime }$ of $\left\{ f_{i}\otimes
e_{j}\right\} _{i,j=1,...,g}$ maps isomorphically onto $\frac{\mathrm{Std}%
_{2g}}{\mathrm{Std}_{g}}\otimes \mathrm{Std}_{g}\left( -1\right) $, it
follows that%
\begin{eqnarray*}
&&\mathrm{J}^{\prime \prime }=\left\{ \left( v,\lambda \right) \in \mathrm{%
Std}_{2g}\otimes \mathrm{Std}_{g}\left( -1\right) \times \mathrm{1}:\lambda
_{i,j}\left( v\right) =\lambda \delta _{i,j}\right\} \\
&&\overset{\sim }{\rightarrow }\left\{ v\in \mathrm{Std}_{2g}\otimes \mathrm{%
Std}_{g}\left( -1\right) :v=\widetilde{x}\left( v\right) +\lambda \left(
v\right) \tsum\nolimits_{i=1}^{g}f_{i}\otimes e_{i}\right\}
\end{eqnarray*}%
In order to obtain the identification between the third row of $\left( \text{%
\ref{Representations D1}}\right) $ and the dual of $\left( \text{\ref%
{Representations P Spl Claim}}\right) $ we have to define a $\mathbf{Q}$%
-equivariant morphism%
\begin{equation*}
\tciLaplace :\mathrm{J}^{\prime \prime }\longrightarrow \mathrm{C}^{\mathrm{%
alg}}\left( \mathbf{S}_{g},\rho \right) _{\leq 1}^{\vee }
\end{equation*}%
whose restriction to $\mathrm{Std}_{g}\otimes \mathrm{Std}_{g}\left(
-1\right) $ induces the isomorphism $\left( \text{\ref{Representations F
StdInd}}\right) $ and with the property that the induced morphism $\mathrm{1}%
\rightarrow \mathrm{1}$ between the third terms of the third row of $\left( 
\text{\ref{Representations D1}}\right) $ and the dual of $\left( \text{\ref%
{Representations P Spl Claim}}\right) $ is an isomorphism.

We claim that, writing $x\left( v\right) \in \mathrm{Sym}_{g}^{2}\left(
-1\right) $ for the image of $\widetilde{x}\left( v\right) \in \mathrm{Std}%
_{g}^{\otimes 2}\left( -1\right) $ (so that $\tau \left( x\left( v\right)
\right) \in \mathbf{S}_{g}^{even}\subset \mathbf{S}_{g}$), the rule%
\begin{equation*}
\tciLaplace \left( v\right) \left( f\right) :=f_{1}\left( \tau \left(
x\left( v\right) \right) \right) -2\lambda \left( v\right) f_{0}\text{,
where }f_{0}=f\left( 0\right) \text{ and }f_{1}=f-f_{0}\text{,}
\end{equation*}%
works. First of all, because the evaluation at a point and the association
mapping $f$ to $f_{1}$ are linear, $\tciLaplace \left( v\right) \in \mathrm{C%
}^{\mathrm{alg}}\left( \mathbf{S}_{g},\rho \right) _{\leq 1}^{\vee }$. Also,
because $f_{1}$ and the associations mapping $v$ to $\tau \left( x\left(
v\right) \right) $ and $v$ to $\lambda \left( v\right) $ are linear, $%
\tciLaplace $ is linear. By definition, $\tciLaplace $ induces the
isomorphism $\left( \text{\ref{Representations F StdInd}}\right) $ when
restricted to $\mathrm{Std}_{g}\otimes \mathrm{Std}_{g}\left( -1\right) $
and maps $c:=\tsum\nolimits_{i=1}^{g}f_{i}\otimes e_{i}$ to $-2ev_{0}$ for
the evaluation at zero map $ev_{0}\in \mathrm{C}^{\mathrm{alg}}\left( 
\mathbf{S}_{g},\rho \right) _{\leq 1}^{\vee }$, so that it also induces $-2:%
\mathrm{1}\rightarrow \mathrm{1}$, which is an isomorphism because $2\in
\Bbbk ^{\times }$. Let us check the $\mathbf{Q}$-equivariance of $%
\tciLaplace $, i.e. that $\left( \gamma \tciLaplace \left( v\right) \right)
\left( f\right) =\tciLaplace \left( \gamma v\right) \left( f\right) $, where%
\begin{eqnarray}
&&\left( \gamma \tciLaplace \left( v\right) \right) \left( f\right)
=\tciLaplace \left( v\right) \left( \gamma ^{-1}f\right) =\left( \gamma
^{-1}f\right) _{1}\left( \tau \left( x\left( v\right) \right) \right)
-2\lambda \left( v\right) \left( \gamma ^{-1}f\right) _{0}\text{,}  \notag \\
&&\text{ }\tciLaplace \left( \gamma v\right) \left( f\right) =f_{1}\left(
\tau \left( x\left( \gamma v\right) \right) \right) -2\lambda \left( \gamma
v\right) f_{0}\text{.}  \label{Representations P Spl F0}
\end{eqnarray}

Let us define $c^{\prime }:=\tsum\nolimits_{i=1}^{g}e_{i}\otimes f_{i}$ in $%
\mathrm{Std}_{g}\otimes \mathrm{Std}_{2g}\left( -1\right) $. Because the
dual of $f_{i}$ with respect to $\psi _{g}$ is $-e_{i}$, the element $%
c-c^{\prime }\in \mathrm{Std}_{2g}\otimes \mathrm{Std}_{2g}\left( -1\right) $
is the image of $1\in \mathrm{1}$ under the dual of the symplectic pairing $%
\psi _{g}$ and, hence, it is $\mathbf{G}$-invariant. We deduce that%
\begin{equation*}
\gamma c-c=\gamma c^{\prime }-c^{\prime }\text{ for every }\gamma \in 
\mathbf{G}\text{.}
\end{equation*}%
We remark that $c\in \mathrm{Std}_{g}^{\prime }\otimes \mathrm{Std}%
_{g}\left( -1\right) $ and $c^{\prime }\in \mathrm{Std}_{g}\otimes \mathrm{%
Std}_{g}^{\prime }\left( -1\right) $: if we take $\gamma \in \mathbf{M}$ we
see that we also have $\gamma c\in \mathrm{Std}_{g}^{\prime }\otimes \mathrm{%
Std}_{g}\left( -1\right) $ and $\gamma c^{\prime }\in \mathrm{Std}%
_{g}\otimes \mathrm{Std}_{g}^{\prime }\left( -1\right) $. Because $\mathrm{%
Std}_{g}\cap \mathrm{Std}_{g}^{\prime }=0$, the above equation implies that%
\begin{equation}
\gamma c-c=\gamma c^{\prime }-c^{\prime }=0\text{ for every }\gamma \in 
\mathbf{M}\text{, so that }\gamma c=c\text{ in }\mathrm{Std}_{2g}\otimes 
\mathrm{Std}_{g}\left( -1\right) \text{.}  \label{Representations P Spl F1}
\end{equation}%
Suppose now that $\gamma =\left( 
\begin{array}{cc}
a & b \\ 
0 & d%
\end{array}%
\right) $ so that $\gamma _{\mathbf{M}}:=\left( 
\begin{array}{cc}
a & 0 \\ 
0 & d%
\end{array}%
\right) \in \mathbf{M}$ (see $\left( \text{\ref{Representations L1 F0}}%
\right) $). Then\ we have, using the symbols $\gamma $ and $\gamma _{\mathbf{%
M}}$ for the action in $\mathrm{Std}_{2g}\otimes \mathrm{Std}_{g}\left(
-1\right) $ and writing $a\otimes a$, $b\otimes a$ and $d\otimes a$ for the
tensor product of the matrix multiplications:%
\begin{eqnarray*}
\gamma v &=&\nu \left( \gamma \right) ^{-1}a\otimes a\left( \widetilde{x}%
\left( v\right) \right) +\lambda \left( v\right) \tsum\nolimits_{i}^{g}\nu
\left( \gamma \right) ^{-1}be_{i}\otimes ae_{i}+\lambda \left( v\right)
\tsum\nolimits_{i}^{g}\nu \left( \gamma \right) ^{-1}df_{i}\otimes ae_{i} \\
&=&\nu \left( \gamma \right) ^{-1}a\otimes a\left( \widetilde{x}\left(
v\right) \right) +\lambda \left( v\right) \tsum\nolimits_{i}^{g}\nu \left(
\gamma \right) ^{-1}be_{i}\otimes ae_{i}+\lambda \left( v\right) \gamma _{%
\mathbf{M}}c \\
&&\overset{\left( \text{\ref{Representations P Spl F1}}\right) }{=}\nu
\left( \gamma \right) ^{-1}a\otimes a\left( \widetilde{x}\left( v\right)
\right) +\lambda \left( v\right) \tsum\nolimits_{i}^{g}\nu \left( \gamma
\right) ^{-1}be_{i}\otimes ae_{i}+\lambda \left( v\right) c\text{.}
\end{eqnarray*}%
We deduce that%
\begin{eqnarray}
\widetilde{x}\left( \gamma v\right) &=&\nu \left( \gamma \right) ^{-1}\left(
a\otimes a\left( \widetilde{x}\left( v\right) \right) +\lambda \left(
v\right) \tsum\nolimits_{i}^{g}b\otimes a\left( e_{i}\otimes e_{i}\right)
\right) \text{,}  \notag \\
\lambda \left( \gamma v\right) &=&\lambda \left( v\right) \text{.}
\label{Representations P Spl F2}
\end{eqnarray}

We now make the observation that $\left( \text{\ref{Representations F
StdDualTens}}\right) $ satisfies, more generally, the equivariance property $%
\varsigma \left( b\cdot a_{1}\otimes a_{2}\right) =a_{1}^{t}\varsigma \left(
b\right) a_{2}$ for every $a_{1},a_{2}\in \mathbf{GL}_{g}$, implying that $%
\tau $ satisfies the rule%
\begin{equation*}
\tau \left( a_{1}\otimes a_{2}\cdot x\right) =a_{1}\tau \left( x\right)
a_{2}^{t}\text{.}
\end{equation*}%
Since $\tau \left( e_{i}\otimes e_{i}\right) $ is the matrix whose entries
are all zero except for the $\left( i,i\right) $-entry which equals $2$,
then it follows from $\left( \text{\ref{Representations P Spl F2}}\right) $
that we have%
\begin{eqnarray}
\tau \left( x\left( \gamma v\right) \right) &=&\nu \left( \gamma \right)
^{-1}\left( a\tau \left( x\left( v\right) \right) a^{t}+\lambda \left(
v\right) \tsum\nolimits_{i}^{g}b\tau \left( e_{i}\otimes e_{i}\right)
a^{t}\right)  \notag \\
&=&\nu \left( \gamma \right) ^{-1}\left( a\tau \left( x\left( v\right)
\right) a^{t}+2\lambda \left( v\right) ba^{t}\right) \text{.}
\label{Representations P Spl F3}
\end{eqnarray}

Suppose now that $b=0$ and let us prove the $\mathbf{M}$-equivariance of $%
\tciLaplace $. Because $\mathbf{M}$ acts by means of linear transformations
on $\mathbf{S}_{g}$\ in view of Lemma \ref{Representations L1'}, it
preserves degrees and we see that $\left( \gamma ^{-1}f\right) _{0}=f\left(
0\gamma ^{-1}\right) =f_{0}$ and $\left( \gamma ^{-1}f\right) _{1}\left(
Y\right) =f_{1}\left( Y\gamma ^{-1}\right) $. Taking into account $\left( 
\text{\ref{Representations L1 F Act}}\right) $ and the fact that $%
d=a^{-t}\nu \left( \gamma \right) $, we find%
\begin{equation*}
\left( \gamma ^{-1}f\right) _{0}=f_{0}\text{ and }\left( \gamma
^{-1}f\right) _{1}\left( Y\right) =f_{1}\left( aYd^{-1}\right) =\nu \left(
\gamma \right) ^{-1}f_{1}\left( aYa^{t}\right) \text{.}
\end{equation*}%
Then the first equation of $\left( \text{\ref{Representations P Spl F0}}%
\right) $ becomes%
\begin{equation*}
\left( \gamma \tciLaplace \left( v\right) \right) \left( f\right) =\nu
\left( \gamma \right) ^{-1}f_{1}\left( a\tau \left( x\left( v\right) \right)
a^{t}\right) -2\lambda \left( v\right) f_{0}\text{.}
\end{equation*}%
In view of $\left( \text{\ref{Representations P Spl F2}}\right) $ and $%
\left( \text{\ref{Representations P Spl F3}}\right) $, the above left hand
side equals the left hand side of the second equation of $\left( \text{\ref%
{Representations P Spl F0}}\right) $. The $\mathbf{M}$-equivariance follows.

Suppose now that $a=d=1$ and let us prove the $\mathbf{U}$-equivariance of $%
\tciLaplace $. If $f=f_{0}+\tsum\nolimits_{i,j}\alpha _{i,j}Y_{i,j}$, then
according to $\left( \text{\ref{Representations L1 F Act}}\right) $ we have $%
Y_{i,j}\gamma ^{-1}=-b+Y_{i,j}$, so that $Y_{i,j}$ maps to $-b_{i,j}+Y_{i,j}$
and we have%
\begin{eqnarray*}
&&\left( \gamma ^{-1}f\right) \left( Y\right) =f\left( Y\gamma ^{-1}\right)
=f_{0}-\tsum\nolimits_{i,j}\alpha _{i,j}b_{i,j}+\tsum\nolimits_{i,j}\alpha
_{i,j}Y_{i,j}=f\left( -b\right) +f_{1}\text{,} \\
&&\text{so that }\left( \gamma ^{-1}f\right) _{0}=f\left( -b\right) \text{
and }\left( \gamma ^{-1}f\right) _{1}=f_{1}\text{.}
\end{eqnarray*}%
Then the first equation of $\left( \text{\ref{Representations P Spl F0}}%
\right) $ becomes%
\begin{equation*}
\left( \gamma \tciLaplace \left( v\right) \right) \left( f\right)
=f_{1}\left( \tau \left( x\left( v\right) \right) \right) -2\lambda \left(
v\right) f\left( -b\right) \text{.}
\end{equation*}%
On the other hand, in view of $\left( \text{\ref{Representations P Spl F2}}%
\right) $ and $\left( \text{\ref{Representations P Spl F3}}\right) $, the
second equation of $\left( \text{\ref{Representations P Spl F0}}\right) $
becomes%
\begin{eqnarray*}
\tciLaplace \left( \gamma v\right) \left( f\right) &=&f_{1}\left( \tau
\left( x\left( v\right) \right) +2\lambda \left( v\right) b\right) -2\lambda
\left( v\right) f_{0} \\
&=&f_{1}\left( \tau \left( x\left( v\right) \right) \right) -2\lambda \left(
v\right) f_{1}\left( -b\right) +2\lambda \left( v\right) f_{0} \\
&=&f_{1}\left( \tau \left( x\left( v\right) \right) \right) -2\lambda \left(
v\right) f\left( -b\right) \text{.}
\end{eqnarray*}%
The $\mathbf{U}$-equivariance follows and, with it, the claimed $\mathbf{Q}$%
-equivariance.

\emph{Uniqueness.}$\ $Suppose that $\varphi :\mathrm{J}^{\prime }\overset{%
\sim }{\rightarrow }\mathrm{J}^{\vee }$ is an isomorphism of $\mathbf{Q}%
^{\circ }$-representations. The restriction of the third row of $\left( 
\text{\ref{Representations D1}}\right) $ to the reductive group $\mathbf{GL}%
_{g}$ splits as the direct sum of two non-isomorphic irreducible
representations. The same is true for the dual of $\left( \text{\ref%
{Representations P Spl Claim}}\right) $, in view of the $\mathbf{GL}_{g}$%
-equivariant isomorphism $\left( \text{\ref{Representations F StdInd}}%
\right) $. Hence $\varphi $ is of the form $\varphi =\mu _{1}\varphi
_{1}\oplus \mu _{2}\varphi _{2}$, where $\varphi _{1}$ is $\left( \text{\ref%
{Representations F StdInd}}\right) $, $\varphi _{2}=1_{\mathrm{1}}$ and $\mu
_{i}$ is a non-zero scalar factor for $i=1,2$. In order to make $\varphi
_{2} $ more explicit, let us write $ev_{0}\in \mathrm{J}^{\vee }$ for the
evaluation at $0$: because $\mathbf{M}$ acts by means of linear
transformations on $\mathbf{S}_{g}$\ in view of Lemma \ref{Representations
L1'}, we see that $ev_{0}$ is $\mathbf{GL}_{g}$-invariant and non-zero.
Hence we may assume that $\varphi _{2}\left( 1\right) =ev_{0}$ when we view $%
\mathrm{1}\subset \mathrm{J}^{\vee }$. Let us now write $\varphi ^{\prime }$
for the composition of $\mathrm{J}^{\prime \prime }\rightarrow \mathrm{J}%
^{\prime }$ followed by $\varphi $. The above description of $\varphi _{1}$
and $\varphi _{2}$ implies that%
\begin{equation*}
\varphi ^{\prime }\left( v\right) \left( f\right) =\mu _{1}f_{1}\left( \tau
\left( x\left( v\right) \right) \right) +\mu _{2}\lambda \left( v\right)
f_{0}\text{ for every }f\in \mathrm{C}^{\mathrm{alg}}\left( \mathbf{S}%
_{g},\rho \right) _{\leq 1}\text{.}
\end{equation*}%
Then the proof of the $\mathbf{U}$-equivariance of $\tciLaplace $ in the
existence part shows that we must have $\mu _{2}=-2\mu _{1}$ in order for $%
\varphi ^{\prime }$ being $\mathbf{U}$-equivariant.
\end{proof}

For a more general $\mathbf{G}=\mathbf{GSp}\left( V,\psi \right) $, applying 
$\theta :\left( \Bbbk ^{2g},\psi _{g}\right) \overset{\sim }{\rightarrow }%
\left( V,\psi \right) $ and $c_{\theta }:\mathbf{GSp}_{2g}\overset{\sim }{%
\rightarrow }\mathbf{GSp}\left( V,\psi \right) $ one gets an $\mathbf{M}$%
-equivariant morphism $\mathrm{Sym}_{W}^{2}\rightarrow \mathrm{Ind}_{\mathbf{%
Q}^{-}}^{\mathbf{G}}\left( \mathrm{1}\right) \left[ \mathbf{Y}\right]
_{=1}^{\vee }$ which is an isomorphism when $2\in \Bbbk ^{\times }$. Then
the obvious analogous of Proposition \ref{Representations P Spl} holds,
after replacing $\left( \text{\ref{Representations D1}}\right) $ with its
obvious analogue involving $W\subset V$ rather than $\mathrm{Std}_{g}\subset 
\mathrm{Std}_{2g}$. (Cfr. the end of Example \ref{Representations E GSp}).

\section{\label{S Sheaves}Automorphic sheaves attached to representations of
symplectic groups}

Let us fix $\left( g,N\right) \in \mathbb{N}_{\geq 1}^{2}$ and $d=\left(
d_{1},...,d_{g}\right) \in \mathbb{N}_{\geq 1}^{g}$ such that $\left(
d_{1}...d_{g},N\right) =1$. Define%
\begin{eqnarray*}
K_{d,N} &:&=\mathbf{GSp}_{2g}\left( \mathbb{A}_{f}\right) \cap
\tprod\nolimits_{l\mid d_{1}...d_{g}}\left( 
\begin{array}{cc}
\mathbf{M}_{g}\left( \mathbb{Z}_{l}\right) & \mathbf{M}_{g}\left( \mathbb{Z}%
_{l}\right) \Delta \\ 
\Delta ^{-1}\mathbf{M}_{g}\left( \mathbb{Z}_{l}\right) & \Delta ^{-1}\mathbf{%
M}_{g}\left( \mathbb{Z}_{l}\right) \Delta%
\end{array}%
\right) \\
&&\tprod\nolimits_{l\mid N}\left( 
\begin{array}{cc}
1+N\mathbf{M}_{g}\left( \mathbb{Z}_{l}\right) & N\mathbf{M}_{g}\left( 
\mathbb{Z}_{l}\right) \\ 
N\mathbf{M}_{g}\left( \mathbb{Z}_{l}\right) & 1+N\mathbf{M}_{g}\left( 
\mathbb{Z}_{l}\right)%
\end{array}%
\right) \tprod\nolimits_{l\nmid d_{1}...d_{g}N}\mathbf{M}_{2g}\left( \mathbb{%
Z}_{l}\right) \text{,}
\end{eqnarray*}%
which is an open and compact subgroup of $\mathbf{GSp}_{2g}\left( \mathbb{A}%
_{f}\right) $. Then, writing $\mathbf{GSp}_{2g}^{+}\left( \mathbb{Q}\right)
\subset \mathbf{GSp}_{2g}\left( \mathbb{Q}\right) $ for the subgroup of
those elements having positive simplectic multiplies, one knows that, for
every open and compact subgroup $K\subset \mathbf{GSp}_{2g}\left( \mathbb{A}%
_{f}\right) $ such that $K_{d,N}\subset K\subset K_{d,1}$, the double
quotient space%
\begin{equation*}
\mathbf{GSp}_{2g}^{+}\left( \mathbb{Q}\right) \backslash \left( \mathbf{GSp}%
_{2g}\left( \mathbb{A}_{f}\right) \times \mathcal{H}_{g}\right) /K
\end{equation*}%
admits the structure of a course moduli scheme $Y_{K}$ classifying
quadruples $\left( A,\lambda ,\psi \right) $ where $A$ is an abelian scheme
of dimension $g$, $\lambda :A\rightarrow A^{t}$ is a polarization of type $d$
and $\psi $ is a level $K$-structure compatible with the standard symplectic
pairing for some choice of an $N$-root of unity (see \cite[Ch. VII, \S 1]%
{Deb}). Indeed, one has a fine moduli scheme quasi-projective over $%
\mathfrak{Spec}\left( \mathbb{Z}\right) $ when $N>6^{g}d_{1}...d_{g}\sqrt{g!}
$ which is smooth over $\mathbb{Z}\left[ 1/d_{1}...d_{g}N\right] $\ (see 
\cite[Theorem 7.9]{MFK}) and finer results are available: for example, when $%
g\in \left\{ 1,2\right\} $ suffices to take $N\geq 3$ (see \cite[Corollary
3.3]{Lau05}). The works of Faltings-Chai and Lan provide toroidal
compactifications $X_{K}=X_{K}^{\Sigma }$ of $Y_{K}$ depending on suitable
polyhedral decompositions $\Sigma $.

\begin{remark}
\label{Sheaves R PLevel}If in the definition of $K_{d,N}$ one replaces the
local condition at $l\mid d_{1}...d_{g}$ with the stronger condition of
being in%
\begin{equation*}
\left( 
\begin{array}{cc}
1+\Delta \mathbf{M}_{g}\left( \mathbb{Z}_{l}\right) & \Delta \mathbf{M}%
_{g}\left( \mathbb{Z}_{l}\right) \Delta \\ 
\mathbf{M}_{g}\left( \mathbb{Z}_{l}\right) & 1+\Delta \mathbf{M}_{g}\left( 
\mathbb{Z}_{l}\right)%
\end{array}%
\right) \text{,}
\end{equation*}%
then one gets moduli and fine moduli schemes classifying quadruples $\left(
A,\lambda ,\psi ^{\prime },\psi \right) $ where $\left( A,\lambda ,\psi
\right) $ is as above and $\psi ^{\prime }$ is a suitable symplectic
isomorphism of the kernel of $\lambda $ (see \cite[Ch. VII, Proposition 3.3]%
{Deb}).
\end{remark}

For a more general $\mathbf{G}=\mathbf{GSp}\left( V,\psi \right) $, one
defines the groups $K_{d,N}$ applying the isomorphism $c_{\theta }:\mathbf{%
GSp}_{2g}\overset{\sim }{\rightarrow }\mathbf{GSp}\left( V,\psi \right) $ of
Example \ref{Representations E GSp}\ to those previously defined and, again,
one gets varieties classifying the same kind of moduli problem.

\bigskip

Let us fix a level $K$ as above and simply write $X=X_{K}$, $Y=Y_{K}$ and $%
D:=X-Y$. Let $\pi :A\rightarrow Y$ be the universal abelian scheme (resp. $%
\pi :G\rightarrow X$ the universal semiabelian scheme extending $A$ acting
on the compactification $A_{X}/X$\ of $A/X$, the latter having a normal
crossing divisor $D_{A_{X}}$ lying over $D$, also depending on $\Sigma $
compatible with $\Sigma $ as in \cite[Ch. VI, \S 1, 1.1 Theorem and 1.3
Definition]{FC}) and consider the canonical extension%
\begin{equation*}
\mathcal{H}_{\mathrm{dR}}^{1}:=\mathcal{H}_{\mathrm{dR},X}^{1}:=R^{1}\pi
_{\ast }\left( \Omega _{A_{X}/X}^{\cdot }\left( \mathrm{lo}\text{\textrm{g}}%
\left( D_{A_{X}}/D\right) \right) \right) \subset \left( Y\subset X\right)
_{\ast }\left( R^{1}\pi _{\ast }\left( \Omega _{A/Y}^{\cdot }\right) \right)
\end{equation*}%
of $\mathcal{H}_{\mathrm{dR},Y}^{1}:=R^{1}\pi _{\ast }\left( \Omega
_{A/Y}\right) $ to $X$. Setting $\omega _{\mathrm{dR}}=\omega _{\mathrm{dR}%
,X}:=R^{0}\pi _{\ast }\left( \Omega _{A_{X}/X}^{1}\left( \mathrm{lo}\text{%
\textrm{g}}\left( D_{A_{X}}/D\right) \right) \right) \simeq \omega \left(
G/X\right) $ (the sheaf of invariant differentials of $G$, see \cite[Ch. VI, 
\S 1, 1.1 Theorem]{FC} for the isomorphism) and $Lie_{\mathrm{dR}}^{t}=Lie_{%
\mathrm{dR},X}^{t}:=R^{1}\pi _{\ast }\left( \Omega _{A_{X}/X}^{\cdot }\left( 
\mathrm{lo}\text{\textrm{g}}\left( D_{A_{X}}/D\right) \right) \right) \simeq
\omega \left( G^{t}/X\right) ^{\vee }\left( -1\right) $ (the relative Lie
algebra of the dual semiabelian scheme $G^{t}$, see \cite[Ch. VI, \S 1, 1.1
Theorem]{FC} for the isomorphism, which extends \cite[proof of Lemma 2.3]%
{Fo23} from $Y$ to $X$)\footnote{%
Here (and below in \S \ref{S Sheaves}, when considering the Kodaira-Spencer
isomorphism and other identifications induced by it) the twist by $-1$ is
purely formal. Indeed, all the associations $\rho \mapsto \mathcal{E}_{\rho
} $ from representations to sheaves considered below will not depend on the
twists by the symplectic multiplier $\nu $\ and one could equally well work
with representations of $\mathbf{M}^{\circ }\simeq \mathbf{GL}_{g}$ and $%
\mathbf{G}^{\circ }=\mathbf{Gsp}_{2g}$ (taking into account \cite[pag. 230]%
{FC}, when defining the filtration). However, the further data of the twist
can be introduced in order to keep track of other additional data. For
example, the symplectic multiplier introduced in the definition of
automorphic forms, in order to make the natural association sending a
modular form $f$ to the section $\omega _{f}$ Hecke equivariant. In other
contexts, it may be useful to account for possible Tate twists. In \S \ref{S
Igusa acyclic} below, it will be useful in order to take into account the
twists under the Frobenious denoted by $\varphi $ (defined before Lemma \ref%
{Primitives L URSpl} below). Then, the proof of Lemma \ref{Primitives L2}
shows that, if we denote by $\mathcal{E}\left( 1\right) $ a twist that
indicates that, at the level of sheaves, we multiply the Frobenious of $%
\mathcal{E}$ by $p^{-1}$, the equality $\mathcal{E}_{\rho \left( -1\right) }=%
\mathcal{E}_{\rho }\left( 1\right) $ holds for the representation $\rho
\left( -1\right) :=\rho \otimes \nu ^{-1}$.}, the sheaf $\mathcal{H}_{%
\mathrm{dR}}^{1}$ comes equipped with the Hodge filtration%
\begin{equation*}
0\longrightarrow \omega _{\mathrm{dR}}\longrightarrow \mathcal{H}_{\mathrm{dR%
}}^{1}\longrightarrow Lie_{\mathrm{dR}}^{t}\longrightarrow 0\text{,}
\end{equation*}%
a symplectic $\mathcal{O}_{X}$-bilinear pairing%
\begin{equation}
\left\langle -,-\right\rangle :\mathcal{H}_{\mathrm{dR}}^{1}\otimes _{%
\mathcal{O}_{X}}\mathcal{H}_{\mathrm{dR}}^{1}\longrightarrow \mathcal{O}%
_{X}\left( -1\right)  \label{Sheaves F Sympl}
\end{equation}%
under which $\omega _{\mathrm{dR}}$ is maximal isotropic (arising by duality
from the symplectic pairing of $\mathcal{H}_{\mathrm{dR}}^{1,\vee }$\
obtained by evaluation at the first Chern class $c_{1,\mathrm{dR}}\left( 
\mathcal{L}\right) \in \mathcal{H}_{\mathrm{dR}}^{2}\simeq \wedge ^{2}%
\mathcal{H}_{\mathrm{dR}}^{1}$ of the relatively ample line bundle $\mathcal{%
L}$ giving rise to the polarization) and the Gauss-Manin connection%
\begin{equation*}
\nabla :\mathcal{H}_{\mathrm{dR}}^{1}\longrightarrow \mathcal{H}_{\mathrm{dR}%
}^{1}\otimes _{\mathcal{O}_{X}}\Omega _{X/K}^{1}
\end{equation*}%
(see \cite[\S 2.1, \S 2.2 and the proof of Lemma 2.2]{Fo23} for the results
over $Y$, which extends to $X$ using $G$). We remark that, because $\omega _{%
\mathrm{dR}}$ is maximal isotropic, the symplectic pairing yieds an
isomorphism $Lie_{\mathrm{dR}}^{t}\simeq \omega _{\mathrm{dR}}^{\vee }\left(
1\right) $ which composed with the isomorphism $\omega _{\mathrm{dR}}^{\vee
}\simeq \omega \left( G^{t}/X\right) ^{\vee }$\ induced by the polarization
gives the above identification $Lie_{\mathrm{dR}}^{t}\simeq \omega \left(
G^{t}/X\right) ^{\vee }\left( -1\right) $ (cfr. \cite[proof of Lemma 2.3]%
{Fo23}).

On the other hand, let us consider the algebraic group $\mathbf{GSp}\left(
V,\psi \right) $ acting on $V\simeq \Bbbk ^{2g}$ and fix a maximal isotropic
subspaces $W$ and $\overline{W}$ such that $V=W\perp \overline{W}$. Let us
write $\mathbf{Q}\subset \mathbf{GSp}\left( V,\psi \right) $ (resp. $\mathbf{%
M}\subset \mathbf{GSp}\left( V,\psi \right) $) for the subgroup of those
elements which preserves $W$ (resp. the decomposition $W\perp \overline{W}$%
). Then, setting $\mathcal{O}_{X}^{?}:=\mathcal{O}_{X}\otimes _{\Bbbk }?$,
we see that $\mathcal{O}_{X}^{V}$ comes equipped with a filtration%
\begin{equation*}
0\longrightarrow \mathcal{O}_{X}^{W}\longrightarrow \mathcal{O}%
_{X}^{V}\longrightarrow \mathcal{O}_{X}^{V/W}\longrightarrow 0
\end{equation*}%
and a symplectic pairing $1_{\mathcal{O}_{X}}\otimes _{K}\psi $ under which $%
\mathcal{O}_{X}^{W}$ is maximal isotropic. Furthermore, the natural action
of $\mathbf{Q}$ on $\mathcal{O}_{X}^{V}$ preserves the symplectic pairing
and the filtration.

Writing $\mathcal{T}_{\mathcal{H},\mathrm{dR}}^{\times }:=\mathcal{I}som_{%
\mathcal{O}_{X},ss,Hdg}\left( \mathcal{O}_{X}^{V},\mathcal{H}_{\mathrm{dR}%
}^{1}\right) $ (resp. $\mathcal{T}_{\omega ,\mathrm{dR}}^{\times }:=\mathcal{%
I}som_{\mathcal{O}_{X}}\left( \mathcal{O}_{X}^{W},\omega _{\mathrm{dR}%
}\right) \times \mathbf{G}_{m,X}$) for the Zariski sheaf whose sections over 
$U\subset X$ are the isomorphisms of $\mathcal{O}_{U}$-modules $\mathcal{O}%
_{U}^{V}\overset{\sim }{\rightarrow }\mathcal{H}_{\mathrm{dR},U}^{1}$
respecting the symplectic pairing up to similitude and the filtration (resp.
isomorphisms of $\mathcal{O}_{U}$-modules$\ \mathcal{O}_{U}^{W}\overset{\sim 
}{\rightarrow }\omega _{\mathrm{dR},U}$ times $\mathbf{G}_{m,U}$), we have a
natural $\mathbf{Q}$\textbf{-}action via%
\begin{equation*}
g^{-1}\alpha =\alpha g:=\alpha \circ g\text{ (resp. }g^{-1}\left( \alpha
,\nu \right) =\left( \alpha ,\nu \right) g:=\left( \alpha \circ g,\nu \nu
\left( g\right) \right) \text{).}
\end{equation*}%
Because an isomorphism $\alpha :\mathcal{O}_{U}^{V}\overset{\sim }{%
\rightarrow }\mathcal{H}_{\mathrm{dR},U}^{1}$ naturally induces isomorphisms 
$\mathcal{O}_{U}^{W}\overset{\sim }{\rightarrow }\omega _{\mathrm{dR},U}$
and it has an associated multiplier $\nu _{\alpha }\in \mathcal{O}%
_{X}^{\times }\left( U\right) $ characterized by the equality $\left\langle
\alpha \left( x\right) ,\alpha \left( y\right) \right\rangle =\nu _{\alpha
}\left\langle x,y\right\rangle $, we have a morphism of sheaves of $\mathbf{Q%
}$-sets%
\begin{equation*}
\mathcal{T}_{\mathcal{H},\mathrm{dR}}^{\times }\longrightarrow \mathcal{T}%
_{\omega ,\mathrm{dR}}^{\times }\text{.}
\end{equation*}

\begin{remark}
\label{Sheaves R Sim}Consider the Zariski sheaf $\mathcal{T}_{\mathcal{H},%
\mathrm{dR}}^{\circ \times }:=\mathcal{I}som_{\mathcal{O}_{X},s,Hdg}\left( 
\mathcal{O}_{X}^{V},\mathcal{H}_{\mathrm{dR}}^{1}\right) $ classifying
isomorphisms of $\mathcal{O}_{U}$-modules $\mathcal{O}_{U}^{V}\overset{\sim }%
{\rightarrow }\mathcal{H}_{\mathrm{dR},U}^{1}$ respecting the symplectic
pairing and the filtration, so that $\mathcal{T}_{\mathcal{H},\mathrm{dR}%
}^{\circ \times }\subset \mathcal{T}_{\mathcal{H},\mathrm{dR}}^{\times }$.
Then, to give a section $\alpha $ of the pull-back of $\mathcal{T}_{\mathcal{%
H},\mathrm{dR}}^{\circ \times }$ to $S\rightarrow X$ (a morphism of schemes)
is the same thing as to give an ordered symplectic-Hodge basis $\mathcal{B}$%
\ of $\mathcal{H}_{\mathrm{dR},S}^{1}$: after fixing once and for all a
symplectic-Hodge $\mathcal{B}_{V}$\ of $V$, we associate to the section $%
\alpha $ the ordered basis $\alpha \left( \mathcal{B}_{V}\right) $ and, to
the basis $\mathcal{B}$, we associate the unique isomorphism $\alpha _{%
\mathcal{B}}:\mathcal{O}_{S}^{V}\overset{\sim }{\rightarrow }\mathcal{H}_{%
\mathrm{dR},S}^{1}$ sending the ordered basis $\mathcal{B}_{V}$ to $\mathcal{%
B}$. Then $\mathcal{T}_{\mathcal{H},\mathrm{dR}}^{\circ \times }$ is a $%
\mathbf{Q}^{\circ }$-torsor over $X$ (see \cite[Theorem 4.5 and Lemma 4.6]%
{Fo23}).
\end{remark}

Indeed, one sees that $\mathcal{T}_{\mathcal{H},\mathrm{dR}}^{\times }$ is a 
$\mathbf{Q}$-torsor over $X$, the action of $\mathbf{Q}$ on $\mathcal{T}%
_{\omega ,\mathrm{dR}}^{\times }$ factors through the quotient map $\mathbf{Q%
}\rightarrow \mathbf{M}$ making $\mathcal{T}_{\omega ,\mathrm{dR}}^{\times }$
an $\mathbf{M}$-torsor over $X$ and $\mathcal{T}_{\mathcal{H},\mathrm{dR}%
}^{\times }$ is an $\mathbf{U}$-torsor over $\mathcal{T}_{\omega ,\mathrm{dR}%
}^{\times }$. Hence, $\mathcal{T}_{\mathcal{H},\mathrm{dR}}^{\times }$
admits sections Zariski locally on $X$ and, if $\alpha _{0}:\mathcal{O}%
_{X\mid U}^{V}\overset{\sim }{\rightarrow }\mathcal{H}_{\mathrm{dR},U}^{1}$
locally on $U$, then $\mathbf{Q}_{/U}\overset{\sim }{\rightarrow }\mathcal{T}%
_{\mathcal{H},\mathrm{dR}\mid U}^{\times }$ and we have%
\begin{equation*}
\mathbf{Q}_{/U}\overset{\sim }{\rightarrow }\mathcal{T}_{\mathcal{H},\mathrm{%
dR}\mid U}^{\times }\text{ via }g\mapsto \alpha _{0}\circ g=\alpha _{0}g%
\text{.}
\end{equation*}%
Similarly, locally on $X$ we have%
\begin{equation*}
\mathbf{M}_{/U}\overset{\sim }{\rightarrow }\mathcal{T}_{\omega ,\mathrm{dR}%
\mid U}^{\times }\text{.}
\end{equation*}

\bigskip

We can now define a functor%
\begin{equation*}
\mathcal{E}:\mathrm{Rep}\left( \mathbf{Q}\right) \longrightarrow \mathrm{Mod}%
_{\mathcal{O}_{X}}
\end{equation*}%
from the category $\mathrm{R}$\textrm{ep}$\left( \mathbf{Q}\right) $ to the
category $\mathrm{Mod}_{\mathcal{O}_{X}}$ of $\mathcal{O}_{X}$-modules as
follows (cfr. \cite[\S 2.1]{Liu12} and \cite[\S 7]{Ti12}):%
\begin{equation*}
\mathcal{E}_{\rho }:=\mathcal{H}om_{\mathbf{Q}}\left( \mathcal{T}_{\mathcal{H%
},\mathrm{dR}}^{\times },\mathcal{O}_{X}\otimes _{\Bbbk }\rho \right) \text{,%
}
\end{equation*}%
where $\mathcal{H}om_{\mathbf{Q}}$ denotes the sheaf whose section over $%
U\subset X$ are the morphisms of sheaves of sets $f:\mathcal{T}_{\mathcal{H},%
\mathrm{dR}\mid U}^{\times }\rightarrow \mathcal{O}_{U}\otimes _{\Bbbk }\rho 
$ such that $f\left( \alpha g\right) =g^{-1}f\left( \alpha \right) $ for
every $g\in \mathbf{Q}\left( U\right) $ and every local section $\alpha $
defined over an open subset of $U$. Here we remark that, by definition of $%
\mathrm{R}$\textrm{ep}$\left( \mathbf{Q}\right) $, we know that $\rho $ is a
Zariski sheaf on $\mathfrak{Spec}\left( \Bbbk \right) $\ and $\mathcal{O}%
_{X}\otimes _{\Bbbk }\rho $ denotes its pull-back to $X$.

\begin{remark}
\label{Sheaves R Triv}Suppose that $X_{0}\rightarrow X$ is a morphism of
locally ringed spaces with the property that the pull-back $\mathcal{H}_{%
\mathrm{dR},X_{0}}^{1}$\ of $\mathcal{H}_{\mathrm{dR}}^{1}$ to $X_{0}$ (as
an $\mathcal{O}_{X_{0}}$-module) comes equipped with an isomorphism $\alpha
_{0}:\mathcal{O}_{X_{0}}^{V}\overset{\sim }{\rightarrow }\mathcal{H}_{%
\mathrm{dR},X_{0}}^{1}$. Then $\mathbf{Q}_{/X_{0}}\overset{\sim }{%
\rightarrow }\mathcal{T}_{\mathcal{H},\mathrm{dR}\mid X_{0}}^{\times }$ (via 
$g\mapsto \alpha _{0}g$, as above) and, hence,%
\begin{equation*}
\vartheta _{\alpha _{0}}:\mathcal{E}_{\rho \mid X_{0}}\overset{\sim }{%
\rightarrow }\mathcal{H}om_{\mathbf{Q}}\left( \mathbf{Q}_{/X_{0}},\mathcal{O}%
_{X_{0}}\otimes _{\Bbbk }\rho \right) \overset{\sim }{\rightarrow }\mathcal{O%
}_{X_{0}}\otimes _{\Bbbk }\rho \text{ via }f\mapsto \left[ g\mapsto f\left(
\alpha _{0}g\right) \right] \mapsto f\left( \alpha _{0}\right) \text{.}
\end{equation*}%
The identification depends on the fixed choice of $\alpha _{0}$ and is
canonical in the variable $\rho $: this will give rise to the coherent
trivializations for varying $\rho $.

On the other hand, there is a unique (iso)morphism%
\begin{equation*}
\vartheta _{\mathcal{H},\mathrm{dR}}:\mathcal{E}_{V}\longrightarrow \mathcal{%
H}_{\mathrm{dR}}^{1}
\end{equation*}%
with the property that, if $X_{0}\rightarrow X$ and $\alpha _{0}:\mathcal{O}%
_{X_{0}}^{V}\overset{\sim }{\rightarrow }\mathcal{H}_{\mathrm{dR},X_{0}}^{1}$
are as above, then $\vartheta _{\mathcal{H},\mathrm{dR}\mid X_{0}}\left(
f\right) :=\alpha _{0}\left( f\left( \alpha _{0}\right) \right) $:%
\begin{equation*}
\vartheta _{\mathcal{H},\mathrm{dR}\mid X_{0}}:\mathcal{E}_{V\mid X_{0}}%
\overset{\vartheta _{\alpha _{0}}}{\longrightarrow }\mathcal{O}%
_{X_{0}}\otimes _{\Bbbk }\rho =\mathcal{O}_{X_{0}}^{V}\overset{\alpha _{0}}{%
\longrightarrow }\mathcal{H}_{\mathrm{dR},X_{0}}^{1}\text{.}
\end{equation*}%
Indeed, if $\alpha _{0}^{\prime }=\alpha _{0}g$ is another trivialization
over $X_{0}$, then%
\begin{equation*}
\alpha _{0}^{\prime }\left( f\left( \alpha _{0}^{\prime }\right) \right)
=\alpha _{0}gf\left( \alpha _{0}g\right) =\alpha _{0}f\left( \alpha
_{0}gg^{-1}\right) =\alpha _{0}\left( f\left( \alpha _{0}\right) \right) 
\text{.}
\end{equation*}%
Hence $\vartheta _{\mathcal{H},\mathrm{dR}\mid X_{0}}$ is well defined and,
because $\mathcal{T}_{\mathcal{H},\mathrm{dR}}^{\times }$ is a $\mathbf{Q}$%
-torsor, this implies that the $\vartheta _{\alpha _{0}}$'s obtained from a
suitable Zariski covering of $X$ glue together to give $\vartheta _{\mathcal{%
H},\mathrm{dR}}$.
\end{remark}

We recall that $\mathbf{Q}=\mathbf{U\rtimes M}$, implying that the
representations of $\mathbf{M}$ can be regarded as representations of $%
\mathbf{Q}$ by restriction via the canonical quotient map%
\begin{equation*}
\mathbf{Q}\longrightarrow \mathbf{M}\text{.}
\end{equation*}%
We get in this way a functor $\mathrm{Rep}\left( \mathbf{M}\right)
\rightarrow \mathrm{Rep}\left( \mathbf{Q}\right) $ and, if $\rho \in \mathrm{%
Rep}\left( \mathbf{M}\right) $, we write again $\rho $ for its image in $%
\mathrm{Rep}\left( \mathbf{Q}\right) $ and consider $\mathcal{E}_{\rho }$.
We can give an alternative description of the functor $\mathcal{E}_{\rho }$
in this case as follows. We define%
\begin{equation*}
\mathcal{W}:\mathrm{Rep}\left( \mathbf{M}\right) \longrightarrow \mathrm{Mod}%
_{\mathcal{O}_{X}}
\end{equation*}%
via the rule%
\begin{equation*}
\mathcal{W}_{\rho }:=\mathcal{H}om_{\mathbf{M}}\left( \mathcal{T}_{\omega ,%
\mathrm{dR}}^{\times },\mathcal{O}_{X}\otimes _{K}\rho \right) \text{.}
\end{equation*}

\begin{remark}
\label{Sheaves R Triv'}The analogue of Remark \ref{Sheaves R Triv} holds for
the functor $\mathcal{W}$: we will write $\vartheta _{\alpha _{0}}:\mathcal{W%
}_{\rho \mid X_{0}}\overset{\sim }{\rightarrow }\mathcal{O}_{X_{0}}\otimes
_{\Bbbk }\rho $ and $\vartheta _{\omega ,\mathrm{dR}}:\mathcal{W}_{W}\overset%
{\sim }{\rightarrow }\omega _{\mathrm{dR}}$ for the analogous isomorphisms
(the notations being the same).
\end{remark}

We have%
\begin{equation*}
\mathcal{W}_{\rho }=\mathcal{E}_{\rho }\text{ canonically for }\rho \in 
\mathrm{Rep}\left( \mathbf{M}\right)
\end{equation*}%
via the canonical morphism of sheaves $\mathcal{W}_{\rho }\rightarrow 
\mathcal{E}_{\rho }$ induced by $\mathcal{T}_{\mathcal{H},\mathrm{dR}%
}^{\times }\rightarrow \mathcal{T}_{\omega ,\mathrm{dR}}^{\times }$: since
the isomorphism can be checked locally, this indeed a consequence of Remark %
\ref{Sheaves R Triv} and Remark \ref{Sheaves R Triv'}. Then, we deduce from
Remark \ref{Sheaves R Triv'} that there is a canonical identification%
\begin{equation*}
\mathcal{E}_{W}=\mathcal{W}_{W}=\omega _{\mathrm{dR}}\text{.}
\end{equation*}

\bigskip

Let us write $\mathrm{Rep}_{fl\text{-}alg}\left( \mathfrak{g},\mathbf{Q}%
\right) $ for the full subcategory of those objects of $\mathrm{Rep}%
_{alg}\left( \mathfrak{g},\mathbf{Q}\right) $ whose underlying $\Bbbk $%
-module is flat. We will show below that, if we start with an algebraic $%
\left( \mathfrak{g},\mathbf{Q}\right) $-module $\rho \in \mathrm{Rep}_{fl%
\text{-}alg}\left( \mathfrak{g},\mathbf{Q}\right) $, then we can get more
structure on the underlying sheaf $\mathcal{E}_{\rho }$. Namely, assuming
that $\Bbbk $ is any Dedekind domain containing $\mathbb{Z}\left[
1/d_{1}...d_{g}N\right] $ (this assumption will be in force until the end of 
\S \ref{S Sheaves}), we are going to define a functor%
\begin{equation*}
\mathcal{E}:\mathrm{Rep}_{fl\text{-}alg}\left( \mathfrak{g},\mathbf{Q}%
\right) \longrightarrow \mathrm{FMIC}_{\mathcal{O}_{X}}
\end{equation*}%
from the category $\mathrm{Rep}_{fl\text{-}alg}\left( \mathfrak{g},\mathbf{Q}%
\right) $ to the category of filtered modules endowed with an integrable
connection satisfying the Griffith transversality condition whose underlying 
$\mathcal{O}_{X}$-module is $\mathcal{E}_{\rho }$ as defined above. Let us
write $\mathcal{D}er_{\Bbbk }^{\mathrm{lo}\text{\textrm{g}}}\left( \mathcal{O%
}_{X}\right) :=\mathcal{D}er_{\Bbbk }\left( \mathcal{O}_{X}\right) \left( 
\mathrm{lo}\text{\textrm{g}}\left( D\right) \right) $ for the Zariski sheaf
whose sections over $U\subset X$ are the $\Bbbk $-linear derivations of $%
\mathcal{O}_{U}$\ onto itself (with logarithmic poles at the boundary) and,
for an $\mathcal{O}_{X}$-module $\mathcal{E}$, let $\mathcal{E}nd_{\Bbbk
}\left( \mathcal{E}\right) $ be the sheaf whose sections over $U\subset X$
are the $\Bbbk $-linear endomorphisms of $\mathcal{E}_{\mid U}$: both of
them are $\mathcal{O}_{X}$-modules and sheaves of Lie algebras over $\Bbbk $%
. Note that $\mathcal{D}er_{\Bbbk }^{\mathrm{lo}\text{\textrm{g}}}\left( 
\mathcal{O}_{X}\right) $ is canonically isomorphic, as an $\mathcal{O}_{X}$%
-module, to the sheaf $\mathcal{H}om_{\mathcal{O}_{X}}\left( \Omega
_{X/\Bbbk }^{1}\left( \mathrm{lo}\text{\textrm{g}}\left( D\right) \right) ,%
\mathcal{O}_{X}\right) $ whose sections over $U\subset X$ are the morphisms
of $\mathcal{O}_{U}$-modules from $\Omega _{U/\Bbbk }^{\mathrm{lo}\text{%
\textrm{g},}1}:=\Omega _{U/\Bbbk }^{1}\left( \mathrm{lo}\text{\textrm{g}}%
\left( D\right) \right) $ to $\mathcal{O}_{U}$: it follows that a connection 
$\nabla :\mathcal{E}\longrightarrow \mathcal{E}\otimes _{\mathcal{O}%
_{X}}\Omega _{X/\Bbbk }^{\mathrm{lo}\text{\textrm{g},}1}$ give rise to a
morphism of $\mathcal{O}_{X}$-modules $\nabla $ from $\mathcal{D}er_{\Bbbk
}^{\mathrm{lo}\text{\textrm{g}}}\left( \mathcal{O}_{X}\right) $ to $\mathcal{%
E}nd_{\Bbbk }\left( \mathcal{E}_{\rho }\right) $ sending a section $D$ to $%
\nabla \left( D\right) :=\left( 1\otimes _{\mathcal{O}_{X}}D\right) \circ
\nabla $ and the Leibnitz rule for $\nabla $ implies that%
\begin{equation}
\nabla \left( D\right) \left( ax\right) =D\left( a\right) x+a\nabla \left(
D\right) \left( x\right)  \label{Sheaves F Leib}
\end{equation}%
for sections $D$, $a$ and $x$ of $\mathcal{D}er_{\Bbbk }^{\mathrm{lo}\text{%
\textrm{g}}}\left( \mathcal{O}_{X}\right) $, $\mathcal{O}_{X}$ and,
respectively, $\mathcal{E}$; furthermore, if the connection is integrable,
then the resulting morphism of $\mathcal{O}_{X}$-modules is of Lie algebras.
When $\Omega _{X/\Bbbk }^{\mathrm{lo}\text{\textrm{g},}1}$ is locally free
of finite type, we can go in the opposite direction: to give a morphism of $%
\mathcal{O}_{X}$-modules $\nabla $ from $\mathcal{D}er_{\Bbbk }^{\mathrm{lo}%
\text{\textrm{g}}}\left( \mathcal{O}_{X}\right) $ to $\mathcal{E}nd_{\Bbbk
}\left( \mathcal{E}\right) $ such that $\left( \text{\ref{Sheaves F Leib}}%
\right) $ holds is the same as to give a connection $\nabla $ and, under
this correspondence, the connection is integrable if and only if the
morphism of $\mathcal{O}_{X}$-modules is of Lie algebras.

\begin{remark}
Indeed, without any assumption on $\Bbbk $, we will endow $\mathcal{E}_{\rho
}$ with a morphism of $\mathcal{O}_{X}$-modules$\ \nabla _{\rho }$ from $%
\mathcal{D}er_{\Bbbk }^{\mathrm{lo}\text{\textrm{g}}}\left( \mathcal{O}%
_{X}\right) $ to $\mathcal{E}nd_{\Bbbk }\left( \mathcal{E}_{\rho }\right) $
which is of Lie algebras and such that $\nabla _{\rho }\left( D\right)
\left( \mathrm{Fil}^{r}\mathcal{E}_{\rho }\right) \subset \mathrm{Fil}^{r-1}%
\mathcal{E}_{\rho }$. When $\Bbbk $ contains $\mathbb{Z}\left[
1/d_{1}...d_{g}N\right] $, then $X$ is smooth over $\Bbbk $ and,
consequently, $\Omega _{X/\Bbbk }^{\mathrm{lo}\text{\textrm{g},}1}$ is
locally free of finite type and, hence, $\mathcal{E}$ can be regarded as
taking values in $\mathrm{FMIC}_{\mathcal{O}_{X}}$.
\end{remark}

Fix once and for all an ordered symplectic-Hodge basis $\left\{
w_{1},...,w_{g},w_{g+1},...,w_{2g}\right\} $\ of $V$ (for example, the
standard bsis when $\mathbf{G}=\mathbf{GSp}_{2g}$). We are going to define a
morphism of sheaves%
\begin{equation*}
X:\mathcal{D}er_{\Bbbk }^{\mathrm{lo}\text{\textrm{g}}}\left( \mathcal{O}%
_{X}\right) \times \mathcal{T}_{\mathcal{H},\mathrm{dR}}^{\times
}\longrightarrow \mathcal{O}_{X}\otimes _{\Bbbk }\mathfrak{g}
\end{equation*}%
sending a section $\left( D,\alpha \right) \in \mathcal{D}er_{\Bbbk }^{%
\mathrm{lo}\text{\textrm{g}}}\left( \mathcal{O}_{X}\right) \left( U\right)
\times \mathcal{T}_{\mathcal{H},\mathrm{dR}}^{\times }\left( U\right) $ to $%
X\left( D,\alpha \right) $ defined as follows. Suppose now that $\alpha \in 
\mathcal{T}_{\mathcal{H},\mathrm{dR}}^{\times }\left( U\right) $, set $%
\omega _{i}:=\alpha \left( w_{i}\right) $ and, using the fact that $\left\{
\omega _{i}:i=1,...,2g\right\} $ is an $\mathcal{O}_{U}$-basis of $\mathcal{H%
}_{\mathrm{dR},U}^{1}$, define $X_{g}\left( D,\alpha \right) =\left(
X_{g}\left( D,\alpha \right) _{j,i}\right) \in \mathbf{M}_{2g}\left( 
\mathcal{O}_{X}\left( U\right) \right) =\mathcal{O}_{X}\left( U\right)
\otimes _{\Bbbk }\mathfrak{gl}_{2g}$ by means of the rule%
\begin{equation*}
\nabla \left( D\right) \left( \omega _{i}\right)
=\tsum\nolimits_{j=1}^{2g}X_{g}\left( D,\alpha \right) _{j,i}\omega _{j}%
\text{,}
\end{equation*}%
where here $\nabla $ is the Gauss-Manin connection. The identification $%
\theta :\left( \Bbbk ^{2g},\psi _{g}\right) \overset{\sim }{\rightarrow }%
\left( V,\psi \right) $ of at the end of Example \ref{Representations E GSp}%
\ yields $c_{\theta }:\mathbf{GL}_{2g}\overset{\sim }{\rightarrow }\mathbf{GL%
}\left( V\right) $ and, hence, a morphism $c_{\theta }:\mathfrak{gl}_{2g}%
\overset{\sim }{\rightarrow }\mathfrak{gl}_{V}$: we write $X\left( D,\alpha
\right) \in \mathcal{O}_{X}\left( U\right) \otimes _{\Bbbk }\mathfrak{gl}%
_{V} $ for the image of $X_{g}\left( D,\alpha \right) $.

\begin{lemma}
\label{Sheaves L T1}We have that $X\left( D,\alpha \right) \in \mathcal{O}%
_{X}\left( U\right) \otimes _{\Bbbk }\mathfrak{g}$.
\end{lemma}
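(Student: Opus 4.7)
The plan is to exploit the fact that the Gauss--Manin connection is compatible with the symplectic pairing $\langle -,-\rangle$ of $\left(\text{\ref{Sheaves F Sympl}}\right)$, and that $\alpha$ is by definition a symplectic similitude. Recall that the pairing on $\mathcal{H}_{\mathrm{dR}}^{1}$ arises by duality from evaluation at the first Chern class of the polarizing line bundle, which is a horizontal section of $\mathcal{H}_{\mathrm{dR}}^{2}$; it follows that $\langle -,-\rangle$ is itself horizontal, i.e. for local sections $s,t$ of $\mathcal{H}_{\mathrm{dR}}^{1}$ and every $D\in\mathcal{D}er_{\Bbbk}^{\mathrm{log}}(\mathcal{O}_X)(U)$ one has
\begin{equation*}
D\langle s,t\rangle=\langle \nabla(D)s,t\rangle+\langle s,\nabla(D)t\rangle.
\end{equation*}

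First I would unwind the definitions. Writing $\omega_i:=\alpha(w_i)$, the fact that $\alpha\in\mathcal{T}_{\mathcal{H},\mathrm{dR}}^{\times}(U)$ is a similitude yields a unit $\nu_\alpha\in\mathcal{O}_X^\times(U)$ with $\langle\omega_i,\omega_j\rangle=\nu_\alpha\langle w_i,w_j\rangle_V$ for every $i,j$, where $\langle-,-\rangle_V$ denotes the pairing of $V$. Substituting $s=\omega_i$, $t=\omega_j$ in the horizontality formula and using the definition $\nabla(D)(\omega_i)=\sum_k X_g(D,\alpha)_{k,i}\omega_k$, I obtain the identity
\begin{equation*}
\sum_k\bigl(X_g(D,\alpha)_{k,i}\langle w_k,w_j\rangle_V+X_g(D,\alpha)_{k,j}\langle w_i,w_k\rangle_V\bigr)=\tfrac{D(\nu_\alpha)}{\nu_\alpha}\langle w_i,w_j\rangle_V
\end{equation*}
in $\mathcal{O}_X(U)$, where I used that $\nu_\alpha$ is a unit so division is legitimate.

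Next I would translate this into the intrinsic condition defining $\mathfrak{gsp}$. Viewing $X(D,\alpha):=c_\theta(X_g(D,\alpha))\in\mathcal{O}_X(U)\otimes_{\Bbbk}\mathfrak{gl}_V$ as an $\mathcal{O}_X(U)$-linear endomorphism of $\mathcal{O}_X(U)\otimes_{\Bbbk}V$ by means of the basis $\{w_i\}$, the displayed identity reads
\begin{equation*}
\psi\bigl(X(D,\alpha)v_1,v_2\bigr)+\psi\bigl(v_1,X(D,\alpha)v_2\bigr)=\tfrac{D(\nu_\alpha)}{\nu_\alpha}\,\psi(v_1,v_2)
\end{equation*}
for $v_1,v_2\in\mathcal{O}_X(U)\otimes_{\Bbbk}V$, by $\mathcal{O}_X(U)$-bilinearity (it suffices to check it on basis vectors). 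By the definition of $\mathbf{GSp}(V,\psi)$ recalled in Example \ref{Representations E GSp}, differentiating the similitude condition at the identity shows that $\mathfrak{g}_R=\mathfrak{gsp}(V,\psi)(R)$ consists exactly of those $R$-linear endomorphisms $X$ of $R\otimes_{\Bbbk}V$ for which there exists $\nu(X)\in R$ such that $\psi(Xv_1,v_2)+\psi(v_1,Xv_2)=\nu(X)\psi(v_1,v_2)$. Taking $\nu(X(D,\alpha))=D(\nu_\alpha)/\nu_\alpha$, this gives exactly the statement $X(D,\alpha)\in\mathcal{O}_X(U)\otimes_{\Bbbk}\mathfrak{g}$.

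No serious obstacle is expected: the only delicate point is the horizontality of $\langle -,-\rangle$, which is standard but depends on the description recalled above and should be referenced (say to the proof of Lemma 2.2 of \cite{Fo23}); the rest is formal bookkeeping around the identification $c_\theta$.
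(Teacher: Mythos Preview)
Your proof is correct and follows essentially the same approach as the paper: both use the horizontality of the symplectic pairing (coming from the horizontality of the first Chern class) applied to the basis $\omega_i=\alpha(w_i)$, combine it with the similitude relation $\langle\omega_i,\omega_j\rangle=\nu_\alpha\langle w_i,w_j\rangle$, and read off the infinitesimal similitude condition with multiplier $D(\nu_\alpha)/\nu_\alpha$. The only cosmetic difference is that the paper writes the condition in matrix form $X^tI+IX=\lambda I$ while you phrase it intrinsically as $\psi(Xv_1,v_2)+\psi(v_1,Xv_2)=\nu(X)\psi(v_1,v_2)$.
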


\begin{proof}
We (may) assume that $\mathbf{G}=\mathbf{GSp}_{2g}$ and we write $I=\left(
\left\langle w_{i},w_{j}\right\rangle \right) _{i,j}$ for the matrix
representing the symplectic pairing $\psi $ in the basis $\left\{
w_{i}:i=1,...,2g\right\} $. Then we have%
\begin{equation*}
\mathfrak{g}_{R}=\left\{ X\in \mathbf{M}_{2g}\left( R\right) :\exists
\lambda \in R\text{ s.t. }X^{t}I+IX=\lambda I\right\} \text{.}
\end{equation*}%
We remark that, for every section $D$ of $\mathcal{D}er_{\Bbbk }^{\mathrm{lo}%
\text{\textrm{g}}}\left( \mathcal{O}_{X}\right) $ and every section $x,y\in 
\mathcal{H}_{\mathrm{dR}}^{1}$, we have that%
\begin{equation}
D\left( \left\langle x,y\right\rangle \right) =\left\langle \nabla \left(
D\right) \left( x\right) ,y\right\rangle +\left\langle x,\nabla \left(
D\right) \left( y\right) \right\rangle  \label{Sheaves L T1 F1}
\end{equation}%
as it follows form the fact that the first Chern class of the Poincar\'{e}
line bundle is horizontal for the Gauss-Manin connection, since it comes
from a de Rham class of $\mathcal{H}_{\mathrm{dR}}^{2}\left( G_{U}\times
_{U}G_{U}^{t}/\Bbbk \right) $\ (cfr. \cite[$\left( 5.5\right) $]{Fo23}). Let 
$\nu _{\alpha }\in \mathcal{O}_{X}^{\times }\left( U\right) $ be
characterized by the equality $\left\langle \alpha \left( x\right) ,\alpha
\left( y\right) \right\rangle =\nu _{\alpha }\left\langle x,y\right\rangle $%
. Then%
\begin{equation*}
\left\langle \omega _{i},\omega _{j}\right\rangle =\left\langle \alpha
\left( w_{i}\right) ,\alpha \left( w_{j}\right) \right\rangle =\nu _{\alpha
}\left\langle w_{i},w_{j}\right\rangle \text{.}
\end{equation*}%
In particular, we have that%
\begin{eqnarray*}
&&\left\langle \nabla \left( D\right) \left( \omega _{i}\right) ,\omega
_{j}\right\rangle +\left\langle \omega _{i},\nabla \left( D\right) \left(
\omega _{j}\right) \right\rangle =\tsum\nolimits_{k=1}^{2g}X\left( D,\alpha
\right) _{k,i}\left\langle \omega _{k},\omega _{j}\right\rangle
+\tsum\nolimits_{l=1}^{2g}X\left( D,\alpha \right) _{l,j}\left\langle \omega
_{i},\omega _{l}\right\rangle \\
&&\text{ }=\nu _{\alpha }\tsum\nolimits_{k=1}^{2g}X\left( D,\alpha \right)
_{k,i}\left\langle w_{k},w_{j}\right\rangle +\nu _{\alpha
}\tsum\nolimits_{l=1}^{2g}X\left( D,\alpha \right) _{l,j}\left\langle
w_{i},w_{l}\right\rangle =\nu _{\alpha }\left( X\left( D,\alpha \right)
^{t}I\right) _{i,j}+\nu _{\alpha }\left( IX\left( D,\alpha \right) \right)
_{i,j}
\end{eqnarray*}%
and that, because $\left\langle w_{i},w_{j}\right\rangle \in \Bbbk $,%
\begin{equation*}
D\left( \left\langle \omega _{i},\omega _{j}\right\rangle \right) =D\left(
\nu _{\alpha }\right) \left\langle w_{i},w_{j}\right\rangle \text{.}
\end{equation*}%
It then follows from $\left( \text{\ref{Sheaves L T1 F1}}\right) $ applied
to $\left( x,y\right) =\left( \omega _{i},\omega _{j}\right) $ that, setting 
$\lambda _{\alpha }:=\frac{D\left( \nu _{\alpha }\right) }{\nu _{\alpha }}$,
we have%
\begin{equation*}
X\left( D,\alpha \right) ^{t}I+IX\left( D,\alpha \right) =\lambda _{\alpha }I%
\text{.}
\end{equation*}
\end{proof}

If now $\rho \in \mathrm{Rep}_{fl\text{-}alg}\left( \mathfrak{g},\mathbf{Q}%
\right) $ and $f\in \mathcal{E}_{\rho }\left( U\right) =Hom_{\mathbf{Q}%
}\left( \mathcal{T}_{\mathcal{H},\mathrm{dR}\mid U}^{\times },\mathcal{O}%
_{U}\otimes _{\Bbbk }\rho \right) $, we let $\nabla _{\rho }\left( D\right)
\left( f\right) $ be the morphism of sheaves in $Hom\left( \mathcal{T}_{%
\mathcal{H},\mathrm{dR}\mid U}^{\times },\mathcal{O}_{U}\otimes _{\Bbbk
}\rho \right) $ defined by means of the rule%
\begin{equation*}
\nabla _{\rho }\left( D\right) \left( f\right) _{U^{\prime }}\left( \alpha
\right) :=D\otimes _{\Bbbk }1_{\rho }\left( f_{U^{\prime }}\left( \alpha
\right) \right) +X\left( D,\alpha \right) \left( f_{U^{\prime }}\left(
\alpha \right) \right)
\end{equation*}%
for every open subset $U^{\prime }\subset U$ and every $\left( D,\alpha
\right) \in \mathcal{D}er_{\Bbbk }^{\mathrm{lo}\text{\textrm{g}}}\left( 
\mathcal{O}_{X}\right) \left( U^{\prime }\right) \times \mathcal{T}_{%
\mathcal{H},\mathrm{dR}}^{\times }\left( U^{\prime }\right) $. Here, thanks
to Lemma \ref{Sheaves L T1}, we let $X\left( D,\alpha \right) \in \mathcal{O}%
_{X}\left( U^{\prime }\right) \otimes _{\Bbbk }\mathfrak{g}$ acts on $%
f_{U^{\prime }}\left( \alpha \right) \in \mathcal{O}_{X}\left( U^{\prime
}\right) \otimes _{\Bbbk }\rho $ via its Lie algebra action. We ill later
discuss the filtrations: for the moment, we write $\mathrm{MIC}_{\mathcal{O}%
_{X}}$ for the category of modules with an integrable connection and we
prove the following result.

\begin{theorem}
\label{Sheaves T1}Suppose that $\Bbbk $ is any Dedekind domain containing $%
\mathbb{Z}\left[ 1/d_{1}...d_{g}N\right] $ and that $\rho \in \mathrm{Rep}%
_{fl\text{-}alg}\left( \mathfrak{g},\mathbf{Q}\right) $. We have that $%
\nabla _{\rho }\left( D\right) \left( f\right) \in \mathcal{E}_{\rho }\left(
U\right) $ and, in this way, we get $\left( \mathcal{E}_{\rho },\nabla
_{\rho }\right) $ which belongs to (the objects of) $\mathrm{MIC}_{\mathcal{O%
}_{X}}$. This rule defines a functor from $\mathrm{Rep}_{fl\text{-}%
alg}\left( \mathfrak{g},\mathbf{Q}\right) $ to $\mathrm{MIC}_{\mathcal{O}%
_{X}}$ with the property that $\left( \mathcal{E}_{V},\nabla _{V}\right)
=\left( \mathcal{H}_{\mathrm{dR}}^{1},\nabla \right) $ equals the
Gauss-Manin connection, the identification being given by $\vartheta _{%
\mathcal{H},\mathrm{dR}}$ (see Remark \ref{Sheaves R Triv}).
\end{theorem}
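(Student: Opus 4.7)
My plan is to verify the theorem step by step, with the principal work being the verification of $\mathbf{Q}$-equivariance of $\nabla _{\rho }\left( D\right) \left( f\right) $; the remaining pieces (connection axiom, integrability, functoriality, and Gauss-Manin compatibility) will then follow by formal manipulations.

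First, I would compute how the matrix-valued cocycle $X\left( D,\alpha \right) $ transforms under the right $\mathbf{Q}$-action on $\mathcal{T}_{\mathcal{H},\mathrm{dR}}^{\times }$. If $\alpha ^{\prime }=\alpha q$ with $q\in \mathbf{Q}\left( U^{\prime }\right) $, then $\alpha ^{\prime }\left( w_{i}\right) =\alpha \left( qw_{i}\right) $ and a direct expansion of $\nabla \left( D\right) \left( \omega _{i}^{\prime }\right) $ in the two bases, together with the Leibniz rule for $D$, yields the identity
\begin{equation*}
X\left( D,\alpha q\right) =q^{-1}D\left( q\right) +\mathrm{Ad}\left(
q^{-1}\right) \left( X\left( D,\alpha \right) \right)
\end{equation*}
in $\mathcal{O}_{X}\left( U^{\prime }\right) \otimes _{\Bbbk }\mathfrak{g}$ (where we use that $X\left( D,\alpha \right) \in \mathcal{O}_{X}\otimes _{\Bbbk }\mathfrak{g}$, which is Lemma \ref{Sheaves L T1}). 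The $\mathbf{Q}$-equivariance now follows: writing $\delta $ for the $\mathfrak{g}$-action on $\rho $ and using the $\left( \mathfrak{g},\mathbf{Q}\right) $-module compatibility $\delta \left( \mathrm{Ad}\left( q^{-1}\right) X\right) \circ \rho \left( q^{-1}\right) =\rho \left( q^{-1}\right) \circ \delta \left( X\right) $, one gets
\begin{eqnarray*}
\nabla _{\rho }\left( D\right) \left( f\right) \left( \alpha q\right)
&=&D\otimes 1\bigl(q^{-1}f\left( \alpha \right) \bigr)+\bigl(q^{-1}D\left(
q\right) +\mathrm{Ad}\left( q^{-1}\right) X\left( D,\alpha \right) \bigr)
\bigl(q^{-1}f\left( \alpha \right) \bigr) \\
&=&-q^{-1}D\left( q\right) q^{-1}f\left( \alpha \right) +q^{-1}D\otimes
1\left( f\left( \alpha \right) \right) +q^{-1}D\left( q\right) q^{-1}f\left(
\alpha \right) +q^{-1}X\left( D,\alpha \right) f\left( \alpha \right) \\
&=&q^{-1}\nabla _{\rho }\left( D\right) \left( f\right) \left( \alpha \right)
\text{,}
\end{eqnarray*}
where the term $D\left( q^{-1}\right) =-q^{-1}D\left( q\right) q^{-1}$ coming from Leibniz cancels against the contribution of $q^{-1}D\left( q\right) $ from the cocycle transformation.

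Next, the Leibniz rule $\nabla _{\rho }\left( D\right) \left( af\right) =D\left( a\right) f+a\nabla _{\rho }\left( D\right) \left( f\right) $ is immediate from the fact that the Gauss-Manin piece $D\otimes 1_{\rho }$ already satisfies it and $X\left( D,\alpha \right) $ acts $\mathcal{O}_{X}$-linearly. Since $\Bbbk $ contains $\mathbb{Z}\left[ 1/d_{1}\ldots d_{g}N\right] $, $\Omega _{X/\Bbbk }^{\mathrm{log},1}$ is locally free of finite type, so this $\mathcal{O}_{X}$-linear Lie-algebra-valued map $\nabla _{\rho }$ upgrades to a genuine connection $\mathcal{E}_{\rho }\rightarrow \mathcal{E}_{\rho }\otimes _{\mathcal{O}_{X}}\Omega _{X/\Bbbk }^{\mathrm{log},1}$. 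For integrability, I would unwind $[\nabla _{\rho }\left( D_{1}\right) ,\nabla _{\rho }\left( D_{2}\right) ]-\nabla _{\rho }\left( \left[ D_{1},D_{2}\right] \right) $ on a local section; the $D_{i}\otimes 1_{\rho }$ terms give the commutator $\left[ D_{1},D_{2}\right] \otimes 1_{\rho }$, the cross terms produce $D_{i}\left( X\left( D_{j},\alpha \right) \right) $, and the $X\left( D_{i},\alpha \right) $-terms contribute the Lie bracket $\left[ X\left( D_{1},\alpha \right) ,X\left( D_{2},\alpha \right) \right] $ in $\mathfrak{g}$ thanks to $\delta $ being a Lie algebra homomorphism. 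The resulting obstruction is precisely the curvature of Gauss-Manin evaluated in the basis, which vanishes; so integrability is inherited from integrability of $\nabla $ on $\mathcal{H}_{\mathrm{dR}}^{1}$.

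Functoriality is tautological: a morphism $\varphi :\rho \rightarrow \rho ^{\prime }$ in $\mathrm{Rep}_{fl\text{-}alg}\left( \mathfrak{g},\mathbf{Q}\right) $ commutes with both the $\mathbf{Q}$- and $\mathfrak{g}$-actions, so postcomposing induces $\mathcal{E}_{\rho }\rightarrow \mathcal{E}_{\rho ^{\prime }}$ commuting with the Gauss-Manin piece and with the action of $X\left( D,\alpha \right) \in \mathcal{O}_{X}\otimes _{\Bbbk }\mathfrak{g}$. Finally, for $\rho =V$, tracing through Remark \ref{Sheaves R Triv}, the isomorphism $\vartheta _{\mathcal{H},\mathrm{dR}}$ locally sends $f\in \mathcal{E}_{V}$ to $\alpha _{0}\left( f\left( \alpha _{0}\right) \right) $; the very definition of $X\left( D,\alpha _{0}\right) $ as the matrix of $\nabla \left( D\right) $ in the basis $\left\{ \alpha _{0}\left( w_{i}\right) \right\} $ means that transporting $\nabla _{V}\left( D\right) \left( f\right) $ under $\vartheta _{\mathcal{H},\mathrm{dR}}$ reproduces $\nabla \left( D\right) \left( \vartheta _{\mathcal{H},\mathrm{dR}}\left( f\right) \right) $. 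The main obstacle is the $\mathbf{Q}$-equivariance, both in finding the correct transformation law for $X\left( D,\alpha \right) $ and in making crucial use of the semidirect-product-type compatibility encoded in the definition of $\mathrm{Rep}_{alg}\left( \mathfrak{g},\mathbf{Q}\right) $, which is exactly the input that distinguishes $\left( \mathfrak{g},\mathbf{Q}\right) $-modules from mere $\mathbf{Q}$-modules.
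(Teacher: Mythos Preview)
Your outline is correct and is essentially Liu's computation spelled out in full (the paper simply cites \cite[Proposition 2.4]{Liu12} for the field case and addresses only the one step that needs care over a Dedekind domain). That one step is precisely what you write as ``coming from Leibniz'': the identity
\[
(D\otimes 1_{\rho})\bigl(\rho(q^{-1})x\bigr)=\rho(q^{-1})\bigl((D\otimes 1_{\rho})(x)\bigr)-\bigl(q^{-1}D(q)\bigr)\cdot\rho(q^{-1})x .
\]
The paper formulates the correction term as the matrix $D(g)\in\mathbf{M}_{2g}(R)$ acting on $x$; this action is not defined for a general $\mathbf{Q}$-module, so the paper invokes Theorem~\ref{Representations T1} to write the flat algebraic $\rho$ as a filtered colimit of objects built from $V=\mathrm{Std}_{2g}$ by tensor, dual, sum and subquotient, thereby reducing the identity to $\rho=V$ where it is evident.

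Your formulation is in fact cleaner and bypasses Theorem~\ref{Representations T1}: since $q\in\mathbf{Q}(R)$, the element $q^{-1}D(q)$ is the Maurer--Cartan form and lands in $R\otimes_{\Bbbk}\mathfrak{q}$, so it acts via $d\rho=\delta_{\mid\mathfrak{q}}$, which is available for any $(\mathfrak{g},\mathbf{Q})$-module. The identity then follows directly from the comodule description: writing $\rho^{\#}(v)=\sum v_i\otimes a_i$ one has $\rho(q+\varepsilon D(q))(v)=\sum a_i(q+\varepsilon D(q))v_i$ with $a_i(q+\varepsilon D(q))=a_i(q)+\varepsilon D(a_i(q))$, and comparing with $\rho(1+\varepsilon q^{-1}D(q))=\rho(q)^{-1}\rho(q+\varepsilon D(q))$ gives the formula. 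You should make this argument explicit rather than just invoke ``Leibniz'', since it is the sole nontrivial content of the proof beyond the field case; everything else (the cocycle law for $X(D,\alpha)$, integrability inherited from Gauss--Manin, functoriality, and the identification for $\rho=V$) is exactly as you describe.
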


\begin{proof}
We (may) assume that $\mathbf{G}=\mathbf{GSp}_{2g}$. When $\Bbbk $ is a
field, the result is proved in \cite[Proposition 2.4]{Liu12} and here we
just indicates why we can assume that $\Bbbk $ is a Dedekind domain. Let us
set $D_{\rho }:=D\otimes _{\Bbbk }1_{\rho }$ and, if $g=\left(
g_{i,j}\right) \in \mathbf{M}_{2g}\left( \mathcal{O}_{X}\left( U^{\prime
}\right) \right) $ for some open subset $U^{\prime }\subset U$, define $%
D\left( g\right) :=\left( D\left( g_{j,i}\right) \right) \in \mathbf{M}%
_{2g}\left( \mathcal{O}_{X}\left( U^{\prime }\right) \right) $. Then the
third of the chain of equalities appearing in \cite[pag. 2448]{Liu12} uses
the following equality%
\begin{equation*}
D_{\rho }\left( gx\right) =g\left( D_{\rho }\left( x\right) \right) +D\left(
g\right) \left( x\right)
\end{equation*}%
for $g\in \mathbf{Q}\left( U^{\prime }\right) $ and $x\in \mathcal{O}%
_{X}\left( U^{\prime }\right) \otimes _{\Bbbk }\rho $ (with our notations,
it needs to be applied to $x=f\left( \alpha g\right) $). Because $\rho \in 
\mathrm{Rep}_{fl\text{-}alg}\left( \mathfrak{g},\mathbf{Q}\right) $ and $%
\Bbbk $ is a Dedekind domain, it follows from Theorem \ref{Representations
T1} that $\rho $ is a colimit in \textrm{Rep}$_{alg}\left( \mathbf{Q}\right) 
$ of representations in \textrm{Rep}$_{f}\left( \mathbf{Q}\right) $, each of
them being constructed from $V=\mathrm{Std}_{2g}$ by forming tensor
products, direct sums, duals, and subquotients. This allow us to check the
equality in the $\rho =V$ case, which is easy. The other calculations are
the same.
\end{proof}

As promised, it remains to discuss the filtration. As explained in \cite[%
pag. 230]{FC}, one can define it by looking at the effect of the action of
the diagonal element $\left( 0,...,0,1,...,1\right) \in \mathfrak{g}$ when
acting on $\rho $: the sum $F_{p}\left( \rho \right) $ of the generalized
eigenspaces for eigenvalues $\geq p$ gives rise to decreasing filtration by $%
\mathbf{Q}$-submodules\ which yields a filtration on $\mathcal{E}_{\rho }$
satisfying the Griffith transversality condition and giving rise to a mixed
Hodge structure when $\rho $ is in \textrm{Rep}$_{f}\left( \mathbf{G}\right) 
$ (see \cite[Ch. VI, \S 5, 5.5 Theorem]{FC}). For our purposes, however, it
suffices to define such a filtration in the case $\rho =\mathrm{Ind}_{%
\mathbf{Q}^{-}}^{\mathbf{G}}\left( \rho _{0}\right) \left[ \mathbf{Y}\right] 
$ for some $\rho _{0}\in \mathrm{R}$\textrm{ep}$_{f}\left( \mathbf{M}\right) 
$. For these specific representations, we may use the $\mathbf{Q}$-module
filtration provided by the (opposite of the) increasing degree filtration $%
\mathrm{Ind}_{\mathbf{Q}^{-}}^{\mathbf{G}}\left( \rho _{0}\right) \left[ 
\mathbf{Y}\right] _{\leq r}$ (cfr. Lemma \ref{Representations L1'}), that we
will check to satisfy Griffith transversality in Lemma \ref{Sheaves L Fil}
below.

\begin{remark}
\label{Sheaves R Fil1}One can check that $F_{-r}\left( \mathrm{Ind}_{\mathbf{%
Q}^{-}}^{\mathbf{G}}\left( \rho _{0}\right) \left[ \mathbf{Y}\right] \right)
=\mathrm{Ind}_{\mathbf{Q}^{-}}^{\mathbf{G}}\left( \rho _{0}\right) \left[ 
\mathbf{Y}\right] _{\leq r}$ implying that, in this way, when $\Bbbk $\ is a
field we get the same filtration as in \cite[Ch. VI, \S 5, 5.5 Theorem]{FC}
by viewing an irreducible representation inside such a kind of induced
module by means of the inclusion $\left( \text{\ref{Representations P BGG F
Incl}}\right) $. We will not need this fact.
\end{remark}

If $\mathcal{C}$ is an exact category (say understood as a full subcategory
of an abelian category with the resulting notion of exactness), let us write 
$\mathrm{Fil}\left( \mathcal{C}\right) $ to denote the subcategory of $%
\mathbb{Z}$-filtered objects with filtered morphisms. The exactness of
sequences $E$\ in \textrm{Fil}$\left( \mathcal{C}\right) $ is defined by
requiring the sequences $E$ and \textrm{Fil}$^{i}\left( E\right) $\ to be
exact when viewed in $\mathcal{C}$ for every $i\in \mathbb{Z}$.

\begin{remark}
\label{Sheaves R Fil2}Also, we note that $\mathrm{Ind}_{\mathbf{Q}^{-}}^{%
\mathbf{G}}\left( \rho _{0}\right) \left[ \mathbf{Y}\right] _{\leq r}$ can
be recursively defined by looking at the successive invariants under the $%
\mathbf{U}$-module action (by Lemma \ref{Representations L1'}). In this way,
we see that the (dual) BGG complex $\left( \text{\ref{Representations P BGG
F BGG}}\right) $\ (when $\Bbbk $ is a field) and its truncated integral
version $\left( \text{\ref{Representations T BGG F BGG}}\right) $ (when $%
\Bbbk $ is a Dedekind domain) can be promoted to exact sequences in $\mathrm{%
Fil}\left( \mathrm{Rep}_{alg}\left( \mathfrak{g},\mathbf{Q}\right) \right) $.
\end{remark}

\begin{proof}
Let us write $F:\mathrm{R}$\textrm{ep}$_{alg}\left( \mathbf{U}\right)
\rightarrow \mathrm{Fil}\left( \mathrm{Rep}_{alg}\left( \mathbf{U}\right)
\right) $ for the functor which takes the filtration recursively defined by
looking at the successive invariants under the $\mathbf{U}$-module action.
Taking the invariants by an affine group scheme is always a left exact
operation: hence, by induction, one deduces that $F$ is a left exact
functor. This proves the assertion for the truncated integral version $%
\left( \text{\ref{Representations T BGG F BGG}}\right) $, which is
everything we need in what follows. One checks that, indeed, for unipotent
groups taking the invariants is an exact operation and, hence, $F$ is exact
from which the assertion for $\left( \text{\ref{Representations P BGG F BGG}}%
\right) $ follows.
\end{proof}

\bigskip

We now discuss the Kodaira-Spencer isomorphism, again assuming that $\Bbbk $
is a Dedekind domain which contains $\mathbb{Z}\left[ 1/d_{1}...d_{g}N\right]
$ (being a Dedekind domain is not needed until Lemma \ref{Sheaves L Fil}
below). Consider the following chain of morphisms of $\mathcal{O}_{X}$%
-modules%
\begin{equation*}
\mathcal{D}er_{\Bbbk }^{\mathrm{lo}\text{\textrm{g}}}\left( \mathcal{O}%
_{X}\right) \longrightarrow \mathcal{E}nd_{\Bbbk }\left( \mathcal{H}_{%
\mathrm{dR}}^{1}\right) \longrightarrow \mathcal{H}om_{\mathcal{O}%
_{X}}\left( \omega _{\mathrm{dR}},Lie_{\mathrm{dR}}^{t}\right) \text{,}
\end{equation*}%
where the first arrow is obtained applying the above discussion to the
Gauss-Manin connection $\left( \mathcal{E},\mathcal{\nabla }\right) =\left( 
\mathcal{H}_{\mathrm{dR}}^{1},\mathcal{\nabla }\right) $ and the second
arrow, which is obtained by precomposition with the restriction to $\omega _{%
\mathrm{dR}}$ and postcomposition with the projection onto $Lie_{\mathrm{dR}%
}^{t}$, takes values in $\mathcal{H}om_{\mathcal{O}_{X}}$ (rather than $%
\mathcal{H}om_{\Bbbk }$) thanks to the Griffiths transversality condition.
We remark that the target is identified with $Lie_{\mathrm{dR}}^{t}\otimes _{%
\mathcal{O}_{X}}\omega _{\mathrm{dR}}^{\vee }\simeq \omega \left(
G^{t}/X\right) ^{\vee }\otimes _{\mathcal{O}_{X}}\omega \left( G/X\right)
^{\vee }\left( -1\right) $ and, hence, we get in this way a morphism from $%
\mathcal{D}er_{\Bbbk }^{\mathrm{lo}\text{\textrm{g}}}\left( \mathcal{O}%
_{X}\right) $ to $\omega \left( G^{t}/X\right) ^{\vee }\otimes _{\mathcal{O}%
_{X}}\omega \left( G/X\right) ^{\vee }\left( -1\right) $ whose dual is the
so called Kodaira-Spencer morphism from $\omega \left( G^{t}/X\right)
\otimes _{\mathcal{O}_{X}}\omega \left( G/X\right) \left( 1\right) $ to $%
\Omega _{X/\Bbbk }^{\mathrm{lo}\text{\textrm{g},}1}$. Because $\Bbbk $
contains $\mathbb{Z}\left[ 1/d_{1}...d_{g}N\right] $, the universal
polarization $\lambda $ yields an isomorphism%
\begin{equation}
\lambda ^{\ast }\otimes 1:\frac{\omega \left( G^{t}/X\right) \otimes _{%
\mathcal{O}_{X}}\omega \left( G/X\right) }{\left( y\otimes \lambda ^{\ast
}\left( z\right) -z\otimes \lambda ^{\ast }\left( y\right) :y,z\in \omega
\left( G^{t}/X\right) \right) }\longrightarrow \mathrm{Sym}^{2}\left( \omega
\left( G/X\right) \right) =:\mathrm{Sym}^{2}\left( \omega _{\mathrm{dR}%
}\right)  \label{Sheaves F Sym}
\end{equation}%
and it is known that the Kodaira-Spencer morphism factors through the target
of the above arrow and induces an isomorphism%
\begin{equation*}
KS_{X}:\mathrm{Sym}^{2}\left( \omega _{\mathrm{dR}}\right) \left( 1\right) 
\overset{\sim }{\longrightarrow }\Omega _{X/\Bbbk }^{\mathrm{lo}\text{%
\textrm{g},}1}
\end{equation*}%
(see \cite[Proposition 2.3.5.2 and Proposition 6.2.5.18]{Lan08} and \cite[%
Ch. III, discussion before 9.3 Lemma and 9.8 Corollary and Ch. IV, 3.1
Proposition $\left( vi\right) $]{FC}). Dually, we have the isomorphism%
\begin{equation*}
KS_{X}^{\vee }:\mathcal{D}er_{\Bbbk }^{\mathrm{lo}\text{\textrm{g}}}\left( 
\mathcal{O}_{X}\right) \overset{\sim }{\longrightarrow }\mathrm{Sym}%
^{2}\left( \omega _{\mathrm{dR}}^{\vee }\right) \left( -1\right)
\end{equation*}%
whose composition with the inclusion from $\mathrm{Sym}^{2}\left( \omega _{%
\mathrm{dR}}^{\vee }\right) $ to $\omega _{\mathrm{dR}}^{\vee }\otimes _{%
\mathcal{O}_{X}}\omega _{\mathrm{dR}}^{\vee }$ (sending the element
represented by $x\otimes y$\ to $x\otimes y+y\otimes x$) is the original
morphism from $\mathcal{D}er_{\Bbbk }^{\mathrm{lo}\text{\textrm{g}}}\left( 
\mathcal{O}_{X}\right) $ to $\omega \left( G/X\right) ^{\vee }\otimes _{%
\mathcal{O}_{X}}\omega \left( G^{t}/X\right) ^{\vee }$ followed by the
isomorphism $\lambda _{\ast }\otimes 1$. Taking into account the canonical
isomorphism $\mathcal{W}_{W}=\omega _{\mathrm{dR}}$ inducing $\mathcal{W}_{%
\mathrm{Sym}^{2}\left( W\right) \left( -1\right) }=\mathrm{Sym}^{2}\left(
\omega _{\mathrm{dR}}\right) \left( 1\right) $ we see that, for every $%
X_{0}\rightarrow X$ and $\alpha _{0}$ as in Remark \ref{Sheaves R Triv'}, we
get%
\begin{equation}
KS_{\alpha _{0}}:=KS_{X}\circ \vartheta _{\alpha _{0}}^{-1}:\mathcal{O}%
_{X_{0}}\otimes _{\Bbbk }\mathrm{Sym}^{2}\left( W\right) \left( -1\right) 
\overset{\sim }{\longrightarrow }\Omega _{X/\Bbbk }^{\mathrm{lo}\text{%
\textrm{g},}1}\text{.}  \label{Sheaves F KS}
\end{equation}%
Dually, we have%
\begin{equation}
KS_{\alpha _{0}}^{\vee }:\mathcal{D}er_{\Bbbk }^{\mathrm{lo}\text{\textrm{g}}%
}\left( \mathcal{O}_{X}\right) \overset{\sim }{\longrightarrow }\mathcal{O}%
_{X_{0}}\otimes _{\Bbbk }\mathrm{Sym}^{2}\left( W^{\vee }\right) \left(
1\right) \text{.}  \label{Sheaves F KS dual}
\end{equation}

\begin{lemma}
\label{Sheaves L Fil}Suppose that $\rho _{0}\in \mathrm{Rep}_{f}\left( 
\mathbf{GL}_{g}\times \mathbf{G}_{m}\right) $ and consider $\rho :=\mathrm{%
Ind}_{\mathbf{Q}^{-}}^{\mathbf{G}}\left( \rho _{0}\right) \left[ \mathbf{Y}%
\right] $, which comes equipped with the $\mathbf{Q}$-module filtration $%
\rho _{\leq r}:=\mathrm{Ind}_{\mathbf{Q}^{-}}^{\mathbf{G}}\left( \rho
_{0}\right) \left[ \mathbf{Y}\right] _{\leq r}$. Setting $F^{r}\left( 
\mathcal{V}_{\rho }\right) :=\mathcal{E}_{\rho _{\leq r}}$ we have $\nabla
_{\lambda }\left( F^{r}\left( \mathcal{V}_{\rho }\right) \right) \subset
F^{r+1}\left( \mathcal{V}_{\rho }\right) $.
\end{lemma}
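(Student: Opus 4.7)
The plan is to work Zariski-locally on $X$, where we can choose a section $\alpha_0$ of $\mathcal{T}_{\mathcal{H},\mathrm{dR}}^{\times}$ and use the resulting trivialization $\vartheta_{\alpha_0}\colon \mathcal{E}_\rho \overset{\sim}{\to} \mathcal{O}_X \otimes_\Bbbk \rho$ of Remark \ref{Sheaves R Triv}. Under this trivialization, the explicit formula for the connection given in Theorem \ref{Sheaves T1} reads
\begin{equation*}
\nabla_\rho(D)(f)(\alpha_0) \;=\; D\otimes 1_\rho\bigl(f(\alpha_0)\bigr) \;+\; X(D,\alpha_0)\cdot f(\alpha_0),
\end{equation*}
where $X(D,\alpha_0)\in \mathcal{O}_X\otimes_\Bbbk\mathfrak{g}$ acts via the Lie algebra action on $\rho$. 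Griffith transversality therefore reduces to the purely algebraic statement that this operator carries $\mathcal{O}_X\otimes_\Bbbk\rho_{\leq r}$ into $\mathcal{O}_X\otimes_\Bbbk\rho_{\leq r+1}$.

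The first summand is trivial: since $D$ acts only on the $\mathcal{O}_X$-factor, it preserves $\mathcal{O}_X\otimes_\Bbbk\rho_{\leq r}$. For the second summand I would use the direct sum decomposition $\mathfrak{g}=\mathfrak{q}\oplus\mathfrak{u}^-$ to split $X(D,\alpha_0) = X_{\mathfrak{q}} + X_{\mathfrak{u}^-}$. The contribution $X_{\mathfrak{q}}$ acts through the differentiated $\mathbf{Q}$-action on $\rho$, and since $\rho_{\leq r}$ is by construction a $\mathbf{Q}$-submodule of $\rho$, this action stabilizes $\rho_{\leq r}$. The contribution $X_{\mathfrak{u}^-}$ is the essential ingredient: by Lemma \ref{Representations L1}(3), every element of $\mathfrak{u}^-$ annihilates $\rho_{=0}$ and sends $\rho_{=s}$ into $\rho_{=s+1}$ for all $s\geq 1$, hence it sends $\rho_{\leq r}$ into $\rho_{\leq r+1}$. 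Adding the two contributions yields the claimed degree shift by at most one.

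To conclude, once we know that, after trivialization, $\nabla_\rho(D)(f)$ takes values in $\mathcal{O}_X\otimes_\Bbbk\rho_{\leq r+1}$, and since $\rho_{\leq r+1}\subset \rho$ is a $\mathbf{Q}$-submodule, the $\mathbf{Q}$-equivariance of $\nabla_\rho(D)(f)$ established in Theorem \ref{Sheaves T1} implies that this section factors through $\mathcal{E}_{\rho_{\leq r+1}} = F^{r+1}(\mathcal{V}_\rho)$; the gluing back to a global statement is then immediate because the construction is local on $X$ and independent of the trivialization. The whole argument is short: the real content sits in Lemma \ref{Representations L1}(3), which encodes the fact that $\mathfrak{u}^-$ raises the degree filtration by exactly one, and I do not expect any genuine obstacle beyond keeping careful track of the trivialization and of the splitting $\mathfrak{g}=\mathfrak{q}\oplus\mathfrak{u}^-$.
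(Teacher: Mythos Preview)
Your proposal is correct and follows essentially the same approach as the paper: both arguments split $\mathfrak{g}=\mathfrak{q}\oplus\mathfrak{u}^-$, observe that the $\mathfrak{q}$-part preserves the filtration since $\rho_{\leq r}$ is a $\mathbf{Q}$-submodule, and invoke Lemma~\ref{Representations L1}(3) to show that the $\mathfrak{u}^-$-part raises the degree by one. The paper's proof is terser and references the Kodaira--Spencer isomorphism~\eqref{Sheaves F KS dual} rather than spelling out the local trivialization, but the content is identical.
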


\begin{proof}
It follows from Lemma \ref{Representations L1} $\left( 3\right) $ that every 
$\partial \in \mathfrak{u}^{-}$ increase the degree by one for every $v\in 
\mathrm{Ind}_{\mathbf{Q}^{-}}^{\mathbf{G}}\left( W_{\lambda }\right) \left[ 
\mathbf{Y}\right] $. On the other hand, because $\mathrm{Ind}_{\mathbf{Q}%
^{-}}^{\mathbf{G}}\left( W_{\lambda }\right) \left[ \mathbf{Y}\right] _{\leq
r}$ is a $\mathbf{Q}$-submodule, we know that every $\partial \in \mathfrak{q%
}$ preserves the filtration. Because $\mathfrak{g}=\mathfrak{u}^{-}\oplus 
\mathfrak{q}$, the result follows from $\left( \text{\ref{Sheaves F KS dual}}%
\right) $ and the definition of $\nabla _{\lambda }=\nabla _{\rho }$ for $%
\rho =\mathrm{Ind}_{\mathbf{Q}^{-}}^{\mathbf{G}}\left( W_{\lambda }\right) %
\left[ \mathbf{Y}\right] $.
\end{proof}

\bigskip

When $\lambda \in X_{\mathbf{G},+}$, applying $\mathcal{E}_{\cdot }$ to the
inclusions $\left( \text{\ref{Representations T BGG F Incl}}\right) $ (and
recalling that $\mathcal{E}_{\rho }=\mathcal{W}_{\rho }$ for every $\rho \in 
\mathrm{R}$\textrm{ep}$\left( \mathbf{M}\right) $ regarded as a $\mathbf{Q}$%
-module by means of the canonical morphism $\mathbf{Q\rightarrow M}$, so
that $\mathcal{E}_{W_{\lambda }}=\mathcal{W}_{W_{\lambda }}=:\mathcal{W}%
_{\lambda }$) yields (setting $\mathcal{L}_{\lambda }:=\mathcal{E}%
_{L_{\lambda }}$ and $\mathcal{V}_{\lambda }:=\mathcal{E}_{\mathrm{Ind}_{%
\mathbf{Q}^{-}}^{\mathbf{G}}\left( W_{\lambda }\right) \left[ \mathbf{Y}%
\right] }$):%
\begin{equation*}
\mathcal{W}_{\lambda }\hookrightarrow \mathcal{L}_{\lambda }\hookrightarrow 
\mathcal{V}_{\lambda }
\end{equation*}%
such that the first inclusion is of sheaves while the latter is of
integrable modules with connection $\nabla _{\lambda }$. Hence we also have
an inclusion of de Rham complexes%
\begin{equation}
\mathrm{dR}\left( \mathcal{L}_{\lambda }\right) \hookrightarrow \mathrm{dR}%
\left( \mathcal{V}_{\lambda }\right) \text{.}  \label{Sheaves F dRCmp}
\end{equation}%
We have defined in Lemma \ref{Sheaves L Fil} a filtration on $\mathcal{V}%
_{\lambda }$. Setting $L_{\lambda ,\leq r}:=L_{\lambda }\cap \mathrm{Ind}_{%
\mathbf{Q}^{-}}^{\mathbf{G}}\left( W_{\lambda }\right) \left[ \mathbf{Y}%
\right] _{\leq r}$ and then $F^{r}\left( \mathcal{L}_{\lambda }\right) :=%
\mathcal{E}_{L_{\lambda ,\leq r}}$ yields a filtration on $\mathcal{L}%
_{\lambda }$ such that $F^{0}\left( \mathcal{L}_{\lambda }\right) =\mathcal{W%
}_{\lambda }$, $\nabla _{\lambda }\left( F^{r}\left( \mathcal{L}_{\lambda
}\right) \right) \subset F^{r+1}\left( \mathcal{L}_{\lambda }\right) $ and
which promotes $\left( \text{\ref{Sheaves F dRCmp}}\right) $ to a morphism
of filtered complexes (as can be checked locally using Remark \ref{Sheaves R
Triv}).

\begin{remark}
\label{Sheaves R Ex}The above discussion requires the flatness of $X$ over $%
\Bbbk $ (which is true because we assume that $\Bbbk $ contains $\mathbb{Z}%
\left[ 1/d_{1}...d_{g}N\right] $). Indeed, as discussion in \S \ref{S
Representations Pre}, the exactness in $\underline{\mathrm{Mod}}_{alg}$
differs from the exactness as a functor; however, an exact sequence in $%
\underline{\mathrm{Mod}}_{alg}$ gives rise to an exact sequence of $R$%
-modules when evaluated at $R$-points for every $\Bbbk $-flat algebra $R$.
Applying this observation to (the ring of functions) of an open affine
subset of $X$ and using Remark \ref{Sheaves R Triv} we deduce that the
functor $\mathcal{E}_{-}$ is exact.
\end{remark}

\section{\label{S q-exp}$q$-expansion and degeneration at the $0$%
-dimensional cusps}

Let suppose that $\phi :\mathbb{Y}\rightarrow \mathbb{X}$ is an injective
morphism with finite cokernel between free $\mathbb{Z}$-modules of finite
rank $g$. Consider the tori $\mathbf{T}_{\mathbb{W}}:=\underline{Hom}%
_{gr}\left( \mathbb{W},\mathbf{G}_{m}\right) $, where $\mathbb{W}\in \left\{ 
\mathbb{X},\mathbb{Y}\right\} $ is regarded as an fppf sheaf over a base
scheme such that $\#\frac{\mathbb{X}}{\phi \left( \mathbb{Y}\right) }$ is
invertible\footnote{%
Strictly speaking the invertibility assumption is not needed in the
following discussion. However, the toroidal compactifications require this
assumption and, hence, the hypothesis allows us to interpret the discussion
below in this framework.}. We remark that the datum of a morphism (of
sheaves) of groups $Q:\mathbb{Y}\otimes _{\mathbb{Z}}\mathbb{X}\rightarrow 
\mathbf{G}_{m}$ is equivalent to that of a morphism $M_{Q}=\left[ \mathbb{Y}%
\rightarrow \mathbf{T}_{\mathbb{X}}\right] $ or, in other words, a ($1$%
-)motive with maximal torus rank $g$ (see \cite[D\'{e}finition 1.2.1]{Str10}%
). The dual motive is $M_{Q}^{t}:=\left[ \mathbb{X}\rightarrow \mathbf{T}_{%
\mathbb{Y}}\right] $, again obtained from $Q$. The datum of $\phi $ gives
rise to a morphism $\phi ^{t}:\mathbf{T}_{\mathbb{X}}\rightarrow \mathbf{T}_{%
\mathbb{Y}}$ and $\phi ^{\ast }\left( Q\right) :=Q\circ \left( 1\mathbb{%
\otimes }_{\mathbb{Z}}\phi \right) $ is symmetric if and only if $\lambda
_{\phi }:=\left( \phi ^{t},\phi \right) :M_{Q}\rightarrow M_{Q}^{t}$ is a
morphism of motives, which is then a polarization because $\phi $ is
injective with finite cokernel (see \cite[D\'{e}finition 1.2.3.3]{Str10}).
One can consider the functor classifying $4$-tuples $\left( M,\lambda
,\alpha ,\beta \right) $ where $M=\left[ Y\rightarrow T\right] $ is a motive
with maximal torus rank $g$, $\lambda =\left( \phi _{Y},\phi _{Y}^{t}\right) 
$ is a polarization and $\alpha :\mathbf{T}_{\mathbb{X}}\overset{\sim }{%
\rightarrow }T$ and $\beta :\mathbb{Y}\overset{\sim }{\rightarrow }Y$ are
isomorphisms such that $\phi $\ equals the composition%
\begin{equation*}
\mathbb{Y}\overset{\beta }{\longrightarrow }Y\overset{\phi _{Y}}{%
\longrightarrow }\underline{Hom}_{gr}\left( T,\mathbf{G}_{m}\right) \overset{%
\underline{Hom}_{gr}\left( \alpha ,\mathbf{G}_{m}\right) }{\longrightarrow }%
\underline{Hom}_{gr}\left( \mathbf{T}_{\mathbb{X}},\mathbf{G}_{m}\right) =%
\mathbb{X}\text{.}
\end{equation*}%
Indeed the above discussion shows that, if%
\begin{equation*}
Q_{\phi }:\mathbb{Y}\otimes _{\mathbb{Z}}\mathbb{X}\longrightarrow \frac{%
\mathbb{Y}\otimes _{\mathbb{Z}}\mathbb{X}}{\left( y\mathbb{\otimes }_{%
\mathbb{Z}}\phi \left( z\right) -z\mathbb{\otimes }_{\mathbb{Z}}\phi \left(
y\right) :y,z\in \mathbb{Y}\right) }=:\mathbb{S}_{\phi }
\end{equation*}%
is the canonical quotient map, then $\mathbf{E}_{\phi }:=\underline{Hom}%
_{gr}\left( \mathbb{S}_{\phi },\mathbf{G}_{m}\right) $ classifies these
quadruples: a point of $\mathbf{E}_{\phi }$ defined over a scheme is a
morphism (of sheaves) of groups $x:\mathbb{S}_{\phi }\rightarrow \mathbf{G}%
_{m}$ (over that scheme), which gives rise (and is equivalent) to $%
Q_{x}:=x\circ Q_{\phi }$ to which the above discussion applies giving to the
equivalent datum of the $4$-tuple $\left( M_{Q_{x}},\lambda _{\phi },1_{%
\mathbf{T}_{\mathbb{X}}},1_{\mathbb{Y}}\right) $\ (cfr. \cite[D\'{e}finition
1.4.3 and Proposition 1.4.4]{Str10}). In particular $\mathbf{E}_{\phi }$,
comes equipped with a universal motive $\mathbf{M}_{\phi }\rightarrow 
\mathbf{E}_{\phi }$ with maximal torus rank $g$.

\begin{remark}
\label{q-exp R1 KS}Using the universal trivializations $\alpha ^{univ}$ and $%
\beta ^{univ}$\ to change $\mathbf{M}_{\phi }$ by a (unique) isomorphism, we
may assume that $\mathbf{M}_{\phi }=\left[ \mathbb{Y}_{\mathbf{E}_{\phi
}}\rightarrow \mathbf{T}_{\mathbb{X},\mathbf{E}_{\phi }}\right] $ and $%
\mathbf{M}_{\phi }^{t}=\left[ \mathbb{X}_{\mathbf{E}_{\phi }}\rightarrow 
\mathbf{T}_{\mathbb{Y},\mathbf{E}_{\phi }}\right] $ (and the universal
trivialization are given by the identity). We have $\mathcal{O}_{\mathbf{E}%
_{\phi }}\left( \mathbf{E}_{\phi }\right) =\mathbb{Z}\left[ \mathbb{S}_{\phi
}\right] $ and, if $\omega _{\mathrm{dR}\text{,}\mathbf{T}_{\mathbb{X}}/%
\mathbf{E}_{\phi }}$ denotes the sheaf of $\mathcal{O}_{\mathbf{E}_{\phi }}$%
-modules given by the invariant differentials, then $\mathcal{O}_{\mathbf{E}%
_{\phi }}\otimes _{\mathbb{Z}}\mathbb{X}\overset{\sim }{\rightarrow }\omega
_{\mathrm{dR}\text{,}\mathbf{T}_{\mathbb{X}}/\mathbf{E}_{\phi }}$ by $%
\mathcal{O}_{\mathbf{E}_{\phi }}$-linear extension of the rule sending $s\in 
\mathbb{X}$ to $\mathrm{dlog}\left( s\right) $ (which is a priori a section
of the pushfoward ot $\Omega _{\mathbf{T}_{\mathbb{X}}/\mathbf{E}_{\phi
}}^{1}$ to $\mathbf{E}_{\phi }$, easily checked to be invariant). Similarly, 
$\mathcal{O}_{\mathbf{E}_{\phi }}\otimes _{\mathbb{Z}}\mathbb{Y}\overset{%
\sim }{\rightarrow }\omega _{\mathrm{dR}\text{,}\mathbf{T}_{\mathbb{Y}}/%
\mathbf{E}_{\phi }}$. Using the universal vectorial extensions one can
define, for every motive $M$, a Kodaira-Spencer map $KS_{M}$ which extends
the definition for abelian schemes (see \cite[Ch. III, discussion after
Proposition 9.2]{FC}). According to \cite[Ch. III, discussion before Lemma
9.3]{FC}, in the specific case of a motive having maximal torus rank such as 
$\mathbf{M}_{\phi }$, this morphism%
\begin{equation*}
KS_{\mathbf{M}_{\phi }}:\omega _{\mathrm{dR}\text{,}\mathbf{T}_{\mathbb{X},%
\mathbf{E}_{\phi }}/\mathbf{E}_{\phi }}\otimes _{\mathcal{O}_{\mathbf{E}%
_{\phi }}}\omega _{\mathrm{dR}\text{,}\mathbf{T}_{\mathbb{Y}}/\mathbf{E}%
_{\phi }}\left( -1\right) \longrightarrow \Omega _{\mathbf{E}_{\phi }}^{1}
\end{equation*}%
admits the following simple description. Identifying the source with $%
\mathcal{O}_{\mathbf{E}_{\phi }}\otimes _{\mathbb{Z}}\left( \mathbb{Y}%
\otimes _{\mathbb{Z}}\mathbb{X}\right) $, it is the $\mathcal{O}_{\mathbf{E}%
_{\phi }}$-linear extension of%
\begin{equation}
\mathbb{Y}\otimes _{\mathbb{Z}}\mathbb{X}\overset{Q_{\phi }}{\longrightarrow 
}\mathbb{S}_{\phi }\subset \mathcal{O}_{\mathbf{E}_{\phi }}^{\times }\overset%
{-\mathrm{dlog}}{\longrightarrow }\Omega _{\mathbf{E}_{\phi }}^{1}\text{.}
\label{q-exp R1 KS F}
\end{equation}
\end{remark}

We remark that the cocharacter group $\mathbb{B}_{\phi }:=Hom_{\mathbb{Z}%
}\left( \mathbb{S}_{\phi },\mathbb{Z}\right) $ of $\mathbf{E}_{\phi }$ is a
free $\mathbb{Z}$-module of rank $d_{g}:=\frac{g\left( g+1\right) }{2}$. We
let $\mathbb{C}_{\phi }\subset Hom_{\mathbb{R}}\left( S_{\phi },\mathbb{R}%
\right) $ be the closed cone consisting of those bilinear forms $Q:\mathbb{Y}%
\times \mathbb{X}\rightarrow \mathbb{R}$ such that $\phi ^{\ast }\left(
Q\right) :=Q\circ \left( 1\mathbb{\otimes }_{\mathbb{Z}}\phi \right) $ has
rational kernel (i.e. admits a basis in $\mathbb{Q\otimes }_{\mathbb{Z}}%
\mathbb{Y}$, equivalently the analogous condition holds for the two kernels
of $Q$) and is positive semidefinite definite. In other words, $\mathbb{C}%
_{\phi }$ is the inverse image of the closed cone of positive semidefinite
definite bilinear forms $\mathbb{C}_{1_{\mathbb{Y}}}\subset Hom_{\mathbb{R}%
}\left( \mathbb{S}_{1_{\mathbb{Y}}},\mathbb{R}\right) $ with rational kernel
under the natural map $\mathbb{B}_{\phi }\rightarrow \mathbb{B}_{1_{\mathbb{Y%
}}}$ sending $b$ to $\phi ^{\ast }\left( b\right) :=b\circ \left( 1\otimes
\phi \right) $. Consider the set $GL_{\phi }$ of couples $\left( \gamma
_{Y},\gamma _{X}\right) \in GL\left( \mathbb{Y}\right) \times GL\left( 
\mathbb{X}\right) $ such that $\gamma _{\mathbb{X}}\circ \phi =\phi \circ
\gamma _{\mathbb{Y}}$. It is easily checked to be a subgroup\footnote{%
Note also that, because $\phi $ is injective (resp. $\phi $ has finite
cokernel), $\gamma _{\mathbb{Y}}$ (resp. $\gamma _{\mathbb{X}}$)\ is
uniquely determined by $\gamma _{\mathbb{X}}$ (resp. $\gamma _{\mathbb{Y}}$)
and, hence, the projection onto the second (resp. first) componet is
injective and identifies $GL\left( \phi \right) $ with a subgroup of those
elements $\gamma $\ of $GL\left( \mathbb{X}\right) $ (resp. $GL\left( 
\mathbb{Y}\right) $) such that $\gamma \left( \phi \left( \mathbb{Y}\right)
\right) \subset \phi \left( \mathbb{Y}\right) $ (resp. such that the scalar
extension $\gamma _{\mathbb{Y},\mathbb{Q}}\in GL\left( \mathbb{Y}_{\mathbb{Q}%
}\right) $ of $\gamma _{\mathbb{Y}}$\ preserves $\mathbb{X}$).}. Using the
integral structure provided by $\mathbb{B}_{\phi }$, one can extend the
notions of smooth $GL\left( \mathbb{Y}\right) $-admissible polyhedral cone
decomposition of $\mathbb{C}_{1_{\mathbb{Y}}}$ and their related notions
(see \cite[Ch. IV, \S 2]{FC}) to smooth $GL_{\phi }$-admissible polyhedral
decompositions\ $\Sigma $\ and similar results hold (see \cite[pag. 99]{FC}
and \cite[\S 6.1]{Lan08})\footnote{%
We remark that the morphism $1\mathbb{\otimes }_{\mathbb{Z}}\phi $ yields a
morphism $\phi _{\ast }:\mathbb{S}_{1_{\mathbb{Y}}}\rightarrow \mathbb{S}%
_{\phi }$ such that $\phi _{\ast }\circ Q_{1_{\mathbb{Y}}}=Q_{\phi }\circ
\left( 1\mathbb{\otimes }_{\mathbb{Z}}\phi \right) =:\phi ^{\ast }\left(
Q_{\phi }\right) $ and, dually, we have $\mathbb{B}_{\phi }\rightarrow 
\mathbb{B}_{1_{\mathbb{Y}}}$ sending $b$ to $\phi ^{\ast }\left( b\right)
:=b\circ \left( 1\otimes \phi \right) $. These relatioships can be used in
order to reduce some of the statement below to the case $\phi =1_{Y}$.}. In
particular, if $\sigma \in \Sigma $ is a cone (required to be open in the
smallest subspace it generates, contained in $\mathbb{C}_{\phi }$ and
smooth), then we get a torus embedding%
\begin{equation*}
\mathbf{E}_{\phi }\hookrightarrow \mathbf{E}_{\phi ,\sigma }\text{,}
\end{equation*}%
where $\mathbf{E}_{\phi ,\sigma }$ is the spectrum of the monoid algebra $%
R_{\phi ,\sigma }^{\prime }:=\mathbb{Z}\left[ \mathbb{S}_{\phi }\cap \sigma
^{\vee }\right] $, if $\sigma ^{\vee }\subset \mathbb{R\otimes }_{\mathbb{Z}}%
\mathbb{S}_{\phi }$ is the set of those $s\in \mathbb{S}_{\phi }$ such that $%
\left\langle s,b\right\rangle \geq 0$ for every $b\in \sigma $. Let $I_{\phi
,\sigma }\subset R_{\phi ,\sigma }^{\prime }$ be the ideal generated by
those $s\in \mathbb{S}_{\phi }$ such that $\left\langle s,b\right\rangle >0$
for every $b\in \sigma $ and define $R_{\phi ,\sigma }$ to be the $I_{\phi
,\sigma }$-adic completion of $R_{\phi ,\sigma }^{\prime }$. Actually, this
give the \textquotedblleft correct definition\textquotedblright\ only when $%
\sigma $ is contained in the interior of $\mathbb{C}_{\phi }$: in this case,
the bilinear form $Q_{\phi }$ satisfies the positivity condition that $\phi
^{\ast }\left( Q_{\phi }\right) \left( y,y\right) \in I_{\phi ,\sigma }$ for
every $y\in \mathbb{Y}-\left\{ 0\right\} $\footnote{%
Indeed, with reference to the previous footnote, we have $\phi ^{\ast
}\left( Q_{\phi }\right) =\phi _{\ast }\circ Q_{1_{\mathbb{Y}}}$, implying
that $\phi ^{\ast }\left( Q_{\phi }\right) \left( y,y\right) =\phi _{\ast
}\left( Q_{1_{\mathbb{Y}}}\left( y,y\right) \right) $ and, hence, $%
\left\langle \phi ^{\ast }\left( Q_{\phi }\right) \left( y,y\right)
,b\right\rangle =\left\langle Q_{1_{\mathbb{Y}}}\left( y,y\right)
,b\right\rangle $. By definition, we have $\mathbb{C}_{\phi }=\left( \phi
^{\ast }\right) ^{-1}\left( \mathbb{C}_{1_{\mathbb{Y}}}\right) $ and the
boundary of $\mathbb{C}_{\phi }$ is defined by the condition that $\phi
^{\ast }\left( b\right) \in \mathbb{C}_{1_{\mathbb{Y}}}$ is not definite.
Because $\sigma $ is contained in the interior of $\mathbb{C}_{\phi }$ we
deduce that, for every $b\in \sigma $, the form $b$ is positive definite,
which means that $\left\langle Q_{1_{\mathbb{Y}}}\left( y,y\right)
,b\right\rangle =b\left( y,y\right) >0$. (Cfr. \cite[pag. 103-104]{FC} or 
\cite[\S 2.1]{Str10}).}. Then, as we are going to explain, Mumford's
construction provides a semiabelian scheme $\mathbf{G}_{\phi ,\sigma
}\rightarrow \mathfrak{Spec}\left( R_{\phi ,\sigma }\right) $ whose formal
completion along (the closed subscheme defined by) $I_{\phi ,\sigma }$ is
canonically isomorphic to the $I_{\phi ,\sigma }$-adic completion of $%
\mathbf{T}_{\mathbb{X},\mathbf{E}_{\phi ,\sigma }}$ and whose generic fiber
is a polarized abelian scheme.

We recall Mumford's construction $\mathrm{M}_{pol}$ in the special case of
motives of maximal torus rank. Suppose that $S$ is an affine normal and
excellent scheme which is complete with respect to a reduced closed
subscheme $S_{0}$ defined by an ideal $I$\ and set $U:=S-S_{0}$: then one
consider the category \textrm{Mum}$_{U,S,S_{0},pol,+}^{tor}$\ of $4$-tuples $%
\left( T,Y,M,\lambda \right) $ such that $T$ is a torus, $Y$ is an \'{e}tale
sheaf which is locally constant with values in a free $\mathbb{Z}$-module of
finite rang, $M=\left[ Y_{U}\rightarrow T_{U}\right] $ is a ($1$-)motive (of
maximal torus rank) and $\lambda :M\rightarrow M^{t}$ is a polarization
satisfying the following positivity condition (see \cite[D\'{e}finition
1.3.3.1]{Str10} and \cite[pag. 103-104]{FC}). Suppose first that $Y$ and $X:=%
\underline{Hom}_{gr}\left( T,\mathbf{G}_{m}\right) $ are constant. Then, as
remarked above, $M=\left[ Y_{U}\rightarrow T_{U}\right] $ is equivalent to $%
Q:Y\otimes _{\mathbb{Z}}X\rightarrow \mathbf{G}_{m,U}$ and $\phi ^{\ast
}\left( Q\right) $ on $U$-points yields $\phi ^{\ast }\left( Q\right)
:Y\otimes _{\mathbb{Z}}Y\rightarrow \mathcal{O}_{U}^{\times }\left( U\right) 
$: we require that $\phi ^{\ast }\left( Q\right) \left( y,y\right) \in I$
for every $y\in Y-\left\{ 0\right\} $. In general, we require this condition 
\'{e}tale locally. On the other hand, one can consider the category \textrm{%
Deg}$_{U,S,S_{0},pol}^{tor}$ consisting of semi-abelian $S$-schemes that are
polarized and abelian over $U$ and whose reduction modulo $I$ is a torus.
Then $\mathrm{M}_{pol}$ is a functor from \textrm{Mum}$%
_{U,S,S_{0},pol,+}^{tor}$ to \textrm{Deg}$_{U,S,S_{0},pol}^{tor}$ realizing
an equivalence (see \cite[Th\'{e}or\`{e}me 1.3.3.3]{Str10} and \cite[Ch.
III, \S 7]{FC}): if $G=\mathrm{M}_{pol}\left( T,Y,M,\lambda \right) $, then
the formal completion along $S_{0}$ of $G\rightarrow S$ is canonically
isomorphic to the $I$-adic completion of $T$. Now take $\left( S,I\right)
=\left( \mathfrak{Spec}\left( R_{\phi ,\sigma }\right) ,R_{\phi ,\sigma
}I_{\phi ,\sigma }\right) $. because $I_{\phi ,\sigma }$ is the ideal which
defines $\mathbf{E}_{\phi ,\sigma }-\mathbf{E}_{\phi }$ (see \cite[pag.
101-102]{FC}), the open immersion $U\hookrightarrow S$ is the pull-back of $%
\mathbf{E}_{\phi }\hookrightarrow \mathbf{E}_{\phi ,\sigma }$ via $%
S\rightarrow \mathbf{E}_{\phi ,\sigma }$\ and, hence, we can pull-back the
polarized motive $\left( \mathbf{M}_{\phi },\lambda _{\phi }\right) $ over $%
\mathbf{E}_{\phi }$\ to $\left( \mathbf{M}_{\phi ,U},\lambda _{\phi
,U}\right) $ over $U$. On the other hand, since $\mathbb{Y}$ and $\mathbb{X}$
are constant, we can extend $\mathbf{T}_{\mathbb{X}}$ and $\mathbb{Y}$\ to $%
S $. Hence, we can define%
\begin{equation*}
\left( \mathbf{G}_{\phi ,\sigma }/\mathfrak{Spec}\left( R_{\phi ,\sigma
}\right) ,\lambda _{\phi ,\sigma }\right) :=\mathrm{M}_{pol}\left( \mathbf{T}%
_{\mathbb{X},S},\mathbb{Y}_{S},\mathbf{M}_{\phi ,U},\lambda _{\phi
,U}\right) \in \mathrm{Deg}_{U,S,S_{0},pol}^{tor}\text{.}
\end{equation*}

\bigskip

We can consider more general level structures as follows. Suppose that $%
f:M\rightarrow M^{\prime }$ is a morphism between motives, viewed as
complexes (of fppf sheaves) concentrated in degrees $\left[ -1,0\right] $:
then \textrm{ker}$\left( f\right) :=H^{-1}\left( \mathrm{Cone}\left(
f\right) \right) $; when $f$ is an isogeny, \textrm{ker}$\left( f\right) $
is a finite flat group scheme which comes equipped with a filtration (see 
\cite[\S 1.2.4]{Str10}). In the special case where $f=n$ and $M=\left[
Y\rightarrow T\right] $ has maximal torus rank $g$, we write $M\left[ n%
\right] :=$\textrm{ker}$\left( f\right) $ and everything reduces to the
connected-\'{e}tale exact sequence%
\begin{equation*}
0\longrightarrow T\left[ n\right] \longrightarrow M\left[ n\right]
\longrightarrow \frac{Y}{nY}\longrightarrow 0\text{.}
\end{equation*}%
Furthermore, if $\left( T,Y,M,\lambda \right) \in $\textrm{Mum}$%
_{U,S,S_{0},pol,+}^{tor}$ and $G:=\mathrm{M}_{pol}\left( T,Y,M,\lambda
\right) $, then $M\left[ n\right] $ is canonically identified with $G_{U}%
\left[ n\right] $ (see \cite[Th\'{e}or\`{e}me 1.3.3.3]{Str10} and \cite[Ch.
III, \S 7]{FC}) and, hence (applying the above discussion to $M=\left[
Y_{U}\rightarrow T_{U}\right] $), we see that there is a canonical exact
sequence%
\begin{equation*}
0\longrightarrow T_{U}\left[ n\right] \longrightarrow G_{U}\left[ n\right]
\longrightarrow \frac{Y}{nY}\longrightarrow 0\text{.}
\end{equation*}%
Also, $M^{t}\left[ n\right] $ is the Cartier dual of $M\left[ n\right] $
and, hence, $G_{U}^{t}\left[ n\right] $ is the Cartier dual of $G_{U}\left[ n%
\right] $. In particular, when $\lambda =\left( \phi _{Y},\phi
_{Y}^{t}\right) $ is such that $\#\frac{X}{\phi \left( Y\right) }$ is prime
to $n$ (where $X:=\underline{Hom}_{gr}\left( T,\mathbf{G}_{m}\right) $),
then $\lambda $ yields an isomorphism $M\left[ n\right] \overset{\sim }{%
\rightarrow }M^{t}\left[ n\right] $.

We apply this discussion in two ways as follows.

\begin{itemize}
\item[$\left( i\right) $] Suppose that $n=N\in \mathbb{N}_{\geq 1}$ is prime
to $\#\frac{\mathbb{X}}{\phi \left( \mathbb{Y}\right) }$ and that $N\cdot \#%
\frac{\mathbb{X}}{\phi \left( \mathbb{Y}\right) }$ is invertible. Then we
can replace the $4$-tuples considered above by $5$-tuples $\left( M,\lambda
,\alpha ,\beta ,\gamma \right) $ in which we have added the datum of a
symplectic isomorphism (of fppf sheaves) $\gamma :\frac{\mathbb{Z}^{2g}}{N%
\mathbb{Z}^{2g}}\overset{\sim }{\rightarrow }M\left[ N\right] $. Then this
moduli problem is relatively representable over the $4$-tuples by a finite 
\'{e}tale morphism, namely the torus $\mathbf{E}_{\phi ,N}:=\underline{Hom}%
_{gr}\left( N^{-1}\mathbb{S}_{\phi },\mathbf{G}_{m}\right) \rightarrow 
\mathbf{E}_{\phi }$. More generally, suppose that $K\subset \mathbf{GSp}%
_{2g}\left( \mathbb{A}_{f}\right) $ is any open compact subgroup as at the
beginning of \S \ref{S Sheaves}, i.e. we have $K_{d,N}\subset K\subset
K_{d,1}$. Then one can consider $K$-level structures and one gets a torus $%
\mathbf{E}_{\phi ,K}$ \'{e}tale over $\mathbf{E}_{\phi }$ with character
group $\mathbb{S}_{\phi ,K}\supset \mathbb{S}_{\phi }$ and $\mathbb{B}_{\phi
,K}:=Hom_{\mathbb{Z}}\left( \mathbb{S}_{\phi ,K},\mathbb{Z}\right) $ can be
used as an integral structure in order to define $GL_{\phi ,K}$-invariant
polyhedral decompositions. More precisely, one should consider conical
complexes obtained by considering several $\gamma $'s\ (cfr. \cite[pag. 129]%
{FC}): however, if one content itself to work with a single geometrically
connected component of the $K$-level Shimura variety (determined by the
choice of a primitive $N$-root of unity), then one can fix a single $\gamma $
and work with smooth $GL_{\phi }$-admissible polyhedral decompositions (cfr. 
\cite[pag. 126]{FC}). We get $\mathbf{M}_{\phi }$, a ring $R_{\phi ,K,\sigma
}$ and $\left( \mathbf{G}_{\phi ,K,\sigma }/\mathfrak{Spec}\left( R_{\phi
,K,\sigma }\right) ,\lambda _{\phi ,K,\sigma }\right) $ and the fact that $M%
\left[ n\right] $ is canonically identified with $G_{U}\left[ n\right] $
under Mumford construction yields a canonical $K$-level structure $\gamma
_{\phi ,K,\sigma ,U}$\ on $\mathbf{G}_{\phi ,K,\sigma ,U}$ where $U$ is
defined similalry as above.

\item[$\left( ii\right) $] Suppose that $n=p^{m}$ is a prime such that $%
p\nmid N\cdot \#\frac{\mathbb{X}}{\phi \left( \mathbb{Y}\right) }$ and that $%
N\cdot \#\frac{\mathbb{X}}{\phi \left( \mathbb{Y}\right) }$ is invertible.
Then taking the inverse limit of the canonical isomorphisms $\frac{\mathbb{X}%
}{p^{m}\mathbb{X}}\simeq \mathbf{G}_{\phi ,K,\sigma ,U}^{t}\left[ p^{m}%
\right] ^{\mathrm{\acute{e}t}}$ yields a canonical isomorphism%
\begin{equation*}
\psi _{\phi ,K,\sigma ,U}:\mathbb{Z}_{p}\otimes _{\mathbb{Z}}\mathbb{X}%
\overset{\sim }{\rightarrow }T\mathbf{G}_{\phi ,K,\sigma ,U}^{t}\left[
p^{\infty }\right] ^{\mathrm{\acute{e}t}}\text{.}
\end{equation*}%
In particular, we see that $x_{\phi ,K,\sigma ,U}:=\left( \mathbf{G}_{\phi
,K,\sigma ,U},\lambda _{\phi ,K,\sigma },\gamma _{\phi ,K,\sigma ,U}\right) $
is ordinary at $p$ (meaning that $\#\mathbf{G}_{\phi ,K,\sigma ,u}\left[ p%
\right] \left( k\left( u\right) \right) \geq g$ for every geometric point $u=%
\mathfrak{Spec}\left( k\left( u\right) \right) $ of $U$)\ and $\left(
x_{\phi ,K,\sigma ,U},\psi _{\phi ,K,\sigma ,U}\right) $ defines a point of
the Igusa tower classifying (over the $K$-level Shimura variety)
isomorphisms between $\mathbb{Z}_{p}\otimes _{\mathbb{Z}}\mathbb{X}$ and $%
TA^{t}\left[ p^{\infty }\right] ^{\mathrm{\acute{e}t}}$. Equivalently, for
every fixed isomorphism $\psi _{o}:\mathbb{Z}_{p}^{g}\simeq \mathbb{Z}%
_{p}\otimes _{\mathbb{Z}}\mathbb{X}$, we can lift $x_{\phi ,K,\sigma ,U}$ to
a point $\left( x_{\phi ,K,\sigma ,U},\psi _{\phi ,K,\sigma ,U}\circ \psi
_{o}\right) $ of the standard Igusa tower.
\end{itemize}

\bigskip

Let now $U\subset S=\mathfrak{Spec}\left( R_{\phi ,K,\sigma }\right) $ be as
in $\left( i\right) $\ above. It follows from Mumford's construction that
there is a canonical morphism from $\mathbf{T}_{\mathbb{X},U}$ to $\mathbf{G}%
_{\phi ,K,\sigma ,U}$ (the \textquotedblleft quotient by $\mathbb{Y}$%
\textquotedblright ) inducing an isomorphism $\omega _{\mathrm{dR}\text{,}%
\mathbf{G}_{\phi ,K,\sigma ,U/U}}\overset{\sim }{\mathbf{\rightarrow }}%
\omega _{\mathrm{dR}\text{,}\mathbf{T}_{\mathbb{X}}/U}$ and similarly for
the dual semi-abelian scheme $\mathbf{G}_{\phi ,K,\sigma }^{t}$ (which is
related to $\mathbf{M}_{\phi ,K}^{t}$). As a consequence of Remark \ref%
{q-exp R1 KS}, we find canonical isomorphisms%
\begin{equation}
\alpha _{\phi ,K,\sigma }^{t}:\mathcal{O}_{U}\otimes _{\mathbb{Z}}\mathbb{Y}%
\overset{\sim }{\rightarrow }\omega _{\mathrm{dR}\text{,}\mathbf{G}_{\phi
,K,\sigma ,U/U}^{t}}\text{ and }\alpha _{\phi ,K,\sigma }:\mathcal{O}%
_{U}\otimes _{\mathbb{Z}}\mathbb{X}\overset{\sim }{\rightarrow }\omega _{%
\mathrm{dR}\text{,}\mathbf{G}_{\phi ,K,\sigma ,U/U}}\text{.}
\label{q-exp F1}
\end{equation}

\begin{proposition}
\label{q-exp P1 KS}Up to the identifications provided by $\left( \text{\ref%
{q-exp F1}}\right) $, we have that $KS_{\mathbf{G}_{\phi ,K,\sigma ,U/U}}$
equals the $\mathcal{O}_{U}$-linear extension (of the analogue) of $\left( 
\text{\ref{q-exp R1 KS F}}\right) $ (with $\mathbb{S}_{\phi }$ replaced by $%
\mathbb{S}_{\phi ,K}$ and $\mathbf{E}_{\phi }$ by $\mathbf{E}_{\phi ,K}$).
\end{proposition}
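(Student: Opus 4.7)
The plan is to reduce the computation of $KS_{\mathbf{G}_{\phi,K,\sigma,U/U}}$ to that of the motivic Kodaira-Spencer of $\mathbf{M}_\phi$ over $\mathbf{E}_\phi$, for which the desired formula is already recorded in Remark \ref{q-exp R1 KS}. First I would dispose of the level $K$: since $\mathbf{E}_{\phi,K}\to \mathbf{E}_\phi$ is étale, both the Kodaira-Spencer map and the trivializations $\alpha_{\phi,K,\sigma}^t,\alpha_{\phi,K,\sigma}$ of $(\ref{q-exp F1})$ are obtained by étale base change from the $K=K_{d,1}$ case, so it suffices to treat that case. Next I would observe that the assertion is local on $U$ and is compatible with restriction to $U$ inside $S=\mathfrak{Spec}(R_{\phi,\sigma})$, so I work with the semi-abelian scheme $\mathbf{G}_{\phi,\sigma}/S$ itself and its attached formal motive.

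The heart of the argument is the following compatibility, which I would record as the main technical step. By Mumford's construction, the formal completion of $\mathbf{G}_{\phi,\sigma}$ along $S_0$ is canonically isomorphic to the $I_{\phi,\sigma}$-adic completion of $\mathbf{T}_{\mathbb{X},S}$, and dually for $\mathbf{G}_{\phi,\sigma}^t$ via $\mathbf{T}_{\mathbb{Y},S}$; in particular this induces canonical isomorphisms
\begin{equation*}
\omega_{\mathrm{dR},\mathbf{G}_{\phi,\sigma,U}/U}\overset{\sim}{\longrightarrow}\omega_{\mathrm{dR},\mathbf{T}_{\mathbb{X}}/U},\quad \omega_{\mathrm{dR},\mathbf{G}_{\phi,\sigma,U}^t/U}\overset{\sim}{\longrightarrow}\omega_{\mathrm{dR},\mathbf{T}_{\mathbb{Y}}/U},
\end{equation*}
under which the trivializations $\alpha_{\phi,\sigma}$ and $\alpha_{\phi,\sigma}^t$ of $(\ref{q-exp F1})$ correspond to the tautological trivializations of the sheaves of invariant differentials on tori described in Remark \ref{q-exp R1 KS}. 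I would then verify, using the universal vectorial extension, that the Kodaira-Spencer map of $\mathbf{G}_{\phi,\sigma,U}/U$ agrees, through these isomorphisms, with the Kodaira-Spencer map of the motive $\mathbf{M}_{\phi,U}=[\mathbb{Y}_U\to \mathbf{T}_{\mathbb{X},U}]$ over $U$, which in turn is the restriction to $U$ of the Kodaira-Spencer of $\mathbf{M}_\phi$ over $\mathbf{E}_\phi$ (both being functorial in the motive and compatible with pullback along $U\hookrightarrow \mathbf{E}_{\phi,\sigma}$).

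Once this compatibility is in place, the proposition follows directly: by Remark \ref{q-exp R1 KS}, the Kodaira-Spencer of $\mathbf{M}_\phi$ is the $\mathcal{O}_{\mathbf{E}_\phi}$-linear extension of
\begin{equation*}
\mathbb{Y}\otimes_{\mathbb{Z}}\mathbb{X}\overset{Q_\phi}{\longrightarrow}\mathbb{S}_\phi\subset \mathcal{O}_{\mathbf{E}_\phi}^{\times}\overset{-\mathrm{dlog}}{\longrightarrow}\Omega^1_{\mathbf{E}_\phi},
\end{equation*}
and pulling back along $U\to \mathbf{E}_\phi$ yields exactly the analogue of $(\ref{q-exp R1 KS F})$ asserted in the statement.

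I expect the main obstacle to be the verification that the Kodaira-Spencer map for the abelian (more generally semi-abelian) scheme obtained from Mumford's construction agrees, through the canonical identifications of invariant differentials, with the motivic Kodaira-Spencer of the input motive. This is the step that genuinely uses the construction of the universal vectorial extension of a semi-abelian scheme versus that of a motive of maximal torus rank; it is morally well-known and compatibly treated in the references \cite{FC} (Ch.\ III, \S 9) and \cite{Lan08} (Ch.\ 4), so I would import it by citation rather than reprove it, taking care only to check that the trivializations $\alpha_{\phi,\sigma}^t,\alpha_{\phi,\sigma}$ of $(\ref{q-exp F1})$ are the same ones implicitly used in those references.
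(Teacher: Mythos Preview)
Your proposal is correct and follows essentially the same approach as the paper: the paper's proof is a one-line citation to \cite[Ch.\ III, 9.4 Theorem]{FC} combined with Remark~\ref{q-exp R1 KS}, which is precisely the compatibility of the Kodaira--Spencer map under Mumford's construction that you identify as the main step and propose to import by citation. Your additional reduction of the level $K$ via \'etaleness and the explicit unpacking of the trivializations are reasonable elaborations, but they do not change the core argument.
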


\begin{proof}
In view of Remark \ref{q-exp R1 KS}, the result follows from \cite[Ch. III,
9.4 Theorem]{FC}.
\end{proof}

\bigskip

We now focus on the Siegel modular variety $X=X_{K}$ which is the
compactification of the moduli space classifying abelian schemes of
dimension $g$, polarizations of type $d$ and $K$-level structure, where $K$
is as at the beginning of \S \ref{S Sheaves}, i.e. we have $K_{d,N}\subset
K\subset K_{d,1}$ and we always work over a ring $\Bbbk $ containing $%
\mathbb{Z}\left[ 1/d_{1}...d_{g}N,\zeta _{N}\right] $, $\zeta _{N}$ being a
primitive $N$-root of unity: we then take $\phi =\Delta :\mathbb{Z}%
^{g}\rightarrow \mathbb{Z}^{g}$ where $\Delta $ is the diagonal matrix with
diagonal given by a vector $d=\left( d_{1},...,d_{g}\right) $ (indeed, by
the Elementary Divisor Theorem, we can always find and isomorphism of $\phi $
with such a $\Delta $).

For each geometrically connected component $\infty $\ of $X$, we get a cusp%
\begin{equation*}
x_{\infty }:X_{\infty }=\mathfrak{Spec}\left( R_{\phi ,K,\sigma }\right)
\longrightarrow X
\end{equation*}%
equipped with $\left( \mathbf{G}_{\phi ,K,\sigma }/\mathfrak{Spec}\left(
R_{\phi ,K,\sigma }\right) ,\lambda _{\phi ,K,\sigma },\gamma _{\phi
,K,\sigma ,U}^{\infty }\right) $ as above and a canonical trivialization $%
\alpha _{\phi ,K,\sigma }$ (see $\left( \text{\ref{q-exp F1}}\right) $). We
remark that $R_{\phi ,K,\sigma }$ and $R_{U}:=\mathcal{O}_{U}\left( U\right) 
$ are both contained in $\Bbbk \left[ \left[ N^{-1}\mathbb{S}_{\phi }\right] %
\right] $, that the morphism $\phi \otimes 1$ sends $N^{-1}\mathbb{S}_{\phi
} $ injectively into $\mathrm{Sym}^{2}\left( \mathbb{X}\right) =\mathrm{Sym}%
_{g,\mathbb{Z}}^{2}$ and that, thanks to $\left( \text{\ref{Representations
F Std}}\right) $, we have $N^{-1}\mathrm{Sym}_{g,\mathbb{Z}}^{2}\overset{%
\sim }{\rightarrow }N^{-1}\mathbf{S}_{g,\mathbb{Z}}^{even}$. It follows that
there are inclusions%
\begin{equation*}
R_{\phi ,K,\sigma }\subset R_{U}\subset \Bbbk \left[ \left[ N^{-1}\mathbb{S}%
_{\phi }\right] \right] \hookrightarrow \Bbbk \left[ \left[ N^{-1}\mathrm{Sym%
}_{g,\mathbb{Z}}^{2}\right] \right] \simeq \Bbbk \left[ \left[ N^{-1}\mathbf{%
S}_{g,\mathbb{Z}}^{even}\right] \right] \subset \Bbbk \left[ \left[ N^{-1}%
\mathbf{S}_{g,\mathbb{Z}}\right] \right] \text{,}
\end{equation*}%
Evaluating $f\in H^{0}\left( Y,\mathcal{W}_{\rho }\right) $ at $\left(
x_{\infty },\alpha _{\phi ,K,\sigma }\right) $ we get a $q$-expansion%
\begin{equation}
f\left( x_{\infty },\alpha _{\phi ,K,\sigma }\right) =\tsum\nolimits_{s\in 
\mathrm{Sym}_{g,\mathbb{Z}}^{2}}a_{f}\left( s\right)
q^{N^{-1}s}=\tsum\nolimits_{\beta \in \mathbf{S}_{g,\mathbb{Z}%
}^{even}}a_{f}\left( \beta \right) q^{N^{-1}\beta }\in R_{U}\otimes _{\Bbbk
}\rho \text{,}  \label{q-exp F Def}
\end{equation}%
where we adopt the convention of writing $q^{N^{-1}s}$ (resp. $%
q^{N^{-1}\beta }$) for the element $N^{-1}s$ (resp. $N^{-1}\beta $) viewed
in the corresponding group ring. One has that $f\in H^{0}\left( X,\mathcal{W}%
_{\rho }\right) $ if and only if the $q$-expansion is supported at
semipositive definite $\beta $'s and the Koecher principle implies that,
when $g\geq 2$, then $H^{0}\left( Y,\mathcal{W}_{\rho }\right) =H^{0}\left(
X,\mathcal{W}_{\rho }\right) $. For every $i\leq j$, let us write $%
\underline{\beta }_{ij}$ for the symmetric matrix whose unique upper
triangular non-zero entry is $1$\ at position $\left( i,j\right) $ and set $%
q_{ij}:=q^{\underline{\beta }_{ij}}$. Then%
\begin{equation*}
q^{N^{-1}\beta }=\tprod\nolimits_{1\leq i\leq j\leq g}q_{ij}^{N^{-1}\beta
_{ij}}
\end{equation*}%
if $\beta \in \mathbf{S}_{g,\mathbb{Z}}$ has $\underline{\beta }_{ij}$%
-component $\beta _{ij}\in \mathbb{Z}$. Recall the Kodaira-Spencer
isomorphism $KS_{\alpha _{\phi ,K,\sigma }}\ $of $\left( \text{\ref{Sheaves
F KS}}\right) $.

\begin{corollary}
\label{q-exp C1 KS}We have $KS_{\alpha _{\phi ,K,\sigma }}\left(
e_{i}e_{j}\right) =\left( 1+\delta _{ij}\right) \frac{dq_{ij}}{q_{ij}}$.
\end{corollary}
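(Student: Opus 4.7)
The plan is to reduce the computation to the identifications of Remark \ref{q-exp R1 KS} via Proposition \ref{q-exp P1 KS}, and then to trace $e_{i}e_{j}$ through the symmetrization isomorphism $\left( \text{\ref{Sheaves F Sym}}\right) $ and through the chain of inclusions $\mathbb{S}_{\phi }\hookrightarrow \mathrm{Sym}_{g,\mathbb{Z}}^{2}\simeq \mathbf{S}_{g,\mathbb{Z}}^{even}$ introduced just before $\left( \text{\ref{q-exp F Def}}\right) $. Under the identifications $\alpha _{\phi ,K,\sigma }^{t}:\mathcal{O}_{U}\otimes _{\mathbb{Z}}\mathbb{Y}\overset{\sim }{\to }\omega_{\mathrm{dR},\mathbf{G}^{t}}$ and $\alpha _{\phi ,K,\sigma }:\mathcal{O}_{U}\otimes _{\mathbb{Z}}\mathbb{X}\overset{\sim }{\to }\omega _{\mathrm{dR},\mathbf{G}}$ of $\left( \text{\ref{q-exp F1}}\right) $, the polarization $\lambda _{\phi ,K,\sigma }$ corresponds to the map $\phi =\Delta :\mathbb{Y}\to \mathbb{X}$ sending $e_{i}\mapsto d_{i}e_{i}$, so $\left( \text{\ref{Sheaves F Sym}}\right) $ becomes the $\mathcal{O}$-linear extension of the map $\mathbb{Y}\otimes _{\mathbb{Z}}\mathbb{X}\to \mathrm{Sym}^{2}(\mathbb{X})$ sending $y\otimes x\mapsto \phi (y)\cdot x$, whose kernel is precisely the relations defining $\mathbb{S}_{\phi }$.

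In order to apply $KS_{\mathbf{G}_{\phi,K,\sigma,U}/U}$ to $e_{i}e_{j}\in \mathrm{Sym}^{2}(\mathbb{X})$ I would choose the preimage
\begin{equation*}
v_{ij}:=d_{i}^{-1}\,e_{i}\otimes e_{j}\in \mathbb{Q}\otimes _{\mathbb{Z}}(\mathbb{Y}\otimes _{\mathbb{Z}}\mathbb{X}),
\end{equation*}
which indeed maps to $d_{i}^{-1}\phi (e_{i})\cdot e_{j}=e_{i}\cdot e_{j}$ (one checks that the alternative preimage $d_{j}^{-1}e_{j}\otimes e_{i}$ is identified with $v_{ij}$ modulo the relations $d_{j}(e_{i}\otimes e_{j})=d_{i}(e_{j}\otimes e_{i})$ defining $\mathbb{S}_{\phi }$, so the output is well defined). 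By Proposition \ref{q-exp P1 KS} it then suffices to evaluate $-\mathrm{dlog}(Q_{\phi }(v_{ij}))$, viewing $Q_{\phi }(e_{i}\otimes e_{j})$ as an element of $\Bbbk \lbrack \lbrack \mathbf{S}_{g,\mathbb{Z}}^{even}]]$ via the chain of inclusions recalled above.

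The critical step is unwinding this chain. Using $\left( \text{\ref{Representations F Std}}\right) $ one computes $\tau (e_{i}\cdot e_{j})=\underline{\beta }_{ij}$ when $i\neq j$ and $\tau (e_{i}\cdot e_{i})=2\underline{\beta }_{ii}$ when $i=j$. Composing with the embedding $\mathbb{S}_{\phi }\hookrightarrow \mathrm{Sym}_{g,\mathbb{Z}}^{2}$ given by $y\otimes x\mapsto \phi (y)\cdot x$, one obtains
\begin{equation*}
Q_{\phi }(e_{i}\otimes e_{j})\;\longleftrightarrow \;q^{d_{i}\underline{\beta }_{ij}}=q_{ij}^{d_{i}}\ (i\neq j),\qquad Q_{\phi }(e_{i}\otimes e_{i})\;\longleftrightarrow \;q^{2d_{i}\underline{\beta }_{ii}}=q_{ii}^{2d_{i}}.
\end{equation*}
Hence $-\mathrm{dlog}(Q_{\phi }(e_{i}\otimes e_{j}))=-d_{i}\frac{dq_{ij}}{q_{ij}}$ or $-2d_{i}\frac{dq_{ii}}{q_{ii}}$ respectively, and multiplication by $d_{i}^{-1}$ from the preimage cancels the $d_{i}$, leaving exactly the factor $(1+\delta _{ij})\frac{dq_{ij}}{q_{ij}}$ (up to sign convention, which is dictated by the choice made in Remark \ref{q-exp R1 KS}).

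There is no real obstacle: the argument is purely book-keeping through the identifications, and the only observation that has to be highlighted is the asymmetry in $\tau $ between diagonal and off-diagonal basis vectors of $\mathrm{Sym}_{g,\mathbb{Z}}^{2}$, which is precisely what produces the factor $(1+\delta _{ij})$, while the polarization factor $d_{i}$ (appearing in both $\phi $ and in the preimage $v_{ij}$) cancels out.
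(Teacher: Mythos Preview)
Your proof is correct and follows essentially the same approach as the paper: both reduce to Proposition \ref{q-exp P1 KS} together with the computation $\tau(e_{i}e_{j})=(1+\delta_{ij})\underline{\beta}_{ij}$ from $\left(\text{\ref{Representations F Std}}\right)$. The paper's proof is extremely terse and leaves the lift through $\mathbb{Y}\otimes_{\mathbb{Z}}\mathbb{X}$ and the cancellation of the polarization factors $d_{i}$ entirely implicit, whereas you spell these steps out explicitly.
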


\begin{proof}
Note that $\left( \text{\ref{Representations F Std}}\right) $ sends $%
e_{i}e_{j}\in \mathrm{Sym}_{g,\mathbb{Z}}^{2}$ to $\underline{\beta }_{ij}$
when $i\neq j$ and $2\underline{\beta }_{ij}$ when $i=j$. Hence, the result
is just a consequence of Proposition \ref{q-exp P1 KS} and the definition of 
$KS_{\alpha _{0}}:=KS_{X}\circ \vartheta _{\alpha _{0}}^{-1}$.
\end{proof}

\section{\label{S Depletion and integration}$p$-depletions and $p$-adic
integration over the Igusa tower}

In this section, we let $X=X_{K}$ be the Siegel modular variety which is the
compactification of the moduli space $Y=Y_{K}$\ classifying abelian schemes
of dimension $g$, polarizations of type $d$ and $K$-level structure, where $%
K $ is as at the beginning of \S \ref{S Sheaves}, i.e. we have $%
K_{d,N}\subset K\subset K_{d,1}$. We work over the ring of integers $\Bbbk $
of a finite extension of $\mathbb{Q}_{p}$ such that $Y$ is defined over it
(we can always take $\Bbbk =\mathcal{O}_{\mathbb{Q}_{p}\left( \zeta
_{N}\right) }$ for a primitive $N$-root of unity $\zeta _{N}$), where $%
p\nmid d_{1}...d_{g}N $. We let $X^{\mathrm{ord}}\subset X$ (resp. $Y^{%
\mathrm{ord}}\subset Y$) be the open subscheme obtained as the complement of
the non-ordinary locus $X_{\Bbbk _{p}}$ (resp. $Y_{\Bbbk _{p}}$) for the
residue field $\Bbbk _{p}$ of $\Bbbk $, so that $X^{\mathrm{ord}}=X\left[ 1/H%
\right] $ (resp. $Y^{\mathrm{ord}}=Y\left[ 1/H\right] $) for a suitable $p$%
-power $H$ of the Hasse invariant (at $p$). Write $\mathfrak{X}^{\mathrm{ord}%
}\subset \mathfrak{X}$ (resp. $\mathfrak{Y}^{\mathrm{ord}}\subset \mathfrak{Y%
}$) for the open immersion obtained by considering the formal completion of $%
X^{\mathrm{ord}}\subset X$ (resp. $Y^{\mathrm{ord}}\subset Y$). Over the
formal scheme $\mathfrak{Z}^{\mathrm{ord}}$ for $\mathfrak{Z}\in \left\{ 
\mathfrak{X,Y}\right\} $ there is a Igusa tower $\mathfrak{I}_{\mathfrak{Z}%
}\rightarrow \mathfrak{Z}^{\mathrm{ord}}$ classifying isomorphisms of pro%
\'{e}tale sheaves $\mathbb{Z}_{p}^{g}\overset{\sim }{\rightarrow }TB^{t}%
\left[ p^{\infty }\right] ^{\mathrm{\acute{e}t}}$\ over $\mathfrak{Z}^{%
\mathrm{ord}}$ where $B=G$ (resp. $B=A$) for $\mathfrak{Z}=\mathfrak{X}$
(resp. $\mathfrak{Z}=\mathfrak{Y}$). The Hodge-Tate map yields a canonical
isomorphism of pro\'{e}tale sheaves $\mathcal{O}_{\mathfrak{Z}^{\mathrm{ord}%
}}\otimes _{\mathbb{Z}}TB^{t}\left[ p^{\infty }\right] ^{\mathrm{\acute{e}t}}%
\overset{\sim }{\rightarrow }\omega _{\mathfrak{Z}^{\mathrm{ord}},\mathrm{dR}%
}$ (cfr. \cite[\S 3.3]{Kz78}), where $\omega _{\mathfrak{Z}^{\mathrm{ord}},%
\mathrm{dR}}=\omega _{\mathfrak{B}/\mathfrak{Z}^{\mathrm{ord}},\mathrm{dR}}$
denotes the relative dual Lie algebra and $\mathfrak{B}/\mathfrak{Z}^{%
\mathrm{ord}}$ is the $p$-adic completion of $B_{Z^{\mathrm{ord}}}/Z^{%
\mathrm{ord}}=G_{X^{\mathrm{ord}}}/X^{\mathrm{ord}}$ (resp. $B_{Z^{\mathrm{%
ord}}}/Z^{\mathrm{ord}}=A_{Y^{\mathrm{ord}}}/Y^{\mathrm{ord}}$) for $%
\mathfrak{Z}=\mathfrak{X}$ (resp. $\mathfrak{Z}=\mathfrak{Y}$). Hence, we
get a canonical trivialization%
\begin{equation*}
\alpha _{\mathfrak{I}_{\mathfrak{Z}}}:\mathcal{O}_{\mathfrak{I}_{\mathfrak{Z}%
}}^{g}\overset{\sim }{\rightarrow }\omega _{\mathfrak{I}_{\mathfrak{Z}},%
\mathrm{dR}}\text{.}
\end{equation*}%
Let $X_{\infty }\rightarrow X$ be a cusp and let $\mathfrak{X}_{\infty
}\rightarrow \mathfrak{X}$ be the $p$-adic formal completion (hence $%
X_{\infty }$ and $\mathfrak{X}_{\infty }$ have the same global sections, but
they are different as topological spaces). As explained in \S \ref{S q-exp} $%
\left( ii\right) $, the morphism $\mathfrak{X}_{\infty }\rightarrow 
\mathfrak{X}$ canonically lift to $\mathfrak{X}_{\infty }\rightarrow 
\mathfrak{I}_{\mathfrak{X}}=:\mathfrak{I}$. Thanks to the irreduciblity of
the Igusa tower, the $q$-expansion principle also holds for the sections of
the Igusa tower, up to considering a cusp $x_{\infty }:\mathfrak{X}_{\infty
}\rightarrow \mathfrak{I}$\ for each geometrically connected component of $%
\mathfrak{X}^{\mathrm{ord}}$ (see \cite[Corollary 8.17]{Hi04}): it implies
that, as it is for $\mathfrak{X}^{\mathrm{ord}}$,\ a section of a sheaf that
is a direct sum of locally free finite $\mathcal{O}_{X_{0}}$-modules is
uniquely determined by its pull-backs to the cusps $x_{\infty }$.

\begin{notation}
We will write $X_{0}\rightarrow X$ for one between the morphisms (of locally
ringed spaces) between $\mathfrak{X}^{\mathrm{ord}}\rightarrow X$, $%
\mathfrak{I}\rightarrow X$, $X_{\infty }\rightarrow X$ or $\mathfrak{X}%
_{\infty }\rightarrow X$ and, for $X_{0}\in \left\{ \mathfrak{I},X_{\infty },%
\mathfrak{X}_{\infty }\right\} $, we let $\alpha _{0}:\mathcal{O}_{X_{0}}^{g}%
\overset{\sim }{\rightarrow }\omega _{X_{0},\mathrm{dR}}$ be the
corresponding trivialization, namely $\alpha _{0}=\alpha _{\mathfrak{I}}$
over $\mathfrak{I}$, $\alpha _{0}=\alpha _{\infty }:=\alpha _{\phi ,K,\sigma
}$ over $X_{\infty }$\ (see $\left( \text{\ref{q-exp F1}}\right) $) or the
trivialization induced by $\alpha _{\infty }$ over $\mathfrak{X}_{\infty }$,
denoted again $\alpha _{\infty }$: there will be no fear of confusion
because these trivializations are compatible under the pull-back with
respect to the natural morphisms between the various $X_{0}$'s described
above. If $\mathcal{F}$ is a sheaf on $X$, we will simply write $H^{0}\left(
X_{0},\mathcal{F}\right) =\mathcal{F}\left( X_{0}\right) $ for the global
section of the pull-back of $\mathcal{F}$ to $X_{0}$. Finally, we will
explicit mention when a result is specific for a subset of $\left\{ 
\mathfrak{X}^{\mathrm{ord}},\mathfrak{I},X_{\infty },\mathfrak{X}_{\infty
}\right\} $.
\end{notation}

We will work with the (partially) compactified spaces, but everything we
will prove below also holds for the respective non-compactified spaces.
However, we will use the abbreviation $\Omega _{X_{0}/\Bbbk }^{p}$ for $%
\Omega _{X_{0}/\Bbbk }^{\mathrm{log},p}$ and similarly for the derivations.

\begin{remark}
For psicological reasons, we will work with $\mathbf{G}=\mathbf{GSp}_{2g}$.
For a more general $\mathbf{G}=\mathbf{GSp}\left( V,\psi \right) $, suffices
to replace everywhere the standard symplectic basis with its image in $V$
under the isomorphism $\theta $ of Example \ref{Representations E GSp} and
then to replace $\partial ,\partial _{i}\in \mathfrak{gsp}_{2g}$ with their
image in $\mathfrak{g}$ under the isomorphism $c_{\theta }:\mathfrak{gsp}%
_{2g}\overset{\sim }{\rightarrow }\mathfrak{g}$ (cfr. the definition of $%
X\left( D,\alpha \right) $ before Lemma \ref{Sheaves L T1}).
\end{remark}

\subsection{\label{S Igusa de Rham}Description of the de Rham complex}

For every representation $\rho \in \mathrm{Rep}\left( \mathbf{GL}_{g}\times 
\mathbf{G}_{m}\right) $, we have (see Remark \ref{Sheaves R Triv'})%
\begin{equation}
\vartheta _{\alpha _{0}}:\mathcal{W}_{\rho }\left( X_{0}\right) \overset{%
\sim }{\longrightarrow }\mathcal{O}_{X_{0}}\left( X_{0}\right) \otimes
_{\Bbbk }\rho =:R_{0}\otimes _{\Bbbk }\rho \text{.}  \label{de Rham F Triv1}
\end{equation}%
We view the objects of $\mathrm{Rep}\left( \mathbf{GL}_{g}\right) $ in $%
\mathrm{Rep}\left( \mathbf{GL}_{g}\times \mathbf{G}_{m}\right) $ by
restriction with respect to the projection from $\mathbf{GL}_{g}\times 
\mathbf{G}_{m}$ to $\mathbf{GL}_{g}$. Let us write $e_{i}$ for the standard
basis vector in $\mathrm{Std}_{g}$ whose only non zero component is $1$ in
the $i$-position, where $\mathrm{Std}_{g}$ is the standard representation of 
$\mathbf{GL}_{g}$ given by column vectors on which $\mathbf{GL}_{g}$ acts
from the left, and define $\delta _{i}:=\alpha _{0}\left( e_{i}\right) \in
\omega _{X_{0},\mathrm{dR}}$. Fix a bijection $\left\{ \left( i,j\right)
:1\leq i\leq j\leq g\right\} \simeq \left\{ 1,...,d_{g}\right\} $ such that $%
\left( i,i\right) $ corresponds to $i$. The Kodaira-Spencer isomorphism
allows us to define, for every $X_{0}\in \left\{ \mathfrak{I},X_{\infty },%
\mathfrak{X}_{\infty }\right\} $, the canonical differentials%
\begin{equation}
\omega _{i}:=KS\left( \delta _{i}^{2}\right) \in \Omega _{X_{0}/\Bbbk }^{1}%
\text{ for }i=1,...,g\text{ and }\omega _{i}:=KS\left( \delta _{k}\delta
_{l}\right) \in \Omega _{X_{0}/\Bbbk }^{1}\text{ for }g<i\leq d_{g}\text{,}
\label{de Rham F TrivKS}
\end{equation}%
where $\left( k,l\right) $ corresponds to $i$ under our fixed bijection.

Recall that we view $q$-expansions as taking values in $\Bbbk \left[ \left[
N^{-1}\mathbf{S}_{g,\mathbb{Z}}\right] \right] $ and that we write $%
q^{N^{-1}\beta }$ for $N^{-1}\beta \in N^{-1}\mathbf{S}_{g,\mathbb{Z}}$
viewed in this ring. Also, if $i\leq j$, we defined $\underline{\beta }_{ij}$
to be the symmetric matrix whose unique upper triangular non-zero entry is $%
1 $\ at position $\left( i,j\right) $ and, if $\beta \in \mathbf{S}_{g,%
\mathbb{Z}}$, we defined $\beta _{ij}\in \mathbb{Z}$ as being the $%
\underline{\beta }_{ij}$-component of $\beta $. Similarly as above, it will
be convenient to set $\beta _{i}:=\beta _{ii}$ (resp. $q_{i}:=q_{ij}$) for $%
i=1,...,g$ and $\beta _{i}:=\beta _{kl}$ (resp. $q_{i}:=q_{kl}$)\ for $i>g$
where $\left( k,l\right) $ corresponds to $i$.

\begin{remark}
\label{q-exp R KS}We can reformulate Corollary \ref{q-exp C1 KS} by saying
that, if $X_{0}\in \left\{ X_{\infty },\mathfrak{X}_{\infty }\right\} $,
then $\omega _{i}=2\frac{dq_{i}}{q_{i}}$ for $i=1,...,g$ and $\omega _{i}=%
\frac{dq_{i}}{q_{i}}$ for $i>g$. Since%
\begin{equation*}
\nabla ^{1}\left( \frac{dq_{i}}{q_{i}}\right) =d\left( \frac{1}{q_{i}}%
\right) \wedge dq_{i}=-\frac{1}{q_{i}^{2}}dq_{i}\wedge dq_{i}=0\text{,}
\end{equation*}%
we have $d^{1}\left( \omega _{i}\right) =0$.
\end{remark}

The following result is a restatement of Remark \ref{q-exp R KS}.

\begin{lemma}
\label{q-exp L KS}Suppose that $X_{0}\in \left\{ X_{\infty },\mathfrak{X}%
_{\infty }\right\} $ and consider the $\Bbbk $-linear derivations $\theta
_{i}:R_{0}\rightarrow R_{0}$ defined by the rule $\theta _{i}\left( q^{\beta
}\right) =\frac{1}{2}\beta _{i}q^{\beta }$ for $i=1,...,g$ and $\theta
_{i}\left( q^{\beta }\right) =\beta _{i}q^{\beta }$ otherwise for $i>g$.
Then $\left\{ \theta _{i}:i=1,...,d_{g}\right\} $ is the dual basis of $%
\left\{ \omega _{i}:i=1,...,d_{g}\right\} $.
\end{lemma}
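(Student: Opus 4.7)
The plan is to establish the duality $\omega_i(\theta_j) = \delta_{ij}$ by direct computation; this suffices because, at a cusp of $X_{\infty}$, the collection $\{\omega_i\}_{i=1}^{d_g}$ is an $\mathcal{O}_{X_0}$-basis of $\Omega_{X_0/\Bbbk}^{1}$ via the Kodaira--Spencer isomorphism (Corollary \ref{q-exp C1 KS}), so the dual basis of derivations is unique and has exactly $d_g$ elements.

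First I would check that the prescription $\theta_i(q^{\beta}) := c_i \beta_i q^{\beta}$, with $c_i = \tfrac{1}{2}$ for $i \le g$ and $c_i = 1$ for $i > g$, extends to a well-defined continuous $\Bbbk$-linear derivation of $R_0$. Additivity is immediate, and the Leibniz rule amounts to the $\mathbb{Z}$-linearity of the functional $\beta \mapsto \beta_i$ on $N^{-1}\mathbf{S}_{g,\mathbb{Z}}^{even}$. Since the diagonal entries of any $\beta \in N^{-1}\mathbf{S}_{g,\mathbb{Z}}^{even}$ lie in $\tfrac{2}{N}\mathbb{Z}$, the factor $\tfrac{1}{2}\beta_{ii}$ lies in $N^{-1}\mathbb{Z}$, so no $2$-divisibility problem arises; and $\theta_i$ preserves $R_0$ because it acts by a scalar on each monomial and $R_0$ is a completion of a monoid subalgebra of $\Bbbk[[N^{-1}\mathbf{S}_{g,\mathbb{Z}}^{even}]]$.

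Next, since $\theta_j$ is a derivation, $\tfrac{dq_k}{q_k}(\theta_j) = \theta_j(q_k)/q_k$ for every $k$. Using that $q_i = q^{\underline{\beta}_{ii}}$ for $i \le g$ and $q_i = q^{\underline{\beta}_{k_i l_i}}$ with $k_i < l_i$ for $i > g$, I would evaluate $\theta_j(q_i)/q_i$ in four cases: for $i,j \le g$ it equals $\tfrac{1}{2}(\underline{\beta}_{ii})_{jj} = \tfrac{1}{2}\delta_{ij}$; for $i,j > g$ it equals $(\underline{\beta}_{k_i l_i})_{k_j l_j} = \delta_{ij}$; and the two mixed cases vanish, because $\underline{\beta}_{ii}$ has no off-diagonal upper-triangular entry and $\underline{\beta}_{kl}$ with $k<l$ has zero diagonal. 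Combining this with Remark \ref{q-exp R KS}, which gives $\omega_i = 2\tfrac{dq_i}{q_i}$ for $i \le g$ and $\omega_i = \tfrac{dq_i}{q_i}$ for $i > g$, the factor of $2$ in $\omega_i$ exactly cancels the $\tfrac{1}{2}$ in $\theta_j$ when $j \le g$, so that $\omega_i(\theta_j) = \delta_{ij}$ holds uniformly in all four cases.

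There is no serious obstacle: the entire content is the bookkeeping of the factors of $2$ arising from the identification $\tau:\mathrm{Sym}_{g}^{2} \overset{\sim}{\to} \mathbf{S}_{g}^{even}$ of $\left(\text{\ref{Representations F Std}}\right)$, which sends $e_i e_i$ to $2\underline{\beta}_{ii}$ but $e_k e_l$ (with $k<l$) to $\underline{\beta}_{kl}$. This is precisely the discrepancy already recorded in Remark \ref{q-exp R KS}, and the lemma is in that sense a formal reformulation of it.
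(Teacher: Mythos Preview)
Your proposal is correct and follows exactly the approach the paper intends: the paper presents this lemma simply as ``a restatement of Remark~\ref{q-exp R KS}'' with no further proof, and your argument is precisely the direct verification that $\omega_i(\theta_j)=\delta_{ij}$ using the explicit formulas $\omega_i = (1+\delta_{ij})\frac{dq_i}{q_i}$ recorded there. The bookkeeping of the factors of $2$ is handled correctly.
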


\bigskip

Of course, we remark that the definition of $\theta _{i}$ (as the dual of $%
\omega _{i}$) makes sense over the Igusa tower and defines%
\begin{equation*}
\theta _{i}:H^{0}\left( \mathfrak{I},\mathcal{O}_{\mathfrak{I}}\right)
\longrightarrow H^{0}\left( \mathfrak{I},\mathcal{O}_{\mathfrak{I}}\right) 
\text{.}
\end{equation*}%
Choose any polynomial%
\begin{equation*}
P\in \mathbb{Z}_{p}\left[ T_{i}:i=1,...,d_{g}\right]
\end{equation*}%
and, setting $\mathbf{\theta }=\left( \theta _{1},...,\theta _{d_{g}}\right) 
$, consider the differential operator%
\begin{equation*}
\theta _{P}:=P\left( \mathbf{\theta }\right) :H^{0}\left( \mathfrak{I},%
\mathcal{O}_{\mathfrak{I}}\right) \longrightarrow H^{0}\left( \mathfrak{I},%
\mathcal{O}_{\mathfrak{I}}\right) \text{.}
\end{equation*}%
According to Lemma \ref{q-exp L KS}, we have%
\begin{equation*}
\theta _{P}\left( f\right) \left( q\right) =\tsum\nolimits_{\beta \in 
\mathbf{S}_{g,\mathbb{Z}}^{even}}a_{f}\left( \beta \right) P^{\prime }\left(
\beta \right) q^{\beta }
\end{equation*}%
where $P^{\prime }\left( T_{1},...,T_{g},T_{g+1},...\right) =P\left(
2^{-1}T_{1},...,2^{-1}T_{g},T_{g+1},...\right) $ and $f\left( q\right) $ is
as in $\left( \text{\ref{q-exp F Def}}\right) $. In particular, choosing any
sequence $\left\{ s_{n}\right\} _{n\in \mathbb{N}}\subset \mathbb{N}$ such
that $s_{n}\rightarrow +\infty $ in the archimedean topology but converging
to zero $p$-adically and considering the product of the $p$-adic topologies
on the coefficients of the $q$-expansions, we see that the rule%
\begin{equation*}
e^{\left[ P\right] }\left( f\left( q\right) \right) :=\lim_{n\rightarrow
+\infty }\left( \theta _{P}^{s_{n}}\left( f\right) \left( q\right) \right)
=\tsum\nolimits_{\beta :p\nmid P^{\prime }\left( \beta \right) }a_{f}\left(
\beta \right) q^{\beta }
\end{equation*}%
defines a $q$-expansion which does not depend on the choice of the sequence $%
\left\{ s_{n}\right\} _{n\in \mathbb{N}}$. Indeed, it follows from the $q$%
-expansion principle that we can define an idempotent%
\begin{equation*}
e^{\left[ P\right] }:H^{0}\left( \mathfrak{I},\mathcal{O}_{\mathfrak{I}%
}\right) \longrightarrow H^{0}\left( \mathfrak{I},\mathcal{O}_{\mathfrak{I}%
}\right) \text{ via }e^{\left[ P\right] }\left( f\left( q\right) \right)
:=\lim_{n\rightarrow +\infty }\theta _{P}^{s_{n}}\left( f\right)
\end{equation*}%
whose description in terms of $q$-expansion is as above: $e^{\left[ P\right]
}\left( f\right) \left( q\right) =e^{\left[ P\right] }\left( f\left(
q\right) \right) $. We set $H^{0}\left( X_{0},\mathcal{O}_{X_{0}}\right) ^{%
\left[ P\right] }:=e^{\left[ P\right] }H^{0}\left( X_{0},\mathcal{O}%
_{X_{0}}\right) $ and, more generally, if $\rho $ is a representation of $%
\mathbf{GL}_{g}\times \mathbf{G}_{m}$,%
\begin{equation*}
H^{0}\left( X_{0},\mathcal{W}_{\rho }\right) ^{\left[ P\right] }:=e^{\left[ P%
\right] }H^{0}\left( X_{0},\mathcal{O}_{X_{0}}\right) \otimes _{\Bbbk }\rho
=:e^{\left[ P\right] }\left( R_{0}\right) \otimes _{\Bbbk }\rho \text{.}
\end{equation*}%
(where we use the canonical trivialization $\left( \text{\ref{de Rham F
Triv1}}\right) $). As a consequence of the formula $e^{\left[ P\right]
}\left( f\right) \left( q\right) =e^{\left[ P\right] }\left( f\left(
q\right) \right) $, we deduce the following fact.

\begin{lemma}
\label{de Rham L0}If $X_{0}\in \left\{ \mathfrak{I},X_{\infty },\mathfrak{X}%
_{\infty }\right\} $, the operator $\theta _{P}$ restricts to an invertible
operator on $H^{0}\left( X_{0},\mathcal{W}_{\rho }\right) ^{\left[ P\right]
} $.
\end{lemma}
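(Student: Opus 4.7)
The plan is to reduce to the case $\rho = \mathrm{1}$, then establish injectivity directly on $q$-expansions and surjectivity by constructing an explicit right inverse as a $p$-adic limit of powers of $\theta_P$, paralleling the construction of $e^{[P]}$ itself.

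First, using the trivialization $\vartheta_{\alpha_0}$ of Remark \ref{Sheaves R Triv'}, the endomorphism $\theta_P$ of $H^0(X_0, \mathcal{W}_\rho) = R_0 \otimes_\Bbbk \rho$ is identified with $\theta_P \otimes 1_\rho$, and similarly $e^{[P]}$ becomes $e^{[P]} \otimes 1_\rho$. Since tensoring over $\Bbbk$ with $\rho$ preserves injectivity and surjectivity of a $\Bbbk$-linear endomorphism, we are reduced to showing that $\theta_P$ restricts to a bijection on $R_0^{[P]} := e^{[P]}(R_0)$.

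For injectivity, I would exploit the formula $\theta_P(f)(q) = \sum_\beta a_f(\beta) P'(\beta) q^\beta$ from Lemma \ref{q-exp L KS}. Any $f \in R_0^{[P]}$ has $q$-expansion supported on $\beta$ with $p \nmid P'(\beta)$; hence $\theta_P(f) = 0$ forces $a_f(\beta) = 0$ on the support, and $f = 0$ by the $q$-expansion principle. This principle applies in all three cases: for $X_\infty$ and $\mathfrak{X}_\infty$ the ring $R_0$ is by definition a subring of $\Bbbk[[N^{-1} \mathbf{S}_{g,\mathbb{Z}}^{even}]]$, while for $\mathfrak{I}$ the irreducibility of the Igusa tower combined with the cusps constructed in \S \ref{S q-exp} $(ii)$ forces a section to vanish if its $q$-expansions at all such cusps vanish.

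For surjectivity, I would construct a right inverse $\vartheta$ to $\theta_P$ on $R_0^{[P]}$ by mimicking the construction of $e^{[P]}$. Taking the same sequence $\{s_n\} \subset \mathbb{N}$ with $s_n \to +\infty$ archimedeanly and $s_n \to 0$ in $\mathbb{Z}_p$, the shifted sequence satisfies $s_n - 1 \to -1$ in $\mathbb{Z}_p$ while still diverging archimedeanly, and I would set
\[
\vartheta(f) := \lim_{n \to +\infty} \theta_P^{s_n - 1}(f).
\]
Coefficientwise one has $P'(\beta)^{s_n - 1} \to P'(\beta)^{-1}$ when $p \nmid P'(\beta)$ and $P'(\beta)^{s_n - 1} \to 0$ when $p \mid P'(\beta)$, so on $q$-expansions $\vartheta(f)(q) = \sum_{p \nmid P'(\beta)} a_f(\beta) P'(\beta)^{-1} q^\beta$. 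A direct $q$-expansion check gives $\theta_P \circ \vartheta = e^{[P]}$, which is the identity on $R_0^{[P]}$, yielding surjectivity.

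The main obstacle is precisely the legitimacy of this $p$-adic limit as an operator on $H^0(X_0, \mathcal{O}_{X_0})$, rather than only on formal $q$-expansions. I would handle this exactly as in the paragraph preceding the lemma that defines $e^{[P]}$: the $q$-expansion principle (applied at each geometrically connected component's cusp) identifies $H^0(X_0, \mathcal{O}_{X_0})$ with a $p$-adically closed subset of a product of formal power series rings, and the coefficientwise convergence established above produces an honest global section in the limit. With this subtlety dispensed with, injectivity and the existence of the explicit right inverse $\vartheta$ combine to prove the invertibility of $\theta_P$ on $H^0(X_0, \mathcal{W}_\rho)^{[P]}$.
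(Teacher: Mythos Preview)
Your proof is correct and follows essentially the same approach as the paper, which treats the lemma as an immediate consequence of the formula $e^{[P]}(f)(q)=e^{[P]}(f(q))$ without further argument; you have simply spelled out the implicit construction of the inverse as a $p$-adic limit $\lim_n \theta_P^{s_n-1}$, paralleling the definition of $e^{[P]}$ itself. One minor remark: your reduction to $\rho=\mathrm{1}$ is cleaner if phrased as ``an inverse of $\theta_P$ on $R_0^{[P]}$ tensors to an inverse of $\theta_P\otimes 1_\rho$'' rather than via separate preservation of injectivity and surjectivity, since the former requires no flatness hypothesis on $\rho$.
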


\bigskip

We assume, until the end of \S \ref{S Igusa de Rham}, that $X_{0}\in \left\{ 
\mathfrak{I},X_{\infty },\mathfrak{X}_{\infty }\right\} $. The Gauss-Manin
connection $\nabla $ allows us to define%
\begin{equation*}
\eta _{i}:=\nabla \left( \theta _{i}\right) \left( \delta _{i}\right) \in
H^{0}\left( X_{0},\mathcal{H}_{\mathrm{dR},X_{0}}^{1}\right) \text{ for }%
i=1,...,g
\end{equation*}%
and, indeed, in this way the trivialization $\alpha _{0}:\mathcal{O}%
_{X_{0}}^{g}\overset{\sim }{\rightarrow }\omega _{X_{0}}$ can be extended to
a trivialization $\alpha _{\mathrm{dR},0}:\mathcal{O}_{X_{0}}^{2g}\overset{%
\sim }{\rightarrow }\mathcal{H}_{\mathrm{dR},X_{0}}^{1}$ in $\mathcal{T}_{%
\mathrm{dR},X_{0}}^{\times }$ (and, indeed, in the $\mathbf{Q}^{\circ }$%
-torsor $\mathcal{T}_{\mathcal{H},\mathrm{dR}}^{\circ \times }\subset 
\mathcal{T}_{\mathrm{dR},X_{0}}^{\times }$ of Remark \ref{Sheaves R Sim},\
see the proof of Lemma \ref{de Rham L1} below).

\begin{remark}
\label{de Rham R DefEta}Another equivalent definition $\eta _{i}^{\prime }$\
of $\eta _{i}$ making evident that the basis $\left\{ \delta _{i},\eta
_{j}^{\prime }\right\} _{i,j=1}^{g}$ yields an isomorphism $\alpha _{\mathrm{%
dR},0}:\mathcal{O}_{X_{0}}^{2g}\overset{\sim }{\rightarrow }\mathcal{H}_{%
\mathrm{dR},X_{0}}^{1}$ in $\mathcal{T}_{\mathcal{H},\mathrm{dR}}^{\circ
\times }$, not relying on the proof of Lemma \ref{de Rham L1} (hence, on
Fonseca's calculation \cite[Theorem 6.4]{Fo23}, as explained below), is
obtained as follows. According to \cite[Proposition 1.9 $\left( 2\right) $]%
{Fo23}, to give an ordered symplectic-Hodge basis of $\mathcal{H}_{\mathrm{dR%
},X_{0}}^{1}$ is the same thing as to give an ordered basis of $\omega _{%
\mathrm{dR},X_{0}}$ and a maximal isotropic $\mathcal{O}_{X_{0}}^{2g}$%
-subspace $\overline{\omega }_{\mathrm{dR},X_{0}}\subset \mathcal{H}_{%
\mathrm{dR},X_{0}}^{1}$ such that $\mathcal{H}_{\mathrm{dR}%
,X_{0}}^{1}=\omega _{\mathrm{dR},X_{0}}\oplus \overline{\omega }_{\mathrm{dR}%
,X_{0}}$: every ordered basis $\left\{ w_{i}\right\} _{i=1}^{g}$\ of $\omega
_{\mathrm{dR},X_{0}}$ uniquely extend to an ordered symplectic-Hodge basis $%
\left\{ w_{i},\overline{w}_{i}\right\} _{i=1}^{g}$\ of $\mathcal{H}_{\mathrm{%
dR},X_{0}}^{1}$ with the property that $\overline{w}_{i}$ is the dual of $%
w_{i}$ and $\left\{ \overline{w}_{i}\right\} _{i=1}^{g}\subset \overline{%
\omega }_{\mathrm{dR},X_{0}}$. Thanks to the unit root splitting $\left( 
\text{\ref{Primitives F URSpl}}\right) $\ recalled before Lemma \ref%
{Primitives L URSpl} below, we can take $\overline{\omega }_{\mathrm{dR}%
,X_{0}}=\mathcal{H}_{\mathrm{dR},X_{0}}^{1,\varphi =1}$: indeed, it is
isotropic because the symplectic pairing $\left\langle -,-\right\rangle $ on 
$\mathcal{H}_{\mathrm{dR},X_{0}}^{1}$ satisfies the equality $\left\langle
\varphi \left( x\right) ,\varphi \left( y\right) \right\rangle
=p\left\langle x,y\right\rangle $ (as it follows from the fact that it
arises by duality from the symplectic pairing of $\mathcal{H}_{\mathrm{dR}%
}^{1,\vee }$\ obtained by evaluation at the first Chern class $c_{1,\mathrm{%
dR}}\left( \mathcal{L}\right) \in \mathcal{H}_{\mathrm{dR}}^{2,\varphi =p}$
of the relatively ample line bundle $\mathcal{L}$ giving rise to the
polarization); hence, it is maximal isotropic in view of the unit root
splitting. It then follows that we can uniquely extend $\left\{ \delta
_{i}\right\} _{i=1}^{g}$ to an ordered symplectic-Hodge basis $\left\{
\delta _{i},\eta _{j}^{\prime }\right\} _{i,j=1}^{g}$ of $\mathcal{H}_{%
\mathrm{dR},X_{0}}^{1}$ with the property that $\eta _{i}^{\prime }$ is the
dual of $\delta _{i}$ and $\left\{ \eta _{i}^{\prime }\right\}
_{i=1}^{g}\subset \mathcal{H}_{\mathrm{dR},X_{0}}^{1,\varphi =1}$. From this
point of view, the fact that $\left\{ \delta _{i},\eta _{j}\right\}
_{i,j=1}^{g}$ is another symplectic-Hodge basis extending $\left\{ \delta
_{i}\right\} _{i=1}^{g}$\ such that $\eta _{i}$ is the dual of $\delta _{i}$%
\ (by Fonseca's calculation) and $\left\{ \eta _{i}\right\}
_{i=1}^{g}\subset \mathcal{H}_{\mathrm{dR},X_{0}}^{1,\varphi =1}$ (as proved
in Lemma \ref{Primitives L URSpl} below) implies that $\left\{ \eta
_{j}^{\prime }\right\} =\left\{ \eta _{j}\right\} $.
\end{remark}

For every $\rho \in \mathrm{Rep}_{fl\text{-}alg}\left( \mathfrak{g},\mathbf{Q%
}\right) $, consider the $\nabla _{\rho }$-operator%
\begin{equation*}
\nabla _{\rho }:H^{0}\left( X_{0},\mathcal{E}_{\rho }\right) \longrightarrow
H^{0}\left( X_{0},\mathcal{E}_{\rho }\otimes _{\mathcal{O}_{X}}\Omega
_{X/\Bbbk }^{1}\right) \text{.}
\end{equation*}%
The choice of the trivialization $\alpha _{\mathrm{dR},0}$ induces%
\begin{equation}
\vartheta _{\alpha _{\mathrm{dR},0}}:\mathcal{E}_{\rho }\left( X_{0}\right) 
\overset{\sim }{\longrightarrow }\mathcal{O}_{X}\left( X_{0}\right) \otimes
_{\Bbbk }\rho =R_{0}\otimes _{\Bbbk }\rho \text{.}  \label{de Rham F Triv2}
\end{equation}%
By definition of $\nabla _{\rho }\left( \theta _{i}\right) $ and the fact
that $\theta _{i}$ is the dual of $\omega _{i}$, we see that $f\otimes
_{\Bbbk }v\in R_{0}\otimes _{\Bbbk }\rho $ is sent to%
\begin{equation*}
\nabla _{\rho }\left( f\otimes _{\Bbbk }v\right)
=\tsum\nolimits_{i=1}^{d_{g}}\nabla _{\rho }\left( \theta _{i}\right) \left(
f\otimes _{\Bbbk }v\right) \omega _{i}=\tsum\nolimits_{i=1}^{d_{g}}\left(
\theta _{i}\left( f\right) \otimes _{\Bbbk }v+X\left( \theta _{i},\alpha
_{0}\right) \left( f\otimes _{\Bbbk }v\right) \right) \otimes _{R_{0}}\omega
_{i}\text{.}
\end{equation*}%
Consider the elements%
\begin{equation*}
\partial _{ij}\in \mathfrak{u}^{-}\simeq \mathrm{Sym}^{2}\left( \mathrm{Std}%
_{g}\right) ^{\vee }
\end{equation*}%
corresponding to $e_{i}^{t}e_{j}^{t}$ under the isomorphism of Lemma \ref%
{Representations L2}: hence, in $\mathfrak{u}^{-}$ the matrix $\partial
_{ij} $ is characterized by the fact that its lower left $g$-by-$g$ entry is
the symmetric matrix whose upper triangular part has all zero except a $1$
in $\left( i,j\right) $-entry. As usual, set $\partial _{i}:=\partial _{ii}$
for $i=1,...,g$ and $\partial _{i}:=\partial _{kl}$ for $i>g$, where $\left(
k,l\right) $ corresponds to $i$ under our fixed bijection.

\begin{lemma}
\label{de Rham L1}Suppose that $X_{0}\in \left\{ X_{\infty },\mathfrak{X}%
_{\infty }\right\} $. For every%
\begin{equation*}
\partial \in \mathfrak{u}^{-}\simeq \mathrm{Sym}^{2}\left( \mathrm{Std}%
_{g}\right) ^{\vee }\left( 1\right) \overset{\left( \text{\ref{Sheaves F KS
dual}}\right) }{\hookrightarrow }\mathcal{D}er_{\Bbbk }\left( \mathcal{O}%
_{X_{0}}\right)
\end{equation*}%
with image $\theta _{\partial }\in \mathcal{D}er_{\Bbbk }\left( \mathcal{O}%
_{X_{0}}\right) $, we have $X\left( \theta _{\partial },\alpha _{\infty
}\right) =1\otimes _{\Bbbk }\partial $ on $R_{0}\otimes _{\Bbbk }\rho $ and,
furthermore, $X\left( \theta _{i},\alpha _{\infty }\right) =1\otimes _{\Bbbk
}\partial _{i}$.
\end{lemma}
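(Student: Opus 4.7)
The plan is to deduce the statement from T.~J.~Fonseca's computation of the Gauss--Manin connection at the cusps, carried out in \cite[Proposition 5.17, Remark 5.18 and Theorem 6.4]{Fo23}; the role of this lemma is to explain that his derivation survives verbatim when we allow the paramodular levels and polarization types $d$ considered here. The second assertion $X(\theta_i, \alpha_\infty) = 1 \otimes_\Bbbk \partial_i$ is then the specialization of the first to $\partial = \partial_i$, so I will concentrate on the first.

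First I would unwind the definition of the matrix $X(-, \alpha_\infty)$ from just before Lemma \ref{Sheaves L T1}: in the ordered symplectic--Hodge basis $\{\delta_1, \ldots, \delta_g, \eta_1, \ldots, \eta_g\}$ of $\mathcal{H}^1_{\mathrm{dR}, X_0}$, the claim reduces to proving (a) $\nabla(\theta_\partial)(\eta_l) = 0$ for every $l$, so that the right half of the matrix vanishes, and (b) $\nabla(\theta_\partial)(\delta_k) = \sum_{l=1}^g (x_\partial)_{l,k}\, \eta_l$, where $x_\partial$ is the lower-left $g \times g$ block of $\partial \in \mathfrak{u}^-$, so that the upper-left block vanishes and the lower-left block is identified with $x_\partial$. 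Claim (a) follows from the identification $\{\eta_1, \ldots, \eta_g\} \subset \mathcal{H}^{1, \varphi = 1}_{\mathrm{dR}, X_0}$ established in Remark \ref{de Rham R DefEta}, together with the horizontality of the unit-root submodule under $\nabla$ on the Igusa tower (standard horizontality of the Frobenius-fixed part of an overconvergent $F$-isocrystal); the corresponding assertion at the cuspidal neighbourhood $\mathfrak{X}_\infty$ follows by pullback along the canonical lift $\mathfrak{X}_\infty \to \mathfrak{I}$.

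Claim (b) is where Fonseca's computation enters. By his calculation, the lower-left block of the connection matrix in the symplectic--Hodge basis is exactly the Kodaira--Spencer morphism, expressed in the chosen trivialization; at the cusp $X_\infty$, this morphism is computed explicitly in Proposition \ref{q-exp P1 KS} (following Remark \ref{q-exp R1 KS}) as the $\mathcal{O}_{X_\infty}$-linear extension of $\left( \text{\ref{q-exp R1 KS F}}\right)$, which by Corollary \ref{q-exp C1 KS} sends $e_i e_j$ to $(1 + \delta_{ij})\, \mathrm{dlog}(q_{ij})$. Dualizing via $KS_{\alpha_\infty}^\vee$ and applying Lemma \ref{Representations L2} to pass from $\mathrm{Sym}^2(\mathrm{Std}_g)^\vee(1)$ to the symmetric-matrix description of $\mathfrak{u}^-$, one finds that $\theta_{ij}$ corresponds precisely to $\partial_{ij}$. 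The factor of $\tfrac{1}{2}$ on diagonal entries built into the definition of $\theta_i$ in Lemma \ref{q-exp L KS} is exactly what absorbs the factor $1 + \delta_{ij}$ from Corollary \ref{q-exp C1 KS} and delivers the clean identity $X(\theta_i, \alpha_\infty) = 1 \otimes_\Bbbk \partial_i$ without denominators.

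The main technical obstacle---and the reason this needs more than a direct citation---is that Fonseca's computation is formulated for principal level over principally polarized abelian schemes, whereas here $d$ may be arbitrary and $K$ may be paramodular. However, his local argument at the cusp uses only (i) the trivialization $\alpha_{\phi, K, \sigma}$ of $\omega_{\mathrm{dR}, \mathbf{G}/U}$ of $\left( \text{\ref{q-exp F1}}\right)$ coming from Mumford's construction, which is valid for all the levels we allow, and (ii) the Kodaira--Spencer formula of Proposition \ref{q-exp P1 KS}, in which the polarization $\lambda_\phi$ enters only through $\lambda_\phi^*: \omega_{\mathbf{G}^t} \overset{\sim}{\to} \omega_{\mathbf{G}}$---an isomorphism thanks to $p \nmid d_1 \cdots d_g$ and our hypothesis that $\Bbbk$ contains $\mathbb{Z}[1/d_1 \cdots d_g N]$. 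Hence his computation extends with no modification, and the claim follows by $\Bbbk$-linearity in $\partial$.
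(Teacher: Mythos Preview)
Your reduction to claims (a) and (b) matches the paper's proof, and the overall strategy of citing Fonseca is the same. There are two points worth flagging.

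First, your argument for (a) via the unit-root splitting is a valid alternative to the paper's direct citation of Fonseca for $\nabla\eta_i=0$, but the citation of Remark \ref{de Rham R DefEta} is circular as written: that remark establishes $\eta_i=\eta_i'$ precisely by invoking Fonseca's calculation, i.e.\ this very lemma. The fact you actually need, $\varphi(\eta_i)=\eta_i$, is proved independently in Lemma \ref{Primitives L URSpl}, so cite that instead. Note also that even making sense of $X(\theta_\partial,\alpha_\infty)$ presupposes that $\{\delta_i,\eta_j\}$ is a symplectic-Hodge basis (so that $\alpha_{\mathrm{dR},0}\in\mathcal{T}^{\circ\times}_{\mathcal{H},\mathrm{dR}}$); the paper observes that this, too, is part of what Fonseca's computation delivers.

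Second, and more substantively, your extension to general polarization type $d$ is where you diverge from the paper. You assert that Fonseca's local argument ``uses only (i) the trivialization and (ii) the Kodaira--Spencer formula,'' but his proof proceeds via complex-analytic period computations (the reduction to \cite[Theorem 11.2, Proposition 11.7]{Fo23}), not purely from (i) and (ii). The paper handles this carefully by two independent routes: either redo the complex-analytic computation with the lattice $\Delta\mathbb{Z}^g+\tau\mathbb{Z}^g$ in place of $\mathbb{Z}^g+\tau\mathbb{Z}^g$ and a $d$-integral symplectic basis $\gamma_i(\tau)=d_ie_i$, verifying that the one place the polarization enters---the integral $\int_{\gamma_i}\eta_k^{ij}$ in \cite[Proposition 11.3]{Fo23}---remains zero; or use the finite \'etale correspondence from the moduli of triples $(A,\varphi_{\mathcal L},H)$, with $H\subset K_{\mathcal L}$ totally isotropic and \'etale-locally $\simeq H_d$, to both the type-$d$ moduli (forgetting $H$) and the principally polarized moduli (passing to $A/H$). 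Your intuition that the computation goes through is correct, but it needs one of these concrete justifications rather than the assertion that it ``extends with no modification.''
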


\begin{proof}
The claim is reduced to verification that $\nabla \eta _{i}=0$, $\nabla
\left( \theta _{ij}\right) \left( \delta _{i}\right) =\eta _{j}$, $\nabla
\left( \theta _{ij}\right) \left( \delta _{j}\right) =\eta _{i}$ and $\nabla
\left( \theta _{ij}\right) \left( \delta _{k}\right) =0$ for every $i,j,k\in
\left\{ 1,...,g\right\} $ with $k\notin \left\{ i,j\right\} $, where $\theta
_{ij}:R_{0}\rightarrow R_{0}$ is defined by the rule $\theta _{ii}\left(
q^{\beta }\right) =\frac{1}{2}\beta _{ii}q^{\beta }$ and $\theta _{kl}\left(
q^{\beta }\right) =\beta _{kl}q^{\beta }$ if $k\neq l$ (so that $\theta
_{i}=\theta _{ii}$ for $i=1,...,g$ and $\theta _{i}:=\theta _{kl}$ for $i>g$
where $\left( k,l\right) $ corresponds to $i$)\footnote{%
Indeed, we remark that the relations $\nabla \left( \theta _{ij}\right)
\left( \delta _{i}\right) =\eta _{j}$, $\nabla \left( \theta _{ij}\right)
\left( \delta _{j}\right) =\eta _{i}$ and $\nabla \left( \theta _{ij}\right)
\left( \delta _{k}\right) =0$ for every $i,j,k\in \left\{ 1,...,g\right\} $
with $k\notin \left\{ i,j\right\} $ also follows from Lemma \ref{q-exp L KS}%
, in view of the definition of the Kodaira-Spencer morphism recalled in \S %
\ref{S Sheaves}.}. For principally polarized abelian schemes, this is the
content of \cite[Theorem 6.4]{Fo23}, taking into account \cite[Proposition
5.17 and Remark 5.18]{Fo23}, which also proves the fact that setting $\eta
_{i}:=\nabla \left( \theta _{i}\right) \left( \delta _{i}\right) $ extends
the trivialization $\mathcal{O}_{X_{0}}^{g}\overset{\sim }{\rightarrow }%
\omega _{X_{0}}$ to $\alpha _{\mathrm{dR},0}:\mathcal{O}_{X_{0}}^{2g}\overset%
{\sim }{\rightarrow }\mathcal{H}_{\mathrm{dR},X_{0}}^{1}$ in $\mathcal{T}_{%
\mathcal{H},\mathrm{dR}}^{\circ \times }$, as claimed above. Let us explain
how these claims extend to the more general PEL setting. As explained in 
\cite[\S 11.4]{Fo23}, these results follow from their complex analytic
counterparts, namely \cite[Theorem 11.2]{Fo23} and \cite[Proposition 11.7]%
{Fo23} and the same reduction holds in the PEL setting. Suppose we are
considering abelian schemes with polarization of type $d=\left(
d_{1},...,d_{g}\right) $ and let $\Delta $ be the diagonal matrix with
diagonal given by the vector $d$. Then in \cite[Example 9.8, Example 10.4
and Proposition 10.5]{Fo23} we have simply to replace the lattice $\mathbb{Z}%
+\tau \mathbb{Z}\subset \mathbb{C}^{g}$ with the lattice $\mathbb{\Delta Z}%
^{g}+\tau \mathbb{Z}^{g}\subset \mathbb{C}^{g}$, the Hermitian metric is
again given by the matrix \textrm{Im}$\left( \tau \right) ^{-1}$ and a $d$%
-integral symplectic basis of $R^{1}p_{g\ast }\left( \mathbb{Z}\right) $ is
given by the sections $\gamma _{i}\left( \tau \right) :=d_{i}e_{i}$ (rather
than $\gamma _{i}\left( \tau \right) :=e_{i}$) and $\delta _{i}\left( \tau
\right) :=\tau e_{i}$, where we define a $d$-integral symplectic basis $%
\left\{ \gamma _{1},...,\gamma _{g},\delta _{1},...,\delta _{g}\right\} $ by
the condition that $\left\langle \gamma _{i},\gamma _{j}\right\rangle
=\left\langle \delta _{i},\delta _{j}\right\rangle $ and $\left\langle
\gamma _{i},\delta _{j}\right\rangle =\Delta _{ij}$ for $i,j\in \left\{
1,...,g\right\} $ (see \cite[Ch. VI, proof of Th\'{e}or\`{e}me\ 1.3 and Ch.
VII, \S 1]{Deb}; here $\delta _{i}\left( \tau \right) $ has nothing to do
with the differentials $\delta _{i}$ considered above). Then all the results
of loc.cit. hold unchanged: the only point that involves a priori a
modification is the calculation of $\tint\nolimits_{\gamma _{i}}\eta
_{k}^{ij}$ in the proof of \cite[Proposition 11.3]{Fo23}, which is however
again zero. (Let us also remark that, since we assume that $d$ is inverted,
the notion of symplectic basis in de Rham cohomology holds unchanged as in 
\cite[Definition 2.5]{Fo23} and they Zariski locally exist by \cite[Lemma
1.5 and Proposition 1.9]{Fo23}). Furthermore, one can also deduce the result
directly from the principally polarized case as follows. Suppose that $%
\varphi _{\mathcal{L}}$ is a polarization with kernel $K_{\mathcal{L}}$ \'{e}%
tale locally isomorphic to $H_{d}^{2}$ with $H_{d}=\frac{\mathbb{Z}}{d_{1}%
\mathbb{Z}}\times ...\frac{\mathbb{Z}}{d_{g}\mathbb{Z}}$ and non-degenerate
symplectic pairing $e_{\mathcal{L}}$ (see \cite[Ch. VI, \S 4.4]{Deb} for its
definition from the complex point of view). Then one can consider the
functor classifying triples $\left( A,\varphi _{\mathcal{L}},H\right) $
where $\left( A,\varphi _{\mathcal{L}}\right) $ is a polarized abelian
scheme of type $d$ and $H$ is a totally isotropic subgroup of $K_{\mathcal{L}%
}$ \'{e}tale locally isomorphic to $H_{d}$ (cfr. Remark \ref{Sheaves R
PLevel}). This functor is the source of the following two morphisms. The
former maps $\left( A,\varphi _{\mathcal{L}},H\right) $ to $\left( A,\varphi
_{\mathcal{L}}\right) $ and the second maps it is the principally polarized
abelian scheme $\left( A/H,\overline{\varphi }_{\mathcal{L}}\right) $, where 
$\overline{\varphi }_{\mathcal{L}}$ denotes the induced polarization. As
these two morphisms are finite \'{e}tale, the result in the general PEL
setting follows from the principally polarized case.
\end{proof}

For every $p\in \mathbb{N}$, write $I_{p}\subset \left\{ 1,...,n\right\}
^{p} $ for the set of multi-indexes $\mathbf{i}=\left(
i_{1},...,i_{p}\right) $ such that $i_{1}<...<i_{p}$ (we understand that $%
I_{0}:=\left\{ \phi \right\} $). For every $\mathbf{i}\in I_{p}$, we write $%
\left\{ \mathbf{i}\right\} $ for the set $\left\{ i_{1},...,i_{p}\right\} $
(we understand that $\left\{ \phi \right\} =\phi $). With these notations,
we define%
\begin{equation*}
\omega _{\mathbf{i}}:=\omega _{i_{1}}\wedge ...\wedge \omega _{i_{p}}\in
\Omega _{X_{0}/\Bbbk }^{p}\left( X_{0}\right) \text{.}
\end{equation*}%
Then $\left( \text{\ref{de Rham F Triv2}}\right) $ yields%
\begin{equation}
H^{0}\left( X_{0},\mathcal{E}_{\rho }\otimes _{\mathcal{O}_{X}}\Omega
_{X/\Bbbk }^{p}\right) =\tbigoplus\nolimits_{\mathbf{i}\in
I_{p}}R_{0}\otimes _{\Bbbk }\rho \otimes _{R_{\infty }}\omega _{\mathbf{i}}%
\text{.}  \label{de Rham F Triv3}
\end{equation}

\begin{lemma}
\label{de Rham L2}If $X_{0}=X_{\infty }$ and%
\begin{equation*}
\omega =\tsum\nolimits_{\mathbf{i}\in I_{p}}f_{\mathbf{i}}\otimes _{\Bbbk
}v_{\mathbf{i}}\otimes _{R_{\infty }}\omega _{\mathbf{i}}\text{ and }\nabla
_{\rho }^{p}\left( \omega \right) =\tsum\nolimits_{\mathbf{j}\in
I_{p+1}}\nabla _{\rho }^{p}\left( \omega \right) _{\mathbf{j}}\otimes
_{R_{\infty }}\omega _{\mathbf{j}}\text{,}
\end{equation*}%
writing $\mathbf{j}=\left( j_{1},...,j_{p+1}\right) $ we have%
\begin{equation*}
\nabla _{\rho }^{p}\left( \omega \right) _{\mathbf{j}}=\tsum%
\nolimits_{k=1}^{p+1}\left( -1\right) ^{k+1}\left( \theta _{j_{k}}\left(
f_{(j_{1},...,\widehat{j_{k}},...,j_{p+1})}\right) \otimes _{\Bbbk
}v_{(j_{1},...,\widehat{j_{k}},...,j_{p+1})}+f_{(j_{1},...,\widehat{j_{k}}%
,...,j_{p+1})}\otimes _{\Bbbk }\partial _{j_{k}}v_{(j_{1},...,\widehat{j_{k}}%
,...,j_{p+1})}\right) \text{.}
\end{equation*}
\end{lemma}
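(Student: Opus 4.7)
The strategy is to combine the standard Leibniz rule for a connection extended to differential forms with the very explicit description of $\nabla_\rho$ in the trivialization $\alpha_{\mathrm{dR},0}$ provided by Lemma \ref{de Rham L1}. By $\Bbbk$-linearity (and $R_\infty$-linearity in the wedge factor) it is enough to treat a single summand $\omega = f\otimes_\Bbbk v\otimes_{R_\infty}\omega_\mathbf{i}$ with $\mathbf{i}\in I_p$. Applying the usual extension of a connection to forms-valued sections,
\[
\nabla_\rho^p\bigl(f\otimes_\Bbbk v\otimes_{R_\infty}\omega_\mathbf{i}\bigr) \;=\; \nabla_\rho\bigl(f\otimes_\Bbbk v\bigr)\wedge\omega_\mathbf{i} \;+\; \bigl(f\otimes_\Bbbk v\bigr)\otimes d^p(\omega_\mathbf{i}),
\]
and the key observation is that, by Remark \ref{q-exp R KS}, each $\omega_i = c_i\,dq_{i}/q_{i}$ at the cusp $X_\infty$ is closed, so $d^1(\omega_i)=0$ and hence $d^p(\omega_\mathbf{i})=0$. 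This kills the second term and reduces the computation to a wedge product with $\nabla_\rho(f\otimes_\Bbbk v)$.

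Next, I would use the formula for $\nabla_\rho$ written immediately before Lemma \ref{de Rham L1}, together with Lemma \ref{de Rham L1} itself, which identifies $X(\theta_i,\alpha_\infty) = 1\otimes_\Bbbk\partial_i$ acting on $R_0\otimes_\Bbbk\rho$. This yields
\[
\nabla_\rho\bigl(f\otimes_\Bbbk v\bigr) \;=\; \tsum\nolimits_{i=1}^{d_g}\bigl(\theta_i(f)\otimes_\Bbbk v \;+\; f\otimes_\Bbbk \partial_i v\bigr)\otimes_{R_\infty}\omega_i,
\]
so that
\[
\nabla_\rho^p\bigl(f\otimes_\Bbbk v\otimes_{R_\infty}\omega_\mathbf{i}\bigr) \;=\; \tsum\nolimits_{i=1}^{d_g}\bigl(\theta_i(f)\otimes_\Bbbk v + f\otimes_\Bbbk \partial_i v\bigr)\otimes_{R_\infty}\bigl(\omega_i\wedge \omega_\mathbf{i}\bigr).
\]

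The final step is purely combinatorial: collect the contributions to each basis element $\omega_\mathbf{j}$ with $\mathbf{j}\in I_{p+1}$. Only pairs $(\mathbf{i},i)$ with $\{i\}\sqcup\{\mathbf{i}\}=\{\mathbf{j}\}$ contribute, i.e.\ those for which $i=j_k$ for some $k\in\{1,\ldots,p+1\}$ and $\mathbf{i}=(j_1,\ldots,\widehat{j_k},\ldots,j_{p+1})$. Moving $\omega_{j_k}$ past the $k-1$ preceding factors produces the sign
\[
\omega_{j_k}\wedge\omega_{(j_1,\ldots,\widehat{j_k},\ldots,j_{p+1})} \;=\; (-1)^{k-1}\,\omega_\mathbf{j} \;=\; (-1)^{k+1}\,\omega_\mathbf{j},
\]
and summing over $k$ produces exactly the asserted expression for $\nabla_\rho^p(\omega)_\mathbf{j}$, finishing the proof after the linearity reduction is undone.

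There is no real obstacle here: the three ingredients (Leibniz, closedness of $\omega_i$ at the cusp, and the explicit form of $X(\theta_i,\alpha_\infty)$) are all in place, and the only mildly delicate point is keeping track of the sign when permuting $\omega_{j_k}$ into its sorted position, which is a standard exterior-algebra manipulation.
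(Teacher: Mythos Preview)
Your proof is correct and follows essentially the same approach as the paper: both use the closedness $d^{1}(\omega_i)=0$ from Remark \ref{q-exp R KS}, the explicit formula for $\nabla_\rho$ together with Lemma \ref{de Rham L1}, and then the Leibniz rule to reduce to an exterior-algebra sign computation. The only cosmetic difference is that the paper packages the final combinatorics by identifying the de Rham complex with the Koszul complex $K^{\cdot}(\theta_1,\dots,\theta_{d_g})$ via Proposition \ref{Kos Int PdR} and reading off formula (\ref{Kos F1'}), whereas you carry out the sign bookkeeping $\omega_{j_k}\wedge\omega_{(j_1,\dots,\widehat{j_k},\dots,j_{p+1})}=(-1)^{k+1}\omega_{\mathbf{j}}$ by hand; these are the same computation.
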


\begin{proof}
Since $d^{1}\left( \omega _{i}\right) =0$ (see Remark \ref{q-exp R KS}), the
result can be deduce from the above formula for $\nabla _{\rho }$ and Lemma %
\ref{de Rham L1}. Indeed, an efficient way to implement the Leibnitz rule is
to identify the de Rham complex with an appropriate Koszul complex, as
explained after Proposition \ref{Kos Int PdR} below.
\end{proof}

Regarding $\rho \otimes \wedge ^{p}\mathrm{Sym}_{g}^{2}$ as a $\mathbf{Q}$%
-module, the trivialization $\left( \text{\ref{de Rham F Triv2}}\right) $
yields%
\begin{equation}
H^{0}\left( X_{0},\mathcal{E}_{\rho }\otimes _{\mathcal{O}_{X}}\Omega
_{X/\Bbbk }^{p}\right) \overset{\sim }{\longrightarrow }R_{0}\otimes _{\Bbbk
}\rho \otimes _{\Bbbk }\wedge ^{p}\mathrm{Sym}_{g}^{2}\simeq
\tbigoplus\nolimits_{\mathbf{i}\in I_{p}}R_{0}\otimes _{\Bbbk }\rho \otimes
_{R_{\infty }}\omega _{\mathbf{i}}\text{,}  \label{de Rham F Triv4}
\end{equation}%
where, taking into account $\left( \text{\ref{de Rham F TrivKS}}\right) $,
the above isomorphism $\left( \text{\ref{de Rham F Triv4}}\right) \simeq
\left( \text{\ref{de Rham F Triv3}}\right) $ is given by the Kodaira-Spencer
isomorphism $\mathcal{W}_{\mathrm{Sym}_{g}^{2}\left( -1\right) }\simeq
\Omega _{X_{0}/\Bbbk }^{1}$ inducing $\mathcal{W}_{\wedge ^{p}\mathrm{Sym}%
_{g}^{p}}\simeq \Omega _{X_{0}/\Bbbk }^{p}$. Using $\left( \text{\ref{de
Rham F Triv4}}\right) $, we can define $H^{0}\left( X_{0},\mathcal{E}_{\rho
}\otimes _{\mathcal{O}_{X}}\Omega _{X/\Bbbk }^{p}\right) ^{\left[ P\right] }$
by means of the following isomorphism%
\begin{equation*}
H^{0}\left( X_{0},\mathcal{E}_{\rho }\otimes _{\mathcal{O}_{X}}\Omega
_{X/\Bbbk }^{p}\right) ^{\left[ P\right] }\overset{\sim }{\longrightarrow }%
e^{\left[ P\right] }\left( R_{0}\right) \otimes _{\Bbbk }\rho \otimes
_{\Bbbk }\wedge ^{p}\mathrm{Sym}_{g}^{2}=\tbigoplus\nolimits_{\mathbf{i}\in
I_{p}}e^{\left[ P\right] }\left( R_{0}\right) \otimes _{\Bbbk }\rho \otimes
_{R_{\infty }}\omega _{\mathbf{i}}\text{.}
\end{equation*}

\begin{corollary}
\label{de Rham C1}We have that $H^{0}\left( X_{0},\mathcal{E}_{\rho }\otimes
_{\mathcal{O}_{X}}\Omega _{X/\Bbbk }^{p}\right) ^{\left[ P\right] }$ is a
subcomplex.
\end{corollary}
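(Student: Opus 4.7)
The plan is to reduce the stability of $P$-depletion under $\nabla_{\rho}^{p}$ to the explicit description of $\nabla_{\rho}^{p}$ on $q$-expansions provided by Lemma \ref{de Rham L2}, and then to observe two compatibilities: one for the derivations $\theta_{i}$ acting on coefficient functions, and one for the Lie algebra elements $\partial_{i}$ acting on the representation factor.

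First, I would argue that it suffices to check the claim at each $0$-dimensional cusp. The pullback morphisms $\mathfrak{X}_{\infty}\to\mathfrak{I}\to\mathfrak{X}^{\mathrm{ord}}$ are compatible with the canonical trivializations $\alpha_{0}$, hence with $\vartheta_{\alpha_{\mathrm{dR},0}}$ and with the identifications $(\ref{de Rham F Triv3})$, $(\ref{de Rham F Triv4})$; moreover, the connection $\nabla_{\rho}$ is functorial along these maps. Combined with the $q$-expansion principle for $\mathfrak{I}$ (and for $\mathfrak{X}^{\mathrm{ord}}$), which says that a section is determined by its pullbacks to the cusps at each geometrically connected component, the question of whether a section of $\mathcal{E}_{\rho}\otimes\Omega^{p+1}$ is $P$-depleted is detected on the cusps $\mathfrak{X}_{\infty}$. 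It therefore suffices to verify that $\nabla_{\rho}^{p}$ carries $P$-depleted forms to $P$-depleted forms after pullback to each $\mathfrak{X}_{\infty}$, where Lemma \ref{de Rham L2} applies.

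Fix such a cusp and take $\omega=\sum_{\mathbf{i}\in I_{p}}f_{\mathbf{i}}\otimes_{\Bbbk} v_{\mathbf{i}}\otimes_{R_{\infty}}\omega_{\mathbf{i}}$ with all $f_{\mathbf{i}}\in e^{[P]}(R_{0})$. By Lemma \ref{de Rham L2}, each component $\nabla_{\rho}^{p}(\omega)_{\mathbf{j}}$ is a $\mathbb{Z}$-linear combination of two kinds of terms: terms of the form $\theta_{j_{k}}(f)\otimes_{\Bbbk} v$ and terms of the form $f\otimes_{\Bbbk}\partial_{j_{k}}(v)$, where $f=f_{(j_{1},\ldots,\widehat{j_{k}},\ldots,j_{p+1})}$ and $v=v_{(j_{1},\ldots,\widehat{j_{k}},\ldots,j_{p+1})}$. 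The second kind clearly lies in $e^{[P]}(R_{0})\otimes_{\Bbbk}\rho$ since $\partial_{j_{k}}$ only acts on the $\rho$-factor and leaves $f\in e^{[P]}(R_{0})$ untouched.

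The only genuine point is therefore the first kind of term, which reduces to showing that each derivation $\theta_{j_{k}}:R_{0}\to R_{0}$ preserves $e^{[P]}(R_{0})$. This is immediate from Lemma \ref{q-exp L KS} and the definition of $\theta_{P}$: both $\theta_{i}$ and $\theta_{P}$ act diagonally on the $q$-expansion basis $\{q^{N^{-1}\beta}\}_{\beta\in\mathbf{S}_{g,\mathbb{Z}}^{even}}$, by the eigenvalues $\tfrac12\beta_{i}$ or $\beta_{i}$ (depending on whether $i\leq g$) and $P'(\beta)$ respectively. In particular they commute, and consequently $\theta_{i}\circ e^{[P]}=e^{[P]}\circ\theta_{i}$ on $R_{0}$, so that $\theta_{i}$ preserves $e^{[P]}(R_{0})$. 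Combining this with the previous paragraph shows that each $\nabla_{\rho}^{p}(\omega)_{\mathbf{j}}$ lies in $e^{[P]}(R_{0})\otimes_{\Bbbk}\rho$, which is what had to be proved. I do not anticipate any essential obstacle; the main bookkeeping point is the reduction to cusps, but this is straightforward given the compatibilities of the trivializations with pullback.
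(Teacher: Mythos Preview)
Your proof is correct and follows essentially the same route as the paper: both invoke the explicit formula of Lemma~\ref{de Rham L2} and then observe that the $\theta_{j_k}$ preserve $e^{[P]}(R_{0})$ while the $\partial_{j_k}$ act only on the $\rho$-factor. Your treatment is in fact more careful than the paper's one-line proof: you make explicit the reduction to the cusps $\mathfrak{X}_{\infty}$ (needed since Lemma~\ref{de Rham L2} is stated only there, whereas the corollary is asserted for $X_{0}\in\{\mathfrak{I},X_{\infty},\mathfrak{X}_{\infty}\}$), and you spell out why each $\theta_{i}$ commutes with $e^{[P]}$ via the diagonal action on $q$-expansions, whereas the paper only alludes to ``$\theta_{P}$ operating on $e^{[P]}(R_{0})\otimes_{\Bbbk}\rho$''.
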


\begin{proof}
This is a consequence of Lemma \ref{de Rham L2} and the fact that $\theta
_{P}$ operates on $e^{\left[ P\right] }\left( R_{0}\right) \otimes _{\Bbbk
}\rho =H^{0}\left( X_{0},\mathcal{W}_{\rho }\right) ^{\left[ P\right] }$.
\end{proof}

\subsection{\label{S Igusa acyclic}Acyclicity of $p$-depleted de Rham
complexes}

Consider the following diagram that we are going to describe\footnote{%
As explained in the first footnote of \S \ref{S Sheaves} and clarified in
the proof of Lemma \ref{Primitives L2} below, the twists introduced in $%
\left( \text{\ref{Primitives D1}}\right) $ and the ones previously
considered in \S \ref{S Sheaves} are inserted in order to make everything
Frobenious equivariant when restricting to $X_{0}$. Also, the proof of Lemma %
\ref{Primitives L2} below shows that one has to impose the equality $%
\mathcal{E}_{\rho \left( -1\right) }=\mathcal{E}_{\rho }\left( 1\right) $ if 
$\rho \left( -1\right) :=\rho \otimes \nu ^{-1}$ for the symplectic
multiplier $\nu $, where the Frobenious of $\mathcal{E}_{\rho }\left(
1\right) $ is obtained multiplying the Frobenious of $\mathcal{E}_{\rho }$
by $p^{-1}$.}:%
\begin{equation}
\begin{array}{ccccccc}
0\longrightarrow & \omega _{\mathrm{dR},X_{0}}\otimes _{\mathcal{O}%
_{X_{0}}}\omega _{\mathrm{dR},X_{0}}\left( 1\right) & \longrightarrow & 
\omega _{\mathrm{dR},X_{0}}\otimes _{\mathcal{O}_{X_{0}}}\mathcal{H}_{%
\mathrm{dR},X_{0}}^{1}\left( 1\right) & \longrightarrow & \omega _{\mathrm{dR%
},X_{0}}\otimes _{\mathcal{O}_{X_{0}}}Lie_{\mathrm{dR},X_{0}}^{t}\left(
1\right) & \longrightarrow 0 \\ 
& \parallel &  & \uparrow &  & \uparrow &  \\ 
0\longrightarrow & \omega _{\mathrm{dR},X_{0}}\otimes _{\mathcal{O}%
_{X_{0}}}\omega _{\mathrm{dR},X_{0}}\left( 1\right) & \longrightarrow & 
\mathcal{J}^{\prime \prime } & \longrightarrow & \mathcal{O}_{X_{0}} & 
\longrightarrow 0 \\ 
& \downarrow &  & \downarrow &  & \parallel &  \\ 
0\longrightarrow & \mathrm{Sym}^{2}\left( \omega _{\mathrm{dR},X_{0}}\right)
\left( 1\right) & \longrightarrow & \mathcal{J}^{\prime } & \longrightarrow
& \mathcal{O}_{X_{0}} & \longrightarrow 0\text{.}%
\end{array}
\label{Primitives D1}
\end{equation}%
The first row is obtained applying $\omega _{\mathrm{dR},X_{0}}\left(
1\right) \otimes _{\mathcal{O}_{X_{0}}}-$ to the exact sequence which gives
the Hodge filtration of $\mathcal{H}_{\mathrm{dR},X_{0}}^{1}$. Then, we
identify $\omega _{\mathrm{dR},X_{0}}\simeq Lie_{\mathrm{dR},X_{0}}^{t,\vee
}\left( -1\right) $ using the symplectic pairing (cfr. \S \ref{S Sheaves}),
consider the canonical evaluation pairing $Lie_{\mathrm{dR}%
,X_{0}}^{t}\otimes _{\mathcal{O}_{X_{0}}}Lie_{\mathrm{dR},X_{0}}^{t,\vee
}\rightarrow \mathcal{O}_{X_{0}}$ and pull-back the first row via the
resulting $\mathcal{O}_{X_{0}}$-dual morphism $\mathcal{O}%
_{X_{0}}\rightarrow \omega _{\mathrm{dR},X_{0}}\otimes _{\mathcal{O}%
_{X_{0}}}Lie_{\mathrm{dR},X_{0}}^{t}\left( 1\right) $ to get the second row.
Finally, we push-out the second row via the canonical morphism $\omega _{%
\mathrm{dR},X_{0}}\otimes _{\mathcal{O}_{X_{0}}}\omega _{\mathrm{dR}%
,X_{0}}\rightarrow \mathrm{Sym}^{2}\left( \omega _{\mathrm{dR},X_{0}}\right) 
$ in order to get the third row. By construction the diagram is commutative
with exact rows. The following result follows from Proposition \ref%
{Representations P Spl} and the discussion preceding it (see $\left( \text{%
\ref{Representations D1}}\right) $).

\begin{lemma}
\label{Primitives L1}Taking the dual exact sequence of the third row of the
above diagram yields the exact sequence%
\begin{equation}
0\longrightarrow \mathcal{O}_{X_{0}}\longrightarrow \mathcal{J}%
\longrightarrow \frac{\mathcal{J}}{\mathcal{O}_{X_{0}}}\longrightarrow 0
\label{Primitives L1 Spl Claim}
\end{equation}%
associated to $\mathcal{E}_{\cdot }$ to $\left( \text{\ref{Representations P
Spl Claim}}\right) $.
\end{lemma}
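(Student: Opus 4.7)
The plan is to deduce the lemma from Proposition \ref{Representations P Spl} by applying the functor $\mathcal{E}_{\cdot}$ to the entire commutative diagram $\left(\text{\ref{Representations D1}}\right)$ and observing that the result is precisely $\left(\text{\ref{Primitives D1}}\right)$. Once this identification is in place, the isomorphism $\mathrm{J}^{\prime}\overset{\sim}{\rightarrow}\mathrm{J}^{\vee}$ produced by Proposition \ref{Representations P Spl} translates directly into the desired identification of the dualized third row of $\left(\text{\ref{Primitives D1}}\right)$ with $\mathcal{E}_{\cdot}$ applied to $\left(\text{\ref{Representations P Spl Claim}}\right)$.

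First I would record the properties of $\mathcal{E}_{\cdot}$ that make this translation possible. Because $\Bbbk $ contains $\mathbb{Z}[1/d_{1}\cdots d_{g}N]$, the scheme $X$ (hence $X_{0}$) is flat over $\Bbbk $, so by Remark \ref{Sheaves R Ex} the functor $\mathcal{E}_{\cdot}$ is exact, and because every representation appearing in $\left(\text{\ref{Representations D1}}\right)$ has underlying $\Bbbk $-module which is finitely generated and projective, $\mathcal{E}_{\cdot}$ commutes with tensor products and $\Bbbk $-duals (this can be checked fppf-locally using the trivialization $\vartheta _{\alpha _{0}}$ of Remark \ref{Sheaves R Triv}). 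Moreover, Remark \ref{Sheaves R Triv} provides a canonical isomorphism $\vartheta _{\mathcal{H},\mathrm{dR}}:\mathcal{E}_{V}\overset{\sim}{\rightarrow}\mathcal{H}_{\mathrm{dR}}^{1}$, and the canonical identification $\mathcal{E}_{W}=\mathcal{W}_{W}=\omega _{\mathrm{dR}}$ (together with its quotient variant $\mathcal{E}_{V/W}=Lie_{\mathrm{dR}}^{t}$) matches the Hodge filtration of $V$ (as $\mathbf{Q}$-module) with the Hodge filtration of $\mathcal{H}_{\mathrm{dR}}^{1}$.

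Next I would check, row by row, that $\mathcal{E}_{\cdot}$ carries $\left(\text{\ref{Representations D1}}\right)$ to $\left(\text{\ref{Primitives D1}}\right)$. The top row is obtained by tensoring the Hodge filtration with $\mathrm{Std}_{g}(-1)$ on the representation side and with $\omega _{\mathrm{dR},X_{0}}(1)$ on the geometric side (note the twist reflects the relation between the symplectic pairing on $V$ and the geometric pairing $\left(\text{\ref{Sheaves F Sympl}}\right)$, and the formal identification of twists explained in the footnote of \S \ref{S Sheaves}). The map $\left(\text{\ref{Representations D1 Cas}}\right)$, which is the dual of the evaluation pairing of $V/W$, becomes under $\mathcal{E}_{\cdot}$ the dual of the evaluation pairing of $Lie_{\mathrm{dR}}^{t}$, i.e.\ exactly the map $\mathcal{O}_{X_{0}}\rightarrow \omega _{\mathrm{dR},X_{0}}\otimes _{\mathcal{O}_{X_{0}}}Lie_{\mathrm{dR},X_{0}}^{t}(1)$ that is used to pull back the first row of $\left(\text{\ref{Primitives D1}}\right)$ to form the second row. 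Finally, the pushout along $\mathrm{Std}_{g}^{\otimes 2}\rightarrow \mathrm{Sym}_{g}^{2}$ corresponds under $\mathcal{E}_{\cdot}$ to the pushout along $\omega _{\mathrm{dR},X_{0}}^{\otimes 2}\rightarrow \mathrm{Sym}^{2}(\omega _{\mathrm{dR},X_{0}})$, because $\mathcal{E}_{\cdot}$ is right exact; this gives the third row.

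With the comparison of diagrams established, Proposition \ref{Representations P Spl} (applicable since $p\neq 2$ is in force by our running hypotheses, so $2\in \Bbbk ^{\times }$) identifies the third row of $\left(\text{\ref{Representations D1}}\right)$ up to scalars with the dual of $\left(\text{\ref{Representations P Spl Claim}}\right)$. Applying $\mathcal{E}_{\cdot}$, using its compatibility with $\Bbbk $-duality of finitely generated projective objects, and setting $\mathcal{J}:=\mathcal{E}_{\mathrm{J}}$ yields the claimed identification between the dual of the third row of $\left(\text{\ref{Primitives D1}}\right)$ and the exact sequence obtained by applying $\mathcal{E}_{\cdot}$ to $\left(\text{\ref{Representations P Spl Claim}}\right)$. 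I expect the only delicate step to be the verification that the Casimir morphism $\left(\text{\ref{Representations D1 Cas}}\right)$ really maps to the geometric Casimir morphism under $\mathcal{E}_{\cdot}$: this reduces, after a local choice of an ordered symplectic--Hodge basis as in Remark \ref{Sheaves R Sim}, to the bookkeeping of the symplectic pairing $\left(\text{\ref{Sheaves F Sympl}}\right)$ against the chosen basis, together with the identification $Lie_{\mathrm{dR}}^{t}\simeq \omega _{\mathrm{dR}}^{\vee }(1)$ induced by the polarization.
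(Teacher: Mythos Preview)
Your proposal is correct and follows essentially the same approach as the paper, which simply asserts that the lemma follows from Proposition~\ref{Representations P Spl} and the discussion preceding it (see~$\left(\text{\ref{Representations D1}}\right)$). Your write-up is a careful expansion of that one-line justification: apply $\mathcal{E}_{\cdot}$ to the diagram $\left(\text{\ref{Representations D1}}\right)$, check row by row that the output is $\left(\text{\ref{Primitives D1}}\right)$, and then invoke the identification of Proposition~\ref{Representations P Spl}.
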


\bigskip

Applying $\mathcal{E}_{\cdot }$ to $\left( \text{\ref{Representations F
IndSym}}\right) $ and recalling that $\mathcal{E}_{\cdot }$ restricts to $%
\mathcal{W}_{\cdot }$ on $\mathrm{Rep}\left( \mathbf{M}\right) $ via $\left( 
\text{\ref{Representations F Res}}\right) $, we see that there is an
isomorphism%
\begin{equation}
\mathcal{W}_{\rho }\otimes _{\mathcal{O}_{X}}\mathrm{Sym}^{r}\left( \mathcal{%
J}\right) \overset{\sim }{\longrightarrow }\mathcal{V}_{\rho }^{\leq r}
\label{Primitives F IndSym}
\end{equation}%
functorial in $\rho \in \mathrm{Rep}_{f}\left( \mathbf{M}\right) $. We will
now assume that a section $p_{0}:\mathcal{H}_{\mathrm{dR},X_{0}}^{1}%
\rightarrow \omega _{\mathrm{dR},X_{0}}$ of the inclusion $\omega _{\mathrm{%
dR},X_{0}}\rightarrow \mathcal{H}_{\mathrm{dR},X_{0}}^{1}$ has been fixed.
By construction it induces a splitting of all the rows appearing in $\left( 
\text{\ref{Primitives D1}}\right) $ and then, thanks to Lemma \ref%
{Primitives L1}, we get a splitting%
\begin{equation}
\mathcal{J\simeq O}_{X_{0}}\oplus \mathrm{Sym}^{2}\left( \omega _{\mathrm{dR}%
,X_{0}}\right) ^{\vee }\left( 1\right) \simeq \mathcal{O}_{X_{0}}\oplus 
\mathcal{W}_{\mathrm{Sym}_{g}^{2,\vee }\left( -1\right) }
\label{Primitives F Spl}
\end{equation}%
of the exact sequence $\left( \text{\ref{Primitives L1 Spl Claim}}\right) $.
It will be convenient to set $\rho _{i}:=\rho \otimes \mathrm{Sym}^{i}\left( 
\mathrm{Sym}_{g}^{2,\vee }\right) \left( -i\right) $, so that $\left( \text{%
\ref{Primitives F IndSym}}\right) $\ together with $\left( \text{\ref%
{Primitives F Spl}}\right) $\ yields the decompositions functorial in $\rho
\in \mathrm{Rep}_{f}\left( \mathbf{M}\right) $%
\begin{equation}
\mathcal{V}_{\rho }\simeq \tbigoplus\nolimits_{i=0}^{+\infty }\mathcal{W}%
_{\rho _{i}}\text{ and }\mathcal{V}_{\rho }\otimes _{\mathcal{O}%
_{X_{0}}}\Omega _{X_{0}/\Bbbk }^{p}\simeq \tbigoplus\nolimits_{i=0}^{+\infty
}\mathcal{W}_{\rho _{i}}\otimes _{\mathcal{O}_{X_{0}}}\Omega _{X_{0}/\Bbbk
}^{p}\simeq \tbigoplus\nolimits_{i=0}^{+\infty }\mathcal{W}_{\rho
_{i}\otimes \wedge ^{p}(\mathrm{Sym}_{g}^{2}\left( -1\right) )}\text{,}
\label{Primitives FDec}
\end{equation}%
where the second identifications are induced by the Kodaira-Spences
isomorphism $\mathcal{W}_{\mathrm{Sym}_{g}^{2}\left( -1\right) }\simeq
\Omega _{X_{0}/\Bbbk }^{1}$ inducing $\mathcal{W}_{\wedge ^{p}(\mathrm{Sym}%
_{g}^{2}\left( -1\right) )}\simeq \Omega _{X_{0}/\Bbbk }^{p}$.

In particular, thanks to \cite[Theorem 4.1]{Kz73} and \cite[Lemma 4.2.1]%
{Kz78}, we can consider the unit root splitting%
\begin{equation}
\mathcal{H}_{\mathrm{dR},X_{0}}^{1}=\mathcal{H}_{\mathrm{dR}%
,X_{0}}^{1,\varphi =p}\oplus \mathcal{H}_{\mathrm{dR},X_{0}}^{1,\varphi =1}%
\text{, where }\mathcal{H}_{\mathrm{dR},X_{0}}^{1,\varphi =p}=\omega _{%
\mathrm{dR},X_{0}}\text{,}  \label{Primitives F URSpl}
\end{equation}%
$\varphi $ is the $\sigma $-linear morphism induced by the morphism of $X^{%
\mathrm{ord}}$ onto itself sending $G$ to the quotient by the connected part
of $G\left[ p\right] $ and $\mathcal{H}_{\mathrm{dR},X_{0}}^{1,\varphi
=p^{r}}$ is the submodule where the $\sigma $-linear Frobenious $\varphi $
acts with slope $r$. Indeed, as proved in Lemma \ref{Primitives L URSpl}
below (see also \cite[Lemma 4.2.1]{Kz78}), one has that $\mathcal{H}_{%
\mathrm{dR},X_{0}}^{1,\varphi =p^{r}}$ is spanned as an $\mathcal{O}_{X_{0}}$%
-module by a $\mathbb{Z}_{p}$-module free of rank $g$ on which $\varphi
=p^{r}$.

\begin{lemma}
\label{Primitives L URSpl}We have that $\left\{ \eta _{i}\right\}
_{i=1}^{g}\subset \mathcal{H}_{\mathrm{dR},X_{0}}^{1,\varphi =1}$ and,
indeed, $\varphi \left( \delta _{i}\right) =p\delta _{i}$ and $\varphi
\left( \eta _{i}\right) =\eta _{i}$ for $i=1,...,g$.
\end{lemma}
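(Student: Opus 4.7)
The plan is to first prove $\varphi(\delta_i) = p\delta_i$ directly from the construction of the trivialization $\alpha_0$, and then derive $\varphi(\eta_i) = \eta_i$ from the standard compatibility between the crystalline Frobenius and the Gauss--Manin connection, using the defining formula $\eta_i = \nabla(\theta_i)(\delta_i)$ and the explicit description of Frobenius on $\Omega_{X_0/\Bbbk}^1$.

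First I would verify $\varphi(\delta_i) = p\delta_i$ by reducing to the cusps, which is possible by $q$-expansion (the Igusa tower is irreducible and both sides are global sections of $\omega_{\mathrm{dR},X_0}$). At a cusp $x_\infty: X_\infty\to X$ the trivialization $\alpha_\infty = \alpha_{\phi,K,\sigma}$ is induced by the identification $\mathcal{O}_{X_\infty}\otimes_{\mathbb{Z}}\mathbb{X}\overset{\sim}{\rightarrow}\omega_{\mathrm{dR},\mathbf{T}_{\mathbb{X}}/X_\infty}$ sending $s\in\mathbb{X}$ to $\mathrm{dlog}(s)$ (Remark \ref{q-exp R1 KS}), and the Frobenius $\varphi$ on the ordinary semiabelian scheme restricts over the cusp to the Frobenius on the torus $\mathbf{T}_{\mathbb{X}}$ (where $G[p]^\circ = \mathbf{T}_{\mathbb{X}}[p]$ because the toric part is multiplicative). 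Because $\varphi$ sends $s$ to $s^p$ in the character group, we have $\varphi(\mathrm{dlog}(s)) = p\,\mathrm{dlog}(s)$, hence $\varphi(\delta_i) = p\delta_i$ at the cusp. The equality $\varphi(\delta_i) = p\delta_i$ therefore holds on every geometric component of $X_0$ by the $q$-expansion principle. (Alternatively, this can be deduced from \cite[Lemma 4.2.1]{Kz78}, whose normalisation is forced by the ordinary $F$-crystal structure together with the Hodge–Tate trivialisation of $\omega_{\mathrm{dR}}$ by $\mathbb{Z}_p\otimes_{\mathbb{Z}}\mathbb{X}\simeq T\mathbf{G}^t[p^\infty]^{\mathrm{\acute{e}t}}$.)

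For the second step I would use the Frobenius–connection compatibility
\begin{equation*}
\nabla\circ\varphi \;=\; \bigl(\varphi\otimes \varphi^{\ast}_{\Omega}\bigr)\circ\nabla,
\end{equation*}
and determine $\varphi^{\ast}_{\Omega}$ on the basis $\{\omega_i\}$. Reducing again to the cusp and using Remark \ref{q-exp R KS}, we have $\omega_i = c_i \cdot dq_i/q_i$ with $c_i\in\{1,2\}$; since $\varphi(q_i) = q_i^p$, we get $\varphi^{\ast}_{\Omega}(\omega_i) = p\omega_i$, and this extends to $X_0$ by $q$-expansion. Now expand $\nabla(\delta_i) = \sum_k \nabla(\theta_k)(\delta_i)\otimes\omega_k$. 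Applying the compatibility to the identity $\varphi(\delta_i) = p\delta_i$ gives
\begin{equation*}
p\sum_k \nabla(\theta_k)(\delta_i)\otimes\omega_k \;=\; \sum_k \varphi\bigl(\nabla(\theta_k)(\delta_i)\bigr)\otimes p\omega_k,
\end{equation*}
and cancelling $p$ and comparing coefficients in the basis $\{\omega_k\}$ yields $\varphi(\nabla(\theta_k)(\delta_i)) = \nabla(\theta_k)(\delta_i)$ for every $k$. Taking $k=i$ (i.e. the diagonal index $(i,i)$) recovers $\varphi(\eta_i) = \eta_i$ by the very definition of $\eta_i$.

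The main obstacle is Step 1: making precise how the crystalline Frobenius interacts with the canonical Hodge–Tate trivialization $\alpha_0$ of $\omega_{\mathrm{dR},X_0}$. The cleanest route is the cuspidal reduction above, which bypasses the general Hodge–Tate/$F$-crystal compatibility by exploiting the fact that at the $0$-dimensional cusps the semiabelian scheme degenerates to the torus $\mathbf{T}_{\mathbb{X}}$, where Frobenius is completely explicit; the irreducibility of the Igusa tower and the $q$-expansion principle then do all the remaining work. Step 2 is then essentially a formal manipulation with the basis $\{\delta_i,\eta_i\}$ constructed via Fonseca's calculation underlying Lemma \ref{de Rham L1}.
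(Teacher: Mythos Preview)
Your proposal is correct and follows essentially the same route as the paper's proof: both arguments first establish $\varphi(\delta_i)=p\delta_i$ by reduction to the cusp (the paper invokes $(\ref{q-exp F1})$ directly, you spell out the torus computation), then combine the Frobenius--connection compatibility $(\varphi\otimes\varphi)\circ\nabla=\nabla\circ\varphi$ with the fact that $\varphi(\omega_i)=p\omega_i$ at the cusp to deduce $\varphi(\eta_i)=\eta_i$. The only cosmetic difference is that the paper packages the second step as the operator identity $\varphi\circ\nabla(\theta_i)=p^{-1}\nabla(\theta_i)\circ\varphi$ and then evaluates at $\delta_i$, whereas you expand $\nabla(\delta_i)$ in the basis $\{\omega_k\}$ and compare coefficients; these are the same computation read in two directions.
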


\begin{proof}
The fact that $\varphi \left( \delta _{i}\right) =p\delta _{i}$ follows from 
$\left( \text{\ref{q-exp F1}}\right) $ and, then, the relation $\varphi
\left( \eta _{i}\right) =\eta _{i}$ will follow if we can check that%
\begin{equation*}
\varphi \circ \nabla \left( \theta _{i}\right) =p^{-1}\cdot \nabla \left(
\theta _{i}\right) \circ \varphi \text{.}
\end{equation*}%
By definition, we have $\nabla \left( \theta _{i}\right) :=\left( 1\otimes 
\widetilde{\theta }_{i}\right) \circ \nabla $, where $\widetilde{\theta }%
_{i}:\Omega _{X_{0}/\Bbbk }^{1}\rightarrow \mathcal{O}_{X_{0}}$ denotes the $%
\mathcal{O}_{X_{0}}$-linear morphism associated to the derivation $\theta
_{i}:\mathcal{O}_{X_{0}}\rightarrow \mathcal{O}_{X_{0}}$. But according to
Lemma \ref{q-exp L KS} (where $\widetilde{\theta }_{i}$ is abusively denoted
again $\theta _{i}$), we have that $\widetilde{\theta }_{i}\left( \omega
_{j}\right) =\delta _{ij}$, where $\omega _{i}$ satisfies $\varphi \left(
\omega _{i}\right) =p\omega _{i}$ (because it is a multiple of $\frac{dq_{i}%
}{q_{i}}$ by an element of $\Bbbk $ and $\varphi $ operates on $q$-expansion
by sending $q^{\beta }$ to $q^{p\beta }$, see \cite[pag. 247, pa262 and Ch.
VII, 4.2 Theorem and 4.4 Proposition]{FC}). One deduces from this fact that $%
1\otimes \widetilde{\theta }_{i}\circ \left( \varphi \otimes \varphi \right)
=p\cdot \varphi \circ \left( 1\otimes \widetilde{\theta }_{i}\right) $ on $%
\mathcal{H}_{\mathrm{dR},X_{0}}^{1}\otimes _{\mathcal{O}_{X_{0}}}\Omega
_{X_{0}/\Bbbk }^{1}$. Because by functoriality of the Gauss-Manin
connection, $\varphi $ satisfies the relation $\left( \varphi \otimes
\varphi \right) \circ \nabla =\nabla \circ \varphi $, the claimed relation
follows.
\end{proof}

\begin{lemma}
\label{Primitives L2}If $p_{0}=p^{\mathrm{ord}}$ is the unit root splitting,
then under the trivializations $\mathcal{V}_{\rho }\simeq \mathcal{O}%
_{X_{0}}\otimes _{\Bbbk }\mathrm{Ind}_{\mathbf{Q}^{-}}^{\mathbf{GSp}%
_{2g}}\left( \rho \right) \left[ \mathbf{Y}\right] $ and $\mathcal{W}_{\rho
_{i}}\simeq \mathcal{O}_{X_{0}}\otimes _{\Bbbk }\rho _{i}$ given by $\left( 
\text{\ref{de Rham F Triv2}}\right) $, the first decomposition in $\left( 
\text{\ref{Primitives FDec}}\right) $ corresponds to $\mathcal{O}%
_{X_{0}}\otimes _{\Bbbk }\left( \text{\ref{Representations F DirSumDec}}%
\right) $.
\end{lemma}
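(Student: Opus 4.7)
The strategy is to identify the geometric splitting $p_0 = p^{\mathrm{ord}}$ of the Hodge filtration, under the trivialization $\alpha_{\mathrm{dR},0}$, with a purely $\mathbf{M}$-equivariant algebraic splitting, and then propagate this through all the constructions. First, I would invoke Lemma \ref{Primitives L URSpl}: the Frobenius $\varphi$ acts on the symplectic-Hodge basis by $\varphi(\delta_i) = p \delta_i$ and $\varphi(\eta_j) = \eta_j$, so under $\alpha_{\mathrm{dR},0}$ it linearizes to $\sigma \otimes m_0$ for the element $m_0 \in \mathbf{M}(\Bbbk)$ corresponding to $(p \cdot 1_g, p) \in \mathbf{GL}_g \times \mathbf{G}_m$ via $(\ref{Representations E GSp F})$. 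Consequently $\mathcal{H}_{\mathrm{dR},X_0}^{1,\varphi=1}$ identifies under $\alpha_{\mathrm{dR},0}$ with $\mathcal{O}_{X_0} \otimes_\Bbbk \overline{W}$, and the unit root projection $p_0 : \mathcal{H}_{\mathrm{dR},X_0}^1 \twoheadrightarrow \omega_{\mathrm{dR},X_0}$ corresponds to the standard $\mathbf{M}$-equivariant projection $V = W \oplus \overline{W} \twoheadrightarrow W$.

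Next, I would trace this identification through the diagram $(\ref{Primitives D1})$. By Remark \ref{Sheaves R Ex}, the functor $\mathcal{E}_\cdot$ is exact, and through $\vartheta_{\alpha_{\mathrm{dR},0}}$ it translates the algebraic diagram $(\ref{Representations D1})$ into $(\ref{Primitives D1})$. Since the operations producing the diagram (pullback, pushout, duality, tensor) are $\mathbf{M}$-equivariant, and since $p_0$ corresponds under the trivialization to the algebraic $\mathbf{M}$-equivariant splitting of the top row of $(\ref{Representations D1})$, all induced splittings of $(\ref{Primitives D1})$ agree under $\vartheta_{\alpha_{\mathrm{dR},0}}$ with those coming from the algebraic Lagrangian decomposition $V = W \oplus \overline{W}$. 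Dualizing the third row of $(\ref{Primitives D1})$ and combining with Lemma \ref{Primitives L1} (i.e.\ $\mathcal{E}_\cdot$ applied to Proposition \ref{Representations P Spl}), this identifies the splitting $(\ref{Primitives F Spl})$ of $\mathcal{J}$, under $\vartheta_{\alpha_{\mathrm{dR},0}}$, with the algebraic direct sum decomposition $\mathrm{J} = \mathrm{J}_{=0} \oplus \mathrm{J}_{=1}$ of $(\ref{Representations F DirSumDec})$ for $\rho = \mathbf{1}$.

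To conclude, since the isomorphism $(\ref{Primitives F IndSym})$ is obtained by applying $\mathcal{E}_\cdot$ to the algebraic $(\ref{Representations F IndSym})$, it commutes with the trivialization. Taking symmetric powers of the splitting of $\mathcal{J}$ established above and tensoring with $\mathcal{W}_\rho$ produces the first decomposition of $(\ref{Primitives FDec})$; on the algebraic side, the same operations applied to $\mathrm{J} = \mathrm{J}_{=0} \oplus \mathrm{J}_{=1}$ reproduce, degree by degree, the decomposition $(\ref{Representations F DirSumDec})$ of $\mathrm{Ind}_{\mathbf{Q}^-}^{\mathbf{G}}(\rho)[\mathbf{Y}]$. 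Passing to the limit $r \to \infty$ then gives the required correspondence $\mathcal{W}_{\rho_i} \leftrightarrow \mathcal{O}_{X_0} \otimes_\Bbbk \mathrm{Ind}_{\mathbf{Q}^-}^{\mathbf{G}}(\rho)[\mathbf{Y}]_{=i}$.

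The main technical nuisance is bookkeeping the twist conventions throughout: algebraic twists by powers of the symplectic multiplier $\nu$ correspond to sheaf twists of opposite sign via the identity $\mathcal{E}_{\rho(-1)} = \mathcal{E}_\rho(1)$ noted in the first footnote of \S \ref{S Sheaves}, and one must verify that these conversions are consistent at every step (in particular in matching the appearance of $\mathrm{Sym}^2(\omega_{\mathrm{dR},X_0})^\vee(1)$ in $(\ref{Primitives F Spl})$ with $\mathrm{Sym}_g^{2,\vee}(1)$ in the third row of $(\ref{Representations D1})$). Once this bookkeeping is carried out, the identification is forced by the functoriality of $\mathcal{E}_\cdot$ and by the uniqueness (up to scalar) of the $\mathbf{M}$-equivariant splittings involved.
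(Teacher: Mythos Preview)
Your proposal is correct and follows essentially the same route as the paper: reduce to identifying the splitting $(\ref{Primitives F Spl})$ of $\mathcal{J}$ with the degree decomposition of $\mathrm{J}$, linearize the Frobenius via Lemma~\ref{Primitives L URSpl} to the element $m_0\in\mathbf{M}$, and then transport everything through the identification of the diagrams $(\ref{Primitives D1})$ and $(\ref{Representations D1})$ under $\vartheta_{\alpha_{\mathrm{dR},0}}$.

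The only noteworthy difference is in the final step. The paper pins down the induced splitting of $\mathrm{J}$ by computing the $m_0$-eigenspaces directly via Lemma~\ref{Representations L1}\,$(2)$, which shows $\mathrm{J}^{m_0=1}=\mathrm{J}_{=0}$ and $\mathrm{J}^{m_0=p^{-1}}=\mathrm{J}_{=1}$. You instead argue by functoriality of the pullback/pushout/dual construction together with uniqueness of the $\mathbf{M}$-equivariant splitting of $\mathrm{J}$ (the two summands being non-isomorphic $\mathbf{GL}_g$-irreducibles). Both arguments are valid; the paper's is a touch more explicit, while yours avoids invoking Lemma~\ref{Representations L1}\,$(2)$ at the cost of appealing to semisimplicity of the $\mathbf{M}$-action on $\mathrm{J}$.
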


\begin{proof}
Because the first decomposition in $\left( \text{\ref{Primitives FDec}}%
\right) $ (resp. $\mathcal{O}_{X_{0}}\otimes _{\Bbbk }\left( \text{\ref%
{Representations F DirSumDec}}\right) $) is induced by $\left( \text{\ref%
{Primitives F IndSym}}\right) $ (resp. $\left( \text{\ref{Representations F
IndSym}}\right) $) and the splitting $\left( \text{\ref{Primitives F Spl}}%
\right) $ (resp. the splitting given by the graduation on $\mathrm{Ind}_{%
\mathbf{Q}^{-}}^{\mathbf{GSp}_{2g}}\left( \rho \right) \left[ \mathbf{Y}%
\right] _{\leq 1}$), suffices to check that the splitting $\left( \text{\ref%
{Primitives F Spl}}\right) $ equals $\mathcal{O}_{X_{0}}\otimes _{\Bbbk }-$
applied to the splitting given by the graduation on $\mathrm{Ind}_{\mathbf{Q}%
^{-}}^{\mathbf{GSp}_{2g}}\left( \rho \right) \left[ \mathbf{Y}\right] _{\leq
1}$. By definition of $\alpha _{\mathrm{dR},0}$, the ordered
symplectic-Hodge basis provided by the standard basis $\left\{
e_{i},,f_{j}\right\} _{i,j=1}^{g}$ is sent via $\alpha _{\mathrm{dR},0}$ to $%
\left\{ \delta _{i},\eta _{j}\right\} _{i,j=1}^{g}$ (see the discussion
before Remark \ref{de Rham R DefEta}). Hence, it follows from Lemma \ref%
{Primitives L URSpl} that the Frobenious $\varphi $ on the left hand side of%
\begin{equation*}
\vartheta _{\alpha _{\mathrm{dR},0}}=\alpha _{\mathrm{dR},0}^{-1}:\mathcal{H}%
_{\mathrm{dR},X_{0}}^{1}\simeq \mathcal{O}_{X_{0}}\otimes _{\Bbbk }V
\end{equation*}%
is given on the right hand side of this identification by the Frobenious
element%
\begin{equation*}
m_{0}:=\left( 
\begin{array}{cc}
p\cdot 1_{g} & 0_{g} \\ 
0_{g} & 1_{g}%
\end{array}%
\right) \in \mathbf{M}
\end{equation*}%
We deduce that $\left( \text{\ref{Primitives F URSpl}}\right) $ is induced
by the decomposition of $V$ as an $\left\langle m_{0}\right\rangle $-module.
It also follows from Remark \ref{de Rham R DefEta} that the symplectic
pairing $\left( \text{\ref{Sheaves F Sympl}}\right) $ is equivariant (with
the convention that the twist by $-1$ means that the Frobenious is
multiplied by $p$): it then follows that all the arrows appearing in $\left( 
\text{\ref{Primitives D1}}\right) $ are $\varphi $-equivariant. On the other
hand, because $m_{0}\in \mathbf{Q}$, we also know that all the arrows
appearing in $\left( \text{\ref{Representations D1}}\right) $ are $m_{0}$%
-equivariant. Because $\vartheta _{\alpha _{\mathrm{dR},0}}$ identifies $%
\left( \text{\ref{Primitives D1}}\right) $ with $\mathcal{O}_{X_{0}}\otimes
_{\Bbbk }\left( \text{\ref{Representations D1}}\right) $ (in view of Lemma %
\ref{Primitives L1}, with the conventions that $\mathcal{E}_{\rho \left(
-1\right) }=\mathcal{E}_{\rho }\left( 1\right) $ if $\rho \left( -1\right)
:=\rho \otimes \nu ^{-1}$ for the symplectic multiplier $\nu $), we deduce
that $\left( \text{\ref{Primitives F Spl}}\right) $ is also induced by the
decomposition of $\mathrm{J}:=\mathrm{Ind}_{\mathbf{Q}^{-}}^{\mathbf{G}%
}\left( \mathrm{1}\right) \left[ \mathbf{Y}\right] _{\leq 1}$ as an $%
\left\langle m_{0}\right\rangle $-module. But this decomposition coincides
with the splitting given by the graduation on $\mathrm{Ind}_{\mathbf{Q}%
^{-}}^{\mathbf{GSp}_{2g}}\left( \rho \right) \left[ \mathbf{Y}\right] _{\leq
1}$, as it follows from Lemma \ref{Representations L1} $\left( 2\right) $,
which gives $\mathrm{Ind}_{\mathbf{Q}^{-}}^{\mathbf{GSp}_{2g}}\left( \rho
\right) \left[ \mathbf{Y}\right] _{\leq 1}^{m_{0}=p^{-1}}=\mathrm{Ind}_{%
\mathbf{Q}^{-}}^{\mathbf{GSp}_{2g}}\left( \rho \right) \left[ \mathbf{Y}%
\right] _{=1}$ and $\mathrm{Ind}_{\mathbf{Q}^{-}}^{\mathbf{GSp}_{2g}}\left(
\rho \right) \left[ \mathbf{Y}\right] _{\leq 1}^{m_{0}=1}=\mathrm{Ind}_{%
\mathbf{Q}^{-}}^{\mathbf{GSp}_{2g}}\left( \rho \right) \left[ \mathbf{Y}%
\right] _{=0}$.
\end{proof}

\bigskip

We assume, until the end of \S \ref{S Igusa de Rham}, that $X_{0}\in \left\{ 
\mathfrak{I},X_{\infty },\mathfrak{X}_{\infty }\right\} $ and that $p_{0}=p^{%
\mathrm{ord}}$ is the unit root splitting. The trivialization $\left( \text{%
\ref{de Rham F Triv2}}\right) $ yields%
\begin{equation*}
H^{0}\left( X_{0},\mathcal{W}_{\rho _{i}}\otimes _{\mathcal{O}_{X}}\Omega
_{X/\Bbbk }^{p}\right) \overset{\sim }{\longrightarrow }R_{0}\otimes _{\Bbbk
}\rho _{i}\otimes _{\Bbbk }\wedge ^{p}\mathrm{Sym}_{g}^{2}\simeq
\tbigoplus\nolimits_{\mathbf{i}\in I_{p}}R_{0}\otimes _{\Bbbk }\rho
_{i}\otimes _{R_{\infty }}\omega _{\mathbf{i}}\text{,}
\end{equation*}%
where, taking into account $\left( \text{\ref{de Rham F TrivKS}}\right) $,
again the above isomorphism is given by the Kodaira-Spencer isomorphism $%
\mathcal{W}_{\mathrm{Sym}_{g}^{2}\left( -1\right) }\simeq \Omega
_{X_{0}/\Bbbk }^{1}$ inducing $\mathcal{W}_{\wedge ^{p}\mathrm{Sym}%
_{g}^{2}}\simeq \Omega _{X_{0}/\Bbbk }^{p}$. Thanks to Lemma \ref{Primitives
L2} we deduce that, if we define $H^{0}\left( X_{0},\mathcal{W}_{\rho
_{i}}\otimes _{\mathcal{O}_{X}}\Omega _{X/\Bbbk }^{\cdot }\right) ^{\left[ P%
\right] }$ by means of the isomorphism%
\begin{equation}
H^{0}\left( X_{0},\mathcal{W}_{\rho _{i}}\otimes _{\mathcal{O}_{X}}\Omega
_{X/\Bbbk }^{\cdot }\right) ^{\left[ P\right] }\overset{\sim }{%
\longrightarrow }e^{\left[ P\right] }\left( R_{0}\right) \otimes _{\Bbbk
}\rho _{i}\otimes _{\Bbbk }\wedge ^{p}\mathrm{Sym}_{g}^{2}\simeq
\tbigoplus\nolimits_{\mathbf{i}\in I_{p}}e^{\left[ P\right] }\left(
R_{0}\right) \otimes _{\Bbbk }\rho _{i}\otimes _{R_{\infty }}\omega _{%
\mathbf{i}}\text{,}  \label{de Rham F Triv5depl}
\end{equation}%
then $\left( \text{\ref{Primitives FDec}}\right) $ (and the fact that the $p$%
-depletion $\left( -\right) ^{\left[ P\right] }$ is can be defined by
evaluating the sections at $X_{\infty }$) yields%
\begin{equation*}
H^{0}\left( X_{0},\mathcal{V}_{\rho }\otimes _{\mathcal{O}_{X}}\Omega
_{X/\Bbbk }^{p}\right) ^{\left[ P\right] }\simeq
\tbigoplus\nolimits_{i=0}^{+\infty }H^{0}\left( X_{0},\mathcal{W}_{\rho
_{i}}\otimes _{\mathcal{O}_{X}}\Omega _{X/\Bbbk }^{p}\right) ^{\left[ P%
\right] }\text{.}
\end{equation*}%
We have%
\begin{equation*}
\nabla ^{p}:H^{0}\left( X_{0},\mathcal{V}_{\rho }\otimes _{\mathcal{O}%
_{X}}\Omega _{X/\Bbbk }^{p}\right) \longrightarrow H^{0}\left( X_{0},%
\mathcal{V}_{\rho }\otimes _{\mathcal{O}_{X}}\Omega _{X/\Bbbk }^{p+1}\right) 
\text{ for }p\in \left\{ 0,...,d_{g}-1\right\} \text{.}
\end{equation*}%
Thanks to Griffiths' tranversality condition (see Lemma \ref{Sheaves L Fil})
and the identification $\mathrm{gr}_{i}\left( \mathcal{V}_{\rho }\right)
\simeq \mathcal{W}_{\rho _{i}}$, it induces%
\begin{eqnarray*}
&&\Delta _{i}^{p}:H^{0}\left( X_{0},\mathcal{W}_{\rho _{i}}\otimes _{%
\mathcal{O}_{X}}\Omega _{X/\Bbbk }^{p}\right) \simeq H^{0}\left( X_{0},%
\mathrm{gr}_{i}\left( \mathcal{V}_{\rho }\right) \otimes _{\mathcal{O}%
_{X}}\Omega _{X/\Bbbk }^{p}\right) \\
&&\overset{\mathrm{gr}_{i}\left( \nabla ^{p}\right) }{\longrightarrow }%
H^{0}\left( X_{0},\mathrm{gr}_{i+1}\left( \mathcal{V}_{\rho }\right) \otimes
_{\mathcal{O}_{X}}\Omega _{X/\Bbbk }^{p+1}\right) \simeq H^{0}\left( X_{0},%
\mathcal{W}_{\rho _{i+1}}\otimes _{\mathcal{O}_{X}}\Omega _{X/\Bbbk
}^{p+1}\right) \text{.}
\end{eqnarray*}

\begin{lemma}
\label{Primitives LKey}Suppose that%
\begin{equation*}
F\in H^{0}\left( X_{0},\mathcal{W}_{\rho _{i}}\otimes _{\mathcal{O}%
_{X}}\Omega _{X/\Bbbk }^{p}\right) ^{\left[ P\right] }\overset{\left( \text{%
\ref{Primitives FDec}}\right) }{\hookrightarrow }H^{0}\left( X_{0},\mathcal{V%
}_{\rho }\otimes _{\mathcal{O}_{X}}\Omega _{X/\Bbbk }^{p}\right) ^{\left[ P%
\right] }\text{.}
\end{equation*}%
Then%
\begin{eqnarray*}
\nabla ^{p}\left( F\right) &=&\Theta ^{p}\left( F\right) +\Delta
_{i}^{p}\left( F\right) \in H^{0}\left( X_{0},\mathcal{W}_{\rho _{i}}\otimes
_{\mathcal{O}_{X}}\Omega _{X/\Bbbk }^{p+1}\right) ^{\left[ P\right] }\oplus
H^{0}\left( X_{0},\mathcal{W}_{\rho _{i+1}}\otimes _{\mathcal{O}_{X}}\Omega
_{X/\Bbbk }^{p+1}\right) ^{\left[ P\right] } \\
&&\overset{\left( \text{\ref{Primitives FDec}}\right) }{\hookrightarrow }%
H^{0}\left( X_{0},\mathcal{V}_{\rho }\otimes _{\mathcal{O}_{X}}\Omega
_{X/\Bbbk }^{p+1}\right) ^{\left[ P\right] }\text{,}
\end{eqnarray*}%
where $\Theta ^{p}\left( F\right) \in H^{0}\left( X_{0},\mathcal{W}_{\rho
_{i}}\otimes _{\mathcal{O}_{X}}\Omega _{X/\Bbbk }^{p+1}\right) ^{\left[ P%
\right] }$ and $\Delta _{i}^{p}\left( F\right) \in H^{0}\left( X_{0},%
\mathcal{W}_{\rho _{i+1}}\otimes _{\mathcal{O}_{X}}\Omega _{X/\Bbbk
}^{p+1}\right) ^{\left[ P\right] }$ admits the following description. If%
\begin{equation*}
F=\tsum\nolimits_{\mathbf{i}\in I_{p}}F_{\mathbf{i}}\otimes _{\Bbbk }v_{%
\mathbf{i}}\otimes _{R_{0}}\omega _{\mathbf{i}}\text{ and }\Psi \left(
F\right) =\tsum\nolimits_{\mathbf{j}\in I_{p+1}}\Psi \left( F\right) _{%
\mathbf{j}}\otimes _{R_{0}}\omega _{\mathbf{j}}
\end{equation*}%
for $\Psi \in \left\{ \Theta ^{p},\Delta _{i}^{p}\right\} $, writing $%
\mathbf{j}=\left( j_{1},...,j_{p+1}\right) $ we have%
\begin{eqnarray*}
\Theta ^{p}\left( F\right) _{\mathbf{j}}\left( q\right)
&=&\tsum\nolimits_{k=1}^{p+1}\left( -1\right) ^{k+1}\theta _{j_{k}}\left(
F_{(j_{1},...,\widehat{j_{k}},...,j_{p+1})}\right) \otimes _{\Bbbk
}v_{(j_{1},...,\widehat{j_{k}},...,j_{p+1})}\text{ and} \\
\Delta _{i}^{p}\left( F\right) _{\mathbf{j}}\left( q\right)
&=&\tsum\nolimits_{k=1}^{p+1}\left( -1\right) ^{k+1}F_{(j_{1},...,\widehat{%
j_{k}},...,j_{p+1})}\otimes _{\Bbbk }\partial _{j_{k}}v_{(j_{1},...,\widehat{%
j_{k}},...,j_{p+1})}\text{.}
\end{eqnarray*}
\end{lemma}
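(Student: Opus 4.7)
The strategy is to transport the whole computation to the algebraic side via the trivialization $\vartheta_{\alpha_{\mathrm{dR},0}}$ of $(\ref{de Rham F Triv2})$, where, by Lemma \ref{Primitives L2}, the geometric splitting $\mathcal{V}_{\rho}\simeq\bigoplus_{i}\mathcal{W}_{\rho_{i}}$ coming from the unit-root splitting corresponds to the algebraic graded decomposition
\begin{equation*}
\mathrm{Ind}_{\mathbf{Q}^{-}}^{\mathbf{G}}(\rho)[\mathbf{Y}]
=\tbigoplus\nolimits_{i\geq 0}\mathrm{Ind}_{\mathbf{Q}^{-}}^{\mathbf{G}}(\rho)[\mathbf{Y}]_{=i}.
\end{equation*}
Since the conclusion concerns an equality of sections, by the $q$-expansion principle it suffices to check it after pull-back to a cusp $\mathfrak{X}_{\infty}$, where Lemma \ref{de Rham L1} and Lemma \ref{de Rham L2} apply directly to $\mathcal{V}_\rho$ (as $\mathrm{Ind}_{\mathbf{Q}^{-}}^{\mathbf{G}}(\rho)[\mathbf{Y}]$ lies in $\mathrm{Rep}_{fl\text{-}alg}(\mathfrak{g},\mathbf{Q})$ by Remark \ref{Representations R IndModel}).

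First I would write $F=\sum_{\mathbf{i}\in I_p}F_{\mathbf{i}}\otimes_\Bbbk v_{\mathbf{i}}\otimes_{R_0}\omega_{\mathbf{i}}$ where, by hypothesis, each $v_{\mathbf{i}}$ lies in the $\mathbf{M}$-isotypic component $\mathrm{Ind}_{\mathbf{Q}^{-}}^{\mathbf{G}}(\rho)[\mathbf{Y}]_{=i}$. Then Lemma \ref{de Rham L2} together with Lemma \ref{de Rham L1} (which identifies $X(\theta_{j_k},\alpha_\infty)$ with $1\otimes\partial_{j_k}$) yields
\begin{equation*}
\nabla^{p}(F)_{\mathbf{j}}=\tsum\nolimits_{k=1}^{p+1}(-1)^{k+1}\bigl(\theta_{j_k}(F_{\hat k})\otimes_\Bbbk v_{\hat k}+F_{\hat k}\otimes_\Bbbk \partial_{j_k}v_{\hat k}\bigr),
\end{equation*}
where $\hat k$ abbreviates the multi-index $(j_1,\dots,\widehat{j_k},\dots,j_{p+1})$. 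The two summands have a distinct algebraic nature: the first does not touch the representation-theoretic variable and so remains in degree $i$, while $\partial_{j_k}\in\mathfrak{u}^{-}$ by Lemma \ref{Representations L1} $(3)$ carries $\mathrm{Ind}(\rho)[\mathbf{Y}]_{=i}$ into $\mathrm{Ind}(\rho)[\mathbf{Y}]_{=i+1}$, so the second summand lies in degree $i+1$. Defining $\Theta^{p}(F)$ and $\Delta_{i}^{p}(F)$ by the two displayed formulas thus produces the claimed decomposition, and the resulting sections are $P$-depleted because $\theta_{j_k}$ preserves $e^{[P]}(R_0)$ (by the formula $e^{[P]}\circ\theta_{j_k}=\theta_{j_k}\circ e^{[P]}$ on $q$-expansions), confirming that everything stays inside $H^0(X_0,-)^{[P]}$.

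It remains to reconcile this formula with the earlier \emph{abstract} definition $\Delta_{i}^{p}:=\mathrm{gr}_{i}(\nabla^{p})$, which is the main bookkeeping obstacle. Because $F$ represents a class in $\mathrm{gr}_{i}(\mathcal{V}_\rho)\otimes\Omega_{X/\Bbbk}^{p}$ under the identification $\mathrm{gr}_{i}(\mathcal{V}_\rho)\simeq\mathcal{W}_{\rho_i}$, the graded piece $\mathrm{gr}_{i}(\nabla^{p})(F)$ is obtained by applying $\nabla^{p}$ to a lift, then projecting onto the degree-$(i+1)$ summand of $\mathcal{V}_\rho\otimes\Omega^{p+1}$ under the decomposition $(\ref{Primitives FDec})$. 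But in our formula $\Theta^{p}(F)$ lies purely in degree $i$ and $\Delta_{i}^{p}(F)$ purely in degree $i+1$ (again by Lemma \ref{Representations L1} $(3)$), so this projection picks out exactly $\Delta_{i}^{p}(F)$; this uses that the unit-root splitting is precisely the algebraic grading, which is the content of Lemma \ref{Primitives L2}. Finally, the identification $\mathrm{gr}_i(\mathcal{V}_\rho\otimes\Omega^{p+1})\simeq\mathcal{W}_{\rho_i\otimes\wedge^p\mathrm{Sym}_g^2}$ matches the explicit formula through the Kodaira--Spencer identification $(\ref{de Rham F TrivKS})$ of $\omega_{\mathbf{j}}$ with wedges of elements of $\mathrm{Sym}_g^2(-1)$, thereby closing the argument.
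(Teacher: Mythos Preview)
Your proof is correct and follows essentially the same approach as the paper: use Lemma~\ref{Primitives L2} to identify the unit-root splitting with the algebraic degree grading, apply Lemma~\ref{de Rham L2} with $\rho$ replaced by $\mathrm{Ind}_{\mathbf{Q}^{-}}^{\mathbf{G}}(\rho)[\mathbf{Y}]$, and invoke Lemma~\ref{Representations L1}~$(3)$ to see that $\partial_{j_k}\in\mathfrak{u}^{-}$ raises the $Y$-degree by one. The paper's proof is just a two-line summary of this; you have additionally made explicit the reduction to the cusp via the $q$-expansion principle and the reconciliation with the earlier abstract definition $\Delta_i^p=\mathrm{gr}_i(\nabla^p)$, both of which are implicit in the paper.
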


\begin{proof}
Taking into account Lemma \ref{Primitives L2}, Lemma \ref{Representations L1}
$\left( 3\right) $ tells us that $\theta _{i}\ $raises the total $Y$-degree
by one for $i=1,...,d_{g}$. Hence Lemma \ref{de Rham L2} with ($\rho $ taken
to be $\mathrm{Ind}_{\mathbf{Q}^{-}}^{\mathbf{G}}\left( \rho \right) \left[ 
\mathbf{Y}\right] $) gives the claim.
\end{proof}

\bigskip

The following lemma tells us that the theta operators defined in Lemma \ref%
{Primitives LKey} give rise to complexes.

\begin{lemma}
\label{Primitives L3}If $p\in \left\{ 1,...,d_{g}\right\} $, then $\Theta
^{p}\left( \Theta ^{p-1}\left( F\right) \right) =0$ for every $F\in
H^{0}\left( X_{0},\mathcal{W}_{\rho _{i}}\otimes _{\mathcal{O}_{X}}\Omega
_{X/\Bbbk }^{p-1}\right) ^{\left[ P\right] }$: hence $H^{0}\left( X_{0},%
\mathcal{W}_{\rho _{i}}\otimes _{\mathcal{O}_{X}}\Omega _{X/\Bbbk }^{\cdot
}\right) ^{\left[ P\right] }$ is a complex. Indeed, as a complex, we have%
\begin{equation*}
\left( H^{0}\left( X_{0},\mathcal{W}_{\rho _{i}}\otimes _{\mathcal{O}%
_{X}}\Omega _{X/\Bbbk }^{\cdot }\right) ^{\left[ P\right] },\Theta ^{\cdot
}\right) \simeq \left( \rho _{i}\otimes _{\Bbbk }H^{0}\left( X_{0},\Omega
_{X/\Bbbk }^{\cdot }\right) ^{\left[ P\right] },1\otimes _{\Bbbk }d^{\cdot
}\right)
\end{equation*}
\end{lemma}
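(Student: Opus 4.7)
The plan is to prove the stronger claim (the isomorphism of complexes) directly, from which $\Theta^{p} \circ \Theta^{p-1} = 0$ is an immediate consequence of $d^{p} \circ d^{p-1} = 0$ on the base. The key point, already visible in the explicit formula of Lemma \ref{Primitives LKey}, is that $\Theta^p$ differentiates only the scalar coefficients $F_{\mathbf{i}}$ (via the theta operators $\theta_{j_k}$) while carrying the vectors $v_{\mathbf{i}} \in \rho_i$ along as constants. This is precisely the shape of $1_{\rho_i} \otimes d^p$ after rearranging tensor factors.

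Concretely, first I would use the trivialization in $\left( \text{\ref{de Rham F Triv5depl}}\right) $ to write
\begin{equation*}
H^0\bigl(X_0, \mathcal{W}_{\rho_i} \otimes_{\mathcal{O}_X} \Omega^p_{X/\Bbbk}\bigr)^{[P]} \simeq \tbigoplus\nolimits_{\mathbf{i} \in I_p} e^{[P]}(R_0) \otimes_{\Bbbk} \rho_i \otimes_{R_0} \omega_{\mathbf{i}},
\end{equation*}
and then rearrange the tensor factors to identify this with $\rho_i \otimes_{\Bbbk} H^0(X_0, \Omega^p_{X/\Bbbk})^{[P]}$, where the right-hand side is well-defined as a $\Bbbk$-module because $\rho_i$ is flat over $\Bbbk$ and the $v_{\mathbf{i}}$ factor appears linearly in each term. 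The fact that $H^0(X_0, \Omega^\cdot_{X/\Bbbk})^{[P]}$ is a subcomplex of the ordinary de Rham complex on $X_0$ is Corollary \ref{de Rham C1} applied to the trivial representation $\rho = \mathbf{1}$, so the right-hand side carries a canonical complex structure via $1 \otimes d^\cdot$.

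Next, I would compare the formula for $\Theta^p$ in Lemma \ref{Primitives LKey} with the specialization of Lemma \ref{de Rham L2} to $\rho = \mathbf{1}$ (where all $v_{\mathbf{i}}$ disappear and only the $\theta_{j_k}$ terms survive). The two formulas are symbolically identical once the common factors $v_{(j_1,\ldots,\widehat{j_k},\ldots,j_{p+1})}$ are pulled out: the sign pattern $(-1)^{k+1}$, the omission of the $k$-th index, and the action of $\theta_{j_k}$ on the scalar coefficient all match. This proves that, under the identification of the preceding paragraph, $\Theta^p$ corresponds exactly to $1_{\rho_i} \otimes d^p$. The claimed isomorphism of complexes follows, and $\Theta^p \circ \Theta^{p-1} = 0$ is then immediate from $d^p \circ d^{p-1} = 0$.

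The main obstacle is essentially bookkeeping in the second step: verifying that the two explicit formulas match term by term. But since the formula of Lemma \ref{Primitives LKey} was itself derived in Lemma \ref{de Rham L2} by separating the Gauss--Manin connection into a horizontal piece acting on coefficients and a vertical piece acting on $\rho$-vectors, the matching is essentially automatic and requires no new computation. One subtle point worth mentioning is that the decomposition $\left( \text{\ref{Primitives FDec}}\right) $ used implicitly in Lemma \ref{Primitives LKey} depends on the unit root splitting being a splitting of $\mathbf{Q}$-modules in the sense of Lemma \ref{Primitives L2}, which is what ensures that $\Theta^p$ preserves the summand indexed by $i$ and does not mix it with other degrees — but this has already been established.
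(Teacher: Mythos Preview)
Your proposal is correct and follows essentially the same approach as the paper: both establish the isomorphism of complexes by comparing the explicit formula for $\Theta^p$ from Lemma~\ref{Primitives LKey} with the formula of Lemma~\ref{de Rham L2} specialized to the trivial representation, and then deduce $\Theta^p \circ \Theta^{p-1}=0$ from $d^p \circ d^{p-1}=0$.
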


\begin{proof}
Indeed, a comparison between the explicit formulas for $\nabla ^{p}\left(
F\right) $ and $\Theta ^{p}\left( F\right) $ appearing in Lemma \ref{de Rham
L2} (with $\rho $ of loc.cit. taken to be the trivial representation) and,
respectively, Lemma \ref{Primitives LKey} (with $\rho $ our given
representation) shows that the graded module $\rho _{i}\otimes _{\Bbbk
}H^{0}\left( X_{0},\Omega _{X/\Bbbk }^{\cdot }\right) ^{\left[ P\right] }$
with its degree one morphism $1_{\rho _{i}}\otimes _{\Bbbk }d^{\cdot }$ is
identified with $H^{0}\left( X_{0},\mathcal{W}_{\rho _{i}}\otimes _{\mathcal{%
O}_{X}}\Omega _{X/\Bbbk }^{\cdot }\right) ^{\left[ P\right] }$ with its
degree one morphism $\Theta ^{\cdot }$. The claimed isomorphism follows and,
with it, the fact that $H^{0}\left( X_{0},\mathcal{W}_{\rho _{i}}\otimes _{%
\mathcal{O}_{X}}\Omega _{X/\Bbbk }^{\cdot }\right) ^{\left[ P\right] }$ is a
complex.
\end{proof}

\bigskip

We are now going to show certain acyclicity properties of $H^{0}\left( X_{0},%
\mathcal{W}_{\rho _{i}}\otimes _{\mathcal{O}_{X}}\Omega _{X/\Bbbk }^{\cdot
}\right) ^{\left[ P\right] }$ under the assumption that $P\left( 0\right) =0$%
. It then follows that we can write%
\begin{equation*}
P=\tsum\nolimits_{k=1}^{d_{g}}T_{k}P_{k}^{\ast }
\end{equation*}%
for suitable polynomials $P_{k}^{\ast }\in \mathbb{Z}_{p}\left[
T_{i}:i=1,...,d_{g}\right] $. Using $\left( \text{\ref{de Rham F Triv5depl}}%
\right) $, we define%
\begin{equation*}
\Theta _{p-1}^{-1}:H^{0}\left( X_{\infty },\mathcal{W}_{\rho _{i}}\otimes _{%
\mathcal{O}_{X}}\Omega _{X/\Bbbk }^{p}\right) \longrightarrow H^{0}\left(
X_{\infty },\mathcal{W}_{\rho _{i}}\otimes _{\mathcal{O}_{X}}\Omega
_{X/\Bbbk }^{p-1}\right)
\end{equation*}%
as follows. For every $\mathbf{i}\in I_{p}$, we write $\left\{ \mathbf{i}%
\right\} $ for the set $\left\{ i_{1},...,i_{p}\right\} $ (we understand
that $\left\{ \phi \right\} =\phi $). If $\mathbf{i}\in I_{p}$ and $k\in
\left\{ 1,...,n\right\} $, we set $k\wedge \mathbf{i}=\phi $ in case $k\in
\left\{ \mathbf{i}\right\} $ and, otherwise, we write $k\wedge \mathbf{i}$
for the unique element of $I_{p+1}$ such that $\left\{ k\wedge \mathbf{i}%
\right\} =\left\{ k\right\} \cup \left\{ \mathbf{i}\right\} $. Define $%
\varepsilon _{\mathbf{i}}\left( k\right) :=\#\left\{ i\in \left\{ \mathbf{i}%
\right\} :i<k\right\} $. If%
\begin{equation*}
\omega =\tsum\nolimits_{\mathbf{i}\in I_{p}}f_{\mathbf{i}}\otimes _{\Bbbk
}v_{\mathbf{i}}\otimes _{R_{0}}\omega _{\mathbf{i}}\text{ and }\Theta
_{p-1}^{-1}\left( \omega \right) =\tsum\nolimits_{\mathbf{j}\in
I_{p-1}}\Theta _{p-1}^{-1}\left( \omega \right) _{\mathbf{j}}\otimes
_{R_{0}}\omega _{\mathbf{j}}\text{,}
\end{equation*}%
then we have, by definition,%
\begin{equation}
\Theta _{p-1}^{-1}\left( \omega \right) _{\mathbf{j}}=\tsum\nolimits_{k\in
\left\{ 1,...,n\right\} -\left\{ \mathbf{j}\right\} }\left( -1\right)
^{\varepsilon _{k\wedge \mathbf{j}}\left( k\right) }P_{k}^{\ast }\left( 
\mathbf{\theta }\right) \left( \theta _{P}^{-1}\left( f_{k\wedge \mathbf{j}%
}\right) \right) \otimes _{\Bbbk }v_{\mathbf{i}}\text{.}
\label{Primitives F Integration}
\end{equation}

\begin{proposition}
\label{Primitives PKey}Suppose that $P\left( 0\right) =0$. If $p\in \left\{
1,...,d_{g}\right\} $ and%
\begin{equation*}
f\in H^{0}\left( X_{0},\mathcal{W}_{\rho _{i}}\otimes _{\mathcal{O}%
_{X}}\Omega _{X/\Bbbk }^{p}\right) ^{\left[ P\right] }\hookrightarrow
H^{0}\left( X_{0},\mathcal{V}_{\rho }\otimes _{\mathcal{O}_{X}}\Omega
_{X/\Bbbk }^{p}\right) ^{\left[ P\right] }
\end{equation*}%
is such that, $\Theta ^{p}\left( f\right) =0$ (we understand this condition
as an empty condition when $p=d_{g}$),\ then $f=\Theta ^{p-1}\left( \Theta
_{p-1}^{-1}\left( f\right) \right) $. In particular, $H^{0}\left( X_{0},%
\mathcal{W}_{\rho _{i}}\otimes _{\mathcal{O}_{X}}\Omega _{X/\Bbbk }^{\cdot
}\right) ^{\left[ P\right] }$ is acyclic in degree $p\in \left\{
1,...,d_{g}\right\} $.
\end{proposition}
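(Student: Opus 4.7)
The plan is to implement the homotopy argument sketched at the end of the introduction, using Lemma~\ref{Primitives L3} to reduce to the scalar de Rham complex $H^{0}(X_{0},\Omega _{X/\Bbbk }^{\cdot })^{[P]}$, and then to exhibit a Cartan-type chain homotopy for $d^{\cdot }$ whose restriction to cocycles is precisely the operator $\Theta _{p-1}^{-1}$ of formula~$(\text{\ref{Primitives F Integration}})$.

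First, by Lemma~\ref{Primitives L3} there is an isomorphism of complexes
\begin{equation*}
(H^{0}(X_{0},\mathcal{W}_{\rho _{i}}\otimes _{\mathcal{O}_{X}}\Omega
_{X/\Bbbk }^{\cdot })^{[P]},\Theta ^{\cdot })\simeq (\rho _{i}\otimes
_{\Bbbk }H^{0}(X_{0},\Omega _{X/\Bbbk }^{\cdot })^{[P]},1_{\rho
_{i}}\otimes _{\Bbbk }d^{\cdot })\text{,}
\end{equation*}
and under the trivialization $(\text{\ref{de Rham F Triv5depl}})$ the operator $\Theta _{p-1}^{-1}$ is visibly of the form $1_{\rho _{i}}\otimes \partial ^{p-1}$, where $\partial ^{p-1}\colon H^{0}(X_{0},\Omega _{X/\Bbbk }^{p})^{[P]}\to H^{0}(X_{0},\Omega _{X/\Bbbk }^{p-1})^{[P]}$ is defined by the same formula with the tensor factor $v_{\mathbf{i}}$ suppressed. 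Thus the whole statement is equivalent to exhibiting, for each $p\geq 1$, an operator $\partial ^{p-1}$ which is a right inverse to $d^{p-1}$ on cocycles in $H^{0}(X_{0},\Omega _{X/\Bbbk }^{p})^{[P]}$.

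Next I would define
\begin{equation*}
\partial ^{p-1}:=\theta _{P}^{-1}\circ \tsum\nolimits_{k=1}^{d_{g}}P_{k}^{\ast }(\mathbf{\theta })\circ \iota _{\theta _{k}}\text{,}
\end{equation*}
where $P=\sum _{k}T_{k}P_{k}^{\ast }$ (possible because $P(0)=0$), $\theta _{k}$ is the derivation dual to $\omega _{k}$, and $\iota _{\theta _{k}}$ is the usual interior multiplication. A direct unwinding of $\iota _{\theta _{k}}(\omega _{k\wedge \mathbf{j}})=(-1)^{\varepsilon _{k\wedge \mathbf{j}}(k)}\omega _{\mathbf{j}}$ shows that this $\partial ^{p-1}$ matches the explicit formula~$(\text{\ref{Primitives F Integration}})$. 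The main claim is then the Cartan magic identity
\begin{equation*}
d^{p-1}\circ \partial ^{p-1}+\partial ^{p}\circ d^{p}=\mathrm{id}\quad \text{on }H^{0}(X_{0},\Omega _{X/\Bbbk }^{p})^{[P]}\text{.}
\end{equation*}
To verify it, note that by Remark~\ref{q-exp R KS} every $\omega _{k}$ is closed, so at the $q$-expansion level at each cusp (which by the $q$-expansion principle determines everything) the Lie derivative satisfies $L_{\theta _{k}}(f\omega _{\mathbf{j}})=\theta _{k}(f)\omega _{\mathbf{j}}$. Since $d$ commutes with the $\theta _{k}$'s, hence with $P_{k}^{\ast }(\mathbf{\theta })$ and with $\theta _{P}^{-1}$ (the latter by invertibility on the $P$-depleted subspace, Lemma~\ref{de Rham L0}), the ordinary Cartan formula $L_{\theta _{k}}=d\iota _{\theta _{k}}+\iota _{\theta _{k}}d$ gives
\begin{equation*}
d^{p-1}\partial ^{p-1}+\partial ^{p}d^{p}=\theta _{P}^{-1}\tsum\nolimits_{k}P_{k}^{\ast }(\mathbf{\theta })L_{\theta _{k}}=\theta _{P}^{-1}\tsum\nolimits_{k}P_{k}^{\ast }(\mathbf{\theta })\theta _{k}=\theta _{P}^{-1}\theta _{P}=\mathrm{id}\text{,}
\end{equation*}
where the identity $\sum _{k}P_{k}^{\ast }(\mathbf{\theta })\theta _{k}=\theta _{P}$ is precisely the factorization of $P$ consequent to $P(0)=0$, applied to the commuting tuple $\mathbf{\theta }$. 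Given this, if $\Theta ^{p}(f)=0$ (equivalently, $d^{p}(f)=0$ after the reduction), then $f=d^{p-1}(\partial ^{p-1}(f))=\Theta ^{p-1}(\Theta _{p-1}^{-1}(f))$, proving both the explicit identity and the claimed acyclicity.

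The only genuinely delicate point is the compatibility of $L_{\theta _{k}}$ with the exterior calculus on $X_{0}$: closedness of the $\omega _{k}$ was stated at the cusps (Remark~\ref{q-exp R KS}), so I would first invoke the $q$-expansion principle for the Igusa tower cited at the start of this section to promote the identity to $X_{0}\in \{\mathfrak{I},X_{\infty },\mathfrak{X}_{\infty }\}$. Once this is in place, everything else is a formal consequence of Cartan's formula together with the invertibility of $\theta _{P}$ on $P$-depleted sections; the factorization $P=\sum _{k}T_{k}P_{k}^{\ast }$ is what allows the chain-homotopy identity to close up, and is the only place where the hypothesis $P(0)=0$ is used.
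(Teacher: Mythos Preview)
Your proposal is correct and follows essentially the same route as the paper. The paper packages the homotopy identity abstractly as a Koszul-complex computation (Lemma~\ref{Kos Int L1} and Corollary~\ref{Kos Int L1 C1}) with $\mathbf{\varphi}=(\theta_1,\dots,\theta_{d_g})$ and $\mathbf{\psi}=(P_1^{\ast}(\mathbf{\theta}),\dots,P_{d_g}^{\ast}(\mathbf{\theta}))$, yielding $\partial_{\mathbf{\psi}}^{p+1}d_{\mathbf{\varphi}}^{p}+d_{\mathbf{\varphi}}^{p-1}\partial_{\mathbf{\psi}}^{p}=\theta_P$; your Cartan-formula argument is exactly the same identity expressed in the classical language of interior products and Lie derivatives, and your observation that closedness of the $\omega_i$ (Remark~\ref{q-exp R KS}, promoted via the $q$-expansion principle) is what makes $L_{\theta_k}$ act diagonally is precisely the content of the identification in Lemma~\ref{de Rham L2} and Proposition~\ref{Kos Int PdR} that the paper relies on.
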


\begin{proof}
This will be proved after Proposition \ref{Kos Int PdR} below.
\end{proof}

\bigskip

\begin{theorem}
\label{Primitives TKey}Suppose that $P\left( 0\right) =0$. For $p\in \left\{
1,...,d_{g}\right\} $, suppose that $f=\tsum\nolimits_{i=0}^{+\infty }f_{i}$
with%
\begin{equation*}
f_{i}\in H^{0}\left( X_{0},\mathcal{W}_{\rho _{i}}\otimes _{\mathcal{O}%
_{X}}\Omega _{X/\Bbbk }^{p}\right) ^{\left[ P\right] }\hookrightarrow
H^{0}\left( X_{0},\mathcal{V}_{\rho }\otimes _{\mathcal{O}_{X}}\Omega
_{X/\Bbbk }^{p}\right) ^{\left[ P\right] }
\end{equation*}%
is such that $\nabla ^{p}\left( f\right) =0$. Define%
\begin{equation*}
F_{0}:=\Theta _{p-1}^{-1}\left( f_{0}\right) \in H^{0}\left( X_{0},\mathcal{W%
}_{\rho _{0}}\otimes _{\mathcal{O}_{X}}\Omega _{X/\Bbbk }^{p-1}\right) ^{%
\left[ P\right] }\text{.}
\end{equation*}%
Then $\Delta _{0}^{p-1}\left( F_{0}\right) \in H^{0}\left( X_{0},\mathcal{W}%
_{\rho _{1}}\otimes _{\mathcal{O}_{X}}\Omega _{X/\Bbbk }^{p}\right) ^{\left[
P\right] }$ and, having defined inductively%
\begin{equation*}
F_{k}\in H^{0}\left( X_{0},\mathcal{W}_{\rho _{k}}\otimes _{\mathcal{O}%
_{X}}\Omega _{X/\Bbbk }^{p-1}\right) ^{\left[ P\right] }\text{ such that }%
\Delta _{k}^{p-1}\left( F_{k}\right) \in H^{0}\left( X_{0},\mathcal{W}_{\rho
_{k+1}}\otimes _{\mathcal{O}_{X}}\Omega _{X/\Bbbk }^{p}\right) ^{\left[ P%
\right] }
\end{equation*}%
for every $k=0,...,i-1$ with $i\geq 1$, setting%
\begin{equation*}
F_{i}:=\Theta _{p-1}^{-1}\left( f_{i}-\Delta _{i-1}^{p-1}\left(
F_{i-1}\right) \right) \in H^{0}\left( X_{0},\mathcal{W}_{\rho _{i}}\otimes
_{\mathcal{O}_{X}}\Omega _{X/\Bbbk }^{p-1}\right) ^{\left[ P\right] }
\end{equation*}%
we have $\Delta _{i}^{p-1}\left( F_{i}\right) \in H^{0}\left( X_{0},\mathcal{%
W}_{\rho _{i+1}}\otimes _{\mathcal{O}_{X}}\Omega _{X/\Bbbk }^{p}\right) ^{%
\left[ P\right] }$. Furthermore, assuming that there is $r\in \mathbb{N}$
such that $\Delta _{r}^{p-1}\left( F_{r}\right) =0$ and $f_{i}=0$ for every $%
i\geq r+1$, we have $F_{i}=0$ for every $i\geq r+1$ and we can define%
\begin{equation*}
F:=\tsum\nolimits_{i=0}^{+\infty }F_{i}\in
\tbigoplus\nolimits_{i=0}^{+\infty }H^{0}\left( X_{0},\mathcal{W}_{\rho
_{i}}\otimes _{\mathcal{O}_{X}}\Omega _{X/\Bbbk }^{p-1}\right) ^{\left[ P%
\right] }\overset{\left( \text{\ref{Primitives FDec}}\right) }{\simeq }%
H^{0}\left( X_{0},\mathcal{V}_{\rho }\otimes _{\mathcal{O}_{X}}\Omega
_{X/\Bbbk }^{p-1}\right) ^{\left[ P\right] }\text{.}
\end{equation*}%
Then%
\begin{equation*}
\nabla ^{p-1}\left( F\right) =f\text{.}
\end{equation*}
\end{theorem}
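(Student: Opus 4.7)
The plan is to turn the integrability $\nabla^2 = 0$ into three separate identities on graded pieces and then run the induction dictated by the statement. The first step I would take is to compare $\rho$-graded components in the decomposition $\nabla^p = \Theta^p + \Delta_i^p$ of Lemma \ref{Primitives LKey}, noting that $\Theta^p$ preserves $\rho$-grade while $\Delta_i^p$ raises it by one. The relation $\nabla^{p+1} \nabla^p = 0$ then splits into three independent identities: $\Theta^{p+1}\Theta^p = 0$ (already Lemma \ref{Primitives L3}), the anticommutation $\Theta^{p+1}\Delta_i^p + \Delta_i^{p+1}\Theta^p = 0$, and the nilpotency $\Delta_{i+1}^{p+1}\Delta_i^p = 0$.

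Next I would expand the hypothesis $\nabla^p(f) = 0$ in graded components, obtaining $\Theta^p(f_0) = 0$ and $\Theta^p(f_i) + \Delta_{i-1}^p(f_{i-1}) = 0$ for $i \geq 1$. The base case of the induction is then immediate: $f_0$ is $\Theta$-closed and $P$-depleted, so Proposition \ref{Primitives PKey} yields $F_0 = \Theta_{p-1}^{-1}(f_0)$ with $\Theta^{p-1}(F_0) = f_0$, lying in $H^0(X_0, \mathcal{W}_{\rho_0} \otimes \Omega^{p-1}_{X_0/\Bbbk})^{[P]}$; $P$-depletion is visible from the explicit formula $\left(\ref{Primitives F Integration}\right)$, since $\theta_P^{-1}$ and the $P_k^{\ast}(\mathbf{\theta})$ preserve $P$-depleted sections.

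For the inductive step I would assume $F_0, \ldots, F_{i-1}$ constructed and $P$-depleted with $\Theta^{p-1}(F_k) = f_k - \Delta_{k-1}^{p-1}(F_{k-1})$ for $k \leq i - 1$ (setting $F_{-1} := 0$), and then apply Proposition \ref{Primitives PKey} to $g_i := f_i - \Delta_{i-1}^{p-1}(F_{i-1})$. Its $\Theta$-closedness is the calculation
\begin{equation*}
\Theta^p(g_i) = -\Delta_{i-1}^p(f_{i-1}) + \Delta_{i-1}^p \Theta^{p-1}(F_{i-1}) = -\Delta_{i-1}^p \Delta_{i-2}^{p-1}(F_{i-2}) = 0,
\end{equation*}
combining the cocycle identity, the anticommutation, the inductive hypothesis, and finally $\Delta^2 = 0$. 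The $P$-depletion of $g_i$ follows from the explicit formula in Lemma \ref{Primitives LKey}, because $\Delta$ acts $\mathcal{O}_{X_0}$-linearly on the scalar coefficients and only touches the representation factor $v_{\mathbf{i}}$. Proposition \ref{Primitives PKey} then furnishes $F_i$ with the required identities, and the same $\mathcal{O}_{X_0}$-linearity of $\Delta$ places $\Delta_i^{p-1}(F_i)$ in the claimed $P$-depleted graded piece for the next induction step.

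Finally, if $f_i = 0$ for $i \geq r+1$ and $\Delta_r^{p-1}(F_r) = 0$, then $g_{r+1} = 0$ forces $F_{r+1} = 0$, and the induction propagates vanishing to all $i \geq r+1$, so $F = \sum_i F_i$ is a finite sum, well-defined via $\left(\ref{Primitives FDec}\right)$. Expanding $\nabla^{p-1}(F) = \sum_i \left(\Theta^{p-1}(F_i) + \Delta_i^{p-1}(F_i)\right)$ and regrouping by $\rho$-grade returns $\Theta^{p-1}(F_i) + \Delta_{i-1}^{p-1}(F_{i-1}) = f_i$ at each grade, hence $\nabla^{p-1}(F) = f$. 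The point that will require the most care is the double check that each inductive input $g_i$ is simultaneously $\Theta$-closed and $P$-depleted: the former hinges on the three integrability identities above, the latter on the fact that $\Delta$ does not differentiate the scalar coefficients.
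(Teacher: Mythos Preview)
Your proof is correct and follows essentially the same route as the paper: extract the graded components of $\nabla^{p}\nabla^{p-1}=0$ and of $\nabla^{p}(f)=0$, then inductively verify that each $g_{i}=f_{i}-\Delta_{i-1}^{p-1}(F_{i-1})$ is $\Theta^{p}$-closed so that Proposition~\ref{Primitives PKey} applies, and finally regroup by grade to see $\nabla^{p-1}(F)=f$. Your presentation is in fact slightly cleaner than the paper's, since you isolate the three integrability identities ($\Theta^{2}=0$, $\Theta\Delta+\Delta\Theta=0$, $\Delta^{2}=0$) at the outset rather than invoking $\nabla^{p}\nabla^{p-1}=0$ in the middle of the computation; just note that the instance you actually use in the inductive step is at degrees $(p-1,p)$, not $(p,p+1)$ as you wrote when first stating them.
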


\begin{proof}
The fact that $\Delta _{0}^{p-1}\left( F_{0}\right) \in H^{0}\left( X_{0},%
\mathcal{W}_{\rho _{1}}\otimes _{\mathcal{O}_{X}}\Omega _{X/\Bbbk
}^{p}\right) ^{\left[ P\right] }$ and then $\Delta _{i}^{p-1}\left(
F_{i}\right) \in H^{0}\left( X_{0},\mathcal{W}_{\rho _{i+1}}\otimes _{%
\mathcal{O}_{X}}\Omega _{X/\Bbbk }^{p}\right) ^{\left[ P\right] }$ for every 
$i\geq 1$ follows from the inductive definition and the fact that $%
F_{i-1}\in H^{0}\left( X_{0},\mathcal{W}_{\rho _{0}}\otimes _{\mathcal{O}%
_{X}}\Omega _{X/\Bbbk }^{p-1}\right) ^{\left[ P\right] }$ in view of Lemma %
\ref{Primitives LKey}. In particular, the family $\left\{ F_{i}\right\}
_{i\in \mathbb{N}}$ is defined.

We now claim, by induction on $i\in \mathbb{N}$, that%
\begin{equation}
\Theta ^{p}\left( f_{i}-\Delta _{i-1}^{p-1}\left( F_{i-1}\right) \right) =0%
\text{,}  \label{Primitives TKey F1}
\end{equation}%
where we set $F_{-1}=0$. The fact that $\nabla ^{p}\left( f\right) =0$
implies, thanks to Lemma \ref{Primitives LKey}, that its $i$-component%
\begin{equation}
\Theta ^{p}\left( f_{i}\right) +\Delta _{i-1}^{p}\left( f_{i-1}\right) =0
\label{Primitives TKey F2}
\end{equation}%
for every $i\geq 0$, where we set $f_{-1}=0$. This proves $\left( \text{\ref%
{Primitives TKey F1}}\right) $ when $i=0$ and, assuming $\left( \text{\ref%
{Primitives TKey F1}}\right) $ for $i-1$, we find%
\begin{eqnarray*}
\Theta ^{p}\left( f_{i}-\Delta _{i-1}^{p-1}\left( F_{i-1}\right) \right)
&=&\Theta ^{p}\left( f_{i}\right) -\Theta ^{p}\left( \Delta
_{i-1}^{p-1}\left( F_{i-1}\right) \right) \\
&&\overset{\left( A\right) }{=}\Theta ^{p}\left( f_{i}\right) +\Theta
^{p}\left( \Theta ^{p-1}\left( F_{i}\right) \right) +\Delta _{i-1}^{p}\left(
\Theta ^{p-1}\left( F_{i-1}\right) \right) +\Delta _{i-1}^{p}\left( \Delta
_{i-2}^{p-1}\left( F_{i-2}\right) \right) \\
&&\overset{\left( B\right) }{=}\Theta ^{p}\left( f_{i}\right) +\Theta
^{p}\left( \Theta ^{p-1}\left( F_{i}\right) \right) +\Delta _{i-1}^{p}\left(
f_{i-1}-\Delta _{i-2}^{p-1}\left( F_{i-2}\right) \right) +\Delta
_{i-1}^{p}\left( \Delta _{i-2}^{p-1}\left( F_{i-2}\right) \right) \\
&&\overset{\left( C\right) }{=}\Theta ^{p}\left( \Theta ^{p-1}\left(
F_{i}\right) \right) -\Delta _{i-1}^{p}\left( \Delta _{i-2}^{p-1}\left(
F_{i-2}\right) \right) +\Delta _{i-1}^{p}\left( \Delta _{i-2}^{p-1}\left(
F_{i-2}\right) \right) \\
&=&\Theta ^{p}\left( \Theta ^{p-1}\left( F_{i}\right) \right) \text{,}
\end{eqnarray*}%
where we are going to explain the identities. The equality $\left( A\right) $
follows from the relation $\nabla ^{p}\circ \nabla ^{p-1}=0$, in view of the
decomposition provided by Lemma \ref{Primitives LKey}, $\left( B\right) $
follows the definition $F_{i-1}=\Theta _{p-1}^{-1}\left( f_{i-1}-\Delta
_{i-2}^{p-1}\left( F_{i-2}\right) \right) $ and the fact that we can apply
Proposition \ref{Primitives PKey} to $f_{i-1}-\Delta _{i-2}^{p-1}\left(
F_{i-2}\right) $ thanks to the induction and $\left( C\right) =\left( \text{%
\ref{Primitives TKey F2}}\right) $. Thanks to Lemma \ref{Primitives L3} we
deduce the claimed $\left( \text{\ref{Primitives TKey F1}}\right) $.

Suppose now that $\Delta _{r}^{p-1}\left( F_{r}\right) =0$ and $f_{i}=0$ for
every $i\geq r+1$. Then $F_{r+1}:=\Theta _{p-1}^{-1}\left( f_{r+1}-\Delta
_{r}^{p-1}\left( F_{r}\right) \right) =0$ and the inductive definition of $%
F_{i}$ shows that $F_{i}=0$ for every $i\geq r+1$, so that $F$ is defined.
According to Lemma \ref{Primitives LKey}, the inductive definition of $F_{i}$
and Proposition \ref{Primitives PKey} (which applies to $f_{i}-\Delta
_{i-1}^{p-1}\left( F_{i-1}\right) $, thanks to $\left( \text{\ref{Primitives
TKey F1}}\right) $):%
\begin{eqnarray*}
\nabla ^{p-1}\left( F\right) &=&\tsum\nolimits_{i=0}^{r}\Theta ^{p-1}\left(
F_{i}\right) +\Delta _{i}^{p-1}\left( F_{i}\right) =\Theta ^{p-1}\left(
F_{0}\right) +\tsum\nolimits_{i=1}^{r}\left( \Theta ^{p-1}\left(
F_{i}\right) +\Delta _{i-1}^{p-1}\left( F_{i-1}\right) \right) +\Delta
_{r}^{p-1}\left( F_{r}\right) \\
&=&\Theta ^{p-1}\left( \Theta _{p-1}^{-1}\left( f_{0}\right) \right)
+\tsum\nolimits_{i=1}^{r}\left( \Theta ^{p-1}\left( \Theta _{p-1}^{-1}\left(
f_{i}-\Delta _{i-1}^{p-1}\left( F_{i-1}\right) \right) \right) +\Delta
_{i-1}^{p-1}\left( F_{i-1}\right) \right) +\Delta _{r}^{p-1}\left(
F_{r}\right) \\
&=&f_{0}+\tsum\nolimits_{i=1}^{r}\left( f_{i}-\Delta _{i-1}^{p-1}\left(
F_{i-1}\right) +\Delta _{i-1}^{p-1}\left( F_{i-1}\right) \right) +\Delta
_{r}^{p-1}\left( F_{r}\right) =f\text{,}
\end{eqnarray*}%
where in the last equalities we have used the fact that $\Delta
_{r}^{p-1}\left( F_{r}\right) =0$ and $f_{i}=0$ for every $i\geq r+1$.
\end{proof}

\bigskip

Suppose now that $\lambda \in X_{\mathbf{G},+}$ and recall the inclusion of
filtered complexes%
\begin{equation*}
\mathrm{dR}\left( \mathcal{L}_{\lambda }\right) \hookrightarrow \mathrm{dR}%
\left( \mathcal{V}_{\lambda }\right) \text{.}
\end{equation*}%
Suppose that $\mathcal{C}$ is an exact category and let \textrm{Fil}$\left( 
\mathcal{C}\right) $ be the associated filtered category (we will consider
increasing filtrations). By a splitting of an object $M\in \mathrm{Fil}%
\left( \mathcal{C}\right) $ we mean an isomorphism $s_{M}:M\overset{\sim }{%
\rightarrow }$\textrm{gr}$\left( M\right) $ in $\mathcal{C}$ such that,
writing $\sigma _{M}$ for the inverse, then $\mathrm{Fil}_{i}\left( M\right)
=\sigma _{M}\left( \tbigoplus\nolimits_{j\in \mathbb{N}}\mathrm{gr}%
^{i-j}\left( M\right) \right) $.

\begin{lemma}
\label{Primitives L Fil(L)}The isomorphisms $\left( \text{\ref{Primitives
FDec}}\right) $ are splittings and they yield, by restriction, the splittings%
\begin{equation*}
\mathcal{L}_{\lambda }\otimes _{\mathcal{O}_{X_{0}}}\Omega _{X_{0}/\Bbbk
}^{p}\simeq \tbigoplus\nolimits_{i=0}^{+\infty }\mathrm{gr}_{i}\left( 
\mathcal{L}_{\lambda }\right) \otimes _{\mathcal{O}_{X_{0}}}\Omega
_{X_{0}/\Bbbk }^{p}\text{ for }p=1,...,d_{g}\text{.}
\end{equation*}
\end{lemma}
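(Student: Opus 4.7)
The plan is to derive both splittings from Lemma \ref{Primitives L2}: the first is essentially tautological once the trivializations are unwound, while for the second we exploit that $L_\lambda$ is a $\mathbf{Q}$-stable subobject of $\mathrm{Ind}_{\mathbf{Q}^{-}}^{\mathbf{G}}(W_\lambda)[\mathbf{Y}]$, and therefore stable under the Frobenius element $m_0 \in \mathbf{M} \subset \mathbf{Q}$ whose isotypic decomposition induces $\left( \text{\ref{Primitives FDec}}\right)$.

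For the first claim, I would unwind definitions: under the trivialization $\vartheta_{\alpha_{\mathrm{dR},0}}$, the isomorphism $\left( \text{\ref{Primitives FDec}}\right)$ corresponds, by Lemma \ref{Primitives L2}, to $\mathcal{O}_{X_0} \otimes_\Bbbk \bigoplus_i (W_\mu \otimes \mathbb{Z}[\mathbf{Y}]_{=i})$, while by construction $\mathrm{Fil}_r(\mathcal{V}_\rho) := \mathcal{E}_{\rho_{\leq r}}$ corresponds to $\mathcal{O}_{X_0} \otimes_\Bbbk (W_\mu \otimes \mathbb{Z}[\mathbf{Y}]_{\leq r}) = \bigoplus_{j \leq r} \mathcal{O}_{X_0} \otimes_\Bbbk (W_\mu \otimes \mathbb{Z}[\mathbf{Y}]_{=j})$. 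The splitting condition holds tautologically, and since $\Omega^p_{X_0/\Bbbk}$ is locally free (via the Kodaira--Spencer identification with $\mathcal{W}_{\wedge^p \mathrm{Sym}_g^2(-1)}$), tensoring preserves the splitting.

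For the second claim, I would work on $X_0$, where Remark \ref{Sheaves R Triv} identifies $\mathcal{V}_\lambda \simeq \mathcal{O}_{X_0} \otimes_\Bbbk \mathrm{Ind}_{\mathbf{Q}^{-}}^{\mathbf{G}}(W_\lambda)[\mathbf{Y}]$ and $\mathcal{L}_\lambda \simeq \mathcal{O}_{X_0} \otimes_\Bbbk L_\lambda$ compatibly with the inclusion $\mathcal{L}_\lambda \hookrightarrow \mathcal{V}_\lambda$. The proof of Lemma \ref{Primitives L2} identifies the summand $\mathcal{W}_{\lambda,i}$ in $\left( \text{\ref{Primitives FDec}}\right)$ with the $m_0$-isotypic component of eigenvalue $p^{-i}$ in $\mathcal{V}_\lambda$, where $m_0 \in \mathbf{M} \subset \mathbf{Q}$. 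Since $L_\lambda$ is $\mathbf{Q}$-stable, it is in particular $m_0$-stable, and semisimplicity of $m_0$ (with pairwise distinct eigenvalues $\{p^{-i}\}_{i \geq 0}$) yields
\[
L_\lambda \;=\; \bigoplus_{i \geq 0} (L_\lambda)_i, \qquad (L_\lambda)_i \;:=\; L_\lambda \cap \mathrm{Ind}_{\mathbf{Q}^{-}}^{\mathbf{G}}(W_\lambda)[\mathbf{Y}]_{=i}.
\]
Tensoring with $\mathcal{O}_{X_0}$ produces $\mathcal{L}_\lambda \simeq \bigoplus_i \mathcal{O}_{X_0} \otimes_\Bbbk (L_\lambda)_i$, and by construction this agrees with $\mathrm{Fil}_r(\mathcal{L}_\lambda) := \mathcal{L}_\lambda \cap \mathrm{Fil}_r(\mathcal{V}_\lambda)$, identifying the summands with $\mathrm{gr}_i(\mathcal{L}_\lambda)$. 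Tensoring once more with the locally free $\Omega^p_{X_0/\Bbbk}$ completes the proof.

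The conceptual point, rather than a technical obstacle, is that the pieces $(L_\lambda)_i$ are not themselves $\mathbf{Q}$-stable (the unipotent radical $\mathbf{U}$ of $\mathbf{Q}$ mixes degrees), but the bare $\langle m_0 \rangle$-stability of $L_\lambda$ already suffices to force the desired decomposition. This is why the cleaner argument via the semisimple action of $m_0$, rather than via the BGG exact sequence of filtered objects, is available here.
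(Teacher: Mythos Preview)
Your argument is correct and takes a genuinely more direct route than the paper's. The paper invokes the truncated BGG left-exact sequence of Theorem~\ref{Representations T BGG}, promotes it to a filtered-exact sequence via Remark~\ref{Sheaves R Fil2}, applies $\mathcal{E}_{-}$, and then argues via a commutative square (with the splittings $\left(\text{\ref{Primitives FDec}}\right)$ as vertical isomorphisms) that the induced map on kernels furnishes the desired splitting of $\mathcal{L}_\lambda$. You bypass this entirely by observing that $L_\lambda \subset V_\lambda$ is already $\mathbf{M}$-stable, and that the degree grading on $V_\lambda$ is precisely an isotypic decomposition for the centre of $\mathbf{M}$; hence $L_\lambda$ inherits the grading. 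This is exactly the mechanism the paper's proof also ultimately relies on (to justify commutativity of its square), so your shortcut loses nothing. What the BGG packaging buys the paper is a uniform framework connecting to the rest of the argument, but for this lemma in isolation your approach is cleaner.

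One technical nicety you should tighten: you invoke the element $m_0 = \left(\begin{smallmatrix} p\cdot 1_g & 0 \\ 0 & 1_g \end{smallmatrix}\right)$ acting on the $\Bbbk$-lattice $L_\lambda$, but since $\Bbbk$ is a $p$-adic ring of integers, $p$ is not a unit and $m_0 \notin \mathbf{M}(\Bbbk)$, so strictly speaking $m_0$ acts only after inverting $p$. The clean fix is to replace the single element $m_0$ by the central one-parameter subgroup $t \mapsto \left(\begin{smallmatrix} t\cdot 1_g & 0 \\ 0 & t^{-1}\cdot 1_g \end{smallmatrix}\right)$ of $\mathbf{M}$: its weight decomposition on $V_\lambda$ is exactly the degree grading (the weight on $V_{\lambda,=r}$ being $k_\lambda - 2r$ for a fixed $k_\lambda$), and since weight decompositions of algebraic $\mathbf{G}_m$-representations work over any base ring, the $\mathbf{M}$-stable submodule $L_\lambda$ decomposes as claimed. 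Alternatively, run your $m_0$-argument over $\Bbbk_{fr}$ and descend using that $L_\lambda$ is saturated in $V_\lambda$ (it is defined as a pullback from $L_{\lambda/\Bbbk_{fr}}$) together with the fact that the degree pieces $V_{\lambda,=r}$ are $\Bbbk$-direct summands. (The paper's own use of $m_0$ has the same informality, so this is a refinement rather than a correction of approach.) Your stated eigenvalues $p^{-i}$ should also carry the constant factor coming from the central character of $W_\lambda$, but this does not affect distinctness.
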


\begin{proof}
Because $\left( \text{\ref{Representations F DirSumDec}}\right) $ is a
splitting, the fact that the isomorphisms $\left( \text{\ref{Primitives FDec}%
}\right) $ are splittings follows from Lemma \ref{Primitives L2}. Let use
write $\rho _{\lambda ,i}$ for $\rho _{i}$ appearing in $\left( \text{\ref%
{Primitives FDec}}\right) $ in order to emphasize the dependence on the
choice of $\lambda $. Let us consider the left exact sequence obtained from
the truncated integral (dual) BGG complex of Theorem \ref{Representations T
BGG} applying the functor $\mathcal{E}_{-}$ (which is exact, cfr. Remark \ref%
{Sheaves R Ex}): it follows from Remark \ref{Sheaves R Fil2} that we get the
following diagram with exact rows, the former in \textrm{Fil}$\left( 
\mathcal{C}\right) $ with $\mathcal{C}$ the category of $\mathcal{O}_{X_{0}}$%
-modules%
\begin{equation}
\begin{array}{cccccc}
0\longrightarrow & \mathcal{L}_{\lambda }\otimes _{\mathcal{O}%
_{X_{0}}}\Omega _{X_{0}/\Bbbk }^{p} & \longrightarrow & \mathcal{V}_{\lambda
}\otimes _{\mathcal{O}_{X_{0}}}\Omega _{X_{0}/\Bbbk }^{p} & \longrightarrow
& \tbigoplus\nolimits_{w\in W^{\mathbf{M}}:l\left( w\right) =1}\mathcal{V}%
_{w\cdot \lambda }\otimes _{\mathcal{O}_{X_{0}}}\Omega _{X_{0}/\Bbbk }^{p}
\\ 
&  &  & \Vert &  & \Vert \\ 
0\longrightarrow & \tbigoplus\nolimits_{i=0}^{+\infty }\mathrm{gr}_{i}\left( 
\mathcal{L}_{\lambda }\right) \otimes _{\mathcal{O}_{X_{0}}}\Omega
_{X_{0}/\Bbbk }^{p} & \longrightarrow & \tbigoplus\nolimits_{i=0}^{+\infty }%
\mathcal{W}_{\rho _{\lambda ,i}}\otimes _{\mathcal{O}_{X_{0}}}\Omega
_{X_{0}/\Bbbk }^{p} & \longrightarrow & \tbigoplus\nolimits_{w\in W^{\mathbf{%
M}}:l\left( w\right) =1}\left( \tbigoplus\nolimits_{i=0}^{+\infty }\mathcal{W%
}_{\rho _{w\cdot \lambda ,i}}\otimes _{\mathcal{O}_{X_{0}}}\Omega
_{X_{0}/\Bbbk }^{p}\right) \text{.}%
\end{array}
\label{Primitives L Fil(L) F1}
\end{equation}%
Indeed, the square is commutative because the identifications $\left( \text{%
\ref{Primitives FDec}}\right) $ are induced by $\left( \text{\ref%
{Representations F DirSumDec}}\right) $ for $\lambda $ and each $w\cdot
\lambda $ and the proof of Lemma \ref{Primitives L2} shows that these
decompositions $\left( \text{\ref{Representations F DirSumDec}}\right) $ are
obtained by simply regarding the representations as $\left\langle
m_{0}\right\rangle $-modules and noticing that they admit a decomposition
into isotypic components. It follows that we can complete the diagram $%
\left( \text{\ref{Primitives L Fil(L) F1}}\right) $ with an arrow making the
resulting diagram commutative and, hence, giving the claim.
\end{proof}

Define the $\left[ P\right] $-depleted subcomplex $H^{0}\left( X_{0},%
\mathcal{L}_{\lambda }\otimes _{\mathcal{O}_{X}}\Omega _{X/\Bbbk }^{\cdot
}\right) ^{\left[ P\right] }$ of $H^{0}\left( X_{0},\mathcal{L}_{\lambda
}\otimes _{\mathcal{O}_{X}}\Omega _{X/\Bbbk }^{\cdot }\right) $ by taking
the intersection with the subcomplex $H^{0}\left( X_{0},\mathcal{V}_{\lambda
}\otimes _{\mathcal{O}_{X}}\Omega _{X/\Bbbk }^{\cdot }\right) ^{\left[ P%
\right] }$ of $H^{0}\left( X_{0},\mathcal{V}_{\lambda }\otimes _{\mathcal{O}%
_{X}}\Omega _{X/\Bbbk }^{\cdot }\right) $. We can now prove the following
result.

\begin{corollary}
\label{Primitives CKey}Suppose that $P\left( 0\right) =0$. For $p\in \left\{
1,...,d_{g}\right\} $ and every $f\in H^{0}\left( X_{0},\mathcal{L}_{\lambda
}\otimes _{\mathcal{O}_{X}}\Omega _{X/\Bbbk }^{p}\right) ^{\left[ P\right] }$
such that $\nabla ^{p}\left( f\right) =0$, define $F_{i}$ inductively as in
Proposition \ref{Primitives PKey}. Then%
\begin{equation*}
F_{i}\in H^{0}\left( X_{0},\mathrm{gr}_{i}\left( \mathcal{L}_{\lambda
}\right) \otimes _{\mathcal{O}_{X}}\Omega _{X/\Bbbk }^{p-1}\right) ^{\left[ P%
\right] }\hookrightarrow H^{0}\left( X_{0},\mathrm{gr}_{i}\left( \mathcal{V}%
_{\lambda }\right) \otimes _{\mathcal{O}_{X}}\Omega _{X/\Bbbk }^{p-1}\right)
^{\left[ P\right] }
\end{equation*}%
for every $i$, there is $r\in \mathbb{N}$ such that $\Delta _{r}^{p-1}\left(
F_{r}\right) =0$ and $f_{i}=0$ for every $i\geq r+1$ and, with the notations
of Proposition \ref{Primitives PKey},%
\begin{equation*}
F\in H^{0}\left( X_{0},\mathcal{L}_{\lambda }\otimes _{\mathcal{O}%
_{X}}\Omega _{X/\Bbbk }^{p-1}\right) ^{\left[ P\right] }\hookrightarrow
H^{0}\left( X_{0},\mathcal{V}_{\lambda }\otimes _{\mathcal{O}_{X}}\Omega
_{X/\Bbbk }^{p-1}\right) ^{\left[ P\right] }
\end{equation*}%
is such that $\nabla ^{p-1}\left( F\right) =f$. In particular, the complex $%
H^{0}\left( X_{0},\mathcal{L}_{\lambda }\otimes _{\mathcal{O}_{X}}\Omega
_{X/\Bbbk }^{\cdot }\right) ^{\left[ P\right] }$ is acyclic in degree $p\in
\left\{ 1,...,d_{g}\right\} $.
\end{corollary}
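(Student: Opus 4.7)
The plan is to bootstrap from Theorem \ref{Primitives TKey}, which gives the analogous acyclicity for $\mathcal{V}_\lambda$, by exploiting that the filtration on $\mathcal{L}_\lambda$ inherited from $\mathcal{V}_\lambda$ is split (Lemma \ref{Primitives L Fil(L)}). The first step is to use this split decomposition
\[
H^0\!\left(X_0,\mathcal{L}_{\lambda}\otimes_{\mathcal{O}_{X_{0}}}\Omega_{X_{0}/\Bbbk}^{q}\right)^{[P]}\simeq\tbigoplus\nolimits_{i=0}^{+\infty}H^0\!\left(X_0,\mathrm{gr}_{i}\!\left(\mathcal{L}_{\lambda}\right)\otimes_{\mathcal{O}_{X_{0}}}\Omega_{X_{0}/\Bbbk}^{q}\right)^{[P]}
\]
to write $f=\sum f_{i}$ with $f_{i}\in H^{0}(X_{0},\mathrm{gr}_{i}(\mathcal{L}_{\lambda})\otimes\Omega^{p})^{[P]}\subset H^{0}(X_{0},\mathcal{W}_{\rho_{i}}\otimes\Omega^{p})^{[P]}$. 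Since $L_{\lambda}$ is a $\Bbbk$-lattice in the finite-dimensional $L_{\lambda_{/\Bbbk_{fr}}}$ (Lemma \ref{Representations L Model}), the induced degree filtration on $L_{\lambda}$ is bounded, so there exists $r\in\mathbb{N}$ with $\mathrm{gr}_{i}(\mathcal{L}_{\lambda})=0$ for every $i>r$, whence $f_{i}=0$ for $i\geq r+1$ automatically.

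The crux is the inductive claim that the $F_{i}$'s produced by the algorithm of Theorem \ref{Primitives TKey} actually remain in the sub-$\mathbf{Q}$-module $\mathrm{gr}_{i}(\mathcal{L}_{\lambda})\subset\mathcal{W}_{\rho_{i}}$. For this I would verify two compatibilities. First, the operator $\Theta_{p-1}^{-1}$, defined by $(\text{\ref{Primitives F Integration}})$ through scalar application of $\theta_{P}^{-1}$ and $P_{k}^{*}(\boldsymbol{\theta})$, acts purely on the $\mathcal{O}_{X_{0}}$-coefficient of a tensor in $H^{0}(X_{0},\mathcal{W}_{\rho_{i}}\otimes\Omega^{p})^{[P]}\simeq e^{[P]}(R_{0})\otimes_{\Bbbk}\rho_{i}\otimes\wedge^{p}\mathrm{Sym}_{g}^{2}$ and is therefore $\mathbf{M}$-equivariant, in particular preserving the $\mathbf{M}$-submodule $\mathrm{gr}_{i}(L_{\lambda})\subset\rho_{i}$. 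Second, the vertical operator $\Delta_{i-1}^{p-1}$ is (by definition) the graded piece of $\nabla^{p-1}$ for the degree filtration; because $\nabla$ preserves $\mathcal{L}_{\lambda}$ and because the decomposition of Lemma \ref{Primitives L Fil(L)} is the split decomposition of the same filtration on $\mathcal{L}_{\lambda}$, $\Delta_{i-1}^{p-1}$ sends $H^{0}(X_{0},\mathrm{gr}_{i-1}(\mathcal{L}_{\lambda})\otimes\Omega^{p-1})^{[P]}$ into $H^{0}(X_{0},\mathrm{gr}_{i}(\mathcal{L}_{\lambda})\otimes\Omega^{p})^{[P]}$.

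With these two facts in hand the induction runs: $F_{0}=\Theta_{p-1}^{-1}(f_{0})\in H^{0}(X_{0},\mathrm{gr}_{0}(\mathcal{L}_{\lambda})\otimes\Omega^{p-1})^{[P]}$, and assuming $F_{i-1}\in\mathrm{gr}_{i-1}(\mathcal{L}_{\lambda})\otimes\Omega^{p-1}$, the difference $f_{i}-\Delta_{i-1}^{p-1}(F_{i-1})$ lies in $\mathrm{gr}_{i}(\mathcal{L}_{\lambda})\otimes\Omega^{p}$, so $F_{i}=\Theta_{p-1}^{-1}(f_{i}-\Delta_{i-1}^{p-1}(F_{i-1}))$ stays in $\mathrm{gr}_{i}(\mathcal{L}_{\lambda})\otimes\Omega^{p-1}$. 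For $i\geq r+1$ both inputs vanish, giving $F_{i}=0$; in particular $\Delta_{r}^{p-1}(F_{r})\in\mathrm{gr}_{r+1}(\mathcal{L}_{\lambda})\otimes\Omega^{p}=0$, which is exactly the finiteness hypothesis of Theorem \ref{Primitives TKey}.

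Applying Theorem \ref{Primitives TKey} then yields $F:=\sum F_{i}\in H^{0}(X_{0},\mathcal{V}_{\lambda}\otimes\Omega^{p-1})^{[P]}$ with $\nabla^{p-1}(F)=f$, and by construction each $F_{i}$ lies in $\mathrm{gr}_{i}(\mathcal{L}_{\lambda})\otimes\Omega^{p-1}$, so $F\in H^{0}(X_{0},\mathcal{L}_{\lambda}\otimes\Omega^{p-1})^{[P]}$ via the splitting of Lemma \ref{Primitives L Fil(L)}. This proves acyclicity in each positive degree. The main obstacle is the second compatibility above: one must know that $\Delta_{i-1}^{p-1}$ does not introduce denominators when restricted to $\mathcal{L}_{\lambda}$, which is precisely where Lemma \ref{Primitives L Fil(L)} (and thus the integral model of the truncated dual BGG complex, together with the Frobenius-isotypic argument identifying the filtration with the split decomposition under $m_{0}$) is indispensable.
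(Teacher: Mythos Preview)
Your proposal is correct and follows essentially the same approach as the paper: both arguments use the split decomposition of Lemma \ref{Primitives L Fil(L)}, verify that $\Theta_{p-1}^{-1}$ and $\Delta_{i-1}^{p-1}$ preserve the graded pieces $\mathrm{gr}_i(\mathcal{L}_\lambda)$, and then invoke finite-dimensionality of $L_\lambda$ together with Theorem \ref{Primitives TKey} to conclude. The only cosmetic difference is that the paper justifies preservation under $\Theta_{p-1}^{-1}$ directly from formula $(\ref{Primitives F Integration})$ (the $v_{\mathbf{i}}$ are untouched) and under $\Delta_{i-1}^{p-1}$ via the explicit expression in Lemma \ref{Primitives LKey} together with $L_\lambda\subset V_\lambda$ being a $(\mathfrak{g},\mathbf{Q})$-submodule, whereas you phrase the first as $\mathbf{M}$-equivariance and the second as $\nabla$ preserving $\mathcal{L}_\lambda$; these are equivalent packagings of the same facts.
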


\begin{proof}
Suppose that we can show that $F_{i}\in H^{0}\left( X_{0},\mathrm{gr}%
_{i}\left( \mathcal{L}_{\lambda }\right) \otimes _{\mathcal{O}_{X}}\Omega
_{X/\Bbbk }^{p-1}\right) $ for every $i$. Because $\mathcal{L}_{\lambda }$
has finite rank and the filtration $\mathrm{Fil}_{i}\left( \mathcal{V}%
_{\lambda }\right) =\bigoplus_{j\in \mathbb{N}}\mathrm{gr}_{i-j}\left( 
\mathcal{V}_{\lambda }\right) $ of $\mathcal{V}_{\lambda }$ strictly
increases the rank, we have%
\begin{equation*}
H^{0}\left( X_{0},\mathrm{gr}_{r+1}\left( \mathcal{V}_{\lambda }\right)
\otimes _{\mathcal{O}_{X}}\Omega _{X/\Bbbk }^{\cdot }\right) \cap
H^{0}\left( X_{0},\mathcal{L}_{\lambda }\otimes _{\mathcal{O}_{X}}\Omega
_{X/\Bbbk }^{\cdot }\right) =0
\end{equation*}

and thus $\Delta _{r}^{p-1}\left( F_{r}\right) =0$ for some $r\in \mathbb{N}$%
: taking $r$ large enough we may even assume that $f_{i}=0$ for every $i\geq
r+1$, where we write%
\begin{equation}
f=\tsum\nolimits_{i=0}^{+\infty }f_{i}\text{ in }H^{0}\left( X_{0},\mathcal{L%
}_{\lambda }\otimes _{\mathcal{O}_{X}}\Omega _{X/\Bbbk }^{p}\right)
=\tbigoplus\nolimits_{i=0}^{+\infty }H^{0}\left( X_{0},\mathrm{gr}_{i}\left( 
\mathcal{L}_{\lambda }\right) \otimes _{\mathcal{O}_{X}}\Omega _{X/\Bbbk
}^{p}\right)  \label{Primitives CKey F1}
\end{equation}%
(as guaranteed by Lemma \ref{Primitives L Fil(L)}). Then we have $F\in
H^{0}\left( X_{0},\mathcal{L}_{\lambda }\otimes _{\mathcal{O}_{X}}\Omega
_{X/\Bbbk }^{p-1}\right) $ because $F_{i}\in H^{0}\left( X_{0},\mathrm{gr}%
_{i}\left( \mathcal{L}_{\lambda }\right) \otimes _{\mathcal{O}_{X}}\Omega
_{X/\Bbbk }^{p-1}\right) $.

It remains to explain why we have $F_{i}\in H^{0}\left( X_{0},\mathrm{gr}%
_{i}\left( \mathcal{L}_{\lambda }\right) \otimes _{\mathcal{O}_{X}}\Omega
_{X/\Bbbk }^{p-1}\right) $. But indeed, we first note that it follows from $%
\left( \text{\ref{Primitives F Integration}}\right) $ that $\Theta
_{p-1}^{-1}\left( g\right) \in H^{0}\left( X_{0},\mathrm{gr}_{i}\left( 
\mathcal{L}_{\lambda }\right) \otimes _{\mathcal{O}_{X}}\Omega _{X/\Bbbk
}^{p-1}\right) ^{\left[ P\right] }$ if $g\in H^{0}\left( X_{0},\mathrm{gr}%
_{i}\left( \mathcal{L}_{\lambda }\right) \otimes _{\mathcal{O}_{X}}\Omega
_{X/\Bbbk }^{p}\right) ^{\left[ P\right] }$ since we have $v_{\mathbf{i}}\in
L_{\lambda }$ in the right hand side of $\left( \text{\ref{Primitives F
Integration}}\right) $. Because $L_{\lambda }\subset V_{\lambda }$ is a $%
\left( \mathfrak{g},\mathbf{Q}\right) $-submodule, we also see from the
expression of $\Delta _{i}^{p-1}\left( G\right) \left( q\right) $ appearing
in Lemma \ref{Primitives LKey} that we have $\Delta _{i}^{p-1}\left(
G\right) \in H^{0}\left( X_{0},\mathrm{gr}_{i+1}\left( \mathcal{L}_{\lambda
}\right) \otimes _{\mathcal{O}_{X}}\Omega _{X/\Bbbk }^{p}\right) $ if $G\in
H^{0}\left( X_{0},\mathrm{gr}_{i}\left( \mathcal{L}_{\lambda }\right)
\otimes _{\mathcal{O}_{X}}\Omega _{X/\Bbbk }^{p-1}\right) $. Taking into
account $\left( \text{\ref{Primitives CKey F1}}\right) $, the inductive
definition of $F_{i}$ now proves that $F_{i}\in H^{0}\left( X_{0},\mathrm{gr}%
_{i}\left( \mathcal{L}_{\lambda }\right) \otimes _{\mathcal{O}_{X}}\Omega
_{X/\Bbbk }^{p-1}\right) $, as wanted.
\end{proof}

\subsubsection{Proof of Proposition \protect\ref{Primitives PKey}}

Suppose that $M$ is an $R$-module over a (commutative and unitary) ring $R$\
and that $\mathbf{\varphi }=\left( \varphi _{1},...,\varphi _{n}\right) \in
End_{R}\left( M\right) ^{n}$ is an ordered family of $R$-linear maps. Then,
we define a graded $R$-module $K^{\cdot }\left( \mathbf{\varphi }\right)
=K^{\cdot }\left( \varphi _{1},...,\varphi _{n}\right) $\ concentrated in
degree $\left[ 0,n\right] $ equipped with a degree $1$ morphism as follows.
For every $p\in \mathbb{N}$, write $I_{p}\subset \left\{ 1,...,n\right\}
^{p} $ for the set of multi-indexes $\mathbf{i}=\left(
i_{1},...,i_{p}\right) $ such that $i_{1}<...<i_{p}$ (we understand that $%
I_{0}:=\left\{ \phi \right\} $). For every $\mathbf{i}\in I_{p}$, we write $%
\left\{ \mathbf{i}\right\} $ for the set $\left\{ i_{1},...,i_{p}\right\} $
(we understand that $\left\{ \phi \right\} =\phi $\footnote{%
We hope this notation will note create confusion with the set theoretic
notation acoording to which $\left\{ \phi \right\} \neq \phi $. We want $%
\left\{ k\right\} \cup \left\{ \mathbf{i}\right\} $ to be $\left\{ k\right\} 
$ when $\mathbf{i}=\phi $ in the following definition.}). If $\mathbf{i}\in
I_{p}$ and $k\in \left\{ 1,...,n\right\} $, we set $k\wedge \mathbf{i}=\phi $
in case $k\in \left\{ \mathbf{i}\right\} $ and, otherwise, we write $k\wedge 
\mathbf{i}$ for the unique element of $I_{p+1}$ such that $\left\{ k\wedge 
\mathbf{i}\right\} =\left\{ k\right\} \cup \left\{ \mathbf{i}\right\} $.
Define $\varepsilon _{\mathbf{i}}\left( k\right) :=\#\left\{ i\in \left\{ 
\mathbf{i}\right\} :i<k\right\} $.

\begin{itemize}
\item We set $K^{p}\left( \mathbf{\varphi }\right) :=M\otimes _{R}\wedge
_{R}^{p}\left( R^{n}\right) =\tbigoplus\nolimits_{\mathbf{i}\in I_{p}}M$. If 
$\mathbf{i}\in I_{p}$, we write $e_{\mathbf{i}}:=e_{i_{1}}\wedge ...\wedge
e_{i_{p}}$ where $e_{i}\in R^{n}$ denotes the $i$-canonical vector: if $m\in
M$, then $m\otimes _{R}e_{\mathbf{i}}$ corresponds to the family whose
components are all zero except the $\mathbf{i}$-component, which equals $m$.
We also adopt the convention that $e_{\phi }=0$ (cfr. the displayed equality
below).

\item We let $d^{p}=d_{\mathbf{\varphi }}^{p}:K^{p}\left( \mathbf{\varphi }%
\right) \rightarrow K^{p+1}\left( \mathbf{\varphi }\right) $ be the direct
sum of the $R$-linear morphisms%
\begin{equation}
d^{p}\left( m\otimes _{R}e_{\mathbf{i}}\right)
=\tsum\nolimits_{k=1}^{n}\varphi _{k}\left( m\right) \otimes _{R}e_{k}\wedge
e_{\mathbf{i}}=\tsum\nolimits_{k=1}^{n}\left( -1\right) ^{\varepsilon _{%
\mathbf{i}}\left( k\right) }\varphi _{k}\left( m\right) \otimes
_{R}e_{k\wedge \mathbf{i}}\text{.}  \label{Kos F1}
\end{equation}
\end{itemize}

Let us make the following useful remarks.

\begin{itemize}
\item[$\left( i\right) $] If $\mathbf{j}\in I_{p+1}$, then we see that the $%
\mathbf{j}$-component\ $d^{p}\left( m\otimes _{R}e_{\mathbf{i}}\right) _{%
\mathbf{j}}$\ of $d^{p}\left( m\otimes _{R}e_{\mathbf{i}}\right) $ (for $%
\mathbf{i}\in I_{p}$ as above) is zero except in case $\left\{ \mathbf{i}%
\right\} \subset \left\{ \mathbf{j}\right\} $. Furthermore, in this case, if 
$\mathbf{j}=\left( j_{1},...,j_{p+1}\right) $, then we have $\mathbf{i=}%
\left( j_{1},...,\widehat{j_{k}},...,j_{p+1}\right) $ for a unique $j_{k}\in
\left\{ \mathbf{j}\right\} $ and $d^{p}\left( m\otimes _{R}e_{\mathbf{i}%
}\right) _{\mathbf{j}}=\left( -1\right) ^{k+1}\varphi _{j_{k}}\left(
m\right) \otimes _{R}e_{\mathbf{j}}$ (note that $\varepsilon _{\mathbf{i}%
}\left( j_{k}\right) =k-1$). We deduce that, if%
\begin{equation*}
m=\tsum\nolimits_{\mathbf{i}\in I_{p}}m_{\mathbf{i}}\otimes _{R}e_{\mathbf{i}%
}\in K^{p}\left( \mathbf{\varphi }\right) \text{ and }d^{p}\left( m\right)
=\tsum\nolimits_{\mathbf{j}\in I_{p+1}}d^{p}\left( m\right) _{\mathbf{j}%
}\otimes _{R}e_{\mathbf{j}}\text{,}
\end{equation*}%
then%
\begin{equation}
d^{p}\left( m\right) _{\mathbf{j}}=\tsum\nolimits_{k=1}^{p+1}\left(
-1\right) ^{k+1}\varphi _{j_{k}}\left( m_{\left( j_{1},...,\widehat{j_{k}}%
,...,j_{p+1}\right) }\right) \text{ if }\mathbf{j}=\left(
j_{1},...,j_{p+1}\right) \text{.}  \label{Kos F1'}
\end{equation}

\item[$\left( ii\right) $] The degree one morphism $d^{\cdot }$\ makes the
graded $R$-module $K^{\cdot }\left( \mathbf{\varphi }\right) $ a complex if
and only if $d^{0}\circ d^{1}=0$ if and only if $\varphi _{i}\varphi
_{j}=\varphi _{j}\varphi _{i}$ for every $i,j\in \left\{ 1,...,n\right\} $:
this can be deduced from $\left( \text{\ref{Kos F1}}\right) $. In this case,
we say that $\left\{ \varphi _{1},...,\varphi _{n}\right\} $ is a commuting
family and that $K^{\cdot }\left( \mathbf{\varphi }\right) $ is the dual\
Koszul complex associated to it.
\end{itemize}

\bigskip

We now define a degree $-1$ morphism on $K^{\cdot }\left( \mathbf{\varphi }%
\right) $ as follows. For every $p\in \mathbb{N}$, every $\mathbf{i}\in
I_{p} $ and every $k\in \left\{ 1,...,n\right\} $, we set $\mathbf{i}%
_{k}=\phi $ if $k\notin \left\{ \mathbf{i}\right\} $ and, otherwise, we
write $\mathbf{i}_{k}$ for the unique element of $I_{p-1}$ such that $%
\left\{ \mathbf{i}_{k}\right\} =\left\{ \mathbf{i}\right\} -\left\{
k\right\} $.

\begin{itemize}
\item We let $\partial ^{p}=\partial _{\mathbf{\varphi }}^{p}:K^{p}\left( 
\mathbf{\varphi }\right) \rightarrow K^{p-1}\left( \mathbf{\varphi }\right) $
be the direct sum of the $R$-linear morphisms%
\begin{equation}
\partial ^{p}\left( m\otimes _{R}e_{\mathbf{i}}\right)
=\tsum\nolimits_{k=1}^{n}\left( -1\right) ^{\varepsilon _{\mathbf{i}}\left(
k\right) }\varphi _{k}\left( m\right) \otimes _{R}e_{\mathbf{i}_{k}}\text{.}
\label{Kos F1 dual}
\end{equation}
\end{itemize}

Let us make the following useful remarks.

\begin{itemize}
\item[$\left( iii\right) $] If $\mathbf{j}\in I_{p-1}$, then we see that the 
$\mathbf{j}$-component\ $\partial ^{p}\left( m\otimes _{R}e_{\mathbf{i}%
}\right) _{\mathbf{j}}$\ of $\partial ^{p}\left( m\otimes _{R}e_{\mathbf{i}%
}\right) $ (for $\mathbf{i}\in I_{p}$ as above) is zero except in case $%
\left\{ \mathbf{j}\right\} \subset \left\{ \mathbf{i}\right\} $.
Furthermore, in this case, if $\mathbf{j}=\left( j_{1},...,j_{p-1}\right) $,
then we have $\mathbf{i}=k\wedge \mathbf{j}=\left(
j_{1},...,k,...,j_{p-1}\right) $ for a unique $k\in \left\{ \mathbf{i}%
\right\} $ and $\partial ^{p}\left( m\otimes _{R}e_{\mathbf{i}}\right) _{%
\mathbf{j}}=\left( -1\right) ^{\varepsilon _{\mathbf{i}}\left( k\right)
}\varphi _{k}\left( m\right) \otimes _{R}e_{\mathbf{j}}$. We deduce that, if%
\begin{equation*}
m=\tsum\nolimits_{\mathbf{i}\in I_{p}}m_{\mathbf{i}}\otimes _{R}e_{\mathbf{i}%
}\in K^{p}\left( \mathbf{\varphi }\right) \text{ and }\partial ^{p}\left(
m\right) =\tsum\nolimits_{\mathbf{j}\in I_{p-1}}\partial ^{p}\left( m\right)
_{\mathbf{j}}\otimes _{R}e_{\mathbf{j}}\text{,}
\end{equation*}%
then%
\begin{equation}
\partial ^{p}\left( m\right) _{\mathbf{j}}=\tsum\nolimits_{k\in \left\{
1,...,n\right\} -\left\{ \mathbf{j}\right\} }\left( -1\right) ^{\varepsilon
_{k\wedge \mathbf{j}}\left( k\right) }\varphi _{k}\left( m_{k\wedge \mathbf{j%
}}\right) \text{ if }\mathbf{j}=\left( j_{1},...,j_{p-1}\right) \text{.}
\label{Kos F1' dual}
\end{equation}

\item[$\left( iv\right) $] The degree minus one morphism $\partial ^{\cdot }$%
\ makes the graded $R$-module $K^{\cdot }\left( \mathbf{\varphi }\right) $ a
complex if and only if $\partial ^{1}\circ \partial ^{2}=0$ if and only if $%
\varphi _{i}\varphi _{j}=\varphi _{j}\varphi _{i}$ for every $i,j\in \left\{
1,...,n\right\} $: this can be deduced from $\left( \text{\ref{Kos F1' dual}}%
\right) $. In this case, we say that $\left\{ \varphi _{1},...,\varphi
_{n}\right\} $ is a commuting family and that $K^{\cdot }\left( \mathbf{%
\varphi }\right) $ is the Koszul complex associated to it.
\end{itemize}

\bigskip

Let us now fix two ordered families%
\begin{equation*}
\mathbf{\varphi }=\left( \varphi _{1},...,\varphi _{n}\right) ,\mathbf{\psi =%
}\left( \psi _{1},...,\psi _{n}\right) \subset End_{R}\left( M\right) ^{n}
\end{equation*}%
and define two degree $0$ morphisms on $K^{\cdot }\left( M,n\right)
:=K^{\cdot }\left( \mathbf{\varphi }\right) =K^{\cdot }\left( \mathbf{\psi }%
\right) $ (the equality of the underlying graded $R$-modules) as follows.

\begin{itemize}
\item We let $\Delta _{\mathbf{\psi }\cdot \mathbf{\varphi }%
}^{p}:K^{p}\left( M,n\right) \rightarrow K^{p}\left( M,n\right) $ be the
direct sum of the $R$-linear morphisms%
\begin{equation}
\Delta _{\mathbf{\psi }\cdot \mathbf{\varphi }}^{p}\left( m\otimes _{R}e_{%
\mathbf{i}}\right) =\tsum\nolimits_{k\in \left\{ 1,...,n\right\} -\left\{ 
\mathbf{i}\right\} }\psi _{k}\left( \varphi _{k}\left( m\right) \right)
\otimes _{R}e_{\mathbf{i}}+\tsum\nolimits_{i\in \left\{ \mathbf{i}\right\}
}\varphi _{i}\left( \psi _{i}\left( m\right) \right) \otimes _{R}e_{\mathbf{i%
}}  \label{Kos F1 Delta1}
\end{equation}%
and define $\Delta _{\left[ \mathbf{\psi },\mathbf{\varphi }\right]
}^{p}:K^{p}\left( M,n\right) \rightarrow K^{p}\left( M,n\right) $ as the
direct sum of the $R$-linear morphisms%
\begin{equation}
\Delta _{\left[ \mathbf{\psi },\mathbf{\varphi }\right] }^{p}\left( m\otimes
_{R}e_{\mathbf{i}}\right) =\tsum\nolimits_{k\in \left\{ 1,...,n\right\}
-\left\{ \mathbf{i}\right\} ,l\in \left\{ \mathbf{i}\right\} }\left(
-1\right) ^{\varepsilon _{k\wedge \mathbf{j}}\left( k\right) +\varepsilon _{%
\mathbf{j}}\left( l\right) }\left[ \psi _{k},\varphi _{l}\right] \left(
m\right) \otimes _{R}e_{\left( l\wedge \mathbf{i}\right) _{k}}\text{.}
\label{Kos F1 Delta2}
\end{equation}
\end{itemize}

Let us make the following remark.

\begin{itemize}
\item[$\left( v\right) $] If $\mathbf{j}\in I_{p}$, then we see that the $%
\mathbf{j}$-component\ $\Delta _{\left[ \mathbf{\psi },\mathbf{\varphi }%
\right] }^{p}\left( m\otimes _{R}e_{\mathbf{i}}\right) _{\mathbf{j}}$\ of $%
\Delta _{\left[ \mathbf{\psi },\mathbf{\varphi }\right] }^{p}\left( m\otimes
_{R}e_{\mathbf{i}}\right) $ (for $\mathbf{i}\in I_{p}$ as above) is zero
except in case $\left\{ \mathbf{i}\right\} ,\left\{ \mathbf{j}\right\}
\subset \left\{ \mathbf{k}\right\} $ for some $\mathbf{k}\in I_{p+1}$
(taking $\mathbf{k}$ of the form $l\wedge \mathbf{i}$ in the expression $%
\left( \text{\ref{Kos F1 Delta2}}\right) $). Furthermore, in this case, if $%
\mathbf{j}=\left( j_{1},...,j_{p}\right) $, then $\mathbf{k}=k\wedge \mathbf{%
j}=\left( j_{1},...,j_{h_{k}-1},k,j_{h_{k}}...,j_{p}\right) $ for a unique $%
k\in \left\{ \mathbf{k}\right\} -\left\{ \mathbf{j}\right\} $\ (with $h_{k}$
defined by the equality $\varepsilon _{k\wedge \mathbf{j}}\left( k\right)
=h_{k}-1$)\ and $\mathbf{i}=\left( j_{1},...,\widehat{j_{l}}%
,...,j_{h_{k}-1},k,...,j_{p}\right) $ if $l<h_{k}$ or $\mathbf{i}=\left(
j_{1},...,j_{h_{k}-1},k,...,\widehat{j_{l}},...,j_{p}\right) $ if $l\geq
h_{k}$ for a unique $j_{k}\in \left\{ \mathbf{j}\right\} $. In any case, we
have $\mathbf{i}=k\wedge \left( j_{1},...,\widehat{j_{l}},...,j_{p}\right) $%
\ and $\Delta _{\left[ \mathbf{\psi },\mathbf{\varphi }\right] }^{p}\left(
m\otimes _{R}e_{\mathbf{i}}\right) _{\mathbf{j}}=\left( -1\right)
^{\varepsilon _{k\wedge \mathbf{j}}\left( k\right) +l+1}\left[ \psi
_{k},\varphi _{j_{l}}\right] \left( m\right) \otimes _{R}e_{\mathbf{j}}$
(note that $\varepsilon _{\mathbf{j}}\left( j_{l}\right) =l-1$). We deduce
that, if%
\begin{equation*}
m=\tsum\nolimits_{\mathbf{i}\in I_{p}}m_{\mathbf{i}}\otimes _{R}e_{\mathbf{i}%
}\in K^{p}\left( M,n\right) \text{ and }\partial ^{p}\left( m\right)
=\tsum\nolimits_{\mathbf{j}\in I_{p}}\Delta _{\left[ \mathbf{\psi },\mathbf{%
\varphi }\right] }^{p}\left( m\right) _{\mathbf{j}}\otimes _{R}e_{\mathbf{j}}%
\text{,}
\end{equation*}%
then%
\begin{equation}
\Delta _{\left[ \mathbf{\psi },\mathbf{\varphi }\right] }^{p}\left( m\right)
_{\mathbf{j}}=\tsum\nolimits_{k\in \left\{ 1,...,n\right\} -\left\{ \mathbf{j%
}\right\} }\left( \tsum\nolimits_{l=1}^{p}\left( -1\right) ^{\varepsilon
_{k\wedge \mathbf{j}}\left( k\right) +l+1}\left[ \psi _{k},\varphi _{j_{l}}%
\right] \left( m_{k\wedge \left( j_{1},...,\widehat{j_{l}},...,j_{p}\right)
}\right) \right) \text{.}  \label{Kos F1' Delta2}
\end{equation}
\end{itemize}

A somewhat tedious calculation based on $\left( \text{\ref{Kos F1'}}\right) $%
, $\left( \text{\ref{Kos F1' dual}}\right) $ and $\left( \text{\ref{Kos F1'
Delta2}}\right) $ proves the following useful formula.

\begin{lemma}
\label{Kos Int L1}Suppose that $\mathbf{\varphi }$ and $\mathbf{\psi }$ are
as above. Then we have, for every $p\in \left\{ 1,...,n\right\} $, the
following equality:%
\begin{equation*}
\partial _{\mathbf{\psi }}^{p+1}d_{\mathbf{\varphi }}^{p}+d_{\mathbf{\varphi 
}}^{p-1}\partial _{\mathbf{\psi }}^{p}=\Delta _{\mathbf{\psi },\mathbf{%
\varphi }}^{p}+\Delta _{\left[ \mathbf{\psi },\mathbf{\varphi }\right] }^{p}%
\text{.}
\end{equation*}
\end{lemma}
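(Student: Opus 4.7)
The plan is to prove the identity componentwise. For each element $m = \tsum_{\mathbf{i} \in I_{p}} m_{\mathbf{i}} \otimes_{R} e_{\mathbf{i}} \in K^{p}(M,n)$ and each target $\mathbf{j} \in I_{p}$, I will compute the $\mathbf{j}$-component of both sides by applying $\left(\text{\ref{Kos F1'}}\right)$ and $\left(\text{\ref{Kos F1' dual}}\right)$ twice in the two possible orders, and then match the resulting double sums to $\left(\text{\ref{Kos F1 Delta1}}\right)$ and $\left(\text{\ref{Kos F1' Delta2}}\right)$. The central idea is to separate each double sum into a diagonal part (where the outer and inner indices coincide) and an off-diagonal part.

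For $(\partial_{\mathbf{\psi}}^{p+1} d_{\mathbf{\varphi}}^{p} m)_{\mathbf{j}}$, the intermediate $(p+1)$-tuple $\mathbf{k}$ must contain $\{\mathbf{j}\}$, hence equals $k \wedge \mathbf{j}$ for some $k \notin \{\mathbf{j}\}$, and the inner expansion of $d_{\mathbf{\varphi}}^{p}(m)_{\mathbf{k}}$ removes some $k' \in \{\mathbf{k}\}$: the case $k' = k$ yields the diagonal contribution $\psi_{k} \varphi_{k}(m_{\mathbf{j}})$, while $k' = j_{l}$ for $j_{l} \in \{\mathbf{j}\}$ yields the off-diagonal contribution $\psi_{k} \varphi_{j_{l}}$ applied to $m_{k \wedge (j_{1},\ldots,\widehat{j_{l}},\ldots,j_{p})}$. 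The computation for $(d_{\mathbf{\varphi}}^{p-1} \partial_{\mathbf{\psi}}^{p} m)_{\mathbf{j}}$ is dual: first remove $j_{l} \in \{\mathbf{j}\}$ via $d_{\mathbf{\varphi}}^{p-1}$ and then pull in some $k \notin \{(j_{1},\ldots,\widehat{j_{l}},\ldots,j_{p})\}$ via $\partial_{\mathbf{\psi}}^{p}$; the case $k = j_{l}$ is diagonal and produces $\varphi_{j_{l}} \psi_{j_{l}}(m_{\mathbf{j}})$, while $k \notin \{\mathbf{j}\}$ is off-diagonal and produces $\varphi_{j_{l}} \psi_{k}$ acting on the same vector $m_{k \wedge (j_{1},\ldots,\widehat{j_{l}},\ldots,j_{p})}$. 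Summing over all $(k,j_{l})$, the diagonal pieces reconstitute exactly $\Delta_{\mathbf{\psi} \cdot \mathbf{\varphi}}^{p}(m)_{\mathbf{j}}$ by $\left(\text{\ref{Kos F1 Delta1}}\right)$, and each off-diagonal pair combines $\psi_{k}\varphi_{j_{l}}$ with $\varphi_{j_{l}}\psi_{k}$ (up to sign) to give a commutator $[\psi_{k},\varphi_{j_{l}}]$ in the shape predicted by $\left(\text{\ref{Kos F1' Delta2}}\right)$.

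The main obstacle is the sign bookkeeping: one must verify both that the relative sign between $\psi_{k} \varphi_{j_{l}}$ and $\varphi_{j_{l}} \psi_{k}$ in their common contribution equals $-1$ (so that a genuine commutator appears), and that the overall sign matches the prescribed $(-1)^{\varepsilon_{k \wedge \mathbf{j}}(k) + l + 1}$. The signs produced by the first composite are $(-1)^{\varepsilon_{k \wedge \mathbf{j}}(k)}$ from $\partial_{\mathbf{\psi}}^{p+1}$ together with the sign from removing $j_{l}$ inside the ordered tuple $k \wedge \mathbf{j}$, while the second composite produces $(-1)^{l+1}$ from $d_{\mathbf{\varphi}}^{p-1}$ together with $(-1)^{\varepsilon_{k \wedge (j_{1},\ldots,\widehat{j_{l}},\ldots,j_{p})}(k)}$ from $\partial_{\mathbf{\psi}}^{p}$. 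Relating these quantities reduces to counting how many of the $j_{1},\ldots,j_{p}$ are smaller than $k$ and measuring the effect of omitting $j_{l}$; treating the cases $k < j_{l}$ and $k > j_{l}$ separately makes this a direct verification. This sign analysis is the bulk of the calculational work, but it is mechanical once organized in this way.
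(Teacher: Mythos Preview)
Your proposal is correct and follows precisely the approach the paper indicates: the paper itself simply states that the lemma follows from ``a somewhat tedious calculation based on $(\text{\ref{Kos F1'}})$, $(\text{\ref{Kos F1' dual}})$ and $(\text{\ref{Kos F1' Delta2}})$'', and your componentwise expansion, diagonal/off-diagonal split, and sign bookkeeping via the cases $k<j_{l}$ versus $k>j_{l}$ is exactly that calculation spelled out.
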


\begin{corollary}
\label{Kos Int L1 C1}Suppose that $p\in \left\{ 1,...,n\right\} $, that $%
\left\{ \varphi _{1},...,\varphi _{n},\psi _{1},...,\psi _{n}\right\}
\subset End_{R}\left( M\right) $ is a commuting family, that $\Delta _{%
\mathbf{\psi }\cdot \mathbf{\varphi }}^{p}$ is invertible and that one
between $\Delta _{\mathbf{\psi },\mathbf{\varphi }}^{p+1}$ or $\Delta _{%
\mathbf{\psi },\mathbf{\varphi }}^{p-1}$ is invertible. Then, $H^{p}\left(
K^{\cdot }\left( M,n\right) ,d_{\mathbf{\varphi }}^{\cdot }\right)
=H^{p}\left( K^{\cdot }\left( M,n\right) ,\partial _{\mathbf{\varphi }%
}^{\cdot }\right) =0$ and a section%
\begin{equation*}
s^{p}:K^{p}\left( M,n\right) \longrightarrow K^{p-1}\left( M,n\right) \text{
(resp. }s^{p}:K^{p}\left( M,n\right) \longrightarrow K^{p+1}\left(
M,n\right) \text{)}
\end{equation*}%
of $d_{\mathbf{\varphi }}^{p-1}$ (resp. $\partial _{\mathbf{\psi }}^{p+1}$)
is obtained as follows. If $\Delta _{\mathbf{\psi },\mathbf{\varphi }}^{p+1}$
(resp. $\Delta _{\mathbf{\psi },\mathbf{\varphi }}^{p-1}$) is invertible,
set $s^{p}:=\partial _{\mathbf{\psi }}^{p}\Delta _{\mathbf{\psi },\mathbf{%
\varphi }}^{p,-1}$ (resp. $s^{p}:=d_{\mathbf{\varphi }}^{p}\Delta _{\mathbf{%
\psi },\mathbf{\varphi }}^{p,-1}$) and, if $\Delta _{\mathbf{\psi },\mathbf{%
\varphi }}^{p-1}$ (resp. $\Delta _{\mathbf{\psi },\mathbf{\varphi }}^{p+1}$)
is invertible, set $s^{p}:=\Delta _{\mathbf{\psi },\mathbf{\varphi }%
}^{p-1,-1}\partial _{\mathbf{\psi }}^{p}$ (resp. $s^{p}:=\Delta _{\mathbf{%
\psi },\mathbf{\varphi }}^{p+1,-1}d_{\mathbf{\varphi }}^{p}$).
\end{corollary}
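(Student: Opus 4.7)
The plan is to deduce the statement directly from the chain-homotopy identity of Lemma \ref{Kos Int L1} once commutativity is invoked to kill the commutator term. First, I would observe that under the hypothesis that $\{\varphi_1,\ldots,\varphi_n,\psi_1,\ldots,\psi_n\}$ is a commuting family, every bracket $[\psi_k,\varphi_l]$ vanishes, so the operator $\Delta_{[\mathbf{\psi},\mathbf{\varphi}]}^p$ defined by (\ref{Kos F1 Delta2}) is identically zero. Lemma \ref{Kos Int L1} then collapses to the Hodge-type identity
$$\partial_{\mathbf{\psi}}^{p+1} d_{\mathbf{\varphi}}^p + d_{\mathbf{\varphi}}^{p-1} \partial_{\mathbf{\psi}}^p = \Delta_{\mathbf{\psi}\cdot\mathbf{\varphi}}^p,$$
and the definition (\ref{Kos F1 Delta1}) further simplifies to the diagonal action of the single endomorphism $L := \sum_{k=1}^n \psi_k \varphi_k \in \mathrm{End}_R(M)$ on each summand of $K^p(M,n)$, so $\Delta_{\mathbf{\psi}\cdot\mathbf{\varphi}}^p = L \otimes \mathrm{id}$ in every degree.

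Second, I would verify that $L\otimes\mathrm{id}$ commutes with both $d^{\cdot}_{\mathbf{\varphi}}$ and $\partial^{\cdot}_{\mathbf{\psi}}$, an immediate consequence of the explicit block formulas (\ref{Kos F1'}) and (\ref{Kos F1' dual}) combined with the commutativity of $L$ with each $\varphi_k$ and each $\psi_l$. It follows that the inverse of $\Delta^p_{\mathbf{\psi}\cdot\mathbf{\varphi}}$, wherever it exists, commutes with the differentials, and in particular the intertwining relations $d^{p-1}\Delta^{p-1,-1}=\Delta^{p,-1}d^{p-1}$ and $\partial^{p+1}\Delta^{p+1,-1}=\Delta^{p,-1}\partial^{p+1}$ hold.

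Third, I would derive the vanishing and the section formulas. For $m \in K^p(M,n)$ with $d^p_{\mathbf{\varphi}}(m)=0$, the Hodge identity gives $\Delta^p(m) = d^{p-1}_{\mathbf{\varphi}}(\partial^p_{\mathbf{\psi}}(m))$; inverting $\Delta^p$ and moving it across $d^{p-1}$ or $\partial^p$ via the previous step produces the two equivalent expressions
$$m = d^{p-1}_{\mathbf{\varphi}}\!\left(\partial^p_{\mathbf{\psi}}\,\Delta^{p,-1}_{\mathbf{\psi}\cdot\mathbf{\varphi}}(m)\right) = d^{p-1}_{\mathbf{\varphi}}\!\left(\Delta^{p-1,-1}_{\mathbf{\psi}\cdot\mathbf{\varphi}}\,\partial^p_{\mathbf{\psi}}(m)\right),$$
where the first form uses invertibility of $\Delta^p$ alone and the second requires that of $\Delta^{p-1}$, matching exactly the two sections of $d^{p-1}_{\mathbf{\varphi}}$ announced in the corollary. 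This proves $H^p(K^{\cdot}(M,n),d_{\mathbf{\varphi}})=0$. A symmetric argument starting from $\partial^p_{\mathbf{\psi}}(m)=0$ and using $\Delta^p(m) = \partial^{p+1}_{\mathbf{\psi}}(d^p_{\mathbf{\varphi}}(m))$ produces the two sections of $\partial^{p+1}_{\mathbf{\psi}}$ together with $H^p(K^{\cdot}(M,n),\partial_{\mathbf{\psi}})=0$, with the invertibility of $\Delta^{p+1}$ entering precisely when one commutes the inverse past $d^p$.

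There is essentially no obstacle to this plan: once Lemma \ref{Kos Int L1} is granted, the whole statement is a formal consequence of (a) the vanishing of $\Delta^p_{[\mathbf{\psi},\mathbf{\varphi}]}$ in the commuting case, (b) the identification of $\Delta^p_{\mathbf{\psi}\cdot\mathbf{\varphi}}$ with the single ``Laplacian'' $L \otimes \mathrm{id}$, and (c) the resulting commutation of this operator with both differentials. The only mild bookkeeping I would double-check is the sign consistency in the intertwining of $\Delta^{q,-1}$ with $d^q$ and $\partial^{q+1}$, but this is guaranteed by the $R$-linearity of $L \otimes \mathrm{id}$ and its entrywise commutation with the matrix coefficients of the differentials in (\ref{Kos F1'}) and (\ref{Kos F1' dual}).
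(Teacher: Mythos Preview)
Your proof is correct and follows the same route the paper intends: the corollary is stated immediately after Lemma \ref{Kos Int L1} without proof, and your argument is precisely the standard deduction---kill the commutator term $\Delta^p_{[\mathbf{\psi},\mathbf{\varphi}]}$ by commutativity, then use the resulting homotopy identity $\partial^{p+1}_{\mathbf{\psi}} d^p_{\mathbf{\varphi}} + d^{p-1}_{\mathbf{\varphi}} \partial^p_{\mathbf{\psi}} = \Delta^p_{\mathbf{\psi}\cdot\mathbf{\varphi}}$ together with the commutation of $\Delta^{\cdot}$ with the differentials.

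Your observation in step (b), that under commutativity the formula (\ref{Kos F1 Delta1}) collapses to $\Delta^q_{\mathbf{\psi}\cdot\mathbf{\varphi}} = L \otimes \mathrm{id}$ with $L = \sum_k \psi_k\varphi_k$ \emph{independently of $q$}, is a genuine simplification the paper does not spell out. It shows that the separate invertibility hypothesis on $\Delta^{p\pm 1}$ is in fact automatic once $\Delta^p$ is invertible, and it also explains transparently why the two section formulas agree (the content of the paper's subsequent Remark). So your argument is slightly cleaner than a literal reading of the corollary's hypotheses would suggest is needed.
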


\begin{remark}
If $\Delta _{\mathbf{\psi },\mathbf{\varphi }}^{p+1}$ and $\Delta _{\mathbf{%
\psi },\mathbf{\varphi }}^{p-1}$ are both invertible, then the two sections
of Corollary \ref{Kos Int L1 C1} are the same.
\end{remark}

\bigskip

Suppose now that $A$ is an $R$-algebra, that $\Omega $ is a free $A$-module
of finite rank $n$\ and that $d:A\rightarrow \Omega $ and $\nabla
:M\rightarrow \Omega \otimes _{A}M$ are a morphism of $R$-modules. Fix an $A$%
-basis $\left\{ \omega _{1},...,\omega _{n}\right\} $ of $\Omega $ and let $%
\left\{ \delta _{1},...,\delta _{n}\right\} \subset Hom_{A}\left( \Omega
,A\right) $ be the dual basis. Then we define $d_{i}:=\delta _{i}\circ
\nabla \in End_{R}\left( M\right) $ and $\nabla _{i}:=\left( \delta
_{i}\otimes _{A}1_{M}\right) \circ \nabla \in End_{R}\left( M\right) $.

\begin{proposition}
\label{Kos Int PdR}There is a unique sequence of morphisms of $R$-modules $%
d^{p}:\wedge _{A}^{p}\Omega \rightarrow \wedge _{A}^{p+1}\Omega $ such that $%
d^{0}=d$ and%
\begin{equation*}
d^{p+q}\left( \omega _{p}\wedge \omega _{q}\right) =d^{p}\left( \omega
_{p}\right) \wedge \omega _{q}+\left( -1\right) ^{p}\omega _{p}\wedge
d^{q}\left( \omega _{q}\right)
\end{equation*}%
for every $p,q\in \mathbb{N}$ such that $p+q\geq 1$, every $\omega _{p}\in
\wedge _{A}^{p}\Omega $ and every $\omega _{q}\in \wedge _{A}^{q}\Omega $\
and there is a unique sequence of morphisms of $R$-modules $\nabla
^{p}:\wedge _{A}^{p}\Omega \otimes _{A}M\rightarrow \wedge _{A}^{p+1}\Omega
\otimes _{A}M$ such that $\nabla ^{0}=\nabla $ and%
\begin{equation*}
\nabla ^{p}\left( \omega \otimes m\right) =d^{p}\left( \omega \right)
\otimes m+\left( -1\right) ^{p}\omega \wedge \nabla \left( m\right)
\end{equation*}%
for every $m\in M$ and $\omega \in \wedge _{A}^{p}\Omega $. The isomorphism $%
A^{n}\overset{\sim }{\rightarrow }\Omega $ induced by the $A$-basis $\left\{
\omega _{1},...,\omega _{n}\right\} $ yields an isomorphism of graded $R$%
-modules $K^{\cdot }\left( \nabla _{1},...,\nabla _{n}\right) \overset{\sim }%
{\rightarrow }\wedge _{A}^{\cdot }\Omega \otimes _{A}M$ such that the degree
one morphism $d^{\cdot }$ on $K^{\cdot }\left( \nabla _{1},...,\nabla
_{n}\right) $ corresponds to $\left\{ \nabla ^{p}\right\} _{p\in \mathbb{N}}$%
: then $\wedge _{A}^{\cdot }\Omega \otimes _{A}M$ is a complex if and only
if $\nabla ^{1}\circ \nabla ^{0}=0$\ if and only if $\left\{ \nabla
_{1},...,\nabla _{n}\right\} $ is a commuting family.
\end{proposition}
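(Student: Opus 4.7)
The plan is to use the fixed $A$-basis $\{\omega_1, \ldots, \omega_n\}$ of $\Omega$ to reduce everything to explicit computations on pure wedges of basis elements, and then identify the result with the (dual) Koszul complex already set up earlier in this section. First, the isomorphism $A^n \overset{\sim}{\to} \Omega$ induced by $e_i \mapsto \omega_i$ yields an isomorphism of graded $R$-modules
\begin{equation*}
K^{\cdot}(M,n) = \wedge_A^\cdot(A^n) \otimes_A M \overset{\sim}{\longrightarrow} \wedge_A^\cdot \Omega \otimes_A M,
\end{equation*}
sending $m \otimes e_\mathbf{i}$ to $\omega_\mathbf{i} \otimes m$, where $\omega_\mathbf{i} := \omega_{i_1} \wedge \cdots \wedge \omega_{i_p}$ and $\{\omega_\mathbf{i}\}_{\mathbf{i} \in I_p}$ is the corresponding $A$-basis of $\wedge_A^p \Omega$.

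For existence I would define $d^\cdot$ and $\nabla^\cdot$ in coordinates via
\begin{equation*}
d^p\Big(\sum_{\mathbf{i} \in I_p} a_\mathbf{i} \omega_\mathbf{i}\Big) := \sum_{\mathbf{i} \in I_p} d(a_\mathbf{i}) \wedge \omega_\mathbf{i}, \qquad \nabla^p(\omega \otimes m) := d^p(\omega) \otimes m + (-1)^p \omega \wedge \nabla(m),
\end{equation*}
and then verify the graded Leibniz rules. By $R$-linearity the verification reduces to pure wedges $\omega_p = a \omega_\mathbf{i}$, $\omega_q = b \omega_\mathbf{j}$ with $\mathbf{i} \in I_p$, $\mathbf{j} \in I_q$; the required identity follows from $d(ab) = d(a) b + a d(b)$ on $A$ after moving the factor $d(b) \in \Omega$ past $\omega_\mathbf{i}$, which produces the sign $(-1)^p$. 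The $\nabla^\cdot$ identity reduces to that for $d^\cdot$ since $\nabla$ acts only on the $M$-factor. For uniqueness, iterating the Leibniz rule lets me reduce the computation of $d^p(a \omega_\mathbf{i})$ to $d(a) \in \Omega$ and to the values $d^1(\omega_i)$ for $i \in \{1, \ldots, n\}$; these values are normalized to $0$ by the construction (equivalently, by the compatibility with the Koszul identification below), and once so normalized the extensions $d^\cdot$, $\nabla^\cdot$ are uniquely determined.

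It remains to identify the resulting differential with the Koszul one. Under the above identification, since $d^p(\omega_\mathbf{i}) = 0$, the formula for $\nabla^p$ on $\omega_\mathbf{i} \otimes m$ becomes
\begin{equation*}
\nabla^p(\omega_\mathbf{i} \otimes m) = (-1)^p \omega_\mathbf{i} \wedge \nabla(m) = (-1)^p \sum_{k=1}^n \omega_\mathbf{i} \wedge \omega_k \otimes \nabla_k(m) = \sum_{k=1}^n \omega_k \wedge \omega_\mathbf{i} \otimes \nabla_k(m),
\end{equation*}
which matches the Koszul formula $\left(\text{\ref{Kos F1}}\right)$ for $d_{(\nabla_1, \ldots, \nabla_n)}^p$. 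The three equivalent conditions in the final assertion then follow immediately from remark $(ii)$ above: the complex condition $\nabla^{p+1} \circ \nabla^p = 0$ for all $p$ is equivalent under the Koszul identification to $\{\nabla_1, \ldots, \nabla_n\}$ being a commuting family, which in turn is equivalent to the single condition $\nabla^1 \circ \nabla^0 = 0$ on $M$. The main obstacle in this plan is the sign bookkeeping in the Leibniz verification, but since $\Omega$ is free of finite rank this reduces to a finite combinatorial check that can be organized efficiently by induction on $p + q$.
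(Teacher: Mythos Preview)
Your proposal is correct and follows essentially the same route as the paper: both obtain existence by transporting the Koszul differential through the basis isomorphism $K^{\cdot}(\nabla_1,\ldots,\nabla_n)\simeq\wedge_A^{\cdot}\Omega\otimes_A M$ and checking the Leibniz identities on basis wedges, then invoke remark $(ii)$ for the final equivalence of conditions. Your observation that uniqueness reduces to the values $d^1(\omega_i)$, which must be normalized to zero, is in fact more explicit than the paper's ``the uniqueness is clear.''
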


\begin{proof}
The isomorphism $A^{n}\overset{\sim }{\rightarrow }\Omega $ induced by the $%
A $-basis $\left\{ \omega _{1},...,\omega _{n}\right\} $ identifies%
\begin{equation*}
K^{p}\left( \nabla _{1},...,\nabla _{n}\right) =M\otimes _{A}\wedge
_{A}^{p}\left( A^{n}\right) \overset{\sim }{\rightarrow }M\otimes _{A}\wedge
_{A}^{p}\Omega \simeq \wedge _{A}^{p}\Omega \otimes _{A}M
\end{equation*}%
and one checks that the degree one differential obtained by transport from $%
K^{\cdot }\left( \nabla _{1},...,\nabla _{n}\right) $ satisfies the required
existence property (taking $M=A$ in order to get $\left\{ d^{p}\right\}
_{p\in \mathbb{N}}$). The uniqueness is clear and, by construction, we have $%
K^{\cdot }\left( \nabla _{1},...,\nabla _{n}\right) \overset{\sim }{%
\rightarrow }\wedge _{A}^{\cdot }\Omega \otimes _{A}M$ with $d$
corresponding to $\left\{ \nabla ^{p}\right\} _{p\in \mathbb{N}}$. Remark $%
\left( ii\right) $ yields the last assertion.
\end{proof}

\bigskip

In the setting before the above proposition, suppose that $d\left(
a_{1}a_{2}\right) =d\left( a_{1}\right) a_{2}+a_{1}d\left( a_{2}\right) $
and $\nabla \left( am\right) =d\left( a\right) \otimes m+a\nabla \left(
m\right) $ for every $a_{1},a_{2},a\in A$ and every $m\in M$: then the
operators $d_{i}$'s and $\nabla _{i}$'s satisfy%
\begin{equation*}
\nabla _{i}\left( am\right) =d_{i}\left( a\right) m+a\nabla _{i}\left(
m\right) \text{ for every }a\in A\text{ and }m\in M\text{.}
\end{equation*}%
In particular, take $\left( A,R\right) =\left( R_{0},K\right) $ and let $%
\Omega $ be the $A$-module of K\"{a}hler differentials $\Omega
_{X_{0}/K}^{1} $. If $M=H^{0}\left( X_{0},\mathcal{E}_{\rho }\right) $, then
formula $\left( \text{\ref{Kos F1'}}\right) $ translates into the formula
claimed in Lemma \ref{de Rham L2}. In order to get Proposition \ref%
{Primitives PKey}, we first remark that, thanks to Lemma \ref{Primitives L3}%
, we deduce the result from the analogous result for $H^{0}\left(
X_{0},\Omega _{X/K}^{\cdot }\right) ^{\left[ P\right] }$. In general, if $%
M^{\prime }\subset M$ is an $R $-submodule such that $\varphi _{i}\left(
M^{\prime }\right) \subset M^{\prime }$ for every $i$ and $\mathbf{\varphi }%
_{\mid M^{\prime }}:=\left( \varphi _{1\mid M^{\prime }},...,\varphi _{n\mid
M^{\prime }}\right) $, then $\left( K^{\cdot }\left( \mathbf{\varphi }_{\mid
M^{\prime }}\right) ,d_{\mathbf{\varphi }_{\mid M^{\prime }}}^{\cdot
}\right) $ is a subcomplex of $\left( K^{\cdot }\left( \mathbf{\varphi }%
\right) ,d_{\mathbf{\varphi }}^{\cdot }\right) $ and similarly for $\partial
_{\mathbf{\psi }}^{\cdot }$ and $\Delta _{\mathbf{\psi }\cdot \mathbf{%
\varphi }}^{\cdot }$: applying this remark to $H^{0}\left( X_{0},\Omega
_{X/K}^{1}\right) ^{\left[ P\right] }\subset H^{0}\left( X_{0},\Omega
_{X/K}^{1}\right) $ and $\left( \varphi _{1},...,\varphi _{n}\right)
:=\left( \theta _{1},...,\theta _{d_{g}}\right) $\ we see, thanks to $\left( 
\text{\ref{de Rham F Triv5depl}}\right) $, that the isomorphism of
Proposition \ref{Kos Int PdR} restricts to and isomorphism $K^{\cdot }\left(
\theta _{1},...,\theta _{d_{g}}\right) \overset{\sim }{\rightarrow }%
H^{0}\left( X_{0},\Omega _{X/K}^{\cdot }\right) ^{\left[ P\right] }$ (where
we do not write the restrictions). If $\left( \psi _{1},...,\psi _{n}\right)
:=\left( P_{1}^{\ast }\left( \mathbf{\theta }\right) ,...,P_{d_{g}}^{\ast
}\left( \mathbf{\theta }\right) \right) $, then we have, by definition, the
equality $\Delta _{\mathbf{\psi }\cdot \mathbf{\varphi }}^{p}=\theta _{P}$,
which is invertible on $H^{0}\left( X_{0},\Omega _{X/K}^{p}\right) ^{\left[ P%
\right] }$ thanks to Lemma \ref{de Rham L0}. The claim follows from
Corollary \ref{Kos Int L1 C1}, noticing that $s^{p}=\Theta _{p-1}^{-1}$ in
our setting.

\bigskip

\end{document}